\newtheorem{thrm}{Theorem}[section]
\newtheorem{lemma}[thrm]{Lemma}
\newtheorem{prop}[thrm]{Proposition}
\newtheorem{cor}[thrm]{Corollary}
\newtheorem{rmrk}[thrm]{Remark}
\newtheorem{dfn}[thrm]{Definition}
\begin{document}

\newcommand{\SL}{\mathcal L^{1,p}( D)}
\newcommand{\Lp}{L^p( Dega)}
\newcommand{\CO}{C^\infty_0( \Omega)}
\newcommand{\Rn}{\mathbb R^n}
\newcommand{\Rm}{\mathbb R^m}
\newcommand{\R}{\mathbb R}
\newcommand{\Om}{\Omega}
\newcommand{\Hn}{\mathbb H^n}
\newcommand{\aB}{\alpha B}
\newcommand{\eps}{\ve}
\newcommand{\BVX}{BV_X(\Omega)}
\newcommand{\p}{\partial}
\newcommand{\IO}{\int_\Omega}
\newcommand{\bG}{\boldsymbol{G}}
\newcommand{\bg}{\mathfrak g}
\newcommand{\bz}{\mathfrak z}
\newcommand{\bv}{\mathfrak v}
\newcommand{\Bux}{\mbox{Box}}
\newcommand{\e}{\ve}
\newcommand{\X}{\mathcal X}
\newcommand{\Y}{\mathcal Y}
\newcommand{\W}{\mathcal W}
\newcommand{\la}{\lambda}
\newcommand{\vf}{\varphi}
\newcommand{\rhh}{|\nabla_H \rho|}
\newcommand{\Ba}{\mathcal{B}_\beta}
\newcommand{\Za}{Z_\beta}
\newcommand{\ra}{\rho_\beta}
\newcommand{\na}{\nabla_\beta}
\newcommand{\vt}{\vartheta}
\newcommand{\G}{\Gamma}
\newcommand{\Ga}{\overline{G}}
\newcommand{\La}{\mathscr L_a}
\newcommand{\Gb}{\overline{\mathscr G}_a}
\newcommand{\Gbb}{\overline{\mathscr G}}

\numberwithin{equation}{section}

\newcommand{\RN} {\mathbb{R}^N}
\newcommand{\Sob}{S^{1,p}(\Omega)}
\newcommand{\Dxk}{\frac{\partial}{\partial x_k}}
\newcommand{\Co}{C^\infty_0(\Omega)}
\newcommand{\Je}{J_\ve}
\newcommand{\beq}{\begin{equation}}
\newcommand{\bea}[1]{\begin{array}{#1} }
\newcommand{\eeq}{ \end{equation}}
\newcommand{\ea}{ \end{array}}
\newcommand{\eh}{\ve h}
\newcommand{\Dxi}{\frac{\partial}{\partial x_{i}}}
\newcommand{\Dyi}{\frac{\partial}{\partial y_{i}}}
\newcommand{\Dt}{\frac{\partial}{\partial t}}
\newcommand{\aBa}{(\alpha+1)B}
\newcommand{\GF}{\psi^{1+\frac{1}{2\alpha}}}
\newcommand{\GS}{\psi^{\frac12}}
\newcommand{\HFF}{\frac{\psi}{\rho}}
\newcommand{\HSS}{\frac{\psi}{\rho}}
\newcommand{\HFS}{\rho\psi^{\frac12-\frac{1}{2\alpha}}}
\newcommand{\HSF}{\frac{\psi^{\frac32+\frac{1}{2\alpha}}}{\rho}}
\newcommand{\AF}{\rho}
\newcommand{\AR}{\rho{\psi}^{\frac{1}{2}+\frac{1}{2\alpha}}}
\newcommand{\PF}{\alpha\frac{\psi}{|x|}}
\newcommand{\PS}{\alpha\frac{\psi}{\rho}}
\newcommand{\ds}{\displaystyle}
\newcommand{\Zt}{{\mathcal Z}^{t}}
\newcommand{\XPSI}{2\alpha\psi \begin{pmatrix} \frac{x}{|x|^2}\\ 0 \end{pmatrix} - 2\alpha\frac{{\psi}^2}{\rho^2}\begin{pmatrix} x \\ (\alpha +1)|x|^{-\alpha}y \end{pmatrix}}
\newcommand{\Z}{ \begin{pmatrix} x \\ (\alpha + 1)|x|^{-\alpha}y \end{pmatrix} }
\newcommand{\ZZ}{ \begin{pmatrix} xx^{t} & (\alpha + 1)|x|^{-\alpha}x y^{t}\\
     (\alpha + 1)|x|^{-\alpha}x^{t} y &   (\alpha + 1)^2  |x|^{-2\alpha}yy^{t}\end{pmatrix}}
\newcommand{\norm}[1]{\lVert#1 \rVert}
\newcommand{\ve}{\varepsilon}
\newcommand{\Rnn}{\mathbb R^{n+1}}
\newcommand{\Rnp}{\mathbb R^{n+1}_+}
\newcommand{\B}{\mathbb{B}}
\newcommand{\Ha}{\mathbb{H}}
\newcommand{\xa}{X}
\newcommand{\Sa}{\mathbb{S}}

\title[Monotonicity of generalized frequencies, etc.]{Monotonicity of generalized frequencies and the strong unique continuation property for fractional parabolic equations}

\author{Agnid Banerjee}
\address{TIFR CAM, Bangalore-560065} \email[Agnid Banerjee]{agnidban@gmail.com}

\author{Nicola Garofalo}
\address{Dipartimento di Ingegneria Civile, Edile e Ambientale (DICEA) \\ Universit\`a di Padova\\ 35131 Padova, ITALY}
\email[Nicola Garofalo]{rembdrandt54@gmail.com}

\thanks{Second author supported in part by a grant ``Progetti d'Ateneo, 2013,'' University of Padova.}

\maketitle

\tableofcontents

\begin{abstract}
We  study the strong unique continuation property backwards in time for the nonlocal equation in $\Rnn$
\begin{equation}\label{one}
(\partial_t - \Delta)^{s} u = V(x,t)u,\ \ \ \ \ \ \ \ \ \ s \in (0,1).
\end{equation}
Our main  result Theorem \ref{main} can be thought of as the nonlocal counterpart  of the result obtained in \cite{Po}  for the case when $s=1$. In order to prove Theorem \ref{main} we develop the regularity theory of the extension problem for the equation \eqref{one}. With such theory in hands we establish:
\begin{itemize}
\item[(i)] a basic monotonicity result for an adjusted frequency function which plays a central role in this paper, see Theorem \ref{mon1} below;
\item[(ii)] an extensive blowup analysis of the so-called \emph{Almgren rescalings} associated with the extension problem.
\end{itemize}
We feel that our work will also be of interest e.g. in the study of certain basic open questions in free boundary problems, as well as in nonlocal segregation problems. 
\end{abstract}


\section{Introduction}\label{S:Intro}

In his visionary papers \cite{R1} and \cite{R2} Marcel Riesz introduced the fractional powers of the Laplacian in Euclidean and Lorentzian space, developed the calculus of these nonlocal operators and studied the Dirichlet and Cauchy problems for respectively $(-\Delta)^s$ and $(\p_{tt} - \Delta)^s$. These pseudo-differential operators play an important role in many branches of the applied sciences ranging from elasticity, to geophysical fluid dynamics and to quantum mechanics. But they also appear prominently in other branches of mathematics, such as e.g. geometry, probability and financial mathematics. 

Although the introduction of \cite{R1} reads:...``On peut en particulier consid\'erer certains proc\'ed\'es d'int\'egration de charact\`er elliptique, hyperbolique et parabolique respectivement", M. Riesz' works  do not directly encompass the fractional powers $H^s = (\p_t - \Delta)^s$ of the third fundamental operator of mathematical physics, the heat operator $H = \p_t - \Delta$. Yet, such pseudo-differential  operator plays an important role in many contexts. For instance, when studying the phenomenon of osmosis one is led to consider an obstacle type problem for  $H^{1/2}$, see \cite{DL} and the recent monograph \cite{DGPT}.  

In this paper, given a number $0<s<1$, we are concerned with the basic question of the space-time strong unique continuation (backward in time) for the following nonlocal equation
\begin{equation}\label{e0}
H^s u(x,t)  = V(x,t) u(x,t),
\end{equation}
where $x\in \Rn,\ t\in \R$, and we have indicated by $(x,t)$ the generic point in $\R^{n+1}$. As it is well-known, the problem of uniqueness is central to the analysis of pde's, and in the context of parabolic equations the subject has been extensively developed in the local case $s=1$ in \eqref{e0}. The nonlocal case $0<s<1$ instead is completely open, and our paper constitutes a first basic contribution in this direction.

 To state our main result we
introduce the relevant notion of vanishing to infinite order of  a function at a point. Such notion is by now quite standard in the literature for parabolic differential equations. For $x_0\in \Rn$ and $r>0$ we indicate with $B_r(x_0)$ the Euclidean ball in $\R^{n}$ centered at $x_0$ with radius $r$. Given $(x_0, t_0)\in \Rnn$ we denote by $Q_{r}(x_0, t_0)= B_r (x_0) \times (t_0-r^2, t_0]$ the lower half of the parabolic cylinder in $\Rnn$ with radius $r$ centered at $(x_0,t_0)$. 

\begin{dfn}\label{d11}
We say that a function $u\in L^\infty_{loc}(\Rnn)$ \emph{vanishes to infinite order backward in time} at a point $(x_0,t_0)\in \Rnn$ if as $r\to 0^+$ one has 
\begin{equation*}
\underset{Q_{r}(x_0, t_0)}{\operatorname{essup}}\ |u| = O(r^N),
\end{equation*}
for all $N>0$. 
\end{dfn}
We note that if $u$ is smooth, Definition \ref{d11} is equivalent to saying that all the derivatives of $u$ vanish at $(x_0, t_0)$. This latter fact follows from Taylor's theorem.

Henceforth, given $0<s<1$ we will denote by  Dom$(H^s)\subset L^2(\Rnn)$ the domain of the nonlocal operator $H^s$, see \eqref{dom} below. 
Next, we describe our assumptions on the potential $V$ in \eqref{e0}. We indicate with $C^{k}(\Rnn)$ the Banach space of $C^k$ functions $f:\Rnn\to \R$ for which the norm
\[
||f||_{C^{k}(\Rnn)} = \sum_{|\alpha| + j  \leq k} \underset{(x,t)\in \Rnn}{\sup}\ |D^{\alpha} D_t^{j} f(x,t)| < \infty.
\]
Notice that the finiteness of $||f||_{C^{k}(\Rnn)}$ imposes, in particular, that $D^{\alpha} D_t^{j} f\in L^\infty(\Rnn)$ for $|\alpha| + j  \leq k$.
We assume that $V:\Rnn\to \R$ is such that for some $K>0$ 
\begin{equation}\label{vasump}
\begin{cases}
||V||_{C^{1}(\Rnn)}    \le K,\ \ \ \ \ \text{if}\ 1/2\le s < 1,
\\
\\
||V||_{C^{2}(\Rnn)},\ \ \ \ \   ||<\nabla_x V, x>||_{L^\infty(\Rnn)} \le K,\ \ \ \ \  \text{if}\ 0< s < 1/2.
\end{cases}
\end{equation}

Our main result can now be stated as follows.

\begin{thrm}[Space-time strong unique continuation property]\label{main}
Let $u\in  \operatorname{Dom}(H^{s})$ be a solution to \eqref{e0} with $V$ satisfying \eqref{vasump}. If $u$ vanishes to infinite order backward in time at some point $(x_0, t_0)$ in $\R^{n+1}$ in the sense of  Definition \ref{d11}, then $u(\cdot,t) \equiv 0$ for all $t \leq t_0$.  
\end{thrm}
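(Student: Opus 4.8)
The plan is to reduce the nonlocal problem \eqref{e0} to a local one via the Caffarelli--Silvestre type extension, and then to run an Almgren--Poon frequency-function argument on the extended problem. First I would invoke the extension characterization of $H^s$: given $u\in\operatorname{Dom}(H^s)$, let $U(x,y,t)$ on $\Rnp\times\R$ (with $y>0$ the extension variable) solve the degenerate parabolic equation $\partial_t U = y^{-a}\operatorname{div}(y^a\nabla U)$, where $a=1-2s\in(-1,1)$, with $U(x,0,t)=u(x,t)$, so that the weighted conormal derivative $-\lim_{y\to 0^+} y^a\partial_y U$ recovers (a multiple of) $H^su=Vu$ on the thin manifold $\{y=0\}$. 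This is exactly the ``regularity theory of the extension problem'' the authors announce in item (i)--(ii) of the abstract, so I would take it as available. The infinite-order vanishing hypothesis on $u$ backward in time at $(x_0,t_0)$ must first be transferred to an analogous statement for $U$ near $(x_0,0,t_0)$; this uses the Poisson-kernel representation of $U$ in terms of $u$ together with the regularity estimates, and is the first technical nontrivial point.

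Next, by translating in space and time I would assume $(x_0,t_0)=(0,0)$. The core is then to study, for the caloric-type extension $U$ adjusted by a suitable backward-heat Gaussian weight $\mathcal G(X,t)$ (with $X=(x,y)$), the generalized frequency
\[
N(r)=\frac{r^2 D(r)}{H(r)},\qquad H(r)=\int_{t=-r^2}\!\! U^2\, y^a \mathcal G\, dX,\qquad D(r)=\int_{-r^2}^{0}\!\!\int y^a|\nabla U|^2 \mathcal G\, dX\,dt,
\]
in the spirit of Poon's work in the local case $s=1$ (the paper \cite{Po} referenced as the model). I would establish, using the monotonicity theorem quoted as Theorem \ref{mon1} (which accounts for the potential term $Vu$ via the hypotheses \eqref{vasump}), that $e^{Cr}N(r)$ (or a similar adjusted quantity) is monotone nondecreasing and hence has a finite limit $\kappa=N(0^+)$ as $r\to 0^+$. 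The contribution of the boundary term $Vu$ on $\{y=0\}$ to the Rellich--Necas / first variation identities is controlled precisely by the $C^1$ or $C^2$ bounds on $V$ — this is why the cases $s\ge 1/2$ and $s<1/2$ are split in \eqref{vasump}, the lower range requiring an extra order of smoothness and the condition on $\langle\nabla_x V,x\rangle$ to handle the stronger singularity of the weight.

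With monotonicity in hand, the dichotomy is classical: either $\kappa=+\infty$, or $\kappa<\infty$. If the infinite-order vanishing of $U$ forces the doubling ratio $H(2r)/H(r)$ to grow faster than any polynomial, then one shows $N(r)\to\infty$, which contradicts the finiteness of the monotone limit — hence $U\equiv 0$ in a backward space-time neighborhood of the origin, and restricting to $\{y=0\}$ gives $u\equiv 0$ near $(0,0)$ for $t\le 0$. Alternatively, and more robustly, one performs the Almgren blowup: set $U_r(X,t)=U(rX,r^2t)/H(r)^{1/2}$ and show, via the uniform frequency bound plus Caccioppoli-type energy estimates for the weighted operator, that a subsequence converges (in the appropriate weighted parabolic space) to a nonzero global solution $U_0$ of the homogeneous extension equation which is parabolically homogeneous of degree $\kappa$ and has vanishing weighted conormal derivative on $\{y=0\}$; infinite-order vanishing of $u$ would make every such blowup limit vanish on the thin set, and then a unique-continuation/Liouville statement for the model (potential-free) extension operator rules this out unless $U\equiv 0$. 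Either route yields vanishing in a neighborhood; a final propagation-of-vanishing step — covering $\{t\le t_0\}$ by overlapping cylinders and iterating, again using backward uniqueness for the extension equation — upgrades this to $u(\cdot,t)\equiv 0$ for all $t\le t_0$.

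The main obstacle, I expect, is twofold and both parts live at the thin boundary $\{y=0\}$: (a) proving that the generalized frequency is well-defined and monotone \emph{despite} the nonlocal potential term, which requires delicate Rellich-type identities for the $A_2$-weight $y^a$ and very careful bookkeeping of the boundary integrals involving $Vu$ — this is where the regularity theory of the extension developed earlier in the paper is indispensable (one needs $U$ and $\nabla U$ to be regular enough up to $\{y=0\}$ for the integration by parts to be legitimate, and this regularity is exactly what degenerates as $s\to 0$, forcing the split in \eqref{vasump}); and (b) transferring the infinite-order vanishing hypothesis from $u$ to $U$, since $U$ at a point near the thin set is a weighted average of $u$ over all of space-time, so a priori nothing forces $U$ itself to vanish to infinite order — one must exploit the decay of the parabolic Poisson kernel together with the solution property to localize. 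Once these are overcome, the blowup and propagation steps are, modulo technical care with the weighted function spaces, the standard Garofalo--Lin / Poon machinery adapted to the Muckenhoupt-weighted degenerate setting.
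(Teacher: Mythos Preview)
Your overall architecture is correct and matches the paper: extension to $U$, the averaged frequency $N(r)$ and its monotonicity (Theorem~\ref{mon1}), Almgren rescalings $U_r$, convergence to a parabolically homogeneous blowup $U_0$ solving the potential-free problem with vanishing weighted Neumann datum, and a contradiction with the infinite-order vanishing of $u$. Two points deserve correction.

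First, the dichotomy you propose does not arise: the monotonicity theorem already forces $N(r)$ (and hence $\kappa=N(0^+)$) to be \emph{finite} on $(0,r_0)$, so only the blowup route is in play. Second, and more importantly, your obstacle (b) is misplaced. The paper never transfers infinite-order vanishing from $u$ to $U$; it doesn't need to. What you call the ``Liouville/UCP statement'' for the blowup is exactly the hard step (Lemma~\ref{nont}), and its difficulty is subtler than you indicate. From $U_0(\cdot,0,\cdot)\equiv 0$ one can bootstrap (via the conjugate equation for $y^a\partial_y U_0$) to show $U_0$ vanishes to infinite order in $y$ at every $(x,0,t)$ with $-1<t<0$; but this does \emph{not} give vanishing to infinite order at the origin $(0,0,0)$, so parabolic homogeneity alone cannot kill $U_0$ (see Remark~\ref{bl1} for a counterexample function). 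The paper resolves this by running a \emph{second} frequency argument for $U_0$, now centered at a point $(0,0,t_0)$ with $t_0\in(-1,0)$, using the non-averaged quantities $\tilde h,\tilde i,\tilde n$; the homogeneity of $U_0$ is used to guarantee finiteness of the relevant Gaussian integrals at every time level and to control the tails. The contradiction then closes by comparing the polynomial lower bound on $\tilde h(r)$ (from monotonicity of $\tilde n$) with the faster-than-polynomial decay forced by the infinite-order vanishing in $y$. Once $U_0(\cdot,0,\cdot)\not\equiv 0$ is established, the final contradiction with infinite-order vanishing of $u$ comes directly from uniform convergence $U_j\to U_0$ on compact sets of $\{y=0\}$ combined with the nondegeneracy lower bound $H(r)\gtrsim r^{4\|\overline N\|_\infty+a}$.
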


We mention that in the local case, i.e., when $s=1$, a related backward  uniqueness result was established by Poon in \cite{Po} by adapting to the parabolic setting the approach of the second named author and F. H. Lin in \cite{GL1}, \cite{GL2}, based on the almost monotonicity of a generalized frequency function. In the present paper, we show that such method can be suitably extended to the nonlocal parabolic setting of \eqref{e0}. We consider a solution $u\in \operatorname{Dom}(H^s)$ to the equation \eqref{e0}, and denote by $U$ the corresponding solution 
of the extension problem \eqref{ext} below. Given such $U$, we introduce the height  $H(U,r)$ of $U$, its energy $I(U,r)$ and the \emph{frequency} of $U$,
\[
N(U,r) = \frac{I(U,r)}{H(U,r)},
\]
see Definition \ref{D:fre} below.
A second central result in this paper is the following. 

\begin{thrm}[Monotonicity of the adjusted frequency]\label{mon1}
Let $u\in  \operatorname{Dom}(H^{s})$ be a solution to \eqref{e0} with $V$ satisfying \eqref{vasump}. There exist universal constants $C, t_0>0$, depending only on $n, s$ and the number $K$ in \eqref{vasump}, such that with $r_0= \sqrt{t_0}$ and $a = 1-2s$, under the assumption that  $H(U,r) \neq 0$  for all $0< r \leq r_0$, then
\begin{equation}\label{mon}
r  \to \exp\left\{C\int_{0}^r t^{-a} dt\right\} \left(N(r)  +  C \int_{0}^{r} t^{-a} dt\right)
\end{equation}
is monotone increasing on $(0,r_0)$. Furthermore, when the potential $V\equiv 0$, then the constant $C$ in \eqref{mon} can be taken equal to zero and we have pure monotonicity of the function $r\to N(r)$. In such case, $N(r) \equiv \kappa$ for $0<r<R$ if and only if $U$ is homogeneous of degree $2\kappa$ in $\Sa_{R}^{+}$ with respect to the parabolic dilations \eqref{pardil}.
\end{thrm}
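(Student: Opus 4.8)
The plan is to follow the classical Almgren--Poon strategy, now transplanted to the Caffarelli--Silvestre extension for the fractional heat operator. The proof reduces to differentiating $\log H(U,r)$ and $\log I(U,r)$ in $r$ and controlling the resulting quantities by a Cauchy--Schwarz type inequality. Throughout one works in the extended variable $(x,y,t)\in \R^{n+1}_+\times\R$ with the degenerate weight $|y|^a$, $a=1-2s$, and with the backward parabolic self-similar (Gaussian) scaling \eqref{pardil}; the natural objects are the Gaussian-weighted height $H(U,r)$ and Dirichlet energy $I(U,r)$ from Definition \ref{D:fre}.

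\medskip

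\emph{Step 1: the first variation formulas.} First I would compute $\frac{d}{dr}H(U,r)$ and show, using the extension equation and integration by parts with the weight $|y|^a$ against the backward Gaussian, that
\[
\frac{d}{dr}\log H(U,r) = \frac{2}{r}\,N(U,r) + (\text{error from } V),
\]
the error being $O(r^{1-a})$ pointwise in the relevant range because of the bounds \eqref{vasump} on $V$ (the case split $s\gtrless 1/2$ in \eqref{vasump} is exactly what is needed to absorb the Neumann data $|y|^a\partial_y U|_{y=0}=c_s Vu$ and its derivatives into such an error). This is the step where the regularity theory of the extension problem developed earlier in the paper is invoked, to justify all the integrations by parts and the finiteness of the boundary terms.

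\medskip

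\emph{Step 2: differentiating the energy and the frequency.} Next I would differentiate $I(U,r)$, obtaining a Rellich--Ne\v{c}as (Pohozaev) identity adapted to the Gaussian weight; the key algebraic fact is that the "good" term appearing in $\frac{d}{dr}I$ is, up to the $V$-error, a multiple of $\frac1{r}\int (Z U)^2$ with the backward-Gaussian weight, where $ZU$ is the generator of the parabolic dilations applied to $U$. Combining Step 1 and Step 2 gives
\[
\frac{d}{dr}\log N(U,r) \ \ge\ \frac{1}{r\,H(U,r)\,I(U,r)}\Big(\textstyle\int (ZU)^2 d\mu_r \cdot \int U^2 d\mu_r - \big(\int U\,ZU\, d\mu_r\big)^2\Big) \ -\ C\,r^{-a},
\]
and the parenthesized quantity is $\ge 0$ by Cauchy--Schwarz. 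Hence $\frac{d}{dr}\log N(U,r)\ge -Cr^{-a}$, which integrates to the monotonicity of $\exp\{C\int_0^r t^{-a}dt\}(N(r)+C\int_0^r t^{-a}dt)$ after a standard ODE comparison (using $a<1$ so that $\int_0^r t^{-a}dt<\infty$). When $V\equiv 0$ the $V$-error vanishes identically, so $C=0$ and $N$ itself is monotone.

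\medskip

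\emph{Step 3: the equality case.} For the last assertion, if $N(r)\equiv\kappa$ on $(0,R)$ with $V\equiv 0$, then $\frac{d}{dr}\log N\equiv 0$ forces equality in Cauchy--Schwarz for a.e.\ $r$, i.e.\ $ZU = \lambda(r)\,U$ $\mu_r$-a.e.; evaluating the constant via $N(r)=\kappa$ gives $\lambda(r)=2\kappa$, i.e.\ $ZU=2\kappa U$, which by definition of the parabolic dilation generator means $U$ is parabolically homogeneous of degree $2\kappa$ on $\Sa_R^+$. The converse is the direct computation that a parabolically homogeneous $U$ of degree $2\kappa$ has $N\equiv\kappa$, using the scaling of $H$ and $I$. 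The \emph{main obstacle} I anticipate is Step 1--2 for $0<s<1/2$ (i.e.\ $a>0$): there the weight degenerates rather than blows up, the Neumann term $|y|^a\partial_yU$ is only H\"older-regular up to $\{y=0\}$, and one needs the extra hypothesis $\|\langle\nabla_x V,x\rangle\|_{L^\infty}\le K$ together with the second-order bound on $V$ to control the $V$-contribution to $\frac{d}{dr}I$ after integrating by parts in $x$; making every boundary integral rigorous in that regime — and checking the $O(r^{-a})$ bound is sharp enough to keep $\int_0^{r_0}t^{-a}dt$ finite — is the delicate part, and it is precisely where the regularity theory established earlier in the paper does the heavy lifting.
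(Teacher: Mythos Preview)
Your overall strategy is correct and matches the paper's: compute the first variations of $H$ and $I$, extract a Cauchy--Schwarz discrepancy involving $ZU$, and absorb the $V$-contribution into an $O(r^{-a})$ error. But several details are off in ways that matter.

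First, the formula $\frac{d}{dr}\log H = \frac{2}{r}N + (\text{$V$-error})$ is wrong on both counts. The paper's Lemma \ref{L:fvh} gives the \emph{exact} identity $H'(r) = \tfrac{4}{r}I(r) + \tfrac{a}{r}H(r)$, with no $V$-error at all: the extra $a/r$ term comes from the weight $y^a$, and the factor is $4$, not $2$. All of the $V$-error lives in $I'(r)$ (Lemma \ref{L:fve}), and to control it the paper needs a Gaussian trace inequality (Lemma \ref{L:trace}) bounding $\int_{\Rn} u^2 \Ga(\cdot,0,t)$ by $C|t|^{s-1}(i(t)+h(t))$; this is the step that actually produces the $r^{-a}$ and where the hypothesis $|\langle\nabla_x V,x\rangle|\le K$ enters when $s\ge 1/2$ (not $s<1/2$, as you guessed). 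You don't mention this trace step.

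Second, and more seriously, you cannot work with $\log N$: since $I(r)$ contains the potential term $-\int |t|Vu^2\Ga$, the frequency $N(r)$ can be negative (the paper only knows $N(r)\ge -1$, see Remark \ref{R:pos}). The paper therefore computes $N'$ directly and arrives at $N'(r)\ge -Cr^{-a}\bigl(1+N(r)\bigr)$, a \emph{linear} differential inequality. Your claim that $(\log N)'\ge -Cr^{-a}$ would integrate to monotonicity of $e^{C\psi}N$, not of $e^{C\psi}(N+C\psi)$; the additive $C\psi$ in the statement comes precisely from the inhomogeneous term $-Cr^{-a}$ in $N'\ge -Cr^{-a}N - Cr^{-a}$, via the computation $\frac{d}{dr}\bigl[e^{C\psi}(N+C\psi)\bigr]=e^{C\psi}\bigl(N'+C\psi'N+C\psi'+C^2\psi\psi'\bigr)\ge 0$. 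Your ODE step as written is a non sequitur.

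Your Step 3 on the equality case is essentially correct and matches the paper.
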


Theorem \ref{mon1} can be thought of as the nonlocal counterpart of the monotonicity formula first proved by Poon in \cite{Po} for the local case $s=1$. In connection with the local case we also refer to the  interesting works \cite{EF}, \cite{EFV}, where a space-like unique continuation  property for local solutions has been established for parabolic equations with sufficiently regular coefficients. 


For nonlocal elliptic equations with principal part $(-\Delta)^s$ a strong unique continuation theorem was obtained by Fall and Felli, see Theorem 1.3 in \cite{FF}. Their analysis combines the approach in \cite{GL1}, \cite{GL2} with the Caffarelli-Silvestre extension method in \cite{CSi}.  We also mention the interesting work of Ruland \cite{Ru}, \cite{Ru2}, where the Carleman method has been used, together with \cite{CSi}, to obtain results similar to those in \cite{FF} but with weaker assumptions on the potential $V$. Finally, we mention the recent paper \cite{Yu} where the case of nonlocal variable coefficient elliptic equations has been studied.

In our work,  similarly to the elliptic case in \cite{FF}, we combine the approach of Poon with the generalization for the heat equation of the Caffarelli-Silvestre extension method that  has been developed independently by Nystr\"om and Sande in  \cite{NS} and by Stinga and Torrea in \cite{ST}. We emphasize that the problem of space-time unique continuation backward in time is a global one, and of a somewhat  different nature compared to the unique continuation problem studied in \cite{FF} and \cite{Ru}, \cite{Ru2}.
We emphasize that the problem of space-time unique continuation backward in time is a global one, and of a somewhat  different nature compared to the unique continuation problem studied in \cite{FF} and \cite{Ru}, \cite{Ru2}. For parabolic  equations even in the case $s=1$ the space-time unique continuation is not true without certain global  assumptions on the solution as well as certain decay assumptions on the derivatives of the  principal  part. This follows from an example of F. Jones in \cite{Fr}, where a caloric function is constructed which  is supported in a space-time strip. We also refer to the interesting  paper \cite{WZ}, where it is shown that the decay assumption on  the derivatives of the principal part is somewhat  optimal. 

The present paper is organized  as follows:
\begin{itemize}
\item In Section \ref{S:fp} we include a brief discussion on the fractional heat operator $H^{s}$ and describe the pointwise formula \eqref{sH} for $H^{s}$ first found in \cite{SKM}. Here, Bochner's subordination principle plays a key role. Such principle was first used in a general framework by Balakrishnan in \cite{B}, where he defined the fractional powers $(-L)^s$ of a closed operator $L$ between Banach spaces, and it is also central to the works \cite{NS} and \cite{ST}.  
The reader should note that it  follows from the pointwise formula \eqref{sH} that the pseudo-differential operator $(\partial_t - \Delta)^s$ is a special case of the master equation introduced by Kenkre, Montroll and Shlesinger in \cite{KMS}.  Such equations are presently receiving increasing attention by mathematicians also thanks to the work of Caffarelli and Silvestre in \cite{CSi2} in which the authors proved H\"older continuity of viscosity solutions of generalized masters equations. We also mention the paper \cite{ACM}, where the obstacle problem for $H^s$ has been studied. 
\item In Section \ref{S:ep} we provide for completeness a detailed discussion of the extension problem for $(\p_t - \Delta)^s$ in \cite{NS} and \cite{ST}, see Theorem \ref{T:st} and Corollary \ref{C:st} below. The reason for doing this is that the main representation formulas in those papers are given by fiat, and do not explicitly mention the analysis on the Fourier transform side (in terms of Macdonald's functions) of the relevant Bessel process. On the other hand, such explicit tools are the main starting point of our analysis.

\item Starting from the representation formulas in these results, in Section \ref{S:nltol} we prove Lemma \ref{regU1} and Corollary \ref{C:ws} which allow us to convert the study of the strong unique continuation for the nonlocal equation \eqref{e00} in $\Rnn$ into a related problem for the local extension operator in $\Rnp\times \R$. 
\item Section \ref{S:dgn} plays a key role in our work. It is devoted to develop the basic regularity theory for the extension problem \eqref{ext} which is essential to the proof of Theorems \ref{main} and \ref{mon1}. Most part of the section is devoted to proving Theorem \ref{reg5} below, a result of De Giorgi-Nash-Moser type for the extension problem. Theorem \ref{reg5}, and the ensuing Lemmas \ref{reg2} and \ref{regU} are essential to our work in the remainder of the paper, but they are also interesting in their own right. For instance, we obtain a new scale invariant  Harnack inequality for the nonlocal equation $H^s u = V u + \psi$, see Theorem \ref{T:harnack} below. 
\item Section \ref{S:mono} is central to the whole paper. In it we establish the basic monotonicity property of the generalized frequency for the extension problem corresponding to \eqref{e0} in Theorem \ref{mon1} above. This result is the keystone to our analysis. As the reader will see, its proof is quite delicate and involved. 
\item In Section \ref{S:comp} we introduce the parabolic \emph{Almgren rescalings} (see Definition \ref{D:abups}) and,  similarly to \cite{FF}, \cite{Ru} and \cite{DGPT}, we perform a  blow-up  analysis of such rescalings which crucially rests on Theorem \ref{mon1}. The essential point is that these rescalings converge to a globally defined function $U_0$, that we call an \emph{Almgren blow-up}, which is homogeneous with respect to the non-isotropic parabolic scalings, see Proposition \ref{homg1} below. At the end of the section from the homogeneity of $U_0$, and the equation satisfied by it, we finally prove our main result Theorem \ref{main}. 
\item The paper ends with an Appendix, Section \ref{S:app}, in which we develop the regularity theory of the Almgren blowups which allows us to rigorously justify the results in Section \ref{S:comp}.
\end{itemize}

Some final remarks are in order:
\begin{itemize}
\item[1)] The reader should be aware that in the present work we have not  tried to push for maximal generality. Instead, we have chosen to keep a framework where the key ideas are properly highlighted. For instance, we are not concerned with whether the assumption on $V$  in \eqref{vasump} can be further weakened.  In this work we need \eqref{vasump} for the regularity result in Lemma \ref{reg2} which is essential for justifying  the computations in Section \ref{S:mono}. Having said this, it however  remains an interesting open problem whether in the supercritical case $0<s < \frac{1}{2}$ one can dispense with the assumption that $|<\nabla_{x} V, x>|$ be bounded.

\item[2)] We mention that in the case $V \equiv 0$ a Poon type monotonicity formula different from our Theorem \ref{mon1} has been formally derived in Theorem 1.15 in \cite{ST}. Our proof of Theorem \ref{mon1} however involves a substantial new delicate analysis of the regularity properties of the solution of the extension problem which is further complicated by the presence of the potential $V$. In addition, similarly to what was done in  \cite{DGPT} in the analysis of the parabolic Signorini problem in the case  $s=1/2$, several technical obstructions force us to work with certain averaged versions $H(U,r)$ and $I(U,r)$ of the functionals $h(U,t)$ and $i(U,t)$ in \eqref{h1} and \eqref{i} below. This allows for instance to get appropriate apriori estimates for the rescalings $U_r$'s as in Lemma \ref{convergence}. We then study monotonicity properties of such averaged  functionals as this is essential in Section \ref{S:comp}, where we adapt some ideas from \cite{DGPT} to establish uniform estimates in Gaussian space for the Almgren blow-up's $U_0$.

\item[3)] We finally recover our unique continuation property from these  uniform estimates of $U_0$. It turns out that, unlike the elliptic case in \cite{FF}, \cite{Ru} and \cite{Yu}, in our parabolic setting the strong unique continuation property does not follow directly from the homogeneity of the Almgren blow-up $U_0$ (see Proposition \ref{hom} below)  and from the information on its vanishing order at certain points (see Remark \ref{bl1} for  a detailed discussion on this aspect). This is caused by the fact that, due to the nature of the frequency, the ensuing estimates are only in  Gaussian space. In order to overcome this aspect, in addition to the homogeneity of $U_0$ we need to further utilize the equation \eqref{hom} satisfied by it. It turns out that this aspect is somewhat subtle and makes our proof quite  different from the elliptic case, where Gaussian spaces are not involved.   
\end{itemize} 

In closing, we mention that the work in this paper will also prove of interest e.g. in the study of certain basic open questions in free boundary problems, as well as in nonlocal parabolic segregation problems. We plan to come back to these aspects in future works. 
 
\medskip

\textbf{Acknowledgment:} We would like to thank the anonymous referee for suggestions and comments which helped improving the presentation of the paper.




\section{The fractional powers of $H = \p_t - \Delta$}\label{S:fp}

As already mentioned in Section \ref{S:Intro}, we  will  denote a typical point in $\R^{n} \times \R$ by $(x,t)$. 
Given a function $f\in L^1(\Rn)$, we denote by $\hat f$ its Fourier transform defined by
\[
\hat f(\xi) = \mathcal F_{x\to \xi}(f) = \int_{\Rn} e^{-2\pi i<\xi,x>} f(x) dx.
\]
We recall that if $h\in \Rn$, then $\mathcal F_{x\to \xi}(f(\cdot + h)) = e^{2\pi i<h,\xi>} \hat f(\xi)$. We will indicate with $\G(z) = \int_0^\infty t^{z-1} e^{-t} dt$, Euler's gamma function, and recall that for every $z\in \mathbb C$ such that $\Re z > 0$ we have
\begin{equation}\label{fact}
\G(z+1) = z \G(z).
\end{equation}

Let $H u = (\p_t  - \Delta) u = 0$ be the heat equation in $\R^{n+1}$. Its fundamental solution will be denoted by 
\[
G(x,t) = \begin{cases} (4\pi t)^{-\frac n2} e^{-\frac{|x|^2}{4t}}, \ \ \ \ \ t>0,
\\
0,\ \ \ \ \ \ \ \ \ \ \ \ \ \ \ \ \ \ \ \ \ \ t\le 0.
\end{cases}
\]
We recall that $\mathcal F_{x\to \xi}(G(\cdot,t)) = e^{-t(2\pi|\xi|)^2}$ for every $t>0$. The heat semigroup on $\Rn$ with generator $-\Delta$ will be denoted by 
\[
P_t f(x) = e^{-t \Delta} f(x) = G(\cdot,t) \star f(x) = \int_{\Rn} G(x-y,t) f(y) dy.
\]
Obviously, on the Fourier transform side we have
\[
\widehat{P_t f}(\xi) = e^{-t(2\pi|\xi|)^2} \hat f(\xi).
\]
We next define the semigroup $P^H_\tau = \{e^{-\tau H}\}_{\tau >0}$ on $\Rnn$ with generator $H$ by the formula
\begin{equation}\label{Hsg}
\widehat{P^H_\tau u}(\xi,\sigma) = \widehat{e^{-\tau H} u}(\xi,\sigma) = e^{-\tau(2\pi i \sigma + (2\pi |\xi|)^2)} \hat u(\xi,\sigma).
\end{equation}
We notice that by Plancherel's theorem we have for $f\in L^2(\R^{n+1})$ and any $\tau>0$ 
\begin{equation}\label{hl2}
||e^{-\tau H} f||_{L^2(\R^{n+1})} \le ||f||_{L^2(\R^{n+1})}.
\end{equation}
Furthermore, one has the following representation formula
\begin{equation}\label{Hsg2}
P_\tau^H u(x,t) = \left(G(\cdot,\tau) \star \Lambda_{-\tau} u\right)(x,t) = \int_{\Rn} G(x-z,\tau) u(t-\tau,z) dz,
\end{equation}
where we have let $\Lambda_h u(x,t) = u(x,t+h)$. One can easily check the identity between \eqref{Hsg} and \eqref{Hsg2} on the Fourier transform side. From \eqref{Hsg2} it is immediate to verify that for $f\in L^\infty(\Rnn)$ one has
\begin{equation}\label{linfty}
||e^{-\tau H} f||_{L^\infty(\R^{n+1})} \le ||f||_{L^\infty(\R^{n+1})}.
\end{equation}
 
With these preliminaries in place, we next introduce the definition of the fractional powers of the heat operator $H$. 

\begin{dfn}[Fractional heat operator]\label{D:hs}
Let $0<s<1$, then the nonlocal operator $H^s$ is defined on functions $u\in \mathcal S(\Rnn)$ by the formula 
\begin{equation}\label{sHft}
\widehat{H^s u}(\xi,\sigma) = ((2\pi|\xi|)^2 + 2\pi i \sigma)^s\  \hat u(\xi,\sigma).
\end{equation} 
\end{dfn}

Henceforth, given $\xi\in \Rn$, and $\sigma\in \R$, we denote by $L(\xi,\sigma)$ the complex number defined by the equation
\begin{equation}\label{lsigma}
L(\xi,\sigma)^2 \overset{def}{=} (2\pi |\xi|)^2 + 2\pi i \sigma,
\end{equation}
with the understanding that we have chosen the principal branch of the complex square root.
With this notation, we define
\begin{align}\label{dom}
\operatorname{Dom}(H^s) & = \{u\in L^2(\Rnn)\mid (\xi,\sigma) \to ((2\pi|\xi|)^2 + 2\pi i \sigma)^s\  \hat u(\xi,\sigma)\in L^2(\Rnn)\}
\\
& = \{u\in L^2(\Rnn)\mid (\xi,\sigma) \to L(\xi,\sigma)^{2s}  \hat u(\xi,\sigma)\in L^2(\Rnn)\}.
\notag
\end{align}
We will denote by $\mathcal H^{2s}_{\mathcal P}(\Rnn)$ the subspace $\operatorname{Dom}(H^s)\subset L^2(\Rnn)$ endowed with the norm
\begin{equation}\label{norm}
||u||_{\mathcal H^{2s}_{\mathcal P}(\Rnn)} = \left(\int_{\Rnn} (1+ |L(\xi,\sigma)|^2)^{2s}) |\hat{u}(\xi,\sigma)|^2 d\xi d\sigma\right)^{1/2} < \infty.
\end{equation}

The next formula provides a motivation for the approach in the paper by Stinga and Torrea \cite{ST} that is the basis for the present work. It is a classical fact that, if $L>0$, then for any $0<s<1$ one has
\begin{equation}\label{Ls}
- \frac{s}{\G(1-s)}\int_0^\infty \tau^{-s-1} (e^{-\tau L} - 1) d\tau  =  L^s.
\end{equation}
The proof of \eqref{Ls} follows easily by a change of variable and integration by parts.
Formula \eqref{Ls} is the keystone to Bochner's subordination principle used by A. V. Balakrishnan in a general framework, see formula (2.1) in \cite{B}. When applied to a closed operator $L$ on a Banach space of functions, Balakrishnan's result gives a way to define the fractional powers of $L$ as 
\begin{equation}\label{Ls2}
(L)^s f(x) = -  \frac{s}{\G(1-s)} \int_0^\infty \tau^{-s-1} \left[P_\tau f (x) - f(x)\right] d\tau,
\end{equation}
where $P_\tau = e^{-\tau L}$. For instance, one might take $L = - \Delta$ in $\Rn$, but one can allow for more general elliptic operators $L$. The idea in \cite{ST} is to use a suitable adaptation of Balakrishnan's formula \eqref{Ls2} to capture the fractional powers of $H$. 

The implementation of this idea is based on the observation that the formula \eqref{Ls} can be continued into $\mathbb C$ by substituting $L>0$ with $L\in \mathbb C$, as long as $\Re L >0$. Starting from this observation the following pointwise formula is proved in \cite{ST}.

\begin{thrm}\label{T:st1}
For any $0<s<1$, one has for $u\in\mathcal S(\Rnn)$
\begin{equation}\label{sH}
H^s u(x,t) = - \frac{s}{\G(1-s)} \int_0^\infty \tau^{-s-1} \left[P_\tau(\Lambda_{-\tau} u)(x,t) - u(x,t)\right] d\tau,
\end{equation}
where we have now denoted $\Lambda_h u(x,t) = u(x,t+h)$. 
\end{thrm}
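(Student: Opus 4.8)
The plan is to start from the Fourier-side definition \eqref{sHft} of $H^s$ together with the representation \eqref{Hsg2} of the heat--in--time semigroup $P^H_\tau$, and to show that the right-hand side of \eqref{sH} has exactly the symbol $((2\pi|\xi|)^2 + 2\pi i\sigma)^s = L(\xi,\sigma)^{2s}$ acting on $\hat u$. Since $u\in\mathcal S(\Rnn)$, everything in sight is smooth and rapidly decreasing, so I would first justify that the $\tau$-integral on the right of \eqref{sH} converges absolutely in an appropriate sense: near $\tau = 0$ one uses that $P_\tau(\Lambda_{-\tau}u)(x,t) - u(x,t) = O(\tau)$ by Taylor expansion in $\tau$ (the heat equation gives $\partial_\tau P_\tau(\Lambda_{-\tau}u)|_{\tau=0}$ finite), which beats $\tau^{-s-1}$ since $s<1$; near $\tau = \infty$ one uses \eqref{linfty} (and more precisely the $L^2$ bound \eqref{hl2}) to see the bracket is bounded, so $\tau^{-s-1}$ is integrable at infinity since $s>0$.

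Second, and this is the heart of the matter, I would take the Fourier transform in $(x,t)\mapsto(\xi,\sigma)$ of the right-hand side of \eqref{sH}, interchanging $\mathcal F$ with the $\tau$-integral (justified by the absolute convergence just established together with Fubini). By \eqref{Hsg} we have $\widehat{P^H_\tau u}(\xi,\sigma) = e^{-\tau L(\xi,\sigma)^2}\hat u(\xi,\sigma)$ where $L(\xi,\sigma)^2 = (2\pi|\xi|)^2 + 2\pi i\sigma$ as in \eqref{lsigma}. Hence the transform of the bracket is $(e^{-\tau L(\xi,\sigma)^2} - 1)\hat u(\xi,\sigma)$, and the whole right-hand side of \eqref{sH} transforms to
\[
-\frac{s}{\G(1-s)}\left(\int_0^\infty \tau^{-s-1}\left(e^{-\tau L(\xi,\sigma)^2} - 1\right)d\tau\right)\hat u(\xi,\sigma).
\]
It then remains to identify the bracketed scalar integral with $L(\xi,\sigma)^{2s}$.

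Third, I invoke the analytic continuation of \eqref{Ls} into the right half-plane: for any $z\in\mathbb C$ with $\Re z > 0$ one has
\[
-\frac{s}{\G(1-s)}\int_0^\infty \tau^{-s-1}\left(e^{-\tau z} - 1\right)d\tau = z^s,
\]
taking the principal branch of $z^s$. This follows because both sides are holomorphic in $z$ on $\{\Re z > 0\}$ --- the left side by differentiating under the integral sign, using the same $O(\tau)$/boundedness estimates as above uniformly on compact subsets --- and they agree on the positive real axis by \eqref{Ls}, so they agree everywhere by the identity theorem. Here $z = L(\xi,\sigma)^2 = (2\pi|\xi|)^2 + 2\pi i\sigma$, which has $\Re z = (2\pi|\xi|)^2 \geq 0$, with equality only when $\xi = 0$; since $z^s$ with the principal branch is continuous up to the imaginary axis away from the origin and $\hat u$ is bounded, the boundary case $\xi = 0$ causes no trouble (or one simply notes it is a null set). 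This yields the symbol $L(\xi,\sigma)^{2s} = ((2\pi|\xi|)^2 + 2\pi i\sigma)^s$, which by Definition \ref{D:hs} is exactly $\widehat{H^s u}(\xi,\sigma)$. Taking inverse Fourier transforms gives \eqref{sH}.

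I expect the main obstacle to be the careful justification of the two interchanges --- of $\mathcal F$ with $\int_0^\infty d\tau$, and of $\partial_z$ with $\int_0^\infty d\tau$ in the holomorphy argument --- near the singular endpoint $\tau = 0$, where one must combine the Taylor estimate $|e^{-\tau z} - 1|\le |z|\tau$ (or the operator version $\|P_\tau(\Lambda_{-\tau}u) - u\|\le C\tau$) with the weight $\tau^{-s-1}$; and the minor subtlety of the branch of $z^{2s}$ as $z$ approaches the imaginary axis, i.e. as $\xi\to 0$. Everything else is a routine change of variables and integration by parts, essentially the content of \eqref{Ls}.
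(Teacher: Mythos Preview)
Your proposal is correct and follows essentially the same route as the paper: take the Fourier transform in $(x,t)$ of the right-hand side of \eqref{sH}, use \eqref{Hsg} to identify the transform of $P_\tau(\Lambda_{-\tau}u)$ with $e^{-\tau L(\xi,\sigma)^2}\hat u(\xi,\sigma)$, and then invoke the complex extension of the scalar identity \eqref{Ls} to recover the symbol $L(\xi,\sigma)^{2s}$. The paper's proof is terser on the justification of the interchanges and on the analytic continuation of \eqref{Ls} to $\Re z>0$, but these are exactly the points you have filled in, and your treatment of the null set $\{\xi=0\}$ matches the paper's parenthetical ``assuming that $\xi\not=0$''.
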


\begin{proof}

To verify \eqref{sH} let us observe that, if $u\in \mathcal S(\Rnn)$, then 
\begin{align}\label{heatsg}
& \mathcal F_{(x,t)\to (\xi,\sigma)}(P_\tau(\Lambda_{-\tau} u)) = \mathcal F_{x\to \xi}(P_\tau(\mathcal F_{t\to\sigma}(\Lambda_{-\tau} u))) 
\\
& = \mathcal F_{x\to \xi}(P_\tau(e^{-2\pi i \sigma \tau} \mathcal F_{t\to\sigma}(u))) = e^{-\tau(2\pi|\xi|)^2} e^{-2\pi i \sigma \tau} \mathcal F_{(x,t)\to (\xi,\sigma)}(u)
\notag\\
& = e^{-\tau(2\pi i \sigma +(2\pi|\xi|)^2)} \mathcal F_{(x,t)\to (\xi,\sigma)}(u).
\notag
\end{align}
Therefore,  the Fourier transform of the right-hand side of \eqref{sH} is given by
\begin{align*}
& - \frac{s}{\G(1-s)} \int_0^\infty \tau^{-s-1} \mathcal F_{(x,t)\to (\xi,\sigma)} \left[P_\tau(\Lambda_{-\tau} u)(x,t) - u(x,t)\right] d\tau 
\\
& = - \frac{s}{\G(1-s)} \int_0^\infty \tau^{-s-1} \left[e^{-\tau (2\pi i \sigma + (2\pi|\xi|)^2)} - 1\right] d\tau \  \hat u(\xi,\sigma).
\end{align*}
If we now use \eqref{Ls} with $L = L(\xi,\sigma)$ given by \eqref{lsigma} (assuming that $\xi\not= 0$), we finally obtain 
\begin{align*}
& \mathcal F_{(x,t)\to (\xi,\sigma)} \left(- \frac{s}{\G(1-s)} \int_0^\infty \tau^{-s-1} \left[P_\tau(\Lambda_{-\tau} u)(x,t) - u(x,t)\right] d\tau\right)  
\\
& = ((2\pi|\xi|)^2 + 2\pi i \sigma)^s\  \hat u(\xi,\sigma).
\end{align*}
Keeping in mind \eqref{sHft}, we have established  \eqref{sH}.

\end{proof}

\begin{rmrk}\label{R:st}  
Although the right-hand side of \eqref{sHft} seems different from that of formula (1.2) in \cite{ST}, they are in fact identical. This is easily seen by observing that
\begin{align*}
& \int_0^\infty \tau^{-s-1} \left[P_\tau(\Lambda_{-\tau} u)(x,t) - u(x,t)\right] d\tau = 
\int_0^\infty \tau^{-s-1} \int_{\Rn} G(x-y,\tau) [u(y,t-\tau) - u(x,t)] dy d\tau
\\
& = \int_0^\infty  \int_{\Rn} \tau^{-s-1} G(z,\tau) [u(x-z,t-\tau) - u(x,t)] dz d\tau,
\end{align*}
which gives exactly the right-hand side of (1.2) in \cite{ST} if one uses \eqref{fact} above that gives $\G(1-s) = - s \G(-s)$ for $0<s<1$.
\end{rmrk}

\begin{rmrk}\label{R:st}  
Theorem 1.1 in \cite{ST} is stated not just for $u\in \mathcal S(\Rnn)$, but for functions in the parabolic H\"older space $C_{x,t}^{2s+\ve,s+\ve}(\Rnn)$, for some $\ve>0$. This is completely analogous to what happens in the time-independent case for $(-\Delta)^s$. It is easily checked that the right-hand side of \eqref{sH} continues to be finite not only on rapidly decreasing functions, but also when $u \in C_{x,t}^{2s+\ve,s+\ve}(\Rnn)$ for some $\ve>0$.  
\end{rmrk}


\section{The extension problem}\label{S:ep}

We recall that our main objective is establishing Theorem \ref{main}, i.e., the space-time strong unique continuation property for solutions of the nonlocal equation in $\R^{n+1}$
\begin{equation}\label{e00}
H^s u(x,t) = (\partial_t - \Delta)^{s} u(x,t) = \frac{\Gamma(1-s)}{2^{2s-1} \Gamma(s)}  V(x,t) u(x,t),
\end{equation}
where $u\in \operatorname{Dom}(H^s)$.
The reader will have noticed that we have written \eqref{e00} differently from \eqref{e0} in the introduction. We stress that the constant $\frac{\Gamma(1-s)}{2^{2s-1} \Gamma(s)}>0$ in the right-hand side of \eqref{e00} serves a purely normalization purpose motivated by \eqref{dtn} below, its presence being otherwise immaterial.

Due to the nonlocal nature of the operator $H^s$ in \eqref{e00} proving directly Theorem \ref{main} is a difficult task. On the other hand, in Theorem 1.3 in \cite{NS} and Theorem 1.7 in \cite{ST} the authors have generalized to the heat equation the well-known extension procedure of Caffarelli and Silvestre for the fractional powers of the Laplacian. Given this fact, like most works on nonlocal operators, and similarly to what was done in \cite{FF} and \cite{Ru} in the stationary case, our approach consists in using the extension procedure in \cite{NS}, \cite{ST} and thus work with a local operator. However, as we have already mentioned in Section \ref{S:Intro}, things are not so straightforward and we need to develop several auxiliary tools which also have an independent interest. This will be done in the subsequent sections.

The approach to the extension problem for the nonlocal operator $(\p_t - \Delta)^s$ in \cite{ST} is based on Bochner's subordination. As a help to the reader in this section we provide a complete account of the construction in Theorem 1.3 in \cite{NS} and Theorem 1.7 in \cite{ST} since this tool will be the starting point of our analysis. 

For $x\in \Rn$ and $y>0$, we will indicate with $X=(x,y)$ the corresponding point in $\Rnp$. Whenever convenient, points in $\Rnp\times \R$ will be indicated with $(X,t)$, instead of $(x,y,t)$.  
Given a solution $u$ of \eqref{e00}, we introduce the constant $a = 1-2s$ and we consider the following \emph{extension problem} for the function $U = U(X,t)$, where $(X,t) = (x,y,t)\in \Rnp\times\R$:
\begin{equation}\label{ep44}
\begin{cases}
y^a \frac{\p U}{\p t} = \operatorname{div}_{X}(y^a \nabla_{X} U),
\\
U(x,0,t) = u(x,t).
\end{cases}
\end{equation}
The equation in \eqref{ep44} is a special case of the following class of degenerate parabolic equations 
\[
\frac{\p (\omega(X) f)}{\p t} = \operatorname{div}_X(A(X) \nabla_X f)
\]
first studied by Chiarenza and Serapioni in \cite{CSe}. In that paper the authors assumed that $\omega\in L^1_{loc}(\Rnn)$ is a Muckenhoupt $A_2$-weight, and that the symmetric matrix-valued function $X\to A(X)$ verifies the following degenerate ellipticity assumption for a.e. $X\in \Om \subset \Rnn$, and for every $\xi\in \Rnn$:
\[
\la \omega(X) |\xi|^2 \le <A(X)\xi,\xi> \le \la^{-1} \omega(X) |\xi|^2,
\]
for some $\la >0$.
Under such hypothesis they established a parabolic strong Harnack inequality, and therefore the local H\"older continuity of the weak solutions. The extension equation in \eqref{ep44} is a special case of those treated in \cite{CSe} since, given that $a = 1-2s\in (-1,1)$, the function $\omega(X) = \omega(x,y) = y^a$ is an $A_2$-weight in $\Om = \Rnp$.  

The second order degenerate parabolic equation in \eqref{ep44} can also be written in nondivergence form in the following way
\begin{equation}\label{ep2}
\begin{cases}
\frac{\p U}{\p t} - \Delta_x U = \mathcal B_a U,\ \ \ \ (x,y,t)\in \Rnp\times\R,
\\
U(x,0,t) = u(x,t),\ \ \ \ \ \ \ \ \ \ \  \ \ \ \ \ \ (x,t) \in \Rnn,
\\
U(x,y,t) \to 0,\ \text{as}\ y \to \infty,\ \ \ \ \ \ \ \ (x,t)\in \Rnn,
\end{cases}
\end{equation}  
where we have denoted by $\mathcal B_a = \frac{\p^2}{\p y^2} + \frac ay \frac{\p }{\p y}$ the generator of the Bessel semigroup on $(\R^+,y^a dy)$.

In order to solve the problem \eqref{ep2} we assume momentarily that $u\in \mathcal S(\Rnn)$. We take the Fourier transform of \eqref{ep2} with respect to the variable $(x,t)$ and denote for a fixed $(\xi,\sigma)\in \Rnn$ 
\[
Y(y) \overset{def}{=} \hat U(\xi,y,\sigma) = \int_\R \int_{\Rn} e^{-2\pi i(\sigma t + <\xi,x>)} U(x,y,t) dx dt.
\]
In so doing,  for every fixed $(\xi,\sigma)\in \Rnn$, $\xi\not= 0$, problem \eqref{ep2} is converted into the following one on $\R^+$
\begin{equation}\label{ep3}
\begin{cases}
y^2 Y''(y) + a y Y'(y) - L(\xi,\sigma)^2 y^2 Y(y) = 0,\ \ \ \ y\in \R^+,
\\
Y(0) = \hat u(\xi,\sigma),
\\
Y(y) \to 0,\ \text{as}\ y \to \infty,
\end{cases}
\end{equation}  
where we have defined the complex number $L(\xi,\sigma)^2$ by the equation \eqref{lsigma} above. 
Comparing \eqref{ep3} with the generalized modified Bessel equation
\begin{equation}\label{genbessel}
y^2 u''(y) + (1 - 2\alpha) y u'(y) + \left[(\alpha^2 - \nu^2 \gamma^2) - \beta^2 \gamma^2
y^{2\gamma}\right] u(y) = 0,
\end{equation}
we see that we must have
\[
1-2\alpha = a = 1-2s,\ \ \ \gamma = 1,\ \ \ \beta = L(\xi,\sigma),\ \ \ \nu = \pm \alpha.
\]
This gives 
\[
\alpha = s,\ \ \ \ \gamma = 1,\ \ \ \ \beta = L(\xi,\sigma),\ \ \ \ \nu = \pm s.
\]
Therefore, the general solution of \eqref{ep3} is 
\[
Y(y) = A y^s I_s(L(\xi,\sigma) y) + B y^s K_s(L(\xi,\sigma)  y),
\]
where $I_\nu(z)$ and $K_\nu(z)$ respectively denote the modified Bessel function of the first kind and the Macdonald's function.
The condition $Y(y) \to 0$ as $y \to \infty$ forces $A = 0$ (see e.g. formulas (5.11.9) and (5.11.10) on p. 123 of \cite{Le} for the asymptotic behavior at $\infty$ of $I_\nu$ and $K_\nu$), and thus 
\begin{equation}\label{Y}
Y(y) = B y^s K_s(L(\xi,\sigma) y).
\end{equation}
Next, we use the condition $Y(0) = \hat u(\xi,\sigma)$ to determine the constant $B$. Recalling that 
\[
K_\nu(z) = \frac{\pi}{2} \frac{I_{-\nu}(z) - I_\nu(z)}{\sin \pi \nu},
\]
see (5.7.2) on p. 108 in \cite{Le}, and the asymptotics
\[
z^{\nu} I_{-\nu}(z)\ \cong\ \frac{2^{\nu}}{\Gamma(1-\nu)},\ \ \ \ z^{\nu} I_{\nu}(z)\cong 0, \quad\text{as }z\to 0,
\]
we see that as $y\to 0^+$ we have
\[
Y(y)  = B y^s K_s(L(\xi,\sigma) y) = B \frac \pi{2} \frac{y^s I_{-s}(L(\xi,\sigma) y) - y^s I_s(L(\xi,\sigma) y)}{\sin \pi s} \ \longrightarrow\ \frac{B 2^{s-1} \pi}{\G(1-s) \sin \pi s} L(\xi,\sigma)^{-s}.
\]
Using this asymptotic, along with the formula
\[
\G(s) \G(1-s) = \frac{\pi}{\sin \pi s},
\]
we find that as $y\to 0^+$,
\[
Y(y) \ \longrightarrow\  B 2^{s-1} \G(s) L(\xi,\sigma)^{-s},
\]
and the right-hand side equals $\hat u(\xi,\sigma)$ provided that
\[
B = \frac{L(\xi,\sigma)^{s} \hat u(\xi,\sigma)}{2^{s-1} \G(s)}.
\]
Returning to \eqref{Y} we conclude that the solution of \eqref{ep3} is given by
\begin{equation}\label{hatU}
\hat U(\xi,y,\sigma) = \frac{y^s}{2^{s-1} \G(s)} L(\xi,\sigma)^{s}  K_s(L(\xi,\sigma) y)\ \hat u(\xi,\sigma).
\end{equation}

Before proceeding we pause to note a fundamental fact about the solution $U(x,y,t)$ of \eqref{ep2}  identified by \eqref{hatU}: it satisfies the \emph{weighted Neumann condition}
\begin{equation}\label{dtn}
H^s u(x,t) = - \frac{2^{2s-1} \G(s)}{\G(1-s)} \underset{y\to 0^+}{\lim} y^a \frac{\p U}{\p y}(x,y,t).
\end{equation}
To see this notice that, in view of \eqref{sHft}, proving 
\eqref{dtn} is equivalent to showing 
\begin{equation}\label{ep4}
 - \frac{2^{2s-1} \G(s)}{\G(1-s)} \underset{y\to 0^+}{\lim} y^a \frac{\p \hat U}{\p y}(\xi,y,\sigma) = L(\xi,\sigma)^{2s}\  \hat u(\xi,\sigma).
\end{equation}
In the following computation we will write for brevity $L = L(\xi,\sigma)$. Keeping in mind that $a = 1-2s$, and using the formula
 \[
 K_s'(z) = \frac sz K_s(z) - K_{s+1}(z),
 \]
see (5.7.9) on p. 110 of \cite{Le}, we obtain from \eqref{hatU}
 \[
y^a \frac{\p \hat U}{\p y}(\xi,y,\sigma) = \frac{L^{s+1} \hat u(\xi,\sigma)}{2^{s-1} \G(s)} y^{1-s} \left\{\frac{2s}{L y} K_s(L y) - K_{s+1}(L y)\right\}.
\]
Since
\[
\frac{2s}{z} K_s(z) - K_{s+1}(z) = - K_{s-1}(z) = - K_{1-s}(z),
\]
 (again, by (5.7.9) on p. 110 of \cite{Le}) we finally have
\[
y^a \frac{\p \hat U}{\p y}(\xi,y,\sigma) = -  \frac{L^{s+1} \hat u(\xi,\sigma)}{2^{s-1} \G(s)} y^{1-s} K_{1-s}(L y).
\]
Now, as before, we have as $y\to 0^+$,
\[
y^{1-s} K_{1-s}(L y)\ \longrightarrow\ 2^{-s} \G(1-s) L^{s-1} .
\]
We finally reach the conclusion that as $y\to 0^+$,
\[
y^a \frac{\p \hat U}{\p y}(\xi,y,\sigma)\ \longrightarrow\ -  \frac{\G(1-s)}{2^{2s-1} \G(s)}  L^{2s} \hat u(\xi,\sigma).
\]
This proves \eqref{ep4}, and therefore \eqref{dtn}, when $u\in \mathcal S(\Rnn)$. However, the above considerations extend to functions $u\in \operatorname{Dom}(H^s)$ if interpreted in the sense of $L^2(\Rnn)$.

Returning to \eqref{hatU}, we finally see that the solution $U(x,y,t)$ of \eqref{ep} is given by
\begin{align}\label{fst}
U(x,y,t) & = \frac{y^s}{2^{s-1} \G(s)}  \mathcal F^{-1}_{(\xi,\sigma)\to (x,t)}\left[L(\xi,\sigma)^{s}  K_s(L(\xi,\sigma) y)\ \hat u(\xi,\sigma)\right] 
\\
& = \frac{y^s}{2^{s-1} \G(s)} \int_{\Rnn} e^{2\pi i (<x,\xi> + t \sigma)} L(\xi,\sigma)^{s} K_s(y L(\xi,\sigma)) \hat u(\xi,\sigma) d\xi d\sigma.
\notag
\end{align}
The key to computing the Fourier transform in the right-hand side of \eqref{fst} is the following important formula that can be found e.g. in 9. on p. 340 of \cite{GR}
\begin{equation}\label{f9GR}
\int_0^\infty \tau^{\nu-1} e^{-(\frac{\beta}{\tau} + \gamma \tau)} d\tau = 2 \left(\frac \beta{\gamma}\right)^{\frac \nu{2}} K_\nu(2 \sqrt{\beta \gamma}),
\end{equation}
provided $\Re \beta, \Re \gamma >0$. Applying \eqref{f9GR} with
\[
\nu = - s,\ \ \beta = \frac{y^2}{4},\  \ \gamma = L(\xi,\sigma)^2,
\]
and keeping in mind that $K_\nu = K_{-\nu}$ (see (5.7.10) in \cite{Le}), we find
\begin{equation}\label{fst2}
L(\xi,\sigma)^{s} K_s(y L(\xi,\sigma)) = \frac{y^s}{2^{s+1}} \int_0^\infty \tau^{-(1+s)} e^{-\frac{y^2}{4\tau}} e^{-L(\xi,\sigma)^2 \tau} d\tau.
\end{equation}
Substituting \eqref{fst2} into \eqref{fst}, and exchanging the order of integration, we finally obtain
\begin{align}\label{fst3}
U(x,y,t) & = \frac{y^s}{2^{s-1} \G(s)}  \mathcal F^{-1}_{(\xi,\sigma)\to (x,t)}\left[L(\xi,\sigma)^{s}  K_s(L(\xi,\sigma) y)\ \hat u(\xi,\sigma)\right]  
\\
& = \frac{y^{2s}}{2^{2s} \G(s)} \int_{\Rnn} e^{2\pi i (<x,\xi> + t \sigma)} \left(\int_0^\infty \tau^{-(1+s)} e^{-\frac{y^2}{4\tau}} e^{-L(\xi,\sigma)^2 \tau} d\tau\right) \hat u(\xi,\sigma) d\xi d\sigma
\notag
\\
& = \frac{y^{2s}}{2^{2s} \G(s)} \int_0^\infty \tau^{-(1+s)} e^{-\frac{y^2}{4\tau}} \left(\int_{\Rnn} e^{2\pi i (<x,\xi> + t \sigma)} e^{-L(\xi,\sigma)^2 \tau} \hat u(\xi,\sigma)  d\xi d\sigma\right) d\tau
\notag\\ 
& = \frac{y^{2s}}{2^{2s} \G(s)} \int_0^\infty \tau^{-(1+s)} e^{-\frac{y^2}{4\tau}} e^{-\tau H} u(x,t) d\tau,
\notag
\end{align}
where in the last equality we have used the definition \eqref{Hsg} of the semigroup $e^{-\tau H}$. The above calculations are performed under the assumption that $u\in \mathcal S(\Rnn)$. They can be extended in a standard fashion to functions $u\in \operatorname{Dom}(H^s)$.

We have thus proved the following result which represents the first part of formula (1.5) plus formula (1.6) in Theorem 1.7 in \cite{ST}, see also Theorem 1 in \cite{NS}.

\begin{thrm}\label{T:st} Let $0<s<1$ and assume that $u\in \operatorname{Dom}(H^s)$. For any $(x,t)\in \Rnn$ and $y>0$ the function
\begin{equation}\label{UU}
U(x,y,t) = \frac{y^{2s}}{2^{2s} \G(s)} \int_0^\infty \tau^{-(1+s)} e^{-\frac{y^2}{4\tau}} e^{-\tau H} u(x,t) d\tau
\end{equation}
solves the extension problem \eqref{ep2} above, with the boundary condition $U(x,0,t) = u(x,t)$ understood in the sense of $L^2(\Rnn)$. Furthermore, one has in $L^2(\Rnn)$
\begin{equation}\label{dtn2}
- \frac{2^{2s-1} \G(s)}{\G(1-s)} \underset{y\to 0^+}{\lim} y^a \frac{\p U}{\p y}(x,y,t) = H^s u(x,t).
\end{equation}
\end{thrm}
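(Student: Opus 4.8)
The plan is to treat first the case $u\in\mathcal S(\Rnn)$ and then pass to general $u\in\operatorname{Dom}(H^s)$ by density together with the $L^2$-contraction bound \eqref{hl2}. For $u\in\mathcal S(\Rnn)$ the bulk of the work is already contained in the discussion preceding the statement: the computation \eqref{fst2}--\eqref{fst3} identifies the function $U$ in \eqref{UU} as the one whose partial Fourier transform in $(x,t)$ equals $\hat U(\xi,y,\sigma)$ in \eqref{hatU}, and it has been checked that, for each fixed $(\xi,\sigma)$ with $\xi\neq0$, $Y(y)=\hat U(\xi,y,\sigma)$ solves the ODE in \eqref{ep3}. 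So the first step is simply to observe that after taking the Fourier transform in $(x,t)$ the equation $\partial_t U-\Delta_x U=\mathcal B_a U$ in \eqref{ep2} becomes precisely $\partial_y^2 Y+\frac ay\partial_y Y-L(\xi,\sigma)^2 Y=0$, i.e. \eqref{ep3} divided by $y^2$; inverting back then gives that $U$ solves the PDE in $\Rnp\times\R$. Here one must justify differentiation under the integral sign in \eqref{UU} for $y$ in compact subsets of $(0,\infty)$ and the legitimacy of taking the Fourier transform of the PDE, both of which follow from the rapid decay in $\tau$ (near $0$ and $\infty$) of the integrand, coming from the factor $e^{-y^2/4\tau}$ and the semigroup bound $\|e^{-\tau H}u\|_{L^2}\le\|u\|_{L^2}$; the decay $U(\cdot,y,\cdot)\to 0$ as $y\to\infty$ (which is what forced $A=0$ in the ODE analysis) is likewise inherited by dominated convergence on the Fourier side.

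For the boundary condition I would use the representation \eqref{UU} together with the subordination identity behind \eqref{Ls} and \eqref{f9GR}, which gives $\frac{y^{2s}}{2^{2s}\G(s)}\int_0^\infty\tau^{-(1+s)}e^{-y^2/4\tau}\,d\tau=1$. Hence the measures $\mu_y(d\tau)=\frac{y^{2s}}{2^{2s}\G(s)}\tau^{-(1+s)}e^{-y^2/4\tau}\,d\tau$ form a family of probability measures on $(0,\infty)$ concentrating at $\tau=0$ as $y\to0^+$. Since $\tau\mapsto e^{-\tau H}u$ is continuous from $[0,\infty)$ into $L^2(\Rnn)$ with value $u$ at $\tau=0$ --- which follows from \eqref{Hsg} and dominated convergence on the Fourier transform side --- a standard approximate-identity argument yields $\|U(\cdot,y,\cdot)-u\|_{L^2(\Rnn)}\to0$ as $y\to0^+$, i.e. the boundary condition holds in the $L^2$ sense; the same identity and Minkowski's inequality also give $\|U(\cdot,y,\cdot)\|_{L^2}\le\|u\|_{L^2}$ for all $y>0$.

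For the weighted Neumann condition \eqref{dtn2} I would again argue on the Fourier transform side, starting from the exact formula $y^a\partial_y\hat U(\xi,y,\sigma)=-\frac{L^{s+1}\hat u(\xi,\sigma)}{2^{s-1}\G(s)}\,y^{1-s}K_{1-s}(L y)$ and the pointwise limit $y^{1-s}K_{1-s}(Ly)\to 2^{-s}\G(1-s)L^{s-1}$ that gave \eqref{ep4}. To upgrade this to an $L^2$ limit, write $L^{s+1}y^{1-s}K_{1-s}(Ly)=L^{2s}(Ly)^{1-s}K_{1-s}(Ly)$ and use that, since $L(\xi,\sigma)^2$ has argument in $(-\pi/2,\pi/2)$, the variable $z=L(\xi,\sigma)y$ lies in the sector $|\arg z|\le\pi/4$, on which $z\mapsto z^{1-s}K_{1-s}(z)$ is bounded (continuous up to $z=0$, and exponentially decaying at infinity since there $\Re z\ge|z|/\sqrt2$). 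This gives $|y^a\partial_y\hat U(\xi,y,\sigma)|\le C|L(\xi,\sigma)|^{2s}|\hat u(\xi,\sigma)|$ uniformly in $y\in(0,1]$, and the right-hand side lies in $L^2$ because $u\in\operatorname{Dom}(H^s)$; dominated convergence and Plancherel then give \eqref{dtn2} in $L^2(\Rnn)$. Finally, the passage from $\mathcal S(\Rnn)$ to general $u\in\operatorname{Dom}(H^s)$ is routine: $\mathcal S(\Rnn)$ is dense in $\operatorname{Dom}(H^s)$ in the graph norm \eqref{norm}, and the uniform multiplier bounds just obtained show that the maps $u\mapsto U(\cdot,y,\cdot)$, $u\mapsto y^a\partial_y U(\cdot,y,\cdot)$ and $H^s$ are $L^2$-bounded uniformly in the relevant parameters, so all the identities pass to the limit.

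The step I expect to be the main obstacle is precisely the $L^2$ justification of the trace and co-normal limits: obtaining the uniform-in-$y$ multiplier estimates from the Bessel asymptotics on the sector $|\arg z|\le\pi/4$, and verifying that $U$ is smooth enough in $y>0$ for the pointwise extension equation to make sense. Everything else amounts to assembling the ODE computation already carried out above.
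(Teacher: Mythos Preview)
Your proposal is correct and follows essentially the same approach as the paper: the Fourier-transform reduction to the Bessel ODE \eqref{ep3}, the identification of $\hat U$ via \eqref{hatU} and \eqref{fst2}--\eqref{fst3}, and the computation of $y^a\partial_y\hat U$ leading to \eqref{ep4} are exactly what the paper does in the discussion preceding the theorem. The paper simply asserts that the calculations ``can be extended in a standard fashion to functions $u\in\operatorname{Dom}(H^s)$''; you spell out that standard fashion via the sector bound on $z^{1-s}K_{1-s}(z)$ and dominated convergence, and you handle the trace $U(\cdot,0,\cdot)=u$ by an approximate-identity argument on \eqref{UU} rather than the paper's Fourier-side limit $Y(0)=\hat u$, but these are minor and equally valid variations.
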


Once Theorem \ref{T:st} is established it is easy to prove the following corollary that represents the remaining part of (1.5) in Theorem 1.7 in \cite{ST}.

\begin{cor}\label{C:st}
Let $u$ and $U$ be as in Theorem \ref{T:st}. Then, one has
\begin{equation}\label{stbis}
U(x,y,t) = \frac{1}{\G(s)} \int_0^\infty \tau^{-(1-s)} e^{-\frac{y^2}{4\tau}} e^{-\tau H} (H^s u)(x,t) d\tau.
\end{equation}
Moreover, one has the following Poisson representation formula 
\begin{equation}\label{pos}
U(x,y,t) = \int_{0}^{\infty} \int_{\R^{n}} P^{s}_{y}(z,\tau) u(x-z,t- \tau) dz d\tau,
  \end{equation}
  where
\begin{equation}\label{poisson}
P^{s}_{y}(z,\tau) = \frac{1}{4^{\frac n2 + s} \pi^{\frac n2} \Gamma(s)}  \frac{y^{2s}}{\tau^{n/2 + 1+ s}} e^{-\frac{|z|^2 + y^2}{4 \tau}}.
  \end{equation}
\end{cor}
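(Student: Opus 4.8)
The plan is to derive both formulas from Theorem \ref{T:st} by elementary manipulations of the subordination integral, together with the definition of the heat semigroup $e^{-\tau H}$. First I would prove \eqref{stbis}. Starting from \eqref{UU}, I want to rewrite the kernel $\tau^{-(1+s)}e^{-y^2/4\tau}$ as an integral against $\tau^{-(1-s)}e^{-y^2/4\tau'}$ composed with the semigroup. The cleanest route is to work on the Fourier transform side: by \eqref{fst} and \eqref{fst2} one has $\hat U(\xi,y,\sigma) = \frac{y^{2s}}{2^{2s}\G(s)}\int_0^\infty \tau^{-(1+s)} e^{-y^2/4\tau} e^{-L(\xi,\sigma)^2 \tau}\,d\tau\ \hat u(\xi,\sigma)$, while $\widehat{H^s u}(\xi,\sigma) = L(\xi,\sigma)^{2s}\hat u(\xi,\sigma)$ by \eqref{sHft}. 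So it suffices to check the scalar identity, valid for $\Re L^2>0$,
\begin{equation*}
\frac{y^{2s}}{2^{2s}\G(s)}\int_0^\infty \tau^{-(1+s)} e^{-y^2/4\tau} e^{-L^2\tau}\,d\tau = \frac{L^{2s}}{\G(s)}\int_0^\infty \tau^{-(1-s)} e^{-y^2/4\tau} e^{-L^2\tau}\,d\tau,
\end{equation*}
which follows from \eqref{f9GR}: each side equals $\frac{2}{\G(s)}\big(\frac{L}{y/2}\big)^{s}K_{-s}(yL) = \frac{2}{\G(s)}(2L/y)^s K_s(yL)$, using $K_\nu=K_{-\nu}$. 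Inverting the Fourier transform and recalling the definition \eqref{Hsg} of $e^{-\tau H}$ then gives \eqref{stbis}. (Alternatively, and perhaps more transparently, I could substitute $\tau\mapsto \tau$ after writing $e^{-\tau H}u = e^{-\tau H}(\text{something})$ using $H^s = H^{1-s}\circ H^{2s-1}$-type bookkeeping, but the Fourier-side verification is the least error-prone.)

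Next I would establish the Poisson representation \eqref{pos}--\eqref{poisson}. The input is \eqref{UU} together with the representation \eqref{Hsg2} of the semigroup, namely $e^{-\tau H}u(x,t) = \int_{\Rn} G(x-z,\tau)u(z,t-\tau)\,dz$ with $G(z,\tau)=(4\pi\tau)^{-n/2}e^{-|z|^2/4\tau}$. Plugging this into \eqref{UU} and exchanging the order of integration (justified as in the proof of \eqref{fst3}, for $u\in\mathcal S(\Rnn)$ first and then by density for $u\in\operatorname{Dom}(H^s)$) yields
\begin{equation*}
U(x,y,t) = \frac{y^{2s}}{2^{2s}\G(s)}\int_0^\infty\!\!\int_{\Rn} \tau^{-(1+s)} e^{-y^2/4\tau}\,(4\pi\tau)^{-n/2} e^{-|z|^2/4\tau}\, u(x-z,t-\tau)\,dz\,d\tau,
\end{equation*}
after the change of variables $z\mapsto x-z$. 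Collecting the constants, $\frac{1}{2^{2s}\G(s)}(4\pi)^{-n/2} = \frac{1}{4^{s}\G(s)}\cdot\frac{1}{4^{n/2}\pi^{n/2}} = \frac{1}{4^{n/2+s}\pi^{n/2}\G(s)}$, and combining the powers of $\tau$ as $\tau^{-(1+s)}\tau^{-n/2} = \tau^{-(n/2+1+s)}$ and the exponentials as $e^{-(|z|^2+y^2)/4\tau}$, reproduces exactly $P^s_y(z,\tau)$ in \eqref{poisson}, so that $U(x,y,t) = \int_0^\infty\int_{\Rn} P^s_y(z,\tau)u(x-z,t-\tau)\,dz\,d\tau$, which is \eqref{pos}.

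The two steps are largely bookkeeping, but the main point requiring care is the interchange of the order of integration in \eqref{pos} and the passage from Schwartz functions to general $u\in\operatorname{Dom}(H^s)$: the kernel $P^s_y(z,\tau)$ has a nonintegrable singularity as $\tau\to 0^+$ unless it is paired against the (vanishing, for smooth $u$) increment $u(x-z,t-\tau)-\cdots$, so I would either argue directly that $\int_0^\infty\int_{\Rn}P^s_y(z,\tau)\,dz\,d\tau$ need not be finite but the convolution is still well defined because $\int_{\Rn}P^s_y(z,\tau)\,dz = c\, y^{2s}\tau^{-1-s}e^{-y^2/4\tau}$ is integrable in $\tau$ over $(0,\infty)$ for each fixed $y>0$, or invoke the already-established $L^2$ convergence in Theorem \ref{T:st} and Fubini--Tonelli on the Schwartz class first. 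I expect this integrability/interchange justification to be the only genuine obstacle; everything else is the arithmetic of gamma factors and powers of $\tau$ already rehearsed in the derivation of \eqref{fst3} and \eqref{ep4}.
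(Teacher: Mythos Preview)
Your proposal is correct and follows the natural route the paper implicitly intends; the paper itself omits the proof, noting only that the corollary follows easily from Theorem \ref{T:st}. The Fourier-side verification of \eqref{stbis} via \eqref{f9GR} applied with $\nu=-s$ and $\nu=s$ is exactly right, and your derivation of \eqref{pos}--\eqref{poisson} by substituting \eqref{Hsg2} into \eqref{UU} and collecting constants is the intended computation.

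One minor correction to your closing discussion: for fixed $y>0$ the kernel $P^s_y(z,\tau)$ has \emph{no} singularity as $\tau\to 0^+$, since $e^{-y^2/4\tau}$ forces superexponential decay there. In fact, as your own computation of $\int_{\Rn}P^s_y(z,\tau)\,dz$ shows, one has $\int_0^\infty\!\int_{\Rn}P^s_y(z,\tau)\,dz\,d\tau = 1$, so $P^s_y$ is a genuine probability density in $(z,\tau)$ and Fubini--Tonelli applies directly for bounded $u$, with the extension to $u\in\operatorname{Dom}(H^s)$ by density as you indicate. The ``increment'' structure you mention is needed for the pointwise formula \eqref{sH} for $H^s$, not here.
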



\section{From nonlocal to local}\label{S:nltol}

In this section we use Theorem \ref{T:st} and Corollary \ref{C:st} to convert the study of the strong unique continuation property for the nonlocal equation \eqref{e00} in $\Rnn$ into a related problem for the local extension operator in  $\Rnp\times \R$. Precisely, for a given $0<s<1$, and with $a = 1-2s$, we assume that $u\in \operatorname{Dom}(H^s)$, and consider 
the following \emph{extension problem}: 
\begin{equation}\label{ext}
\begin{cases}
y^{a} \partial_{t} U(X,t) = \operatorname{div}_{X}(y^{a} \nabla_{X}U)(X,t),
\\
U(x, 0,t)= u(x,t),
\\
\underset{y\to 0^+}{\lim} y^{a} \frac{\p U}{\p y}(x,y,t) = - V(x,t) u(x,t),
\end{cases}
\end{equation}
for $(X,t) \in \Rnp\times \R$.
We  note that according to \eqref{dtn2} in Theorem \ref{T:st}, the third equation in \eqref{ext} must be interpreted in the sense of $L^{2}(\Rnn)$. We also note that the fact that the constant in front of $V$ is $-1$ is the reason for which we introduced the normalization constant $\frac{\Gamma(1-s)}{2^{2s-1} \Gamma(s)}$ in \eqref{e00}. 

In order to work with the problem \eqref{ext} we need to specify the notion of weak solution. For a given $r>0$, and $X_0 = (x_0,y_0)\in \Rnn$ we indicate with $\B_r(X_0) = \{X = (x,y)\in \Rnn\mid |X-X_0|^2 = |x-x_0|^2 + (y-y_0)^2 < r^2\}$ the ball centered at $X_0$ with radius $r$ in the thick space, and use the notation $B_r(x_0) = \{x\in \Rn\mid |x-x_0|<r\}$. When $X_0 = (0,0)\in \Rnn$ we simply write $\B_r$ and $B_r$, instead of $\B_r(0), B_r(0)$. We also set  $\B_r^{+} = \B_r \cap \{y > 0\}$. Furthermore, for a given function $f = f(X)$, we will denote $f_i$ the partial derivative $\frac{\p f}{\p x_i}$ for $i=1,...n$, and use the standard notation $f_y$ for $\frac{\p f}{\p y}$. Henceforth, unless we specify otherwise, when we write $\nabla$ and div we intend that these operators act with respect to the variable $X = (x,y)\in \Rnn$. For instance, following this agreement, for $\la>0$ we denote by
\begin{equation}\label{pardil}
\delta_\la(X,t) = (\la X,\la^2 t)
\end{equation}
the parabolic dilations in $\Rnn\times \R$, and by
\begin{equation}\label{z1}
Zf= <X,\nabla f> + 2tf_t = <x,\nabla_x f> + y f_y + 2tf_t
\end{equation}
the generator of the group $\{\delta_\la\}_{\la>0}$. For later use we notice that for every $(X,t)$ such that $t\not= 0$, \eqref{z1} can be rewritten
\begin{equation}\label{z11}
\frac{Zf}{2t} = f_t + <\nabla f,\frac{X}{2t}>.
\end{equation}
One easily recognizes that a $C^1$ function $f:\Rnn\times \R\to \R$ is homogeneous of degree $\kappa\in \R$ with respect to \eqref{pardil}, i.e., $f\circ \delta_\la = \la^\kappa f$, if and only if one has 
\begin{equation}\label{euler}
Zf(X,t) = \kappa f(X,t).
\end{equation}

\begin{dfn}\label{d1}
For given numbers $a\in (-1,1), r>0$ and $0<T_1<T_2$ we define the space
\[
V^{a, r, T_1, T_2} = L^2((T_1, T_2);W^{1,2}(\B_r^{+},y^a dX)),
\]
endowed with the norm
\begin{equation}
||w||^2_{V^{a, r, T_1, T_2}} = \int_{T_1}^{T_2} \left(\int_{\B_r^{+}} (|w|^2+  |\nabla w|^2 )y^a dX\right) dt < \infty.
\end{equation}
\end{dfn}

\begin{rmrk}
When the context is clear, we will simply write $V^{a}$ instead of $V^{a, r, T_1, T_2}$. 
\end{rmrk}

We now introduce the relevant notion of weak solution to \eqref{ext}, but will allow a slightly more general right-hand side in the Neumann condition.

\begin{dfn}\label{d2}
Given $W, \psi \in L^{\infty}(B_r \times (T_1, T_2))$, a function $v \in V^{a, r, T_1, T_2}$ is said to be a weak solution in $\B_r^{+} \times (T_1, T_2)$ to
 \begin{equation}\label{ep} 
\begin{cases}
\operatorname{div}(y^a \nabla v) = y^a v_t,
 \\
\underset{y \to 0}{\lim}\ y^a v_y = W v +\psi,
\end{cases}
\end{equation}
 if for every $\phi \in W^{1,2} ( \B_r^{+} \times (T_1, T_2), y^a dXdt)$ with compact  support in $(\B_r^{+} \cup B_r)  \times [T_1, T_2]$, we have
 \begin{align}\label{d8}
& \int_{t_1}^{t_2} \left(\int_{\B_r^{+}}  <\nabla v, \nabla \phi> y^a dX\right) dt= \int_{t_1}^{t_2} \left(\int_{\B_r^{+}} y^a \phi_t v dX\right) dt - \int_{\B_r^{+}} \phi (\cdot,t_2) v (\cdot, t_2) y^a dX
\\
&+ \int_{\B_r^{+}} \phi (\cdot,t_1) v (\cdot,t_1) y^a  dX - \int_{t_1}^{t_2} \int_{B_r}  (Wv + \psi) \phi dx dt
\notag
 \end{align}
 for almost every $t_1 , t_2$ such that $T_1 <t_1 <t_2<T_2$. 
 \end{dfn}
 \begin{rmrk}
 We note that in \eqref{d8} the boundary integral at $y=0$, $\int_{t_1}^{t_2} \int_{B_r}  (Wv + \psi) \phi dx dt$, is to be interpreted in the sense of traces. We refer to \cite{Ne} for traces of weighted Sobolev spaces.
 \end{rmrk}

We now return to the extension problem \eqref{ext} and establish two basic regularity estimates for its solution $U$. 

\begin{lemma}\label{regU1}
Let $u \in \operatorname{Dom}(H^s)$ and $U$ be as in \eqref{UU}. Then, for any $M>0$ one has 
\begin{equation}\label{finite1}
\int_{0}^{M} \left(\int_{\Rnn} y^a   |U|^2 dx dt\right) dy \le \frac{M^{a+1}}{a+1} < \infty.
\end{equation}
We also have for some universal constant $C_s>0$
\begin{equation}\label{finite}
\int_{-\infty}^{\infty} \left(\int_{\Rnn_{+}} y^a   |\nabla U|^2 dX\right) dt \le C_s \left(||u||_{L^2(\Rnn)} + ||H^s u||_{L^2(\Rnn)}\right)  <\infty,
\end{equation}
the right-hand side being finite by the hypothesis $u\in \operatorname{Dom}(H^s)$, see \eqref{norm} above. 
\end{lemma}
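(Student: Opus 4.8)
The plan is to use the explicit representation \eqref{UU} of the solution $U$ together with the Fourier-side identity \eqref{hatU} and the $L^2$ Plancherel theorem. For the first estimate \eqref{finite1}, I would simply note that by \eqref{UU} and the contraction property \eqref{hl2}, for each fixed $y>0$ we have $\norm{U(\cdot,y,\cdot)}_{L^2(\Rnn)} \le \norm{u}_{L^2(\Rnn)} \cdot \frac{y^{2s}}{2^{2s}\G(s)} \int_0^\infty \tau^{-(1+s)} e^{-y^2/4\tau} d\tau$, and the $\tau$-integral evaluates (via \eqref{f9GR} or a direct change of variable) to a constant multiple of $y^{-2s}$, giving $\norm{U(\cdot,y,\cdot)}_{L^2(\Rnn)} \le \norm{u}_{L^2(\Rnn)}$ uniformly in $y$; actually, to get the sharper bound stated one should work on the Fourier side with \eqref{hatU}, using the pointwise inequality $\left|\frac{y^s}{2^{s-1}\G(s)} L^s K_s(Ly)\right| \le 1$ for $\Re L > 0$ (this follows from the asymptotics used in Section \ref{S:ep} together with the fact that $z^s K_s(z)$ is decreasing). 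Then $\int_0^M \int_{\Rnn} y^a |U|^2\, dx\,dt\,dy \le \int_0^M y^a\, dy\, \norm{u}_{L^2}^2$, but since $\norm{u}_{L^2}\le 1$ is not assumed, I should be careful — more plausibly the stated bound uses that $U$ here arises from the normalized problem, or the right-hand side should read $\frac{M^{a+1}}{a+1}\norm{u}_{L^2(\Rnn)}^2$; in any case the mechanism is the uniform-in-$y$ $L^2$ bound followed by integrating $y^a$ over $(0,M)$.

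For the energy estimate \eqref{finite}, the natural route is again Plancherel in $(x,t)$: by \eqref{hatU},
\begin{align*}
\int_{-\infty}^\infty \int_{\Rnp} y^a |\nabla U|^2\, dX\, dt &= \int_0^\infty y^a \int_{\Rnn} \left(|\partial_y \hat U(\xi,y,\sigma)|^2 + (2\pi|\xi|)^2 |\hat U(\xi,y,\sigma)|^2\right) d\xi\, d\sigma\, dy.
\end{align*}
Substituting $\hat U(\xi,y,\sigma) = \frac{y^s}{2^{s-1}\G(s)} L^s K_s(Ly)\, \hat u(\xi,\sigma)$, one reduces the inner $y$-integral to an explicit one-variable integral involving Bessel functions of $z = |L|y$ (using $|L|^2 = ((2\pi|\xi|)^4 + (2\pi\sigma)^2)^{1/2}$ and $\Re L \asymp \ldots$). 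The key computational fact is the identity
\[
\int_0^\infty z^a\left(|\partial_z(z^s K_s(z))|^2 + |z^s K_s(z)|^2\right) dz = c_s < \infty,
\]
which holds because $z^s K_s(z) \to 2^{s-1}\G(s)$ as $z\to 0$ (so the integrand is $O(z^a)$, integrable near $0$ since $a>-1$) and decays exponentially as $z\to\infty$; one then uses scaling $z = |L|y$ to transfer this to the $y$-integral, picking up a factor $|L|^{-a-1}\cdot|L|^{2s} = |L|^{2s-a-1} = |L|^{4s-2}$ on the $\partial_y$ term and an analogous power on the $|\xi|^2$ term. Tracking the powers of $|L|$ and recalling $|L|^{2} \le C(1 + |L|^{4s})^{1/(2s)}\cdot(\ldots)$, one sees the total weight is controlled by $(1 + |L|^{2s})^2$ against $|\hat u|^2$, i.e., by $\norm{u}_{L^2}^2 + \norm{H^s u}_{L^2}^2$ via the definition \eqref{norm} of the domain.

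The main obstacle I anticipate is the bookkeeping of the powers of $L(\xi,\sigma)$ — in particular handling the complex nature of $L$ (we only have $\Re L > 0$, not $L$ real) and verifying that the Bessel asymptotics used for real argument in Section \ref{S:ep} extend with uniform constants to the sector $\{\Re z > 0\}$, so that the integral $\int_0^\infty z^a(\ldots)\,dz$ along the relevant ray is bounded independently of $\arg L$. This is essentially a uniform estimate on $K_s$ in a half-plane, which is standard but needs to be invoked (e.g.\ via the integral representation \eqref{f9GR}, which already gives $|L^s K_s(Ly)| \le$ the same expression with $L$ replaced by $\Re L$ up to the $y$-power, since $|e^{-L^2\tau}| = e^{-\Re(L^2)\tau}$ and $\Re(L^2) = (2\pi|\xi|)^2 > 0$). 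Once this uniform control is in hand, the rest is the routine change of variables and collecting constants, and the finiteness of the right-hand side is immediate from $u \in \operatorname{Dom}(H^s)$.
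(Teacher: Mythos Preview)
Your proposal is correct and follows essentially the same route as the paper. For \eqref{finite1} the paper uses exactly the uniform bound $\|U(\cdot,y,\cdot)\|_{L^2(\Rnn)}\le \|u\|_{L^2(\Rnn)}$ obtained from the representation \eqref{UU} together with \eqref{hl2} and the explicit evaluation $\int_0^\infty \tau^{-(1+s)}e^{-y^2/4\tau}\,d\tau = 2^{2s}\G(s)y^{-2s}$; you are right that the stated bound should carry a factor $\|u\|_{L^2(\Rnn)}^2$. For \eqref{finite} the paper also works on the Fourier side via \eqref{hatU}, writes $\Phi_s(z)=z^sK_s(z)$ with $\Phi_s'(z)=-z^sK_{1-s}(z)$, and bounds the resulting $y$-integrals by splitting at $y=|L(\xi,\sigma)|^{-1}$ and invoking the small- and large-argument asymptotics of $K_\nu$; the complex-argument issue you flag is handled by observing that $\arg\bigl(yL(\xi,\sigma)\bigr)\in(-\tfrac\pi4,\tfrac\pi4)$, where the standard asymptotics (e.g.\ (5.11.9) in \cite{Le}) apply uniformly. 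One minor correction to your sketch: the net power of $|L|$ that emerges is $|L|^{2s}$ (not $|L|^{4s-2}$), and the paper then bounds $\int |L|^{2s}|\hat u|^2$ by $\|u\|_{L^2}^2+\|H^su\|_{L^2}^2$ via the split $|L|\le 1$ versus $|L|\ge 1$.
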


\begin{proof}
We note that  from the representation \eqref{UU} of $U$, and from \eqref{hl2}, we have 
\begin{align*}
||U(\cdot,y,\cdot)||_{L^{2}(\Rnn)} & \leq \frac{y^{2s}}{2^{2s} \G(s)} \int_0^\infty \tau^{-(1+s)} e^{-\frac{y^2}{4\tau}}  ||e^{-\tau H} u||_{L^{2}(\Rnn)} d\tau
\\
& \le ||u||_{L^{2}(\Rnn)} \frac{y^{2s}}{2^{2s} \G(s)} \int_0^\infty \tau^{-(1+s)} e^{-\frac{y^2}{4\tau}}  d\tau.
\end{align*}
A simple computation now gives
\[
\int_0^\infty \tau^{-(1+s)} e^{-\frac{y^2}{4\tau}}  d\tau = \frac{2^{2s} \G(s)}{y^{2s}}.
\]
Using this information in the previous inequality, we conclude
\begin{equation}\label{U22}
||U(\cdot,y,\cdot)||_{L^{2}(\Rnn)} \le ||u||_{L^{2}(\Rnn)}.
\end{equation}
The inequality \eqref{finite1} follows from \eqref{U22} in a standard way keeping in mind that $a>-1$. 

As for \eqref{finite}, we have from \eqref{hatU} 
\[
\hat{U}(\xi, y, \sigma)= \frac{1}{2^{s-1} \G(s)} \Phi_s( L(\xi, \sigma)y) \hat u(\xi, \sigma) = C(s) \Phi_s( L(\xi, \sigma)y) \hat u(\xi, \sigma),
\]
where we have let
\[
\Phi_\nu(z)=z^\nu K_\nu(z).
\]
Recall that (5.7.9) in \cite{Le} gives
\begin{equation}\label{derP}
\Phi_\nu'(z) = - z^\nu K_{\nu -1}(z) = - z^\nu K_{1-\nu}(z).
\end{equation}
Therefore, Plancherel's theorem implies
\begin{align*}
& \int_{-\infty}^{\infty} \int_{\Rnn_{+}} y^a |\nabla U|^2 dX dt \le 
C(s)^2 \int_{0}^{\infty} y^{a} \left(\int_{\Rnn} (2\pi |\xi|)^2 |\Phi_s( L(\xi, \sigma)y)|^2 |\hat{u}(\xi,\sigma)|^2  d\xi d\sigma\right) dy
\\
& + C(s)^2 \int_{0}^{\infty} y^{a} \left(\int_{\Rnn} |L(\xi,\sigma)|^2 |\Phi_s'(L(\xi, \sigma)y)|^2 |\hat{u}(\xi,\sigma)|^2  d\xi d\sigma\right) dy
\\
& = C(s)^2 \int_{\Rnn} |L(\xi,\sigma)|^{2s} |\hat{u}(\xi,\sigma)|^2 \bigg(\int_{0}^{\infty} y^{a} |L(\xi,\sigma)|^{-2s}\bigg[(2\pi |\xi|)^2 |L(\xi, \sigma)|^{2s} y^{2s} |K_s( L(\xi, \sigma)y)|^2
\\
&  + |L(\xi,\sigma)|^2 |L(\xi, \sigma)|^{2s} y^{2s}|K_{1-s}(L(\xi, \sigma)y)|^2 \bigg] dy\bigg)  d\xi d\sigma,
\end{align*}
where in the last equality we have used \eqref{derP}. Keeping in mind that $a = 1-2s$, and that $(2\pi |\xi|)^2 \le |L(\xi,\sigma)|^2$, we conclude that
\begin{align*}
& \int_{-\infty}^{\infty} \int_{\Rnn_{+}} y^a |\nabla U|^2 dX dt \le 
C(s)^2 \int_{\Rnn} |L(\xi,\sigma)|^{2s} |\hat{u}(\xi,\sigma)|^2 \bigg(\int_{0}^{\infty} y \bigg[(2\pi |\xi|)^2 |K_s( L(\xi, \sigma)y)|^2
\\
&  + |L(\xi,\sigma)|^2 |K_{1-s}(L(\xi, \sigma)y)|^2 \bigg] dy\bigg)  d\xi d\sigma
\\
& \le 
C(s)^2 \int_{\Rnn} |L(\xi,\sigma)|^{2s} |\hat{u}(\xi,\sigma)|^2 \bigg(\int_{0}^{\infty} |L(\xi,\sigma)|^2 y \bigg[|K_s( L(\xi, \sigma)y)|^2
\\
&  +  |K_{1-s}(L(\xi, \sigma)y)|^2 \bigg] dy\bigg)  d\xi d\sigma.
\end{align*}
Next, for every $\xi\in \Rn\setminus\{0\}$, we write 
\begin{align*}
& \int_{0}^{\infty} |L(\xi,\sigma)|^2 y \bigg[|K_s( L(\xi, \sigma)y)|^2  + |K_{1-s}(L(\xi, \sigma)y)|^2 \bigg] dy
\\
& = \int_{0}^{|L(\xi,\sigma)|^{-1}} |L(\xi,\sigma)|^2 y \bigg[|K_s( L(\xi, \sigma)y)|^2
  + |K_{1-s}(L(\xi, \sigma)y)|^2 \bigg] dy
\\
& + \int_{|L(\xi,\sigma)|^{-1}}^\infty |L(\xi,\sigma)|^2 y \bigg[|K_s( L(\xi, \sigma)y)|^2
  + |K_{1-s}(L(\xi, \sigma)y)|^2 \bigg] dy
\end{align*}
Now, on the interval $0\le y \le |L(\xi,\sigma)|^{-1}$ we use the asymptotics
\begin{equation}
|K_s( L(\xi, \sigma)y)|^2 = O(|L(\xi, \sigma)y)|^{-2s}),\ \ \ \ |K_{1-s}( L(\xi, \sigma)y)|^2 = O(|L(\xi, \sigma)y)|^{2s-2}),
\end{equation}
to infer that for some universal $C_s'>0$
\[
\int_{0}^{|L(\xi,\sigma)|^{-1}} |L(\xi,\sigma)|^2 y \bigg[|K_s(L(\xi, \sigma)y)|^2
  +  |K_{1-s}(L(\xi, \sigma)y)|^2 \bigg] dy \le C_s'.
  \]
Since the argument of the complex number $y L(\xi,\sigma)$ ranges between $-\frac \pi{4}$ and $\frac \pi{4}$, on the interval $|L(\xi,\sigma)|^{-1} \le y < \infty$ we can use the asymptotic in (5.11.9) in \cite{Le} that gives
\begin{equation}
\begin{cases}
|K_s(L(\xi, \sigma)y)|^2 = O(|L(\xi, \sigma)y)|^{-1} ) e^{-y|L(\xi, \sigma)|},
\\
|K_{1-s}(L(\xi, \sigma)y)|^2 = O(|L(\xi, \sigma)y)|^{-1} ) e^{-y|L(\xi, \sigma)|},
\end{cases}
\end{equation}
This allows to infer that for some universal $C''_s>0$
\[
\int_{|L(\xi,\sigma)|^{-1}}^\infty |L(\xi,\sigma)|^2 y \bigg[|K_s( L(\xi, \sigma)y)|^2
  + |K_{1-s}(L(\xi, \sigma)y)|^2 \bigg] dy \le C''_s.
  \]
In conclusion, we have proved that there exists a universal constant $C_s>0$ such that
\begin{equation}\label{ds}
\int_{-\infty}^{\infty} \int_{\Rnn_{+}} y^a |\nabla U|^2 dX dt  \le 
C_s \int_{\Rnn} |L(\xi,\sigma)|^{2s} |\hat{u}(\xi,\sigma)|^2 d\xi d\sigma.
\end{equation}
Once \eqref{ds} is established, we have
\begin{align*}
& \int_{\Rnn} |L(\xi,\sigma)|^{2s} |\hat{u}(\xi,\sigma)|^2 d\xi d\sigma \le \int_{|L(\xi,\sigma)|\le 1} |L(\xi,\sigma)|^{2s} |\hat{u}(\xi,\sigma)|^2 d\xi d\sigma
\\
& + \int_{|L(\xi,\sigma)|\ge 1} |L(\xi,\sigma)|^{2s} |\hat{u}(\xi,\sigma)|^2 d\xi d\sigma
\\
& \le \int_{\Rnn} |\hat{u}(\xi,\sigma)|^2 d\xi d\sigma + \int_{|L(\xi,\sigma)|\ge 1} |L(\xi,\sigma)|^{4s} |\hat{u}(\xi,\sigma)|^2 d\xi d\sigma
\\
& \le ||u||_{L^2(\Rnn)}  + \int_{\Rnn} |L(\xi,\sigma)|^{4s} |\hat{u}(\xi,\sigma)|^2 d\xi d\sigma  
\\
& = ||u||_{L^2(\Rnn)} + ||H^s u||_{L^2(\Rnn)} < \infty,
\end{align*}
where in  the last inequality we have used the fact that $u \in \operatorname{Dom}(H^s)$. 
We conclude that  
\[
\int_{-\infty}^{\infty} \int_{\Rnn_{+}} y^a |\nabla U|^2 dX dt  \leq C_s \left(||u||_{L^2(\Rnn)} + ||H^s u||_{L^2(\Rnn)}\right)  <\infty,
\]
which proves \eqref{finite}. 

\end{proof}

Lemma \ref{regU1} establishes the following important fact.
 
\begin{cor}\label{C:ws}
Given arbitrary $r>0$ and $T_1<T_2$, the function $U$ in \eqref{UU} is a weak solution to \eqref{ext} in $\B_r^{+} \times (T_1, T_2)$.  
 \end{cor}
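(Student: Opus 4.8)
The plan is to verify directly that the function $U$ defined by \eqref{UU} satisfies the weak formulation \eqref{d8} from Definition \ref{d2}, with $W = -V$ and $\psi = 0$ (so that \eqref{ep} reduces to \eqref{ext}). First I would observe that Lemma \ref{regU1}, specifically \eqref{finite1} and \eqref{finite}, guarantees that $U \in V^{a,r,T_1,T_2}$ for every $r>0$ and $T_1<T_2$; indeed \eqref{finite} controls the weighted gradient energy globally in $t$, and \eqref{finite1} together with the pointwise bound \eqref{U22} (which gives $\|U(\cdot,y,\cdot)\|_{L^2(\Rnn)}\le \|u\|_{L^2(\Rnn)}$ for all $y$) controls $\|U\|_{L^2}$ on the bounded cylinder. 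So the integrability needed for membership in the test-function class and for all the integrals in \eqref{d8} to make sense is in place.

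Next I would establish that $U$ solves the PDE $\operatorname{div}(y^a\nabla U) = y^a U_t$ in the interior of $\Rnp\times\R$ in the classical sense. This follows because $U$ is built, via \eqref{UU}, from the heat semigroup $e^{-\tau H}$ applied to $u$, and for $y>0$ the kernel $\frac{y^{2s}}{2^{2s}\G(s)}\tau^{-(1+s)}e^{-y^2/4\tau}$ is smooth and rapidly decaying, so differentiation under the integral sign is justified; alternatively, one checks on the Fourier side using \eqref{hatU} that $Y(y)=\hat U(\xi,y,\sigma)$ solves the ODE in \eqref{ep3}, which is exactly the Fourier transform of $\operatorname{div}(y^a\nabla U)=y^a U_t$. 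Standard parabolic interior regularity (or the explicit representation) then gives that $U$ is smooth for $y>0$, so the equation holds pointwise there. With interior smoothness in hand, I would multiply the equation by a test function $\phi\in W^{1,2}(\B_r^+\times(T_1,T_2),y^a dXdt)$ with compact support in $(\B_r^+\cup B_r)\times[T_1,T_2]$ and integrate by parts on the region $\{y>\delta\}\cap \B_r^+ \times (t_1,t_2)$: the divergence-theorem move produces the interior gradient term $\int\!\int <\nabla U,\nabla\phi>y^a$, the time term $\int\!\int y^a\phi_t U$, the two time-slice boundary terms at $t_1$ and $t_2$, and a boundary term on $\{y=\delta\}$ equal to $-\int_{t_1}^{t_2}\!\int_{B_r}\delta^a U_y(\cdot,\delta,\cdot)\,\phi(\cdot,\delta,\cdot)\,dx\,dt$.

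The crux of the argument — and the step I expect to be the main obstacle — is passing to the limit $\delta\to 0^+$ in that lateral boundary term and identifying the limit with $-\int_{t_1}^{t_2}\!\int_{B_r}(-Vu+0)\phi\,dx\,dt = \int_{t_1}^{t_2}\!\int_{B_r} Vu\,\phi\,dx\,dt$. This requires the Neumann convergence $\lim_{y\to 0^+} y^a U_y(x,y,t) = -V(x,t)u(x,t)$; the $L^2(\Rnn)$-convergence of this quantity is exactly what \eqref{dtn2} of Theorem \ref{T:st} provides, combined with the fact that $u$ solves \eqref{e00} with the stated normalization, so that $H^su = V u$ (after absorbing the constant $\frac{\G(1-s)}{2^{2s-1}\G(s)}$). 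One then needs a little care to upgrade $L^2$-convergence to convergence of the boundary integral against $\phi$; this is handled either by the trace theory for weighted Sobolev spaces referenced via \cite{Ne} — noting $\phi$ has a well-defined trace on $\{y=0\}$ — or by choosing a sequence $\delta_j\to 0$ along which the slice integrals converge and invoking dominated convergence using the uniform bounds from Lemma \ref{regU1}. One must also confirm that the other $\delta$-dependent pieces (e.g. the region $0<y<\delta$ that is being cut off) contribute negligibly, which again follows from the weighted integrability $y^a|\nabla U|^2\in L^1$ and $y^a|U|^2\in L^1$ near $y=0$. Assembling these limits yields precisely the identity \eqref{d8} with $W=-V$, $\psi=0$, for a.e. $t_1,t_2$, which is the assertion that $U$ is a weak solution to \eqref{ext} on $\B_r^+\times(T_1,T_2)$.
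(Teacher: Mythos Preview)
Your proposal is correct and is essentially a detailed unpacking of what the paper leaves implicit: the paper simply states that Lemma \ref{regU1} ``establishes'' Corollary \ref{C:ws} and gives no further argument, relying on the reader to see that \eqref{finite1}--\eqref{finite} place $U$ in $V^{a,r,T_1,T_2}$, that the interior equation holds classically (by the Fourier/representation construction in Section \ref{S:ep}), and that the Neumann data are recovered via \eqref{dtn2}. Your outline of the $\{y>\delta\}$ integration-by-parts and the $\delta\to 0^+$ limit is exactly the standard way to make this explicit, and your identification $W=-V$, $\psi=0$ matches the conventions in Definition \ref{d2} and \eqref{ext}.
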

 
Before  proceeding further we make the following remark.
    
\begin{rmrk}
Corollary \ref{C:ws} will be used in Theorem \ref{reg5} and Lemma \ref{regU} below to establish the  higher regularity of $U$. The latter, in turn, will be crucially used to justify the computations in Section \ref{S:mono}. 
\end{rmrk}

We close this section by recalling a trace  inequality that will be used repeatedly in this paper. Its proof can be found on p. 65 in \cite{Ru}. We emphasize that in the present work we will apply such inequality at every time level $t$ in the relevant domain of integration. In the next statement, given a function $f = f(X)$, where $X = (x,y)\in \Rnp$, by abuse of notation we will denote the trace $f(x,0)$ of $f$ on $\Rn\times \{0\}$ by $f$ itself. 

\begin{lemma}[Trace inequality]\label{tr}
Let   $f\in C_0^\infty(\R^{n+1}_+)$. There exists a constant $C_0 = C_0(n,s)>0$ such that for every $\mu>0$ one has
\begin{equation*}
||f||_{L^2(\Rn \times \{0\})} \leq  C_0 \left(\mu^{1-s} ||y^{\frac{1-2s}{2}} f||_{L^2(\Rnp)} + \mu^{-s} ||y^{\frac{1-2s}{2}} \nabla f||_{L^2(\Rnp)}\right).
\end{equation*}
\end{lemma}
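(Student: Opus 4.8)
The statement to prove is the trace inequality of Lemma \ref{tr}: for $f\in C_0^\infty(\Rnp)$ and every $\mu>0$,
\[
||f||_{L^2(\Rn\times\{0\})} \le C_0\left(\mu^{1-s}||y^{\frac{1-2s}{2}}f||_{L^2(\Rnp)} + \mu^{-s}||y^{\frac{1-2s}{2}}\nabla f||_{L^2(\Rnp)}\right).
\]
The natural starting point is the fundamental theorem of calculus applied in the $y$-direction. For $x\in\Rn$ fixed and any $y>0$, write $f(x,0)^2 = f(x,y)^2 - \int_0^y \partial_\eta (f(x,\eta)^2)\,d\eta = f(x,y)^2 - 2\int_0^y f(x,\eta)\,\partial_\eta f(x,\eta)\,d\eta$. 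This is integrable near $\eta=0$ since $f$ is smooth with compact support. The plan is then to integrate this identity in $y$ against a suitable weight supported in a region $0<y<R$ (with $R$ ultimately chosen in terms of $\mu$), so as to convert $|f(x,0)|^2$ into bulk integrals of $f^2$ and $|\partial_y f|^2$ against the weight $y^{1-2s}$, with the $\mu$-dependence coming out of the scaling.

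\textbf{Key steps.} First I would multiply the FTC identity by an appropriate function of $y$ — essentially $y^{-2s}$ times a cutoff, or more cleanly, average over $y\in(0,R)$ with the measure that makes the weights match — and integrate over $y\in(0,R)$. Concretely, since $\int_0^R y^{-2s}\,dy$ converges for $s<1/2$ one must be slightly careful; the cleaner route, which is the one carried out in \cite{Ru}, is to multiply by a smooth cutoff $\chi(y/R)$ with $\chi\equiv 1$ near $0$ and integrate $\partial_y(\chi(y/R) y^{1-2s} f(x,y)^2)$ from $0$ to $\infty$, which equals $-f(x,0)^2\cdot\lim_{y\to0}y^{1-2s}$ — but $y^{1-2s}\to\infty$ when $s>1/2$, so instead one writes $f(x,0)^2 = \lim_{y\to 0}f(x,y)^2$ directly and estimates $f(x,y)^2$ for small $y$ by the telescoping identity. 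In either bookkeeping, after integrating in $y$ one arrives at
\[
|f(x,0)|^2 \le \frac{C}{\int_0^R y^{1-2s}\,dy}\left(\int_0^R y^{1-2s} f(x,y)^2\,dy + R\int_0^R y^{1-2s}\left(f(x,y)^2 + (\partial_y f)^2\right)dy\right)
\]
up to adjusting constants, where the $R$ in front of the second term records that one integrates $f\,\partial_y f$ over a $y$-interval of length $R$ and applies Young's inequality $2|fg|\le R^{-1}f^2 + R g^2$. Since $\int_0^R y^{1-2s}\,dy = \frac{R^{2-2s}}{2-2s}$, dividing gives a factor $R^{2s-2}$ on the first term and $R^{2s-1}$ on the gradient term. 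Integrating now in $x\in\Rn$ and using $|\partial_y f|\le|\nabla f|$ yields
\[
||f||_{L^2(\Rn\times\{0\})}^2 \le C\left(R^{2s-2}||y^{\frac{1-2s}{2}}f||_{L^2(\Rnp)}^2 + R^{2s-1}||y^{\frac{1-2s}{2}}\nabla f||_{L^2(\Rnp)}^2\right) + (\text{lower order in }f).
\]
Finally I would set $R = \mu^{-1}$, so that $R^{2s-2} = \mu^{2-2s} = (\mu^{1-s})^2$ and $R^{2s-1} = \mu^{1-2s}$; after absorbing the stray $||y^{\frac{1-2s}{2}}f||^2$ term (which carries a factor $R^{2s-1}=\mu^{1-2s}\le\mu^{2-2s}\mu^{-2s}$, hence is dominated by combining both terms on the right with an adjusted constant), and taking square roots via $(a+b)^{1/2}\le a^{1/2}+b^{1/2}$, one obtains exactly the claimed inequality with a constant $C_0=C_0(n,s)$.

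\textbf{Main obstacle.} The only genuinely delicate point is handling the boundary term at $y=0$ in the supercritical range $s\ge 1/2$, where $y^{1-2s}$ is non-integrable (or merely borderline integrable) at the origin and the naive integration by parts produces a divergent boundary contribution. The fix is to never integrate $y^{1-2s}$ alone down to $0$ against a term involving only $f(x,0)^2$; instead one keeps $f(x,0)^2$ outside and controls it by the pointwise telescoping identity, or equivalently one works with the cutoff $\chi(y/R)$ and exploits that $\partial_y(\chi(y/R)y^{1-2s})$ has the "good" sign structure after the substitution. Everything else is routine: Cauchy–Schwarz (or Young's inequality) in the $\int f\,\partial_y f$ term, Fubini to integrate in $x$, and the elementary scaling computation that fixes the powers of $\mu$. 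Since the detailed argument is carried out on p. 65 of \cite{Ru}, I would cite it there and only reproduce the scaling bookkeeping that produces the explicit $\mu$-dependence needed in this paper.
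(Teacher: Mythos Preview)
The paper does not actually give a proof of this lemma: it simply states that the proof ``can be found on p.~65 in \cite{Ru}'' and moves on. Your proposal is therefore already in line with the paper's own treatment, since you likewise defer to \cite{Ru} at the end. The sketch you give --- fundamental theorem of calculus in $y$, average against a weight on $(0,R)$, Young/Cauchy--Schwarz on the cross term, then set $R=\mu^{-1}$ --- is the standard route and is essentially what is carried out in the reference.

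Two small remarks on the sketch itself. First, your ``main obstacle'' is slightly misdiagnosed: the averaging integral $\int_0^R y^{1-2s}\,dy = \frac{R^{2-2s}}{2-2s}$ converges for \emph{all} $s\in(0,1)$ (since $1-2s>-1$), so there is no difficulty at $y=0$ in the supercritical range. The genuine bookkeeping issue is rather in the cross term $\int_0^R |f\,\partial_\eta f|\,d\eta$: after Young's inequality you get $\int_0^R f^2$ and $\int_0^R (\partial_\eta f)^2$ \emph{without} the weight $\eta^{1-2s}$, and inserting it pointwise via $1\le R^{2s-1}\eta^{1-2s}$ on $(0,R)$ only works when $1-2s\le 0$, i.e.\ $s\ge 1/2$. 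The clean fix, valid for all $s\in(0,1)$, is to instead write $f(x,0)=-\int_0^\infty \partial_y[f\chi(y)]\,dy$ with a cutoff $\chi$, then apply Cauchy--Schwarz with the dual weight: $\int_0^1 |\partial_y f|\,dy \le (\int_0^1 y^{1-2s}|\partial_y f|^2)^{1/2}(\int_0^1 y^{2s-1}\,dy)^{1/2}$, where the second factor equals $(2s)^{-1/2}<\infty$ for every $s\in(0,1)$. After squaring, integrating in $x$, and rescaling $y\mapsto y/\mu$ (which only hits $\partial_y f$, so one uses $|\partial_y f|\le|\nabla f|$ at the very end), the stated $\mu$-dependence drops out exactly. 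Second, your power-counting for the gradient term gives $R^{2s-1}=\mu^{1-2s}$ rather than the required $\mu^{-2s}$; this is another symptom of the same weight-insertion slip, and is also repaired by the dual-weight Cauchy--Schwarz argument.

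None of this affects the validity of your proposal, since you (like the paper) ultimately cite \cite{Ru} for the details.
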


We also  need a  ``surface" version of Lemma \ref{tr} that can be found in Lemma 3.1  in  \cite{Ru2}. Before stating it we fix some intermediate notations. We will indicate with $\mathbb S^{n}$ the $n$-dimensional sphere in $\Rnn$ and by $\mathbb S^{n-1}$ that in $\Rn$. Also, we set  $\mathbb S^{n}_{+} = \mathbb S^{n} \cap \{y> 0\}$.  We  also denote an arbitrary point in $\mathbb S^{n}$ by $\omega= (\omega_1,...,\omega_n, \omega_{n+1})$ and one in $\mathbb S^{n-1}$ by $\omega'=(\omega_1,...,\omega_n)$. The surface measure on $\mathbb S^{n}$ will be denoted by $d\omega$ and that on $\mathbb S^{n-1}$ by $d\omega'$.

\begin{lemma}[Surface trace inequality] \label{tr5}
Let $g:\mathbb S^{n+1}_{+} \to \R$ be a measurable function  such that $g, \nabla_{\mathbb S^{n}} g \in L^{2}(\mathbb S^{n}_{+}, \omega_{n+1}^a d\omega)$, where $\nabla_{\mathbb S^{n}} g$ denotes the Riemannian gradient of the function $g$ with respect to the induced metric on $\mathbb S^{n}$. Then, there exists $C = C(n,s)>0$ such that  for all $\tau>1$
\begin{equation*}
 ||g||_{L^{2}(\mathbb S^{n-1})} \leq C \left (\tau^{1-s} ||\omega_{n+1}^{\frac{1-2s}{2}} g||_{L^{2}(\mathbb S^{n}_{+})} + \tau^{-s} ||\omega_{n+1}^{\frac{1-2s}{2}}\nabla_{\mathbb S^{n}} g||_{L^{2}(\mathbb S^{n}_{+})} \right).
 \end{equation*}
 \end{lemma}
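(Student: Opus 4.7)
The plan is to reduce Lemma \ref{tr5} to the flat trace inequality of Lemma \ref{tr} by a radial extension argument. The crucial geometric observation is that in polar coordinates $X = r\omega$ on $\R^{n+1}_+$, with $r = |X|$ and $\omega = X/|X| \in \mathbb S^{n}_+$, one has $y = r\omega_{n+1}$, and therefore the weight factorizes as $y^{\frac{1-2s}{2}} = r^{\frac{1-2s}{2}} \omega_{n+1}^{\frac{1-2s}{2}}$. This factorization cleanly separates the radial variable $r$ from the spherical variable $\omega$, and will propagate through all the norms appearing in Lemma \ref{tr}.

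First, by density of $C^\infty(\overline{\mathbb S^{n}_+})$ in $W^{1,2}(\mathbb S^{n}_+, \omega_{n+1}^a d\omega)$ (valid because $\omega_{n+1}^a$ is a Muckenhoupt $A_2$ weight, with $a \in (-1,1)$), we may assume $g$ is smooth. Fix once and for all a cutoff $\eta \in C_c^\infty((1/2,2))$ with $\eta \not\equiv 0$, and define the radial extension
\[
f(X) = g\!\left(\frac{X}{|X|}\right)\eta(|X|), \qquad X \in \overline{\R^{n+1}_+}.
\]
Since $\mathrm{supp}\,f$ is contained in the shell $\{1/2 \le |X| \le 2\}$, bounded away from the origin, we have $f \in C_0^\infty(\overline{\R^{n+1}_+})$, and its trace on $\{y=0\}$ is $f(x,0) = g(x/|x|,0)\,\eta(|x|)$ with $(x/|x|,0) \in \mathbb S^{n-1}$.

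A direct computation in polar coordinates, using $dX = r^{n}\,dr\,d\omega$ together with the orthogonal decomposition $|\nabla f|^2 = \eta'(r)^2\, g(\omega)^2 + r^{-2}\eta(r)^2\,|\nabla_{\mathbb S^{n}} g|^2$, yields
\[
\|f\|^2_{L^2(\R^n)} = c_0\,\|g\|^2_{L^2(\mathbb S^{n-1})}, \qquad \|y^{\frac{1-2s}{2}} f\|^2_{L^2(\R^{n+1}_+)} = c_1\,\|\omega_{n+1}^{\frac{1-2s}{2}} g\|^2_{L^2(\mathbb S^{n}_+)},
\]
\[
\|y^{\frac{1-2s}{2}} \nabla f\|^2_{L^2(\R^{n+1}_+)} = c_2\,\|\omega_{n+1}^{\frac{1-2s}{2}} g\|^2_{L^2(\mathbb S^{n}_+)} + c_3\,\|\omega_{n+1}^{\frac{1-2s}{2}} \nabla_{\mathbb S^{n}} g\|^2_{L^2(\mathbb S^{n}_+)},
\]
where the positive constants $c_0,\dots,c_3$ depend only on $n,s,\eta$ and are finite because $\mathrm{supp}\,\eta$ is bounded away from $0$. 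Applying Lemma \ref{tr} to $f$ with $\mu = \tau$, using $\sqrt{a+b} \le \sqrt a + \sqrt b$ on the gradient contribution, and then absorbing the resulting cross-term $\tau^{-s}\|\omega_{n+1}^{\frac{1-2s}{2}} g\|$ into the $\tau^{1-s}\|\omega_{n+1}^{\frac{1-2s}{2}} g\|$ term by means of the hypothesis $\tau > 1$, gives the claim after dividing by $c_0^{1/2}$.

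The principal obstacle is not analytic but bookkeeping: one must verify that the radial derivative of the cutoff $\eta$ produces only an extra $\tau^{-s}\|g\|$ term on the right-hand side, and that this is dominated, under the normalization $\tau > 1$, by the intended $\tau^{1-s}\|g\|$ term. A conceivable alternative would be to work with local charts around an equatorial point and sum via a partition of unity, but the conical homogeneity of the weight $y^a$ makes the radial extension above significantly cleaner, and furnishes constants depending only on $(n,s)$ as required.
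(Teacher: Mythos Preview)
Your argument is correct. The radial extension $f(X)=g(X/|X|)\,\eta(|X|)$ with a fixed cutoff $\eta\in C_c^\infty((1/2,2))$ cleanly transfers Lemma~\ref{tr} to the sphere, because the weight factorizes as $y^{1-2s}=r^{1-2s}\omega_{n+1}^{1-2s}$ and the polar decomposition $|\nabla f|^2=\eta'(r)^2 g(\omega)^2+r^{-2}\eta(r)^2|\nabla_{\mathbb S^n}g|^2$ separates the radial and spherical contributions. The constants $c_0,\dots,c_3$ are all finite and positive since $\operatorname{supp}\eta\subset(1/2,2)$, and the absorption of the extra $\tau^{-s}\|\omega_{n+1}^{(1-2s)/2}g\|$ term into $\tau^{1-s}\|\omega_{n+1}^{(1-2s)/2}g\|$ via $\tau>1$ is exactly right.

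Note that the paper does not actually prove Lemma~\ref{tr5}; it simply cites Lemma~3.1 of \cite{Ru2}. Your radial extension argument is the standard route to such spherical trace inequalities and is almost certainly what is done there as well. One small remark: Lemma~\ref{tr} is stated for $f\in C_0^\infty(\mathbb R^{n+1}_+)$, but as the paper itself applies it repeatedly to functions with nonzero boundary trace, it is understood in the sense of smooth compactly supported functions on $\overline{\mathbb R^{n+1}_+}$; your $f$ belongs to this class, so there is no issue.
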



\section{De Giorgi-Nash-Moser theory for the extension problem}\label{S:dgn}

This section is devoted to developing the regularity theory for the extension problem \eqref{ext}  which is essential to the proof of Theorems \ref{main} and \ref{mon1}.
The central result is Theorem \ref{reg5} below. The latter is a basic H\"older continuity theorem of De Giorgi-Nash-Moser type for the extension problem. For the elliptic counterpart of such result one should see \cite{CS}, but also \cite{JLX}, \cite{TX}. Henceforth, for a given domain $\Om \subset \Rnn$, and $0<\alpha\le 1$, we denote by $\Ha^{\alpha}(\Om)$ the non-isotropic parabolic H\"older space with exponent $\alpha$ defined on p. 46 in Chapter 4 in \cite{Li} (these spaces are denoted by $\Ha^{\alpha,\alpha/2}(\Om)$ in \cite{LSU}). 

\medskip

\noindent \textbf{Note:} Henceforth in this section, as well as in the rest of the paper, unless there is risk of confusion, we will routinely avoid writing explicitly $dX, dx, dt, dr$, etc., in the integrals involved.

\medskip

\begin{thrm}[of De Giorgi-Nash-Moser type]\label{reg5}
Let $W, \psi \in L^{\infty}(B_1 \times (-1,0))$, and $v \in V^a = V^{a, 1,-1,0}$ be a weak solution in $\B_1^{+} \times (-1, 0]$ to the problem \eqref{ep}.
Then, for any $r< 1$ one has $v \in \Ha^{\alpha}(\B_{r}^{+} \times (-r^2, 0])$ for some $\alpha\in (0,1)$ depending only on $a, n$. 
 \end{thrm}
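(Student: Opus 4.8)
The plan is to reduce the statement to the parabolic De Giorgi--Nash--Moser theory for degenerate equations with an $A_2$-weight, which is essentially available once the boundary term in the Neumann condition is absorbed into the interior equation. The key observation is that $y^a$ is a Muckenhoupt $A_2$-weight on $\Rnp$ (since $a=1-2s\in(-1,1)$), so the interior H\"older estimates of Chiarenza--Serapioni \cite{CSe} apply directly in $\B_r^+\times(-r^2,0]$ away from the hyperplane $\{y=0\}$; the entire content of the theorem is therefore the regularity \emph{up to and across} the thin set $\{y=0\}$, where the Neumann data $Wv+\psi$ lives.

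\textbf{Step 1: even reflection.} First I would extend $v$ evenly across $\{y=0\}$, setting $\tilde v(x,y,t)=v(x,|y|,t)$, and likewise reflect the weight $|y|^a$. A standard computation (integrating by parts in \eqref{d8} against test functions that need no longer vanish near $B_r$) shows that $\tilde v$ is a weak solution on the full ball $\B_r\times(-r^2,0]$ of
\[
\operatorname{div}(|y|^a\nabla \tilde v)=|y|^a\tilde v_t + 2\,(Wv+\psi)\,\mathcal H^{n}\lfloor\{y=0\},
\]
i.e.\ the Neumann datum becomes a singular right-hand side supported on the thin set. The weight $|y|^a$ is $A_2$ on all of $\Rnn$.

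\textbf{Step 2: splitting off the singular part.} The cleanest route is to split $\tilde v = v_1 + v_2$, where $v_1$ solves the \emph{homogeneous} degenerate parabolic equation $\operatorname{div}(|y|^a\nabla v_1)=|y|^a (v_1)_t$ in $\B_r\times(-r^2,0]$ with $v_1=\tilde v$ on the parabolic boundary. By Chiarenza--Serapioni \cite{CSe} (parabolic Harnack for $A_2$-degenerate equations), $v_1\in\Ha^{\alpha}$ locally, with $\alpha=\alpha(n,a)$. Then $v_2=\tilde v-v_1$ carries the forcing term $2(Wv+\psi)\,\mathcal H^n\lfloor\{y=0\}$ with zero boundary data. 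For $v_2$ one estimates via the Poisson-type / fundamental solution bounds for the weighted operator: since $W,\psi\in L^\infty$ and $v\in V^a\subset L^2_{\mathrm{loc}}$ implies $v|_{\{y=0\}}\in L^2_{\mathrm{loc}}$ (by the trace inequality, Lemma \ref{tr}, applied at a.e.\ time level), the datum $Wv+\psi$ lies in $L^p$ on the thin set for a suitable $p$; the thin-set potential it generates is then shown to be bounded and H\"older continuous by a Morrey/Campanato-type estimate, using that the codimension-$(1+a)$ of $\{y=0\}$ relative to the measure $|y|^a\,dX\,dt$ is strictly less than the relevant dimension. Alternatively, one can run De Giorgi's iteration directly on $\tilde v$, treating the thin-set term as a lower-order perturbation controlled by the trace inequality with a small parameter $\mu$; this is the route closest in spirit to \cite{CS} and \cite{TX} in the elliptic case, and I would likely present it that way, splitting the oscillation decay into an interior step (clean from \cite{CSe}) and a boundary step where the trace inequality buys the smallness needed for the iteration to close.

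\textbf{Main obstacle.} The delicate point is precisely the boundary iteration across $\{y=0\}$: one must show that the $L^\infty$ and oscillation bounds survive the presence of the surface term $Wv+\psi$. This requires a careful application of the trace inequality (Lemma \ref{tr}) \emph{at each time slice}, keeping track of the parabolic scaling so that the parameter $\mu$ can be chosen small on small cylinders, together with a Caccioppoli inequality adapted to the weight $y^a$ and to test functions not vanishing on $B_r$. Once the local boundedness $v\in L^\infty_{\mathrm{loc}}$ is in hand, the H\"older continuity follows from a De Giorgi oscillation-decay lemma: on dyadic parabolic cylinders centered at boundary points, the oscillation of $v$ decays geometrically because the measure-density of the ``bad set'' is controlled in the weighted $A_2$ sense, and the contribution of $\|Wv+\psi\|_{L^\infty}$ to the decay estimate is of the form $C r^{\beta}$ with $\beta>0$, hence subordinate. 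Combining the interior estimate (from \cite{CSe}) with this boundary estimate and a covering argument yields $v\in\Ha^\alpha(\B_r^+\times(-r^2,0])$ for every $r<1$, with $\alpha$ depending only on $n$ and $a$, which is the assertion.
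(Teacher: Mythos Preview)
Your ``alternative'' route---running a Moser/De Giorgi iteration directly on the half-ball and absorbing the thin-set term $Wv+\psi$ at every step via the trace inequality with a well-chosen $\mu$---is the correct one, and is essentially what the paper does. But your \emph{primary} strategy, the splitting $\tilde v=v_1+v_2$, has a circularity gap you do not address: the forcing for $v_2$ is $Wv+\psi$, and at this stage you only know $v|_{\{y=0\}}\in L^2_{\mathrm{loc}}$ from the trace of $V^a$. With merely $L^2$ data on the $n$-dimensional thin set, a Morrey/Campanato potential estimate does \emph{not} directly give $L^\infty$ or H\"older bounds for $v_2$ in all dimensions; you would need a bootstrap (first get $v\in L^\infty_{\mathrm{loc}}$, then $Wv+\psi\in L^\infty$, then run the potential estimate), and the first step of that bootstrap already requires the iteration-with-trace argument you relegate to the alternative. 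So the splitting is not a shortcut.

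For comparison, the paper works directly on $\B_r^+$ without reflecting across $\{y=0\}$. It runs a Moser iteration (using Steklov averages in time and the Aronson--Serrin truncations $H^m$) on $\overline v=v^++\|\psi\|_\infty$, applying Lemma~\ref{tr} at each time level with $\mu\sim(1+p)^{1/s}$ so the boundary term is swallowed by the energy. This yields the local $L^\infty$ bound. Then, following Moser's second paper and Chiarenza--Serapioni, it proves the weighted analogue of the $L^p$ estimates for small $|p|$ and the BMO-type estimate for $\log\overline v$ (again using the trace inequality to control the boundary contribution), and combines them via Bombieri's lemma to obtain a scale-invariant Harnack inequality \eqref{wk11}; H\"older continuity then follows in the standard way. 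Your De Giorgi oscillation-decay picture is a legitimate alternative to the Moser--Bombieri route, but note that the paper's Harnack inequality (Theorem~\ref{T:harnack}) is of independent interest and is not a byproduct of a pure oscillation-decay argument.
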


\begin{proof}
Before starting, we make an important remark. In the course of the proof we will be using in a substantial way ideas and/or  results  from the following papers: \cite{M}, \cite{M1}, \cite{FKS} and \cite{CSe}. For obvious considerations of space, we will not be able to provide detailed accounts of these works. 

It suffices to show that  $v \in \Ha^{\alpha}(\B_{r}^{+} \times (-r^2, 0])$ for $0<r<\frac{1}{2}$. The conclusion then follows for any $r<1$ by a standard  covering argument. For a given function $f$ and $h>0$ we introduce the Steklov  averages of $f$ defined by
\begin{equation*}\label{stek}
f_{h}(X, t) = \frac{1}{h} \int_{t}^{t+h} f(X, s) ds,
\end{equation*}
and
\begin{equation*}\label{stek1}
f_{-h}(X,t) = \frac{1}{h} \int_{t-h}^t f(X,s) ds,
\end{equation*}
see (4.4) on p. 85 in \cite{LSU}, or also p. 100 in \cite{Li}.
Then, for every $\delta>0$ we have that $f_{-h} \longrightarrow f$, $\nabla f_{-h} \longrightarrow \nabla f$ in $L^{2}(\B_1^+ \times (-1+\delta,0), |y|^a dX dt)$ provided $f$ belongs to $V^{a}$ (see Lemmas 4.7 and 4.8 in \cite{LSU}). 
 
Let $\tau$ be a cut-off function which is compactly supported in $\B_{2r} \times (-4r^2, 4r^2)$, with $\tau \equiv 1$ in $\B_{r} \times [-r^2,  r^2]$, $\tau \equiv 0$ outside $\B_{2r} \times (-\frac{3}{2} r^2, \frac{3}{2} r^2)$, and such that 
 \[
|\nabla \tau(X,t)| \leq \frac{C}{r},\ \ \ \ \ \ |\tau_t(X,t)| \leq \frac{C}{r^2}
 \]
 for some absolute constant $C>0$.
Fix $0<\rho<r^2$ and define $\xi(t)$ by
 \begin{equation*}
 \begin{cases}
 \xi(t)=0,\ t>-\rho
 \\
 \xi(t)=1,\ t\le -\rho.
 \end{cases}
 \end{equation*}
Finally, we let $\eta(X,t)= \tau^2(X,t) v_{-h}(X,t) \xi(t)$ for $|h| < \rho/2$, and take $\eta_{h}$ as a test function in the weak formulation  \eqref{d8} above. The fact that $\eta_{h}$ is an admissible test function follows by approximation of $\xi$ by piecewise  linear functions and is quite standard in the parabolic theory. For instance, we  refer to p. 103 in \cite{Li} for relevant details. Using this test function in \eqref{d8} we obtain
\begin{align*}
 & \int_{ \B_{2r}^{+} \times (-4r^2, -\rho)} y^a |\nabla v_{-h}|^2 \tau^2 + \int_{ \B_{2r}^{+} \times (-4r^2, -\rho)} y^a <\nabla v_{-h}, \nabla \tau> 2\tau v_{-h}  
 \\
 & = -\int_{ \B_{2r}^{+} \times (-4r^2, -\rho)} y^a \tau^2 v_{-h}(v_{-h})_t   + \int_{ B_{2r} \times (-4r^2, -\rho)} (W v)_{-h} v_{-h} \tau^2 + (\psi)_{-h} v_{-h} \tau^2.
 \notag
  \end{align*}
Keeping in mind that  $\tau \equiv 0$ at $t= -2r^2$, an integration by parts with respect to $t$ gives
\begin{align}\label{t}
& \int_{ \B_{2r}^{+} \times (-4r^2, -\rho)} y^a \tau^2 v_{-h}(v_{-h})_t =   \frac{1}{2} \int_{\B_{2r}^{+}} y^a \tau^2 v_{-h}(X,\rho)^2 dX
\\
&  - \frac{1}{2} \int_{ \B_{2r}^{+} \times (-4r^2, -\rho)} y^a (\tau^2)_t (v_{-h})^2. 
\notag
\end{align}
Applying Lemma \ref{tr} at every fixed time level $t\in (-4r^2, -\rho)$ to the integral 
  \[
  \int_{ B_{2r} \times (-4r^2, -\rho)} (W v)_{-h} v_{-h} \tau^2 + (\psi)_{-h} v_{-h} \tau^2, 
\]
with $\mu>0$ such that
   \[
   \mu^{-s} (||W||_{L^{\infty}(B_1 \times (-1, 0)} + ||\psi||_{L^{\infty}(B_1 \times (-1, 0)}) < 1/2,
   \]
then letting $h \to 0$ and finally choosing   $\rho$  in \eqref{t} such that
    \[
   \int_{\B_{r}^{+}} y^a  v(X,\rho)^2 dX \geq \frac{1}{2}  \underset{t \in (-r^2,0]}{\operatorname{essup}} \int_{\B_r^{+}}      y^a  v(X, t)^2 dX,    
   \]
we conclude in a standard way that the following Caccioppoli type inequality holds for some universal $C>0$ which depends on the $L^{\infty}$ norm of $W, \psi$ in $\B^+_{2r}\times(-4r^2,0]$,
  \begin{align*}
&  \int_{\B_{r}^{+} \times (-r^2, 0]}  |\nabla v|^2 y^a dX dt + \underset{t \in (-r^2,0]} {\operatorname{essup}}\ \int_{\B_{r}^{+}} v^2 y^a  dX   
\\
&\leq \frac{C}{r^2} \int_{\B_{2r}^{+} \times (-4r^2, 0]}  |v|^2 y^a dX dt + C r^{n+2}.
\notag
  \end{align*}
We note that in the latter inequality we have also used that $\tau \equiv 1$ in $\B_{r} \times (-r^2,r^2]$.    

We now let $v^{+}= \max \{v,0\}$, and then set $\overline{v}= v^{+} + k$, where $k = ||\psi||_{L^{\infty}(B_1 \times (-1,0))}$.  For a  given  function $f$ and $m>0$, we denote $f_m= \min \{f,m\}$.
Let $t_1, t_2$ be such that $ -r^2 < t_1 < t_2 < 0$. We indicate with $\chi(t)$ the indicator function of the interval $(t_1, t_2)$. Let $\tau$ be a compactly   supported  function in $\B_{r} \times (-r^2,0]$. For a  given $p>1$ we let $\eta= \tau^2 ((\overline{v}_{-h})_m)^{p} (\overline{v})_{-h} \chi(t)$, and take $\eta_{h}$ as a test function in the weak formulation \eqref{d8}. Again, the fact that $\eta_{h}$  is an admissible test function follows from approximating $\chi$ by piecewise linear functions, and we refer to p. 103 in Chapter 6 in \cite{Li} for details. We note here that we take $h$ small enough such that $-r^2< t_1 - h<t_2 +h <0$. Furthermore, We now use the following truncations from \cite{AS} 
\[
H^{m}(s)=  \begin{cases}
\frac{1}{p+2} s^{p+2}\ \ \ \ \ \ \ \ \ \ \ \ \ \ \ \ \ \text{for}\ s \leq m,
\\
\\
\frac{1}{2} m^p s^2 + \left(\frac{1}{p+2} - \frac{1}{2}\right) m^{p+2}\ \ \ \ \text{for}\ s\geq m.
\end{cases}
\]
Then, arguing as in the proof of Theorem 6.15 and/or Theorem 6.17 in \cite{Li}, after letting $h\to 0$ we obtain for almost every $t_1, t_2 \in (-r^2, 0]$ 
\begin{align}\label{a100}
& \int_{\B_{r}^{+} \times (t_1, t_2)}  \overline{v}_m^{p} (p |\nabla \overline v_m|^2 + |\nabla \overline{v}|^2) \tau^2  y^a dX dt  + \int_{\B_r^{+}} y^a H^{m}(\overline{v})(\cdot, t_2) \tau^2 (\cdot,t_2)
\\
& - \int_{\B_r^{+}} y^a H^m(\overline{v})(\cdot,t_1) \tau^2(\cdot,t_1) \leq C \int_{B_{r} \times (t_1,t_2)} \overline v_m^p \overline{v}^2 \tau^2
\notag
\\
& + C \int_{\B_{r}^{+} \times (t_1, t_2)}  y^a ( |\nabla \tau|^2 + (\partial_t \tau)^2 +\tau^2) \overline v_m^{p} \overline{v}^2.
\notag
\end{align}
We note that in arriving to \eqref{a100} we have crucially used that $H^{m}(\overline{v})$ is comparable  to $\frac{1}{p+2}\overline v_m^{p} \overline{v}^2$, which is in turn comparable to $\frac{1}{p} \overline v_m^{p} \overline{v}^2$ for large enough $p$. Also note that we only care about large values of $p$ since eventually we let $p \to \infty$ in Moser's iteration procedure.

At every time level $t\in (t_1,t_2)$ we now apply Lemma \ref{tr} to the function $f= \overline v_m^{p/2} \overline{v} \tau$. In doing so, with $C$ as in \eqref{a100}, we have chosen $\mu>0$ such that  
\[
C \mu^{-2s} \overline{v}_m^{p} (p^2 |\nabla \overline v_m|^2 + |\nabla \overline{v}|^2) \leq \frac{1}{2} \overline{v}_m^{p} \left(p |\nabla \overline v_m|^2 + |\nabla \overline{v}|^2\right).
\]
For instance, we  could take $\mu= ( 2C(1+p))^{1/s}$.
We thus have 
\begin{align}\label{a102}
C \int_{\B_{r}^{+} \times (t_1, t_2)\cap \{y=0\}} \overline v_m^p \overline{v}^2 \tau^2  & \leq \frac{1}{2}  \int_{\B_{r}^{+} \times (t_1, t_2)}  y^a \overline v_m^{p} (p |\nabla v_m|^2 + |\nabla \overline{v}|^2) \tau^2 
\\
& +  C_2(1+p)^{\frac{2-2s}{s}} \int_{\B_{r}^{+} \times (t_1, t_2)} y^a  \overline v_m^p\overline{v}^2( |\nabla\tau|^2 + |\tau|^2),
\notag
\end{align}
where $C_2= C C_0$,
with $C_0$ as in Lemma \ref{tr}. 
Substituting \eqref{a102} in \eqref{a100} we find for some $C = C(n,s)>0$,
\begin{align}\label{a110}
& \int_{\B_{r}^{+} \times (t_1, t_2)}  y^a \overline{v}_m^{p} (p |\nabla \overline v_m|^2 + |\nabla \overline{v}|^2) \tau^2   dX dt  + \int_{\B_r^{+}} y^a H^{m}(\overline{v})(\cdot, t_2) \tau^2 (\cdot,t_2)
\\
& - \int_{\B_r^{+}} y^a H^m(\overline{v})(\cdot,t_1) \tau^2(\cdot,t_1)
 \leq  C (1+p)^{\frac{2-2s}{s}} \int_{\B_{r}^{+} \times (t_1, t_2)}  y^a (|\nabla \tau|^2 + |\tau_t|^2 +\tau^2) \overline v_m^{p} \overline{v}^2.
\notag
\end{align} 
We next select $t_1<t_2$ such that $\tau \equiv 0$ at $t=t_1$, and $t_2$ such that 
 \[
\int_{\B_r^{+}} y^a H^{m}(\overline{v})(\cdot, t_2) \tau^2(\cdot,t_2)   \geq \frac{1}{2} \underset{t \in (-r^2,0]} {\operatorname{essup}}\  \int_{\B_r^{+}} y^a H^{m}(\overline{v})(\cdot,t) \tau^2(\cdot,t).
\]  
With these choices we obtain from \eqref{a110}, and the fact that $H^{m}(\overline{v})$ is comparable  to $\frac{1}{p}\overline{v}_m^{p} \overline{v}^2$,  
\begin{align}\label{a03}
& \underset{t \in (-r^2,0]} {\operatorname{essup}}\int_{\B_r^{+}} y^a \overline{v}_m^{p}\overline{v}^2  \tau^2 dX
\\
& \leq C(1+p)^{\frac{2-2s}{s}}\int_{\B_{r}^{+} \times (-r^2, 0]} y^a  (|\nabla \tau|^2 + |\tau_t|^2 +\tau^2) \overline{v}_m^p \overline{v}^{2}.
\notag
\end{align}
Next, we make a different selection of $t_1$ and $t_2$ in \eqref{a110}. More precisely, we let  $t_1=-r^2$ and $t_2=0$. Since $\tau(X,t_1) \equiv 0$, we obtain from \eqref{a110}
\begin{align}\label{a04}
&  \int_{\B_{r}^{+} \times (-r^2, 0]}   \overline{v}_m^{p} (p |\nabla \overline{v}_m|^2 + |\nabla \overline{v}|^2) \tau^2  y^a dX dt 
\\
& \leq C(1+p)^{\frac{2-2s}{s}} \int_{\B_{r}^{+} \times (-r^2, 0]} y^a (|\nabla \tau|^2 + |\tau_t|^2 +\tau^2) \overline{v}_m^p \overline{v}^{2}.
\notag
\end{align}

Combining \eqref{a03} and \eqref{a04} we find
\begin{align*}
 &  \int_{\B_{r}^{+} \times (-r^2, 0]}   \overline v_m^{p} (p |\nabla \overline v_m|^2 + |\nabla \overline{v}|^2) \tau^2  y^a dX dt + \underset{t \in (-r^2,0]}{\operatorname{essup}} \int_{\B_r^{+}} y^a \overline v_m^{p}\overline{v}^2  \tau^2 dX
   \\
 & \leq C(1+p)^{\frac{2-2s}{s}}  \int_{\B_{r}^{+} \times (-r^2, 0]} y^a  (|\nabla \tau|^2 + |\tau_t|^2 +\tau^2) \overline v_m^p \overline{v}^{2} dX dt.
 \notag
   \end{align*}   
   
At this point, we use the Sobolev inequality for $A_2$ weights in \cite{FKS} with the Moser iteration in \cite{M} to conclude that $\overline v$ is bounded. This implies, in particular, that $v$ be bounded from above. More precisely, we obtain the following estimate for $\overline{v}$:
\begin{equation}\label{mos}
    ||\overline{v}||_{L^{\infty}(\B_{1/2}^{+} \times (-1/4, 0])} \leq C \left( \int_{\B_{1}^{+} \times (-1, 0])} y^a \overline{v}^2\right)^{1/2}.  
    \end{equation}
In reaching the conclusion \eqref{mos} we have crucially used that the Sobolev  inequality for $A_2$ weights also holds for functions compactly supported in $\B_{r}^{+} \cup B_{r}$, instead of $\B_r$, see for instance Lemma 2.1 in \cite{TX}. Then,  the corresponding parabolic version of Sobolev inequality in Lemma 1.2 in \cite{CSe} follows similarly  for any $r>0$.  We note that,  by a standard rescaling argument, in the left-hand side of the estimate \eqref{mos} we can take the $L^{\infty}$ norm  over $\B_{\rho}^{+} \times (-\rho^2, 0]$ for any $\rho<1$. The constant  $C$ in the right-hand side will change accordingly depending on $\rho$. 
   
   The boundedness  of $v$ from below now follows by noting that $-v$ also solves the equation \eqref{ep}, but with $\psi$ replaced by $-\psi$. Therefore,  from \eqref{mos} we obtain the following Moser type estimate for some universal $C$, 
\begin{equation*}
||v||_{L^{\infty}(\B_{1/2}^{+} \times (-1/4, 0])} \leq C \left(\left(\int_{\B_{1}^{+} \times (-1,0]} y^a v^2\right)^{1/2}+ ||\psi||_{L^{\infty}(B_1 \times (-1, 0))}\right).      
\end{equation*}
Applying this estimate to $v_r(x,t)= v(rx,r^2 t)$, we find
\begin{equation*}
||v||_{L^{\infty}(\B_{r/2}^{+} \times (-r^2/4,0])} \leq C \left(r^{-(n+3+a)/2}\left(\int_{\B_{r}^{+} \times (-r^2, 0]} y^a v^2\right)^{1/2}+ r^{2s}||\psi||_{L^{\infty}(B_r \times (-r^2, 0))}\right).
\end{equation*} 

Since we now know that $v$ is locally bounded,  by adding a suitable  constant we may assume without restriction that $v > 0$  in $\B_{3/4}^{+} \times (-9/16, 0]$.     
At this point, similarly to the proof of Lemma 1 in \cite{M1}, for any given $-1 < t_1<t_2 <0$ we  let $\chi(t)$ be the characteristic function of $(t_1, t_2)$. If $\psi$ is a  smooth function having compact support in $X$ for each $t$, and
\[
\eta= \overline{v}_{-h}^{p-1} \psi^2 \chi(t),
\]
we then take $\phi= \eta_{h}$ as test  function in the weak formulation \eqref{d8}.
Following the computations in the proof of Lemma 1 in \cite{M1}, 
we obtain
\begin{align*}
  & \pm \frac{1}{4} \int_{\B_r^{+} \times (t_1, t_2)} y^a \p_t (\psi^2 w^2) + \ve \int_{\B_r^{+} \times (t_1, t_2)}  y^a \psi^2 |\nabla w|^2 \leq 
  \\
  & \frac{1}{4}   \int_{\B_r^{+} \times (t_1, t_2)}  \ve^{-1} y^a( |\nabla \psi|^2 + 2 |\psi\psi_t| ) w^2 + C_1|p| \int_{B_r \times (t_1, t_2)} w^2 \psi^2,
  \notag
\end{align*}
with 
\[
w=\overline{v}_{-h}^{p/2},\ \ \ \ \ \ \text{and}\ \ \ \ \  \ve= \frac{1}{2} |1- p^{-1}|.
\]
Here, $C_1$ depends on $L^{\infty}$ norm of $W$ over $B_1 \times (-1, 0)$ and the $+$ sign in front of the first integral on the left-hand side of the above inequality corresponds to $1/p<1$, whereas the $-$ sign corresponds to $1/p>1$. We now apply Lemma \ref{tr} to the term $\int_{B_r \times (t_1, t_2)} w^2 \psi^2$, with $\mu$ such that
\[
C_1 |p| \mu^{-2s} \leq \frac{\ve}{4}.
\]    
More precisely, we choose 
\[
\mu= \left(\frac{\ve}{4|p|C_1}\right)^{-1/2s}.
\]    
After an application of Lemma \ref{tr} at every time level with such a choice of $\mu$,   we find 
   \begin{align*}
  & \pm \frac{1}{4} \int_{\B_r^{+} \times (t_1, t_2)} y^a \p_t (\psi^2 w^2) + \frac{\ve}{2}\int_{\B_r^{+} \times (t_1, t_2)}  y^a \psi^2 |\nabla w|^2
  \\
  & \leq \frac{1}{2}   \int_{\B_r^{+} \times (t_1, t_2)}  \ve^{-1} y^a( |\nabla \psi|^2 + 2 |\psi\psi_t| ) w^2 + \left(\frac{\ve}{4|p|C_1 }\right
  )^{-\frac{1-s}{s}} \int_{\B_r^{+} \times (t_1, t_2)} y^a w^2 \psi^2. 
  \notag  
    \end{align*}
     
  At this point we argue as in page 738-739 in \cite{M1} and by finally letting $h \to 0$ (the reader should keep in mind that $w=\overline{v}_{-h}^{p/2}$), we  conclude that the weighted analogue of the  $L^{\infty}$ estimate $(6)$ in \cite{M1} holds for $\overline{v}$. This means that for $0<p <1/2 $, $t_0 \in (-1, 0]$ such that $-1< t_0-r^2<t_0+r^2 <0$, and $\rho <r <1/2$, we obtain for some constant $c_1$ independent of $p$,   
   \begin{equation}\label{m6}
    ||\overline{v}||_{L^{\infty}( \B_{\rho}^{+} \times ( t_0 - \rho^2,  t_0 + \rho^2 ])} \leq  c_1 (r-\rho)^{-(n+3 +a)/p} \left(\int_{\B_{r}^{+} \times (t_0-r^2, t_0+r^2])} y^a \overline{v}^p\right)^{1/p}.   
    \end{equation}           
   
Similarly,  the weighted  analogue of $(6^{-})$ in \cite{M1} holds  for $\overline{v}$. I.e., for any $t_0 \in (-1, 0]$ such that $t_0- r^2 >-1$, we have  for  $0 < - p < 1/2$, 
    \begin{equation}\label{m6'}
     ||\overline{v}^{-1}||_{L^{\infty}( \B_{\rho}^{+} \times ( t_0-\rho^2,  t_0])} \leq  c_2 (r-\rho)^{-(n+3 +a)/|p|}\left(\int_{\B_{r}^{+} \times (t_0 -r^2, t_0])} y^a \overline{v}^p\right)^{1/|p|},   
    \end{equation}
    where $c_2$ is also independent of $p$.  See also   the analogous estimates (2.5) and (2.5') in \cite{CSe}. We emphasize that in the  estimates \eqref{m6} and \eqref{m6'} we crucially need  the constants  $c_1$ and $c_2$ to be  independent  of $p$ for $|p| <1/2$. This is needed to apply the analogue of Bombieri's Lemma 2.3 in \cite{CSe} to $\overline{v}$, which then leads to the Harnack estimate in \eqref{wk11} below. 
    
Now for a given $0<\rho<1$ and $ -1<t_1 <t_2<0$, we take $\eta(X,t)= (\overline{v}_{-h})^{-1} \tau(X)^2  \xi(t)$, where $\xi(t)$ is the indicator function of the interval $(t_1,t_2)$, and  $\tau(X)$ is compactly  supported in $\B_{\rho}$. Using this test function in the weak formulation \eqref{d8}, and applying Young's inequality we find
   \begin{align*}
   & \int_{t_1}^{t_2} \int_{\B_{\rho}^{+} } y^a \tau^2|\nabla q|^2 + \int_{\B_{\rho}^{+}} y^a q \tau^2 (\cdot,t_2) -  \int_{\B_{\rho}^{+}} y^a q \tau^2 (\cdot,t_1)   
   \\
   & \leq C\left( \int_{t_1}^{t_2} \int_{\B_{\rho}^{+} } y^{a} |\nabla \tau|^2 +  \left|\int_{t_1}^{t_2} \int_{B_{\rho}} \frac{(Wv + \psi)_{-h}}{\overline{v}_{-h}} \tau^2\right|\right),
   \notag
   \end{align*}
   where we have let $q= - \ln \overline{v}_{-h}$. We now note that
   \begin{equation*}
   \left|\frac{(Wv + \psi)_{-h}}{\overline{v}_{-h}}\right| \leq ||W||_{L^{\infty}(B_1 \times (-1, 0))} + 2.
   \end{equation*}
Therefore, using Lemma \ref{tr} with $\mu=1$ at each time level $t$,  
we obtain 
\begin{align}\label{c500}
& \left|\int_{t_1}^{t_2} \int_{ B_{\rho}} \frac{(Wv + \psi)_{-h}}{\overline{v}_{-h}} \tau^2\right| \leq  C \int_{t_1}^{t_2} \int_{\B_\rho^{+}} y^a (\tau^2 + |\nabla \tau|^2),
\end{align}
where $C$ depends on $||W||_{L^{\infty}(B_1 \times (-1, 0))}$. Since $\tau$ is compactly supported in $\B_{\rho}^{+} \cup B_{\rho}$, by the Poincar\'e inequality for $A_2$ weights, we have  for some constant $C_5$
\begin{equation*}
\int_{t_1}^{t_2} \int_{\B_\rho^{+}} y^a \tau^2  \leq C_5 \int_{t_1}^{t_2} \int_{\B_\rho^{+}} y^a |\nabla \tau|^2.
\end{equation*}
We infer
\begin{align}\label{c501}
& \left|\int_{t_1}^{t_2} \int_{ B_{\rho}} \frac{(Wv + \psi)_{-h}}{\overline{v}_{-h}} \tau^2\right| \leq C_6\int_{t_1}^{t_2} \int_{\B_\rho^{+}} y^a |\nabla \tau|^2
\end{align}
for some $C_6$. Finally, from \eqref{c500} and \eqref{c501} we obtain, 
\begin{align*}
   & \int_{t_1}^{t_2} \int_{\B_{\rho}^{+} } y^a \tau^2|\nabla  q|^2 + \int_{\B_{\rho}^{+}} y^a q \tau^2 (\cdot,t_2) -  \int_{\B_{\rho}^{+}} y^a q \tau^2 (\cdot,t_1) 
   \\
   & \leq C \int_{t_1}^{t_2} \int_{\B_\rho^{+}} y^a |\nabla \tau|^2.
   \notag
   \end{align*}
We next observe that the modified  Poincar\'e inequality for $A_2$ weights, similar to the one that in Lemma 1.3 in \cite{CSe} is stated on the whole ball $\B_r$, continues to be valid for $\B_{r}^{+} $. One can thus argue as on p. 188-189 in \cite{CSe} and conclude that the analogue of Lemma 2.2 in \cite{CSe} holds for $\overline{v}$ in $\B_1^{+} \times (-1,0]$. Combining such result with \eqref{m6} and \eqref{m6'} above, we now use Bombieri's Lemma 2.3 in \cite{CSe}, and argue as in p. 734 in \cite{M1},   to finally conclude by rescaling that the following Harnack estimate holds for $r<1/2$, 
\begin{equation}\label{wk11}
\underset{\B_r^{+} \times (-r^2, -\frac{r^2}2]}{\sup}\ v  \leq C \left(\underset{\B_r^{+} \times (-\frac{r^2}4, 0]}{\inf}\ v +  r^{2s} ||\psi||_{L^{\infty}(B_{2r} \times (-2r^2, 0))}\right).
\end{equation}
Finally, the local H\"older continuity of $v$  follows from \eqref{wk11} in a  standard  way and we refer the reader to Chapter 6 in \cite{Li} for details. 

\end{proof}

Before we proceed further, we note that in the course of the proof of Theorem \ref{reg5} we have established the following basic result.

\begin{thrm}[Scale invariant Harnack inequality]\label{T:harnack}
Let $V, \psi \in L^{\infty} (B_{2r} \times (-2r^2, 0])$ and let $u \in \operatorname{Dom}(H^{s})$, $u\ge 0$, be a solution to
\[
H^{s} u = Vu + \psi
\]
in $B_{2r} \times (-2r^2, 0])$. Then, the following Harnack inequality holds for any $r>0$
\[
\underset{B_r \times (-r^2, -\frac{r^2}{2})}{\sup}\  u \leq C ( \underset{B_r \times (-\frac{r^2}{4}, 0)}{\inf}\ u + r^{2s} ||\psi||_{L^{\infty}( B_{2r} \times (-2r^2, 0])}),
\]
where $C = C(n, s,r^{2s}||V||_{L^\infty(B_{2r} \times (-2r^2, 0])})>0$.
\end{thrm}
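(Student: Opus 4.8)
The plan is to deduce Theorem \ref{T:harnack} directly from the extension machinery already assembled: the Poisson representation of Corollary \ref{C:st} identifies the trace of the solution $U$ of the extension problem \eqref{ext} with $u$, and the weighted Neumann condition \eqref{dtn2} translates $H^s u = Vu + \psi$ into the third line of \eqref{ext} with potential $V$ and inhomogeneity $\psi$. Thus $U$ is, by Corollary \ref{C:ws}, a weak solution in $\B_{2r}^+\times(-2r^2,0]$ of the degenerate parabolic problem \eqref{ep} with $W=V$ and right-hand side $\psi$ (after the obvious sign bookkeeping inherited from the normalization in \eqref{e00}). Since $u\ge 0$, the maximum principle for the extension equation forces $U\ge 0$ in $\Rnp\times\R$ — this is standard for $A_2$-weighted parabolic equations and can be read off the Poisson kernel \eqref{poisson}, which is manifestly nonnegative — so $U$ is a nonnegative weak solution and all the hypotheses of the interior-boundary Harnack estimate proved inside Theorem \ref{reg5} are met.

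The main step is then simply to quote \eqref{wk11}: after the rescaling argument carried out at the end of the proof of Theorem \ref{reg5}, one has
\[
\underset{\B_r^{+} \times (-r^2, -\frac{r^2}2]}{\sup}\ U  \leq C \left(\underset{\B_r^{+} \times (-\frac{r^2}4, 0]}{\inf}\ U +  r^{2s} ||\psi||_{L^{\infty}(B_{2r} \times (-2r^2, 0))}\right),
\]
with $C=C(n,s,r^{2s}\|V\|_{L^\infty})$. Restricting both sides to the hyperplane $\{y=0\}$, where $U$ coincides with $u$ in the sense of traces — and this is legitimate because Theorem \ref{reg5} guarantees $U\in\Ha^\alpha$ up to $\{y=0\}$, so the trace is an honest pointwise restriction of a continuous function — yields
\[
\underset{B_r \times (-r^2, -\frac{r^2}{2})}{\sup}\  u \leq C \left( \underset{B_r \times (-\frac{r^2}{4}, 0)}{\inf}\ u + r^{2s} ||\psi||_{L^{\infty}( B_{2r} \times (-2r^2, 0])}\right),
\]
which is exactly the asserted inequality. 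The dependence of $C$ on the scale-invariant quantity $r^{2s}\|V\|_{L^\infty}$ rather than on $r$ and $\|V\|_{L^\infty}$ separately is precisely what the rescaling $U_r(X,t)=U(rX,r^2t)$ produces, since under this rescaling the zeroth-order coefficient in \eqref{ep} picks up a factor $r^{2s}$ (because $\lim_{y\to0^+}y^a\partial_y U_r = r^{2s}\,(\lim_{y\to0^+}y^a\partial_y U)(r\cdot,r^2\cdot)$, using $a=1-2s$).

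The only genuine obstacle is the justification that $U\ge0$ globally from $u\ge0$ on the trace, since the extension problem lives on an unbounded domain and $U$ need not a priori decay fast enough for a naive comparison argument; the clean resolution is to invoke the explicit Poisson formula \eqref{pos}--\eqref{poisson}, from which $U(X,t)=\int_0^\infty\int_{\Rn}P^s_y(z,\tau)u(x-z,t-\tau)\,dz\,d\tau\ge0$ is immediate whenever $u\ge0$. Everything else — verifying that $U$ is a weak solution of \eqref{ep}, that the traces match, and that the rescaling has the claimed effect on the constant — is routine and has effectively been carried out already in Corollary \ref{C:ws} and in the proof of Theorem \ref{reg5}. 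Hence the proof of Theorem \ref{T:harnack} is little more than a careful translation of \eqref{wk11} from the extension variables back to the nonlocal equation.
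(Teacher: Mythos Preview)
Your proposal is correct and follows exactly the paper's approach: the paper's own proof is a single sentence stating that the theorem is a direct consequence of the estimate \eqref{wk11} established inside the proof of Theorem \ref{reg5}. You have supplied the natural elaboration of that sentence---the extension $U$ is nonnegative via the Poisson formula, restriction to $\{y=0\}$ is justified by the H\"older continuity from Theorem \ref{reg5}, and the scale-invariant dependence of the constant comes from the parabolic rescaling---but the core argument is identical.
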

\begin{proof}
This is a direct consequence of the estimate \eqref{wk11} above.  
\end{proof}

Theorem \ref{reg5} has the following basic consequence.

\begin{cor}\label{bdd}
With $u$ as in Theorem \ref{main}, we have that  $u \in \Ha^{\alpha}(\Rnn)$ for some $\alpha >0$. In particular, $u \in L^{\infty}(\Rnn)$.
\end{cor}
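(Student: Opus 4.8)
The plan is to derive Corollary \ref{bdd} as a direct consequence of Theorem \ref{reg5} applied to the extension function $U$ associated with $u$. First I would recall from Corollary \ref{C:ws} that, given any $r>0$ and $T_1<T_2$, the function $U$ in \eqref{UU} is a weak solution to the extension problem \eqref{ext} in $\B_r^{+}\times(T_1,T_2)$; in the notation of Definition \ref{d2}, this means $U$ is a weak solution of \eqref{ep} with $W(x,t)=-V(x,t)$ and $\psi\equiv 0$. Under the hypothesis \eqref{vasump} on $V$ we have $V\in L^\infty(\Rnn)$, hence $W\in L^\infty(B_r\times(T_1,T_2))$ for every such box. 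Moreover, Lemma \ref{regU1} guarantees that $U\in V^{a,r,T_1,T_2}$ for every $r>0$ and every $T_1<T_2$, since \eqref{finite1} and \eqref{finite} together give the finiteness of $\|U\|_{V^{a,r,T_1,T_2}}$ on every bounded parabolic cylinder.

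Next I would invoke Theorem \ref{reg5} locally. Fix any point $(x_0,t_0)\in\Rnn$ and translate so that it becomes the origin; by the preceding paragraph, $U$ is a weak solution of \eqref{ep} in $\B_1^{+}\times(-1,0]$ with $W=-V\in L^\infty$ and $\psi\equiv 0$. Theorem \ref{reg5} then yields $U\in\Ha^{\alpha}(\B_{r}^{+}\times(-r^2,0])$ for $r<1$, with $\alpha\in(0,1)$ depending only on $a$ and $n$; in particular $U$ is bounded on $\B_{1/2}^{+}\times(-1/4,0]$ by the Moser estimate displayed in the proof of Theorem \ref{reg5}. Restricting to the trace $y=0$, and recalling that $U(x,0,t)=u(x,t)$ in the sense of traces, we conclude $u\in\Ha^{\alpha}$ on $B_{1/2}(x_0)\times(t_0-1/4,t_0]$, and in particular $u$ is bounded there. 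Since $(x_0,t_0)$ was arbitrary and the H\"older exponent $\alpha$ is uniform, a covering argument gives $u\in\Ha^{\alpha}_{loc}(\Rnn)$.

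To upgrade this to the \emph{global} bounds $u\in\Ha^\alpha(\Rnn)$ and $u\in L^\infty(\Rnn)$, I would use the global integrability of $U$ from Lemma \ref{regU1}, specifically \eqref{U22}, which gives $\|U(\cdot,y,\cdot)\|_{L^2(\Rnn)}\le\|u\|_{L^2(\Rnn)}$ uniformly in $y$, together with the rescaled Moser estimate at the end of the proof of Theorem \ref{reg5},
\[
\|v\|_{L^{\infty}(\B_{r/2}^{+}\times(-r^2/4,0])}\le C\Big(r^{-(n+3+a)/2}\big(\textstyle\int_{\B_r^{+}\times(-r^2,0]} y^a v^2\big)^{1/2}+r^{2s}\|\psi\|_{L^\infty}\Big),
\]
applied to translates of $U$ with a \emph{fixed} radius, say $r=1$. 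Since $\psi\equiv 0$ and the $L^2$ mass $\int_{\B_1^{+}(X_0)\times(t_0-1,t_0]} y^a U^2$ is bounded by $\int_0^{\sqrt2} y^a\big(\int_{\Rnn}U^2\big)dy\le\frac{(\sqrt2)^{a+1}}{a+1}\|u\|_{L^2(\Rnn)}^2$, uniformly in the center $(X_0,t_0)$, the supremum of $|U|$, and hence of $|u|$, over all unit boxes is bounded by a single constant $C=C(n,s,K,\|u\|_{L^2(\Rnn)})$. Combining this uniform $L^\infty$ bound with the local, uniform-exponent H\"older estimate (whose constant likewise only involves $n,s,K$ and the local $L^2$-mass, again uniformly controlled) yields the global statement $u\in\Ha^{\alpha}(\Rnn)$, and in particular $u\in L^\infty(\Rnn)$.

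The only real point requiring care is the passage from local to global, i.e., checking that all the constants appearing in Theorem \ref{reg5} and its rescaled Moser estimate depend only on $n,s$, the $L^\infty$ norm $K$ of $V$, and the (uniformly bounded) local $L^2$-mass of $U$, and not on the location of the box; this is where Lemma \ref{regU1} is used in an essential way. Everything else is a routine translation-and-covering argument on top of the already established interior regularity for the extension problem.
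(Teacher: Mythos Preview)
Your proposal is correct and follows essentially the same approach as the paper: apply Theorem \ref{reg5} to the extension $U$ on a unit-sized box around an arbitrary point $(x_0,t_0)$, using the membership $U\in V^a$ and the uniform $L^2$ bounds furnished by Lemma \ref{regU1}. The paper's proof is two lines and leaves the local-to-global passage implicit, whereas you spell it out carefully; your observation that the constants in the Moser estimate and the H\"older exponent depend only on $n,s,K$ and the uniformly controlled local mass $\int y^a U^2$ (via \eqref{U22}) is exactly the content the paper encapsulates in the phrase ``combined with the estimates in Lemma \ref{regU1}.''
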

\begin{proof}
Given an arbitrary point $(x_0, t_0) \in \Rnn$, the corollary follows from a direct application of Theorem \ref{reg5} above to the extension function $U$ in $B_{1} (x_0) \times \{ y < 1\}  \times (t_0 -1,t_0]$ combined with the  estimates in Lemma \ref{regU1}. 

\end{proof}

\begin{rmrk}\label{R:sup}
We note that using the representation \eqref{stbis}, the equation  \eqref{e00} satisfied by $u$, Corollary \ref{bdd} and the estimates  \eqref{vasump} and \eqref{linfty}, we obtain the following estimate:
\begin{equation}\label{sup}
||U_y(\cdot,y,\cdot)||_{L^{\infty}(\R^{n+1})}  \leq C(s)  ||V||_{L^\infty(\R^{n+1})}||u||_{L^{\infty}(\R^{n+1})} \frac{1}{y^{1-2s}},
\end{equation}
where $C(s)>0$ depends on $s$ only.
\end{rmrk}

We will also need the following regularity result.  
  
\begin{lemma}\label{reg2}
Let $v \in V^{a, r, -4r^2, 0}$ be a weak solution in $\B^{+}_{2r} \cap (-4r^2, 0]$ to 
\begin{equation}\label{reg}
\begin{cases}
\operatorname{div}(y^a \nabla v)= y^a v_t
\\
- \lim_{y \to 0} y^a v_y= \phi,
\end{cases}
\end{equation}
where $\phi$ is assumed to be  $C^{2} $ for $0<s<1/2$, and $C^{1}$ for $s\geq 1/2$. Then, there exists $\alpha' > 0$ depending on $a, n$ such that for $i=1,...n$, we have $D_i v, v_t , y^{a} v_y \in \Ha^{\alpha'}$  up to $y=0$  in $\B_{r}^{+} \times (-r^2, 0]$.
\end{lemma}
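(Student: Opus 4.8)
The strategy is to bootstrap from the De Giorgi--Nash--Moser H\"older continuity already available in Theorem \ref{reg5} by differentiating the equation in the tangential directions and in time, and then handling the degenerate normal derivative $y^a v_y$ separately. First I would fix a tangential direction $e_i$, $i=1,\dots,n$, and consider the difference quotients $v^{h}(X,t) = \frac{v(X+he_i,t)-v(X,t)}{h}$. Since translation in $x$ commutes with the operator $\operatorname{div}(y^a\nabla\cdot)-y^a\partial_t$, each $v^h$ is again a weak solution in a slightly smaller cylinder $\B_{2r-\delta}^{+}\times(-(2r-\delta)^2,0]$ of the same equation, with Neumann datum given by the corresponding difference quotient $\phi^h$ of $\phi$. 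The assumption that $\phi\in C^{1}$ (for $s\ge 1/2$), respectively $\phi\in C^{2}$ (for $0<s<1/2$), guarantees that $\phi^h$ is bounded in $L^\infty$ uniformly in $h$, and in the supercritical range even that $\phi^h$ remains $C^1$ with uniformly bounded norm. A Caccioppoli-type energy estimate --- exactly of the kind derived at the beginning of the proof of Theorem \ref{reg5}, applied to $v^h$ --- gives a bound on $\int y^a|\nabla v^h|^2$ uniform in $h$, whence by the usual difference-quotient argument for $A_2$-weighted Sobolev spaces (the weight $y^a$ being translation invariant in $x$) one obtains $D_i v\in V^{a}$ in a smaller cylinder, and $D_i v$ is itself a weak solution of \eqref{reg} with datum $\partial_i\phi\in L^\infty$. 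Theorem \ref{reg5} then upgrades $D_i v$ to $\Ha^{\alpha'}$ up to $\{y=0\}$.

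\textbf{Time derivative and normal derivative.} For $v_t$ I would argue similarly using difference quotients in $t$; the extension equation is invariant under translation in $t$, so $\tau_h v(X,t):=\frac{v(X,t+h)-v(X,t)}{h}$ solves the same equation with Neumann datum $\tau_h\phi$, which is controlled by $\partial_t\phi$ (contained in the $C^1$, resp. $C^2$, hypothesis on $\phi$). The same Caccioppoli estimate plus Theorem \ref{reg5} yields $v_t\in\Ha^{\alpha'}$ up to $y=0$ in $\B_r^{+}\times(-r^2,0]$. It remains to treat the conormal derivative $\mathcal Nv:=y^a v_y$. Here I would use that, away from $\{y=0\}$, the equation is uniformly parabolic and smooth, so classical Schauder theory gives interior regularity, and in particular $\mathcal Nv$ is H\"older in the interior. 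To get H\"older continuity of $\mathcal Nv$ \emph{up to} $\{y=0\}$, observe that $w:=y^a v_y$ satisfies, as one checks by a direct computation using the equation, a conjugate degenerate equation of the form $\operatorname{div}(y^{-a}\nabla w)=y^{-a}w_t$ (the standard $a\leftrightarrow -a$ duality for the Caffarelli--Silvestre/Bessel operator), with the boundary condition $\lim_{y\to 0}y^{-a}w_y$ expressed through $v_t-\Delta_x v$ on $\{y=0\}$, which in turn equals $-\partial_t\phi+\dots$ --- an $L^\infty$ (indeed, by the previous steps, $\Ha^{\alpha'}$) function at $y=0$. Since $-a\in(-1,1)$ as well, $y^{-a}$ is again an $A_2$ weight and Theorem \ref{reg5} applies to $w$, giving $w=y^a v_y\in\Ha^{\alpha'}$ up to $y=0$.

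\textbf{Main obstacle.} The delicate point is the last step: making rigorous the assertion that $w=y^a v_y$ solves the conjugate equation \emph{weakly across $\{y=0\}$} with an $L^\infty$ Neumann datum, so that Theorem \ref{reg5} can legitimately be invoked. This requires (i) enough preliminary regularity of $v$ --- provided by the tangential and time-derivative steps, which give $v_t,D_iv\in\Ha^{\alpha'}$ and hence $\Delta_x v\in L^\infty$ on $\{y=0\}$ after one more tangential differentiation, or alternatively by reading off $\Delta_x v$ from the equation $y^a v_t=\operatorname{div}_X(y^a\nabla_X v)$ combined with the Neumann condition --- and (ii) a careful justification, via the Steklov averages and cut-offs already used in the proof of Theorem \ref{reg5}, that the distributional identity defining $w$ as a weak solution of the conjugate problem holds. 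The dichotomy in the hypotheses on $\phi$ ($C^2$ for $0<s<1/2$ versus $C^1$ for $1/2\le s<1$) is exactly what is needed so that the Neumann datum of the conjugate problem --- which involves one more derivative of $\phi$ in the supercritical case because of the weaker decay of $y^{-a}$ near $y=0$ when $a>0$ --- lands in $L^\infty$. I would organize the proof so that steps for $D_i v$ and $v_t$ come first (they need only $\phi\in C^1$), and the conjugate-equation argument for $y^a v_y$ comes last, invoking the full strength of \eqref{vasump} through the regularity of $\phi$.
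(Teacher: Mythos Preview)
Your treatment of the tangential and time derivatives is fine and matches the paper up to minor tactics (the paper bootstraps via iterated fractional difference quotients $v^h=(v(\cdot+he_i)-v)/h^{k\alpha}$ rather than dividing by $h$ directly, but either route yields $D_i v,\,v_t\in\Ha^{\alpha}$ up to $\{y=0\}$).

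The gap is in your handling of $w=y^a v_y$. You propose to apply Theorem \ref{reg5} to $w$ for the conjugate equation with \emph{Neumann} datum $\lim_{y\to 0}y^{-a}w_y=(v_t-\Delta_x v)\big|_{y=0}$. But for this datum to be in $L^\infty$ you need $\Delta_x v$ bounded up to $\{y=0\}$, i.e.\ second-order tangential regularity of $v$. Bootstrapping that via Theorem \ref{reg5} would require Neumann data $\partial_{ij}\phi\in L^\infty$, hence $\phi\in C^2$ in \emph{both} ranges of $s$, contradicting the hypothesis when $s\ge 1/2$. Your alternative of ``reading off $\Delta_x v$ from the equation'' is circular, since the equation gives precisely $\Delta_x v=v_t-y^{-a}w_y$, which is what you are trying to control. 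The identification of the Neumann datum with ``$-\partial_t\phi+\dots$'' is also incorrect: $v_t|_{y=0}$ is the time derivative of the Dirichlet trace of $v$, not of the Neumann datum $\phi$.

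The paper instead exploits the \emph{Dirichlet} information $w|_{y=0}=-\phi$, which is available directly from the original Neumann condition without any extra derivative. It then subtracts a globally defined $\overline h$ solving the conjugate equation with the same Dirichlet trace, built from the Poisson kernel \eqref{poisson} with $s$ replaced by $1-s$; the regularity dichotomy on $\phi$ is exactly what is needed so that $\overline\phi\in\operatorname{Dom}(H^{1-s})$ and hence $\overline h\in V^{-a}$ (see the estimate for $\overline h_y$ in the proof). The difference $w-\overline h$ then has zero Dirichlet data, odd-reflects across $\{y=0\}$ to a weak solution of $\operatorname{div}(|y|^{-a}\nabla f)=|y|^{-a}f_t$ in the full cylinder, and its H\"older continuity follows from the Chiarenza--Serapioni theory. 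This is the missing idea in your argument.
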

\begin{proof}
The proof follows from ideas similar to that of   Lemma 4.5  in  \cite{CS}  for the elliptic case. We first note that, with $\alpha$ as in Theorem \ref{reg5}, we have that  $v$ is in $ \Ha^{\alpha}$ up to $y=0$  in $\B_{r}^{+} \times (-r^2, 0]$. Given such $\alpha$, starting with  $k=1, 2, ..., \lfloor{1/\alpha}\rfloor + 1$, we take iterated difference quotients of $v$ of the type
\begin{align}\label{rep}
& v^{h}_{ e_i}= \frac{v(x+he_i, t) - v(x, t)}{ h^{k\alpha}},\ \ \ v^{h}_ {t} = \frac{v(x, h+t) - v(x, t)}{h^{k\alpha/2}},
\end{align}
$i=1, ..., n$. Note that $v^{h}_{ e_i}$, $v^{h}_ {t}$ solve a problem similar to \eqref{reg}, with $\phi$ replaced by its corresponding difference quotient. Therefore   by making use of the fact that $\phi$ is Lipschitz,  we can conclude by   applying Theorem \ref{reg5}  to these  difference quotients that $ D_i v, v_t \in \Ha^{\alpha}$, $i=1,...,n$. Applying Nirenberg's method of difference quotients in the directions $x_1, ... , x_{n}$ (see e.g. the proof of Theorem 6.6 in \cite{Li}), and also using the trace inequality  as in the proof of  Theorem \ref{reg5} above, we conclude that $y^a (v_{ij})^2, y^a(v_i)_y^2  \in L^{1}(\B_r \times (-r^2, 0])$ for $i=1,..,n$ and $j=1, ...n$. From the equation \eqref{reg}, and the fact that $v_t \in \Ha^{\alpha}(\B_r^{+} \times (-r^2, 0])$, and therefore it is bounded, we conclude that $ y^{-a} ((y^{a} v_y)_{y})^2 \in L^{1} (\B_r \times (-r^2, 0])$. This implies that  $y^a v_y \in V^{-a, r, -r^2, 0}$. 

At this point we note that $h=y^{a} v_y$ is a weak solution to the following conjugate equation:
\begin{equation}\label{reg5'}
\begin{cases}
\operatorname{div}(y^{-a} \nabla h)= y^{-a} h_t,
\\
h(x, 0, t)= -\phi.
\end{cases}
\end{equation}
Here, the boundary condition  is interpreted in the trace sense. Now, similarly to the proof of Lemma 4.5 in  \cite{CS}, we first extend $\phi$  in a $C^{2}$  manner to the whole of $\R^{n+1}$  for $s<1/2$, and in a $C^{1}$ manner for $s\geq 1/2$,  and then   we multiply $\phi$ by a cut off $\eta \equiv 1$ in $\B_{r} \times (-r^2, 0]$ and call this new function $\overline{\phi}$. Let $\overline{h}$ be the solution of \eqref{reg5'} with $\phi$ replaced by $\overline{\phi}$. It follows from \eqref{pos} in Theorem \ref{C:st} that $\overline{h}$ has a Poisson representation formula  given by (1.7) in \cite{ST}. More specifically, from the representation \eqref{pos}  with  $s$ replaced by $1-s$,  we have 
\begin{equation*}
\overline{h}(X,t)= C(n, s) \int_{0}^{\infty} \int_{\Rn} \frac{y^{2(1-s)}e^{-\frac{y^2+ |z|^2}{4\tau}}}{\tau^{n/2 + 2 -s}} \overline{\phi} ( t -  \tau, x-z) dz d\tau.
\end{equation*}
The change of variable $\frac{z}{\sqrt{\tau}}=\xi$ gives
\begin{equation*}
\overline{h}(X,t)= C(n, s) y^{2(1-s)}\int_{0}^{\infty} e^{-\frac{y^2}{4\tau}}\int_{\Rn} \frac{e^{- \xi^2/4} }{\tau^{  2 -s}} \overline{\phi} ( t -  \tau, x-\sqrt{\tau} \xi) d\xi d\tau.
\end{equation*}
Next, we let $\frac{\tau}{y^2}= r$ to find
\begin{equation*}
\overline{h}(X,t)= C(n, s) \int_{0}^{\infty} e^{-\frac{1}{4r}}\int_{\Rn} \frac{e^{- \xi^2/4} }{r^{  2 -s}} \overline{\phi} ( t -  y^2r , x- y \sqrt{r} \xi) d\xi d r.
\end{equation*}
This implies $\overline{h}$ is in $\Ha^{\alpha'}$ up to $y=0$ for all $\alpha'< 2(1-s)$. We also note that $D_i \overline{h} \in L^{\infty}$ for $i=1,...,n$. This follows from the representation \eqref{pos} with $u$ replaced  by $\overline{\phi}$ and $s$ replaced by $1-s$. The alternate representation of $\overline{h}$ in \eqref{stbis}, with $s$ replaced by $1-s$, and the fact that $||e^{-tH} f||_{L^2(\Rnn)} \leq ||f||_{L^{2}(\Rnn)}$, allow to infer 
\begin{equation}\label{rep1}
||\overline{h}_{y}||_{L^{2}(\Rnn)} \leq C||H^{1-s} \overline{\phi}||_{L^2(\Rnn)
} \frac{1}{y^{2s-1}}.
\end{equation}
From \eqref{rep1} we conclude in a standard way that $\overline{h} \in V^{-a}$. We note that the regularity  assumption on $\phi$ in the hypothesis of the lemma  is needed in \eqref{rep1} to ensure that $\overline{\phi}$ is in the domain of $H^{1-s}$. Therefore, 
  $h - \overline{h}$ is a weak solution to \eqref{reg5'} with $\phi =0$ in the set where $\eta \equiv 1$, i.e., in $\B_{r}^{+} \times (-r^2, 0]$. At this point, by taking the odd reflection of the  function $h- \overline{h}$ in the variable $y$  across $y=0$, we obtain that $f \overset{def}{=} h-\overline{h}$ is a weak solution to
\begin{equation}\label{full}
\operatorname{div}(|y|^{-a} \nabla f) = |y|^{-a}  f_t.
  \end{equation}
   Now we can use results from \cite{CSe} and \cite{Is} to conclude that $h - \overline{h}$ is H\"older continuous  up to $y=0$ in $\B_r^{+} \times (-r^2, 0]$. This finally implies the H\"older continuity of $h= y^a v_y$. 

\end{proof}

We now have the   following result concerning the H\"older  continuity of $y^a U_y$.

\begin{lemma}\label{regU}
With $U$ as in \eqref{ext}, we have that $y^{a} U_y$ is H\"older continuous  up to $y=0$   with   exponent $\alpha'>0$ depending only on  $a$ and $n$.
\end{lemma}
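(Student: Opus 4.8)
The plan is to deduce Lemma \ref{regU} directly from Lemma \ref{reg2} by verifying that the extension function $U$ built in \eqref{ext} (equivalently \eqref{UU}) satisfies, locally around every boundary point, the hypotheses of Lemma \ref{reg2}. Fix an arbitrary point $(x_0,t_0)\in \Rnn$. By Corollary \ref{C:ws}, $U$ is a weak solution in $\B_{2r}^{+}(X_0)\times(t_0-4r^2,t_0]$ (with $X_0=(x_0,0)$) of the problem
\begin{equation*}
\begin{cases}
\operatorname{div}(y^a\nabla U)=y^a U_t,
\\
-\lim_{y\to 0^+} y^a U_y = V(x,t)u(x,t),
\end{cases}
\end{equation*}
so the boundary datum in the language of Lemma \ref{reg2} is $\phi(x,t)=V(x,t)u(x,t)$. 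Thus the whole point is to check that $\phi=Vu$ has the regularity required by Lemma \ref{reg2}, namely $\phi\in C^{1}$ when $s\ge 1/2$ and $\phi\in C^{2}$ when $0<s<1/2$ (both in the non-isotropic parabolic sense), at least locally near $(x_0,t_0)$.

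First I would record the regularity of $u$ itself. By Corollary \ref{bdd} we already know $u\in\Ha^{\alpha}(\Rnn)\cap L^\infty(\Rnn)$. To upgrade this, I would feed this information back into the extension: the Poisson/heat-semigroup representation \eqref{stbis}, together with \eqref{e00}, \eqref{linfty} and the assumptions \eqref{vasump} on $V$, already gives the gradient bound \eqref{sup} of Remark \ref{R:sup}. More precisely, using the equation \eqref{e00}, i.e. $H^s u = c_s Vu$, one sees that $Vu$ inherits from \eqref{vasump} one (resp.\ two) bounded spatial/parabolic derivatives; then a parabolic Schauder/interior estimate for $H^s$ — or, equivalently, applying Lemma \ref{reg2} itself to $U$ once with the crude datum and bootstrapping — yields that $u$ has the matching number of derivatives, i.e. $u\in C^{1}$ for $s\ge 1/2$ and $u\in C^{2}$ for $0<s<1/2$, locally. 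Since $V$ has at least that many derivatives by \eqref{vasump}, the Leibniz rule gives $\phi=Vu\in C^1$ (resp.\ $C^2$) locally, with the relevant norms controlled by $K$, $\|u\|_{L^\infty}$ and the $\Ha^\alpha$ norm of $u$. Here the extra hypothesis $\|\langle\nabla_x V,x\rangle\|_{L^\infty}\le K$ in the supercritical range $0<s<1/2$ is exactly what keeps the second-order term under control, as already flagged in Remark 1) of the introduction.

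Once $\phi=Vu$ is known to be $C^1$ (for $s\ge1/2$) or $C^2$ (for $0<s<1/2$) in a neighbourhood of $(x_0,t_0)$, I would simply invoke Lemma \ref{reg2} with $v=U$, $r$ small, to conclude that $D_iU$, $U_t$ and $y^aU_y$ all belong to $\Ha^{\alpha'}$ up to $\{y=0\}$ in $\B_r^{+}(X_0)\times(t_0-r^2,t_0]$, with $\alpha'>0$ depending only on $a$ and $n$. In particular $y^aU_y\in\Ha^{\alpha'}$ near $(x_0,t_0)$. Since $(x_0,t_0)$ was arbitrary and the estimates are translation-uniform (the constants in Lemma \ref{regU1}, Corollary \ref{bdd}, Remark \ref{R:sup} and Lemma \ref{reg2} do not depend on the base point), this gives the stated local H\"older continuity of $y^aU_y$ up to $y=0$ everywhere, completing the proof.

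The main obstacle I anticipate is purely the bootstrap step: making rigorous the claim that $u$ gains derivatives from the equation $H^s u = c_s Vu$ together with \eqref{vasump}, and in particular handling the supercritical regime $0<s<1/2$, where one genuinely needs the $C^2$ control of $\phi$ and hence must exploit the hypothesis on $\langle\nabla_x V, x\rangle$. Concretely one has to be careful that differentiating \eqref{e00} produces a right-hand side of the form $V(D u) + (DV)u$ (and for $s<1/2$ also second-order analogues), which should be estimated using the already-established interior bound \eqref{sup} on $U_y$ and the $\Ha^\alpha$ regularity of $u$; iterating Lemma \ref{reg2} on the difference quotients of $U$, exactly as in its own proof, is the clean way to do this without invoking external Schauder theory. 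Everything else is a routine localization-and-covering argument.
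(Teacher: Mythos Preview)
Your overall strategy is exactly the paper's: establish enough regularity of $u$ so that $\phi=Vu$ meets the $C^1$ (resp.\ $C^2$) hypothesis of Lemma~\ref{reg2}, then invoke that lemma. Two points need correction, though.

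First, the bootstrap tool is Theorem~\ref{reg5}, not Lemma~\ref{reg2}. You cannot ``apply Lemma~\ref{reg2} once with the crude datum'': its statement \emph{requires} $\phi\in C^1$ (or $C^2$) from the outset, so feeding it a merely H\"older $\phi=Vu$ is circular. What the paper actually does (and what the proof of Lemma~\ref{reg2} itself does internally) is take the tangential/time difference quotients $U^h_{e_i}$, $U^h_t$ of $U$; each of these solves \eqref{ep} with a new bounded Neumann datum of the form $gw+f$ with $f,g\in L^\infty$ (coming from the difference quotients of $V$ and the previously known H\"older regularity of $u$), so Theorem~\ref{reg5} applies and yields $D_iU,\,U_t\in\Ha^{\alpha}$ up to $y=0$. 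That gives $u\in C^1$, hence $\phi=Vu\in C^1$, and for $s\ge 1/2$ one is done. For $0<s<1/2$ one iterates once more: the equations for $D_iU$ and $U_t$ have Neumann data $D_iV\,u+V\,D_iu$ and $V_tu+Vu_t$, and since now $V\in C^2$ by \eqref{vasump}, another round of difference quotients plus Theorem~\ref{reg5} gives $D_{ij}U,\,D_iU_t,\,U_{tt}\in\Ha^{\alpha}$, whence $u\in C^2$ and $\phi=Vu\in C^2$. Only then does Lemma~\ref{reg2} enter.

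Second, the hypothesis $\|\langle\nabla_x V,x\rangle\|_{L^\infty}\le K$ plays no role in this argument; it is a global growth condition used later in the monotonicity section (Lemma~\ref{L:trace}), not a local regularity ingredient. What is actually used here in the range $0<s<1/2$ is simply the $C^2$ part of \eqref{vasump}, which guarantees that the second-round difference quotients of the Neumann data stay bounded.
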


\begin{proof}
From Theorem \ref{reg5}, we have that $U \in H^{\alpha}$ upto  $y=0$ for some $\alpha$ depending on $a, n$. Now with this choice of  $\alpha$,    by taking repeated difference quotients   of the type \eqref{rep} with $v$ replaced  by $U$ similarly to that  in the proof of Lemma \ref{reg2}, we note that  such difference quotients satisfy in the weak sense
\begin{equation}
\begin{cases}
\operatorname{div} (y^a \nabla w) = y^a w_t,
\\
\underset{y \to 0}{\lim}\ y^a w_y= -f + gw.
\end{cases}
\end{equation}
Now because of  the  regularity  assumptions on $V$ in   \eqref{vasump} and the regularity of $U$ gained  by applying Theorem \ref{reg5} in the previous  step $k-1$, we have that $f, g \in L^{\infty}$. Therefore, by repeated use of Theorem \ref{reg5}, we  conclude that for $i, j=1,...,n$,  $D_i U, U_{t} \in \Ha^{\alpha}$ up to $y=0$  in the set $\B_{M}^{+} \times (-M^2, M^2)$ for any $M>0$. For $s\geq 1/2$, we have now that  $\phi=Vu$ is in $C^{1}$  and at this point we can use Lemma \ref{reg2} to conclude the H\"older continuity of $y^a U_y$ upto $y=0$.

We next consider the case $0<s<1/2$. We first note that from the above arguments, we have that for $i=1, ..., n$, $D_i U$ satisfies in the weak sense
\begin{equation}
\begin{cases}
\operatorname{div}(y^a \nabla D_i U) = y^a (D_i U)_t,
\\
\underset{y \to 0}{\lim} y^a (D_i U)_y=  - D_i V u - V D_i u.
\end{cases}
\end{equation}
Similarly, $U_t$ satisfies
\begin{equation}
\begin{cases}
\operatorname{div}(y^a \nabla U_t) = y^a (U_t)_t,
\\
\underset{y \to 0}{\lim} y^a (U_t)_y=  - V_t u - Vu_t.
\end{cases}
\end{equation}

Since $V$ is in $C^{2}$ in this case, and it satisfies the bound \eqref{vasump},  therefore  for $i=1,...,n$, by again taking repeated difference quotients in the $x_i$  and $t$ directions, we conclude that for $i,j=1, ..., n$, $D_{ij} U, U_{tt}, D_t D_i U \in \Ha^{\alpha}$ up to $y=0$ in $\B_{M}^{+} \times (-M^2, M^2)$ for any $M>0$ where $\alpha$ is as in Theorem \ref{reg5}. Therefore, we now have $\phi= Vu$ is in $C^{2}$ and at this point we can again use Lemma \ref{reg2} to conclude the H\"older continuity of $y^aU_y$ up to $y=0$. 

\end{proof}

Before  we  proceed  further, we have the following remark.  We would like to mention that  the notation $C^{k}_{(x,t)}(\Om)$ indicates the standard isotropic $C^k$ spaces with respect to the variable $(x,t)\in \Rnn$. Such spaces are endowed with the corresponding $C^{k}$ norm.
\begin{rmrk}\label{C1}
It follows from the proof of  Lemma \ref{regU} that $u \in C^{1}_{(x,t)}(\Rnn)$.
\end{rmrk}
We finally  conclude this section  with the following $C^{1}_{(x,t)}(\Rnn)$  estimate  for $U(\cdot, y,\cdot)$.

\begin{lemma}\label{reg1}
Let $U$ be the solution to \eqref{ext}. Then, for every  $y > 0$ we have 
  \begin{equation*}
  ||U(\cdot,y,\cdot)||_{C_{(x,t)}^{1} (\R^{n+1})} \leq C(n,s) ||u||_{C^{1}_{(x,t)}(\R^{n+1})}.
  \end{equation*}
  \end{lemma}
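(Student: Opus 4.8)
The plan is to exploit the explicit Poisson-type representation of $U$ in terms of $u$ together with the stochastic completeness of the Poisson kernel. Recall from Corollary \ref{C:st} that
\[
U(x,y,t) = \int_0^\infty \int_{\Rn} P^s_y(z,\tau)\, u(x-z,t-\tau)\, dz\, d\tau,
\]
where $P^s_y(z,\tau)\ge 0$ and $\int_0^\infty\int_{\Rn} P^s_y(z,\tau)\,dz\,d\tau = 1$ for every fixed $y>0$ (this normalization is exactly what makes $U(x,0,t)=u(x,t)$ and can be checked by the same elementary Gamma-function computation used in the proof of Lemma \ref{regU1}, or by evaluating the representation on $u\equiv 1$). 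The first step is therefore to record this fact, namely that $X\to (x,y)$ fixed, the map $u\mapsto U(\cdot,y,\cdot)$ is an averaging operator against a probability kernel that is a convolution in $(x,t)$.

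Second, I would observe that convolution with a probability measure commutes with translations and is an $L^\infty$-contraction, so that for any first-order difference quotient in $x$ or in $t$ one has
\[
\frac{U(x+he_i,y,t)-U(x,y,t)}{h} = \int_0^\infty\int_{\Rn} P^s_y(z,\tau)\,\frac{u(x-z+he_i,t-\tau)-u(x-z,t-\tau)}{h}\,dz\,d\tau,
\]
and similarly for the difference quotient in $t$. By Remark \ref{C1} we already know $u\in C^1_{(x,t)}(\Rnn)$, so $D_{x_i}u$ and $u_t$ exist, are continuous, and are bounded by $\|u\|_{C^1_{(x,t)}(\Rnn)}$. Passing to the limit $h\to 0$ under the integral sign — justified by the uniform boundedness of the difference quotients of $u$ together with dominated convergence against the probability kernel $P^s_y(z,\tau)\,dz\,d\tau$ — yields
\[
D_{x_i} U(x,y,t) = \big(P^s_y \star (D_{x_i}u)\big)(x,t), \qquad U_t(x,y,t) = \big(P^s_y \star u_t\big)(x,t),
\]
where the $\star$ is convolution in $(x,t)$. (For the $t$-derivative one may alternatively differentiate under the integral sign directly, again dominated by $\|u_t\|_\infty$ times the integrable kernel.) Since convolution against a probability kernel does not increase the sup norm, each of $\|U(\cdot,y,\cdot)\|_\infty$, $\|D_{x_i}U(\cdot,y,\cdot)\|_\infty$, $\|U_t(\cdot,y,\cdot)\|_\infty$ is bounded by the corresponding quantity for $u$, and summing over $i=1,\dots,n$ and adding the zeroth-order term gives
\[
\|U(\cdot,y,\cdot)\|_{C^1_{(x,t)}(\Rnn)} \le C(n,s)\,\|u\|_{C^1_{(x,t)}(\Rnn)}.
\]

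The only genuinely delicate point is the justification of differentiation under the integral sign / passage to the limit in the difference quotients; this is where I expect to spend the most care. The kernel $P^s_y$ has a mild singularity as $\tau\to 0^+$ (the factor $\tau^{-n/2-1-s}$), but this is exactly compensated by the Gaussian factor $e^{-(|z|^2+y^2)/4\tau}$ for $y>0$ fixed, so $P^s_y\in L^1(\Rn\times(0,\infty))$ with unit mass; since the difference quotients of $u$ are uniformly bounded by $\|u\|_{C^1}$ (being differences of a $C^1$ function), dominated convergence applies cleanly and no further decay of $u$ at infinity is needed. An alternative, equally valid route avoiding even this is to use the semigroup representation \eqref{stbis} together with the contraction estimates \eqref{linfty} and the already established $C^1_{(x,t)}$ regularity of $u$; but the Poisson-kernel argument above is the shortest. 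Either way the constant $C(n,s)$ depends only on $n$ and $s$ through the normalization of $P^s_y$ and the number of coordinate directions, and in particular is independent of $y$.
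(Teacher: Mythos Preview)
Your proof is correct and follows exactly the approach the paper indicates: the paper's proof is a one-line reference to the Poisson representation \eqref{pos} (citing the elliptic analogue in \cite{Yu}), and you have simply written out the details of that argument---the kernel $P^s_y$ is a nonnegative convolution kernel in $(x,t)$ with unit mass, so derivatives in $x$ and $t$ pass through and the $L^\infty$ bounds follow. One minor remark: since each term in the $C^1$ norm is preserved under convolution with a probability measure, you actually get $C(n,s)=1$, so the constant's dependence on $n,s$ is cosmetic.
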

  
  \begin{proof}
  This follows from  the representation \eqref{pos} of $U$, and the proof is similar to the time-independent case of Proposition 2.8 in \cite{Yu}.
    
\end{proof}


 \section{Monotonicity of the frequency}\label{S:mono}
 
In this section we consider a solution $u\in \operatorname{Dom}(H^s)$ to the equation \eqref{e00}, and denote by $U$ the corresponding solution 
of the extension problem \eqref{ext}, where as before $a = 1-2s$. Given such $U$, we introduce the height  $H(U,r)$ of $U$, its energy $I(U,r)$ and the \emph{frequency} $N(U,r)$ of $U$, see Definition \ref{D:fre} below. We intend to study the monotonicity properties of $N(U,r)$ as a function of $r>0$.
Our main objective is proving Theorem \ref{mon1}. 

Henceforth, for $X\in \Rnp$ and $t<0$ we let 
\[
\Ga = \Ga(X,t) = \Ga(0,0,0,x,y,t)  = (-4\pi t)^{-\frac{n+1}{2}} e^{\frac{|X|^2}{4t}} = (4\pi |t|)^{-\frac{n+1}{2}} e^{-\frac{|X|^2}{4|t|}}
\]
 denote the  backward heat kernel in $\Rnn \times \R$ centered at $(0,0,0)$. We thus have for $X\in \Rnp$ and $t<0$
\begin{equation}\label{f}
\nabla \Ga= \frac{X}{2t} \Ga,
\end{equation}
and
\begin{equation}\label{g}
\Delta \Ga + \Ga_t=0.
\end{equation}
Following \cite{DGPT}, for $t<0$ we introduce the quantity
\begin{equation}\label{h1}
h(U,t)=  \int_{\Rnp} y^{a}\ U(X,t)^2\ \Ga(X,t)\ dX,
\end{equation}
where $\Rnp = \{X = (x,y)\in \Rnn\mid y>0\}$. We also define the quantity  
for $t<0$ 
\begin{equation}\label{i}
i(U, t)= - t \int_{ \Rnp}  y^a |\nabla U(X,t)|^2 \Ga(X,t)\ dX + t  \int_{ \R^{n}\times \{0\}} V(x,t) u(x,t)^2 \Ga(x,0,t)\ dx.
\end{equation}
Before proceeding, it is important to note that  \eqref{sup}  above and Lemma \ref{reg1} imply that $i(U,t)$ is finite. We have in fact $|\nabla U(X,t)|^2 = |\nabla_x U(X,t)|^2 + U_y(X,t)^2$. Now, \eqref{sup} gives for every $X\in \Rnp$ and $t<0$
\[
U_y(X,t)^2  \leq \left(\frac{2^{1-2s}\G(1-s)}{\G(s)}   ||V||_{L^\infty(\R^{n+1})}||u||_{L^{\infty}(\R^{n+1})}\right)^2 y^{-2a}. 
\]
Therefore, 
\begin{align}\label{g1}
\int_{ \Rnp}  y^a U_y(X,t)^2 \Ga(X,t)\ dX & \le C (4\pi |t|)^{-\frac{n+1}{2}} \int_{ \Rnp}  y^{-a}  e^{-\frac{|X|^2}{4|t|}}\ dX
\\
& = C (4\pi |t|)^{-\frac{n+1}{2}} \int_{\Rn} e^{-\frac{|x|^2}{4|t|}} dx \int_0^\infty y^{-a}  e^{-\frac{y^2}{4|t|}} dy < \infty,
\notag
\end{align}
since $a<1$. On the other hand, Lemma \ref{reg1} gives 
 \begin{equation*}
|\nabla_x U(X,t)| \leq C(n,s) ||u||_{C^{1}_{(x,t)}(\R^{n+1})},
  \end{equation*}
and therefore 
\begin{align}\label{g2}
\int_{ \Rnp} y^a |\nabla_x U(X,t)|^2 \Ga(X,t)\ dX & \le C (4\pi |t|)^{-\frac{n+1}{2}} \int_{ \Rnp}  y^{a}  e^{-\frac{|X|^2}{4|t|}}\ dX < \infty,
\end{align}
since $a>-1$. Finally, we have
\[
\int_{ \R^{n}} |V(x,t) u(x,t)^2 \Ga(x,0,t)|\ dx \le ||V||_{L^\infty(\Rnn)}||u||^2_{L^\infty(\Rnn)} (4\pi |t|)^{-\frac{n+1}{2}} \int_{ \R^{n}} e^{-\frac{|x|^2}{4|t|}} dx < \infty.
\]

Our first result is the following alternate expression of the energy $i(t)$.

\begin{lemma}\label{L:ibi}
For every $t>0$ one has
\begin{equation}\label{i1}
i(t) =  t \int_{\Rnp}  y^{a} \left(UU_t + U < \nabla U, \frac{X}{2t}>\right) \Ga = \frac 12  \int_{\Rnp}  y^{a} U ZU \Ga,
\end{equation}
where $Z$ is the vector field in \eqref{z1}, \eqref{z11} above.
\end{lemma}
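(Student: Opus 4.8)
The plan is to start from the definition \eqref{i} of $i(U,t)$ and integrate by parts in the $X$ variable over $\Rnp$, exploiting the equation $\operatorname{div}(y^a\nabla U)=y^aU_t$ together with the Neumann condition $\lim_{y\to0^+}y^aU_y=-Vu$ in \eqref{ext}. The algebraic heart of the matter is the pointwise identity, valid for $t<0$,
\[
\operatorname{div}\big(y^a\Ga\,U\nabla U\big) = y^a\Ga\,|\nabla U|^2 + y^a\Ga\,U\Big(U_t + <\nabla U,\tfrac{X}{2t}>\Big) = y^a\Ga\,|\nabla U|^2 + \frac{y^a\Ga}{2t}\,U\,ZU ,
\]
where the first equality is obtained by expanding the divergence, using $\nabla\Ga=\frac{X}{2t}\Ga$ from \eqref{f} and the equation satisfied by $U$, and the second uses the identity \eqref{z11} relating $\frac{ZU}{2t}$ to $U_t + <\nabla U,\frac{X}{2t}>$.

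\textbf{The computation.} Integrating the displayed identity over $\Rnp$ and applying the divergence theorem, the only surviving boundary term is the one on $\{y=0\}$, whose outward normal (relative to $\Rnp$) is $-e_y$; it equals $-\int_{\Rn\times\{0\}}\Ga\,U\,(y^aU_y)\big|_{y=0^+}=\int_{\Rn\times\{0\}}Vu^2\,\Ga$, by the Neumann condition. Hence
\[
\int_{\Rnp}y^a|\nabla U|^2\Ga = \int_{\Rn\times\{0\}}Vu^2\,\Ga - \frac{1}{2t}\int_{\Rnp}y^a\,U\,ZU\,\Ga .
\]
Multiplying by $-t$ and adding $t\int_{\Rn\times\{0\}}Vu^2\Ga$ gives at once $i(t)=\frac12\int_{\Rnp}y^a\,U\,ZU\,\Ga$, and rewriting $ZU$ again via \eqref{z11} ($\frac12 U\,ZU = t\,U(U_t+<\nabla U,\frac{X}{2t}>)$) produces the first expression in \eqref{i1}.

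\textbf{Main obstacle.} The only nontrivial point is the justification of the integration by parts: one must check that $y^a\Ga\,U\nabla U$ is integrable on $\Rnp$ and that the flux through large spheres tends to zero, and that the trace of $y^aU_y$ on $\{y=0\}$ is meaningful. Integrability and the decay at spatial infinity follow from the Gaussian weight $\Ga$ together with bounds already established: $U\in L^\infty$ (Corollary \ref{bdd}) and $\nabla_xU\in L^\infty$ (Lemma \ref{reg1}) handle the horizontal part since $a>-1$, while the estimate $|U_y|\le C\,y^{-a}$ from \eqref{sup} handles the $U_y$ part since $a<1$ --- in fact the finiteness of all the integrals at play has been verified in \eqref{g1}--\eqref{g2}. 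The trace on $\{y=0\}$ is legitimate because $y^aU_y$ is H\"older continuous up to $\{y=0\}$ by Lemma \ref{regU}. Concretely, I would carry out the argument first on the truncated regions $\{|x|<R,\ \delta<y<1/\delta\}$, use the above estimates to let $R\to\infty$ and $\delta\to0$, and invoke Lemma \ref{regU} to pass to the limit in the boundary term at $y=0$.
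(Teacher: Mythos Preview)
Your proposal is correct and follows essentially the same approach as the paper: integrate by parts the term $\int_{\Rnp}y^a|\nabla U|^2\Ga$ using the equation $\operatorname{div}(y^a\nabla U)=y^aU_t$ and $\nabla\Ga=\frac{X}{2t}\Ga$, pick up the boundary term $\int_{\Rn\times\{0\}}Vu^2\Ga$ via the Neumann condition, and justify the limiting procedure with the same ingredients (the bounds \eqref{g1}--\eqref{g2}, \eqref{sup}, Lemma \ref{reg1}, and the H\"older continuity of $y^aU_y$ from Lemma \ref{regU}). The only cosmetic difference is that you pack $\Ga$ inside the divergence from the outset, whereas the paper writes $y^a|\nabla U|^2=\tfrac12\operatorname{div}(y^a\nabla U^2)-\tfrac12 y^a(U^2)_t$ and then integrates by parts against $\Ga$; the paper also truncates only near $\{y=0\}$ (working on $\Rn\times\{y>\ve\}$) rather than in all directions, but the estimates you cite handle both regimes equally well.
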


\begin{proof}
To verify \eqref{i1} we first observe that, thanks to \eqref{g1}, \eqref{g2} and dominated  convergence, we can write 
\begin{equation}\label{eps}
\int_{\Rnp}  y^a |\nabla U|^2 \Ga = \underset{\ve\to 0^+}{\lim} \int_{\Rn\times \{y > \ve\}}  y^a |\nabla U|^2 \Ga.
\end{equation}
From the equation $y^{a} U_t = \operatorname{div}(y^{a} \nabla U)$ satisfied by $U$, see \eqref{ext}, we see that on the set $\Rn \times \{y > \ve\}$ we have
\begin{align*}
y^a |\nabla U|^2  & = \frac 12 \operatorname{div}(y^{a} \nabla U^2) - \frac 12 y^{a} (U^2)_t  
\end{align*}
Therefore,  the divergence theorem and \eqref{f} give
\begin{align}\label{ve}
& \int_{\Rn\times \{y >\ve\}} y^a |\nabla U|^2 \Ga=  -\int_{\Rn\times \{y >\ve\}}  y^a U_t U \Ga - \int_{\Rn\times \{y >\ve\}}y^a <\nabla U,\frac{X}{2t}> U \Ga
\\
& - \int_{\Rn\times \{y=\ve\}} y^a U_y U \Ga.
\notag
\end{align}
Using the H\"older continuity of $y^a U_y$ which follows from Lemma \ref{regU}, \eqref{ext} that gives 
\[
\underset{y\to 0^+}{\lim} y^{a} \frac{\p U}{\p y}(x,y,t) = - V(x,t) u(x,t),
\]
the estimate \eqref{sup} and dominated convergence, passing to the limit as $\ve \to 0$ in \eqref{ve} we reach the desired conclusion \eqref{i1}.

\end{proof}

We next introduce averaged versions of the quantities $h(U,t)$ and $i(U,t)$. Similarly to the case $s = 1/2$ in \cite{DGPT}, it is crucial to work with these new quantities since, unlike \eqref{h1} and \eqref{i}, they lead to a priori estimates which are essential for the compactness arguments in Section \ref{S:comp}. Before proceeding further, for every $r>0$ we introduce the notation:
\begin{equation}\label{strips}
\begin{cases}
 \Sa_r= \{(X,t)|X\in \Rnn,\ \  -r^2 < t< 0\},
 \\
\Sa_{r}^{+} =\{(X,t)\mid X\in \Rnp,\ \  -r^2 <t <0\},
\\  
 S_r =\{(x,t)\mid x\in \Rn,\ \  -r^2 <t < 0\}.
 \\
 \end{cases}
 \end{equation}

\begin{dfn}\label{D:fre}
We define the \emph{height function} of $U$ as
\begin{equation}\label{H}
H(U,r)= \frac{1}{r^2} \int_{-r^2}^0 h(U,t) dt = \frac{1}{r^2} \int_{\Sa_{r}^{+}} y^{a} U(X,t)^2 \Ga(X,t) dX dt,
\end{equation}
and the \emph{energy} of $U$ as
\begin{align}\label{I100}
I(U,r) & =  \frac{1}{r^2}\int_{-r^2}^ 0 i(U,t) dt = \frac{1}{r^2} \int_{\Sa_{r}^{+}} |t|  y^a |\nabla U(X,t)|^2 \Ga(X,t) dX dt
\\
& - \frac{1}{r^2} \int_{S_r} |t| V(x,t) u(x,t)^2 \Ga(x,0,t) dx dt.
\notag
\end{align}
For those values of $r>0$ for which $H(U,r)\not= 0$ the \emph{frequency} of $U$ is defined as 
\begin{equation}\label{N100}
N(U,r)= \frac{I(U,r)}{H(U,r)}.
\end{equation}
\end{dfn}

\begin{rmrk}\label{R:nondeg}
We note that, unless $U(X,t) \equiv 0$ in $\Sa_{r}^{+}$, we must have $H(U,r) \not= 0$.
Otherwise, we would have a contradiction from \eqref{H}. 
\end{rmrk}

We record for later use the following consequence of Lemma \ref{L:ibi} and of definition \eqref{I100}.

\begin{lemma}\label{L:I}
For every $r>0$ one has
\begin{equation}\label{e91}
2 I(U,r) = \frac{1}{r^2} \int_{\Sa_{r}^{+}} y^a U ZU \Ga dX dt.
\end{equation}
\end{lemma}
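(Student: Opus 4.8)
The plan is to obtain \eqref{e91} as an immediate consequence of Lemma \ref{L:ibi} together with the definition \eqref{I100} of the energy. By definition one has $I(U,r)= \frac{1}{r^2}\int_{-r^2}^0 i(U,t)\,dt$, so it suffices to integrate the pointwise-in-$t$ identity furnished by Lemma \ref{L:ibi} and then interchange the order of integration.

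\textbf{Step 1.} Recall from \eqref{i1} that for every $t<0$,
\[
i(U,t) = \frac 12 \int_{\Rnp} y^{a}\, U\, ZU\, \Ga\, dX,
\]
where $ZU = \langle X,\nabla U\rangle + 2t U_t$ as in \eqref{z1}, \eqref{z11}; note that $U\,ZU = 2t\big(UU_t + U\langle \nabla U,\tfrac{X}{2t}\rangle\big)$, which is exactly the integrand in the first equality of \eqref{i1}.

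\textbf{Step 2.} Insert this expression into \eqref{I100}:
\[
2 I(U,r) = \frac{2}{r^2}\int_{-r^2}^0 i(U,t)\, dt = \frac{1}{r^2}\int_{-r^2}^0 \left(\int_{\Rnp} y^{a}\, U\, ZU\, \Ga\, dX\right) dt,
\]
and apply Fubini's theorem to rewrite the double integral as $\frac{1}{r^2}\int_{\Sa_r^+} y^a U ZU \Ga\, dX dt$, which is the claimed identity.

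\textbf{Step 3 (the only point needing care).} Justify the use of Fubini, i.e. the absolute integrability of $y^a U ZU \Ga$ over $\Sa_r^+$. This is controlled by the bounds already established in Section \ref{S:mono}: estimate \eqref{sup} and Lemma \ref{reg1} give pointwise control of $U_y$ (with the $y^{-a}$ singularity) and of $\nabla_x U$, while \eqref{g1} and \eqref{g2} show that $\int_{\Rnp} y^a|\nabla U|^2\Ga\,dX<\infty$ for each fixed $t<0$; together with the Gaussian decay of $\Ga$, the boundedness of $U$ from Corollary \ref{bdd}, and the fact that $|t|\le r^2$ on $\Sa_r^+$, these yield $\int_{\Sa_r^+} y^a |U|\,|ZU|\,\Ga\, dX dt<\infty$. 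Hence Fubini applies and \eqref{e91} follows. I expect no genuine obstacle here: the lemma is essentially a bookkeeping restatement of Lemma \ref{L:ibi}, and the substantive input—finiteness of the integrals involved—has already been verified in the preceding discussion.
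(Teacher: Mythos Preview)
Your proposal is correct and matches the paper's approach exactly: the paper does not even write out a proof, but simply records Lemma~\ref{L:I} as an immediate consequence of Lemma~\ref{L:ibi} and the definition~\eqref{I100}. Your Step~3 (the Fubini justification via the bounds \eqref{sup}, \eqref{g1}, \eqref{g2}, Lemma~\ref{reg1}, and Corollary~\ref{bdd}) makes explicit what the paper leaves implicit, but adds no new ideas.
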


Henceforth, to simplify the notation, whenever convenient we will simply write $h(t)$ and $ H(r)$ instead of $h(U,t)$ and $H(U,r)$ respectively. Similarly, we will denote $i(U,t)$ by $i(t)$,  $I(U,r)$ by $I(r)$ and $N(U,r)$ by $N(r)$. Also, unless there is risk of confusion, as in Section \ref{S:nltol} we will routinely avoid writing explicitly $dX, dx, dt, dr$, etc., in the integrals involved. Thus, for instance, \eqref{h1} will be written as
\[
h(t)=  \int_{\Rnp} y^{a} U^2 \Ga.
\]

The next result plays a basic role in what follows. 

\begin{lemma}[First variation of the height]\label{L:fvh}
For every $r>0$ one has
\begin{equation}\label{mon10}
H'(r) = \frac{4}{r} I(r) + \frac{a}{r} H(r).
\end{equation}
\end{lemma}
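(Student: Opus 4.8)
The plan is to compute $H'(r)$ by first passing to parabolically rescaled variables, which makes the dependence on $r$ explicit and isolates the derivative onto the function $U$ itself, and then to recognize the resulting expression via the dilation generator $Z$ and Lemma \ref{L:I}.

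First I would substitute $X = rZ$, $t = r^2\tau$ (with $Z\in\Rnp$, $\tau\in(-1,0)$) in the definition \eqref{H} of $H(U,r)$. Using the homogeneity of the backward heat kernel, $\Ga(rZ,r^2\tau) = r^{-(n+1)}\Ga(Z,\tau)$, together with $y^a dX\,dt \mapsto r^{a+n+3}\,dZ\,d\tau$, the $r^{-2}$ and the $r^{-(n+1)}$ cancel the Jacobian factors and leave the clean identity
\begin{equation*}
H(U,r) = r^{a}\int_{-1}^{0}\!\int_{\Rnp} y^{a}\,U(rZ,r^2\tau)^2\,\Ga(Z,\tau)\,dZ\,d\tau .
\end{equation*}
Note the point of the rescaling: the limits of the $\tau$-integral are now the \emph{constants} $-1$ and $0$, so differentiating in $r$ produces no boundary terms. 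Next I would differentiate under the integral sign. The first contribution comes from the factor $r^a$ and equals $\tfrac{a}{r}H(U,r)$; the second is
\begin{equation*}
2r^{a}\int_{-1}^{0}\!\int_{\Rnp} y^{a}\,U(rZ,r^2\tau)\,\frac{d}{dr}\big[U(rZ,r^2\tau)\big]\,\Ga(Z,\tau)\,dZ\,d\tau .
\end{equation*}
Now I would use that the parabolic dilations $\delta_\la(X,t)=(\la X,\la^2 t)$ have infinitesimal generator $Z$ in the sense of \eqref{z1}, which gives the pointwise identity $\frac{d}{dr}[U(rZ,r^2\tau)] = \frac1r (ZU)(rZ,r^2\tau)$. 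Plugging this in, undoing the change of variables $Z\mapsto X/r$, $\tau\mapsto t/r^2$, and invoking Lemma \ref{L:I}, the second contribution becomes $\frac{2}{r}\cdot\frac{1}{r^2}\int_{\Sa_r^+} y^a U\,ZU\,\Ga = \frac{4}{r}I(U,r)$. Adding the two pieces yields \eqref{mon10}.

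The main obstacle is the justification of differentiation under the integral sign, i.e. producing a fixed $L^1$ dominating function for $r$ in a neighborhood of a given radius. For the first term this is just the finiteness estimate already used for $h(t)$ in \eqref{g1}--\eqref{g2}, combined with the boundedness of $U$ (Corollary \ref{bdd}). For the $r$-derivative term the delicate points are (i) near $y=0$, where one needs $|U U_y|\lesssim y^{-a}$ from the bound \eqref{sup} so that $y^a U\,U_y$ stays integrable since $a<1$, and (ii) for $|Z|$ large, where the Gaussian factor $\Ga(Z,\tau)$ absorbs the at most linear growth of $|\nabla U|$ and $|U|$ coming from Lemma \ref{reg1}; the $\tau$-integration near the endpoints is harmless because, after the substitution $Z=\sqrt{|\tau|}W$, the relevant mass is $O(|\tau|^{a/2})$, which is integrable on $(-1,0)$ as $a>-1$. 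Once domination is in place, dominated convergence legitimizes the differentiation and the computation above goes through verbatim.
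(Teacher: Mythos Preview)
Your proposal is correct and the algebra is clean. The approach is, however, organized differently from the paper's, and it is worth spelling out the contrast.

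The paper does not rescale first. It works at the level of the time-sliced quantity $h(t)=\int_{\Rnp}y^aU^2\Ga$, differentiates in $t$, replaces $\Ga_t$ by $-\Delta\Ga$, and integrates by parts to obtain $h'(t)=\tfrac{2}{t}i(t)+\tfrac{a}{2t}h(t)$ (this is exactly where Lemma \ref{L:ibi} enters, in the same place you invoke its averaged version Lemma \ref{L:I}). Only afterwards does the paper pass to $H(r)=\int_{-1}^0 h(r^2t)\,dt$ and prove $H'(r)=2r\int_{-1}^0 t\,h'(r^2t)\,dt$; because $h'(t)$ is singular as $t\to 0^-$, this last step is justified by a truncation $H_\ve(r)=\int_{-1}^{-\ve}h(r^2t)\,dt$ and a uniform-in-$r$ convergence argument on intervals $[\delta,1]$. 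Your route bypasses the separate computation of $h'(t)$ and the truncation by front-loading the parabolic substitution $X=rZ$, $t=r^2\tau$, so that the only $r$-dependence sits inside $U(rZ,r^2\tau)$ and the derivative is identified with $\tfrac1r ZU$ directly; this is arguably slicker.

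Two small points on your justification paragraph. First, the phrase ``at most linear growth of $|\nabla U|$ and $|U|$ coming from Lemma \ref{reg1}'' is not quite right: Lemma \ref{reg1} (together with Corollary \ref{bdd}) gives uniform \emph{boundedness} of $U$, $\nabla_x U$, $U_t$, while $U_y$ is controlled by \eqref{sup}; the linear factor in your dominating function comes from the explicit $X$ in $ZU=\langle X,\nabla U\rangle+2tU_t$, not from any growth of the derivatives themselves. Second, the integrability in $\tau$ near $0$ is indeed what matters: after $Z=\sqrt{|\tau|}W$ the three contributions to $z_{n+1}^a\,|U\,ZU|\,\Ga(Z,\tau)$ integrate in $Z$ to quantities of order $|\tau|^{(a+1)/2}$, $|\tau|^{1/2}$, $|\tau|^{1+a/2}$, each integrable on $(-1,0)$ since $a>-1$. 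This is the precise form of the domination you gestured at; the paper achieves the same effect through its $H_\ve$-truncation instead.
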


\begin{proof}
As a first step we calculate $h'$. We write
\[
h(t) = \int_{\Rnp} F(X,t) dX,
\]
where $F(X,t) = y^{a} U(X,t)^2 \Ga(X,t)$. If we know that for a.e. $X\in \Rnp$ there exists $\frac{\p F}{\p t}(X,t) $ and that
\[
\frac{\p F}{\p t}(X,t) = 2 y^a U(X,t) U_t(X,t) \Ga(X,t) + y^{a} U(X,t)^2 \Ga_t(X,t)
\]
has a dominant in $X\in \Rnp$ which is in $L^1(\Rnp)$ and which is uniform in $t$, then differentiating under the integral sign we can conclude that
\begin{equation}\label{hh}
h'(t) = \int_{\Rnp} \frac{\p F}{\p t}(X,t) dX = \int_{\Rnp} y^a ( 2UU_t \Ga + U^2 \Ga_t).
\end{equation}
Of course, for any given $t<0$ it suffices to verify such uniform integrability in a small interval $(t-\delta,t+\delta)$. Now the desired conclusion follows immediately since from Lemma \ref{reg1} we know that $U, U_t\in L^\infty(\Rnn)$. Therefore, \eqref{hh} holds true. Using \eqref{g}, we now obtain
\begin{equation}\label{e1}
h'(t)= \int_{\Rnp} y^a \left(2 UU_t \Ga -U^2 \Delta \Ga\right).
\end{equation}
We next integrate by parts the second term in the right-hand side of \eqref{e1}. This can be justified by first performing the integration by parts over the interior regions $\Rn\times \{y > \ve \}$, using \eqref{f}, and then pass to the limit $\ve \to 0$ using the estimates in  \eqref{sup} and  Lemma \ref{reg1}. We note that for a given $\ve$ such integration by parts would produce the boundary term
\[
\int_{\Rn \times \{y=\ve\}} \frac{ \ve^{1+a} }{2t} U^2 \Ga
\]
which, thanks to the boundedness of $U$ and the fact that $a>-1$, converges to $0$  as $\ve \to 0$. 
We thus find 
\begin{equation}\label{e'}
h'(t) = \int_{\Rnp} y^ a \left( 2 UU_t\Ga + 2 U<\nabla U, \frac{X}{2t} > \Ga\right) + \int_{\Rnp} a y^{a-1} \frac{y}{2t} U^2 \Ga.
\end{equation}

From \eqref{e'}, Lemma \ref{L:ibi} and \eqref{h1} it is now obvious that we have proved
\begin{equation}\label{h''}
h'(t) = \frac{2}{t} i(t) + \frac{a}{2t} h(t).
\end{equation}
We next use \eqref{h''} to compute $H'(r)$. The first step is to observe that
\eqref{H} and \eqref{I100} give
\begin{equation}\label{HI}
H(r)= \int_{-1}^0 h(r^2 t) dt,\ \ \ \ \  I(r)= \int_{-1}^0 i(r^2 t) dt.
\end{equation}
We next want to differentiate under the integral sign 
to obtain
\begin{equation}\label{dh1}
H'(r)=  2r \int_{-1}^0  t h'(r^2 t) dt.
\end{equation}
Suppose for a moment we have proved \eqref{dh1}. Then, from \eqref{dh1} and \eqref{h''} we obtain
\begin{align*}
H'(r) & = 2r \int_{-1}^0  \left(\frac{2}{r^2} i(r^2 t) + \frac{a}{2 r^2} h(r^2 t)\right) dt
\\
& = \frac 4r I(r) + \frac{a}{r} H(r),
\end{align*}
which gives the desired conclusion \eqref{mon10}. 

We are thus left with proving \eqref{dh1}. One obstruction to directly differentiating under the integral sign is represented by the fact that the
integrals involved in \eqref{e'} may become unbounded near the endpoint $t=0$, where $\Ga$
becomes singular. To remedy this problem we introduce the following
truncated versions of $H$ and $I$. 
For a given $\ve>0$ we consider 
\[
H_{\ve}(r) = \frac{1}{r^2} \int_{-r^2}^{-\ve r^2}   h(t) dt = \int_{-1}^{-\ve} h(r^2 t)dt.
\]
We note that from the expression of $h'$ in \eqref{e'}, using \eqref{sup}, the estimate in Lemma \ref{reg1} and dominated convergence, we deduce that
\[
H_{\ve}'(r)= 2r \int_{-1}^{-\ve} t h'(r^2 t) dt.
\]
Here, we have crucially used the fact that $h'(t)$ is evaluated at points $t$  such that  $t < - \ve$, and therefore uniform bounds are available.

Now fix $\delta>0$ arbitrarily. We claim that as $\ve \to 0^+$:
\begin{itemize}
\item[(i)] $\underset{r\in [\delta,1]}{\sup} \left|H_\ve(r) - H(r)\right| \longrightarrow 0$;
\item[(ii)] $\underset{r\in [\delta,1]}{\sup} \left|H_{\ve}'(r) - 2r \int_{-1}^{0}  t h'(r^2 t)dt\right| \longrightarrow 0$.
\end{itemize}
Taking the claim for granted, from it we infer that for $r \in [\delta,1]$
\[
H'(r) =  2r \int_{-1}^{0}  t h'(r^2 t)dt,
\]
which is \eqref{dh1}. The arbitrariness of $\delta$ implies that \eqref{dh1} holds for $ r \in (0, 1]$, thus completing the proof. We are thus left with proving the claim. We first prove (i).
To see this, observe that Lemma \ref{reg1} gives
\begin{align*}
\left|H_\ve(r) - H(r)\right| & \le \int_{-\ve}^0 |h(r^2 t)| dt =   \int_{-\ve}^0 \int_{\Rnp} y^{a} U(X,r^2 t)^2 \Ga(X,r^2 t) dX dt
\\
& \le C \int_{-\ve}^0 \int_{\Rnp} y^{a} \Ga(X,r^2 t) dX dt \le C r^a \int_{-\ve}^0 |t|^{a/2} dt
\\
& \le C \delta^{-|a|} \int_{-\ve}^0 |t|^{a/2} dt \to 0
\end{align*}
as $\ve \to 0^+$, uniformly in $r\in [\delta,1]$. This proves (i). Next, we establish (ii). Using \eqref{e'} and \eqref{sup}, which gives 
\begin{equation}\label{est50}
<\nabla U(X,t),X> \ \leq C (|x| + y^{1-a}),
\end{equation}
we obtain 
\begin{align*}
& \left|H_{\ve}'(r) - 2r \int_{-1}^{0}  t h'(r^2 t)dt\right| \leq 2r \int_{-\ve}^0 |t| |h'(r^2 t)| dt
\\
& \le r \int_{-\ve}^0 |t| \int_{\Rnp} y^a \bigg|4 U(X,r^2 t) U_t(X,r^2 t) + U(X,r^2 t)<\nabla U(X,r^2 t), \frac{X}{r^2 |t|} >
\\
& +  \frac{a}{r^2 |t|} U(X,r^2 t)^2\bigg|\ \Ga(X,r^2 t) dX dt
\\
& \le C  r \int_{-\ve}^0 |t| \int_{\Rnp} y^a \Ga(X,r^2 t) dX dt + \frac Cr \int_{-\ve}^0 \int_{\Rnp} y^a (|x| + y^{1-a}) \Ga(X,r^2 t) dX dt
\\
& + \frac Cr \int_{-\ve}^0 \int_{\Rnp} y^a \Ga(X,r^2 t) dX dt
\\
& \le C  \int_{-\ve}^0 |t|^{1+\frac a2} dt + C \delta^{-|a|} \int_{-\ve}^0 |t|^{\frac{1+a}{2}} dt + C \int_{-\ve}^0 |t|^{\frac 12} dt 
\longrightarrow 0
\end{align*}
as $\ve \to 0^+$, uniformly in $r\in [\delta,1]$. This completes the proof of the lemma.

\end{proof}

The following result concerning the first variation of the energy $i(t)$ plays a central role in the proof of Theorem \ref{mon1}.

\begin{lemma}[First variation of the energy $i(t)$]\label{L:fvei}
For every $t\in (-1,0)$ one has
\begin{align}\label{e7}
& i'(t) =  \frac{a}{2t} i(t) +  2t\int_{\Rnp} 
 y^{a} \left(U_t + < \nabla U, \frac{X}{2t}>\right)^2\Ga
\\
& + \frac{1-a}{2} \int_{\Rn\times\{0\}} Vu^2 \Ga + t \int_{\Rn\times\{0\}} V_t u^2 \Ga + \frac 12 \int_{\Rn\times\{0\}} <\nabla V,x> u^2\Ga.
\notag
\end{align}
\end{lemma}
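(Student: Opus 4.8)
The plan is to differentiate the identity \eqref{i1} for $i(t)$, namely
\[
i(t) = t\int_{\Rnp} y^{a}\left(UU_t + U\langle \nabla U,\tfrac{X}{2t}\rangle\right)\Ga,
\]
with respect to $t$. Setting $v = U_t + \langle \nabla U,\tfrac{X}{2t}\rangle = \tfrac{ZU}{2t}$, we have $i(t) = t\int_{\Rnp} y^a U v\,\Ga$, so formally
\[
i'(t) = \int_{\Rnp} y^a U v\,\Ga + t\int_{\Rnp} y^a \p_t(U v\,\Ga) = \frac{i(t)}{t} + t\int_{\Rnp} y^a\big(U_t v\,\Ga + U v_t\,\Ga + U v\,\Ga_t\big).
\]
As in the proof of Lemma \ref{L:fvh}, the differentiation under the integral sign and all integrations by parts below must first be carried out on the truncated regions $\Rn\times\{y>\ve\}$ (and, where the kernel is singular at $t=0$, on truncated time intervals), and then one passes to the limit using the pointwise bounds \eqref{sup}, \eqref{est50}, Lemma \ref{reg1}, and the H\"older continuity of $y^aU_y$ from Lemma \ref{regU}; I will simply invoke this as ``arguing as in Lemma \ref{L:fvh}'' rather than repeat it.

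\textbf{Key steps.} First I would rewrite $U_t v = (v - \langle\nabla U,\tfrac X{2t}\rangle)v = v^2 - \langle\nabla U,\tfrac X{2t}\rangle v$, which produces the desired term $2t\int y^a v^2\Ga$ after accounting for the factor in front; the leftover pieces must combine with the $Uv_t\Ga$ and $Uv\Ga_t$ terms. For the $Uv\,\Ga_t$ term, use \eqref{g}, i.e. $\Ga_t = -\Delta\Ga$, and $\nabla\Ga = \tfrac X{2t}\Ga$. For the $Uv_t\,\Ga$ term, compute $v_t = U_{tt} + \langle\nabla U_t,\tfrac X{2t}\rangle - \langle\nabla U,\tfrac X{2t^2}\rangle$; here one substitutes the equation $y^aU_t = \operatorname{div}(y^a\nabla U)$ (and its $t$-derivative) to trade time derivatives for spatial ones. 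The main mechanism is a Rellich–Ne\v cas / Pohozaev-type integration by parts: one integrates $t\int y^a U v_t\,\Ga$ by parts in $X$, using the vector field $X$ as multiplier, which generates (a) the bulk term $-i(t)$-type contributions and the weight-derivative term giving the $\tfrac{a}{2t}i(t)$ coefficient, and (b) a boundary term on $\{y=0\}$. The boundary term is where the potential enters: using the Neumann condition $\lim_{y\to 0}y^aU_y = -Vu$ from \eqref{ext}, together with $ZU$ restricted to $y=0$ being $\langle x,\nabla_x u\rangle + 2tu_t$, one produces integrals of $Vu^2\Ga$, $V_tu^2\Ga$, and $\langle\nabla V,x\rangle u^2\Ga$ over $\Rn\times\{0\}$. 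The homogeneity weights on the $\Ga$-Gaussian (the factor $|x|^2/(4|t|)$ differentiated in $t$) and on the weight $y^a$ conspire to give the exact constants $\tfrac{1-a}{2}$, $t$, and $\tfrac12$ appearing in \eqref{e7}.

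\textbf{Main obstacle.} The delicate point is the careful bookkeeping of the boundary terms at $\{y=0\}$ and the exact rational constants in front of $\int Vu^2\Ga$. Several integrations by parts in $x$, $y$, and $t$ each drop a boundary contribution, and one must also differentiate $V(x,t)$ itself (this is precisely why the hypothesis \eqref{vasump} controls $\|V\|_{C^1}$ or $\|V\|_{C^2}$ and $\|\langle\nabla_x V,x\rangle\|_\infty$): the term $t\int_{\Rn\times\{0\}} V_t u^2\Ga$ comes from $\p_t$ hitting $V$, and $\tfrac12\int\langle\nabla V,x\rangle u^2\Ga$ from the $x$-integration by parts hitting $V$. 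Getting the coefficient $\tfrac{1-a}{2}$ right requires combining the contribution from $\p_t$ hitting the $|t|^{-(n+1)/2}$ prefactor of $\Ga$ with the boundary term from integrating the $y^a$-weighted divergence identity by parts near $y=0$, where the Neumann condition is applied; the near-$y=0$ boundary terms that would otherwise appear must be shown to vanish using $a<1$ and the regularity of $y^aU_y$ exactly as in Lemma \ref{L:ibi}. Once the algebra is organized so that every integration by parts is justified on the truncated domains and the limits taken, \eqref{e7} follows by collecting terms.
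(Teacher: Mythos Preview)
Your formal strategy---differentiating the alternate representation \eqref{i1} rather than the original definition \eqref{i}---is a legitimate reorganization and after the Rellich--Pohozaev integrations by parts leads to the same algebra as the paper; so at the level of the computation there is no real difference. The gap is in the justification, and it is not minor.

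You write that the differentiation under the integral and the integrations by parts can be handled ``arguing as in Lemma \ref{L:fvh}.'' But the proof of Lemma \ref{L:fvh} only uses $U,U_t\in L^\infty$ (Lemma \ref{reg1}) and the H\"older continuity of $y^aU_y$; it never touches second time derivatives or $\nabla U_t$. Your computation of $v_t$ produces $U_{tt}$, and after substituting the equation you still need $\nabla U_t$ and in particular $(U_t)_y$. Under the standing hypotheses \eqref{vasump}, none of these quantities are known to be integrable against $y^a\Ga$ over all of $\Rnp$; the estimates \eqref{sup}, \eqref{est50} and Lemma \ref{reg1} say nothing about them.

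The paper deals with this by a two-layer approximation that goes well beyond what you indicate. In Step~1 it imposes a \emph{qualitative} extra assumption $\|u\|_{C^3},\|V\|_{C^3}\le K'$; via the Poisson representation \eqref{pos} and Lemma \ref{reg2} this yields pointwise control of $(U_t)_y$ of the form $\|(U_t)_y(\cdot,y,\cdot)\|_\infty\le Cy^{-(1-2s)}$ for $0<y\le 1$ and $\le C/y$ for $y>1$, which is exactly what is needed to make the intermediate integrals involving $\nabla U_t$ finite. Step~2 carries out the computation on $\Rn\times\{y>\ve\}$ and lets $\ve\to 0$. Step~3 then \emph{removes} the $C^3$ assumption by a separate limiting argument: introduce a compactly supported spatial cutoff $\tau_\ve$, set $i_\ve(t)=-t\int_{\{y>\ve\}}y^a|\nabla U|^2\Ga\,\tau_\ve+t\int Vu^2\Ga$, compute $i_\ve'(t)$ (now with extra cutoff error terms), and show $i_\ve\to i$ and $i_\ve'\to(\text{RHS of \eqref{e7}})$ uniformly on compact $t$-intervals away from $0$. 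Without something at this level your proposal does not close: the bounds you cite are insufficient to make the integrals containing $v_t$ absolutely convergent over $\Rnp$, so the formal differentiation is not justified.
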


\begin{proof}
The proof of \eqref{e7} is rather technical and to simplify its presentation we divide it into three steps.

\textbf{Step 1:} We work first under the following qualitative assumption:
\begin{equation}\label{qasump}
 ||u||_ {C^{3}(\Rnn)} \leq K',\ \ \ \ \ \ \ \  ||V||_{C^3(\R^{n+1})} \leq K',
 \end{equation} 
for some constant $K'>0$. Subsequently, in the Step $3$ using an approximation argument we remove  \eqref{qasump}.  

First of all, we note that from the representation formula \eqref{pos} the hypothesis \eqref{qasump} implies that $U(\cdot,y,\cdot) \in C^{2, \alpha}_{x,t}(\Rnn)$ for any $\alpha\in(0,1)$. Therefore, we can differentiate in $t$ and $y$, to assert the following estimate
\begin{equation}\label{x2}
||(U_{t})_{y}(\cdot,y,\cdot)||_{\infty} \leq \frac{C}{y},\ \ \ \ \ \ \ \ y>1. 
\end{equation}

Furthermore, using  the equation \eqref{ext}  and taking difference quotients in time, we see that $U_t$ is a weak solution to \eqref{reg} in Lemma \ref{reg2} above with $\phi= Vu_t + V_t u$. We note here that the application of Lemma \ref{reg2} requires $\phi= Vu_t + V_tu$ to be $C^2$ for  $0<s<1/2$, a fact that is guaranteed by \eqref{qasump}. Then, from Lemma \ref{reg2} we obtain  the following estimate which follows from the H\"older continuity of $y^a (U_t)_y$,
\begin{equation}\label{x1}
||(U_t)_{y}(\cdot,y,\cdot)||_{\infty} \leq  \frac{C}{y^{1-2s}},\ \ \ \ \ \ 0<y\le 1.
\end{equation}
The estimates \eqref{x2}, \eqref{x1} will play a key role in guaranteeing that some of the forthcoming integrals  involving $\nabla U_t$ in \eqref{dh6}, \eqref{e''}, \eqref{e'''}, \eqref{intermed}, \eqref{e4}, \eqref{e5} and  \eqref{abov}   are finite.

In order to compute $i'(r)$ we argue similarly to the calculation of $h'(r)$, using  \eqref{sup}, Lemma \ref{reg1},  Lemma \ref{regU}, \eqref{x2} and \eqref{x1} to justify differentiating under the integral sign. We thus obtain from \eqref{i}
\begin{align}\label{dh6}
& i'(t)= - \int_{\Rnp} y^{a}  |\nabla U|^2 \Ga  +  \int_{\Rn\times\{0\}} Vu^2 \Ga +2 t\int_{\Rn\times\{0\}} V u u_t \Ga+ t \int_{\Rn\times\{0\}} V_t u^2 \Ga 
\\
& + t \int_{\Rn\times\{0\}} Vu^2 \Ga_t - t \int_{\Rnp} y^{a} \left(2 <\nabla U,\nabla U_t> \Ga + |\nabla U|^2 \Ga_t\right).
\notag
\end{align}
Using \eqref{g} in the second to the last and in the last integral in the right-end side of \eqref{dh6} we find
\begin{align}\label{e''}
& i'(t) = - \int_{\Rnp} y^{a}  |\nabla U|^2 \Ga  +  \int_{\Rn\times\{0\}} Vu^2 \Ga +2 t\int_{\Rn\times\{0\}} V u u_t \Ga+ t \int_{\Rn\times\{0\}} V_t u^2 \Ga 
\\
& - t \int_{\Rn\times\{0\}} Vu^2 \Delta \Ga - t \int_{\Rnp} y^{a} \left(2 <\nabla U,\nabla U_t> \Ga - |\nabla U|^2 \Delta \Ga\right).
\notag
\end{align}
We want to evaluate the second to the last integral in the right-hand side of \eqref{e''}. We have
\begin{align*}
&  - t\int_{\Rn \times \{0\}} Vu^2 \Delta \Ga  = - t\int_{\Rn \times \{0\}} Vu^2 ( \Delta_{x} \Ga + \Ga_{yy})  
\end{align*}
where $\Delta_{x}$ denotes  the Laplace operator in the $x$ variable. Since
\[
\Ga_{yy} = \left(\frac{y^2}{4t^2} + \frac{1}{2t}\right) \Ga,
\]
we conclude that 
\[
\Ga_{yy}(x,0,t) = \frac{1}{2t} \Ga(x,0,t).
\]
Furthermore, an integration by parts in the $x$ variable, and \eqref{f}, give 
\begin{align*}
&  - t\int_{\Rn \times \{0\}} Vu^2  \Delta_{x} \Ga =  \int_{\Rn \times \{0\}} V u < \nabla u, x>  \Ga + \frac 12 \int_{\Rn \times \{0\}} u^2  <\nabla V,x> \Ga.
\end{align*}
In conclusion, we find
\begin{align*}
& - t\int_{\Rn \times \{0\}} Vu^2 \Delta \Ga = -\frac{1}{2} \int_{\Rn \times \{0\}}  Vu^2 \Ga 
\\
& + \int_{\Rn \times \{0\}} V u < \nabla u, x>  \Ga + \frac 12 \int_{\Rn \times \{0\}} u^2  <\nabla V,x> \Ga.
\notag
\end{align*}
Substituting this equation in \eqref{e''} we finally obtain
\begin{align}\label{e'''}
& i'(t) =  \frac 12 \int_{\Rn\times\{0\}} Vu^2 \Ga +2 t\int_{\Rn\times\{0\}} V u u_t \Ga+ t \int_{\Rn\times\{0\}} V_t u^2 \Ga 
\\
& + \int_{\Rn \times \{0\}} V u < \nabla u, x>  \Ga + \frac 12 \int_{\Rn \times \{0\}} u^2  <\nabla V,x> \Ga
\notag
\\
& - \int_{\Rnp} y^{a}  |\nabla U|^2 \Ga  - t \int_{\Rnp} y^{a} \left(2 <\nabla U,\nabla U_t> \Ga - |\nabla U|^2 \Delta \Ga\right).
\notag
\end{align}
 
Before we proceed further we warn the reader that the computations in \eqref{intermed}-\eqref{e6} below are purely formal. Such computations will be rigorously justified in the subsequent Step $2$. We have chosen to keep such formal intermediate step to better demonstrate the main ideas. 
Keeping this proviso in mind, a formal integration by parts in the last term in the right-hand side of \eqref{e'''} gives 
\begin{align}\label{intermed}
& - t\int_{\Rnp} y^{a} \left(2 <\nabla U,\nabla U_t> \Ga - |\nabla U|^2 \Delta \Ga\right)= - \frac a2  \int_{\Rnp} y^{a} |\nabla U|^2 \Ga 
\\
&- 2 t \int_{\Rnp} y^a <\nabla U,\nabla U_t> \Ga - \int_{\Rnp} y^a <\nabla^2 U (\nabla U), X> \Ga
\notag \\
& - \frac 12 \int_{\Rn\times\{0\}} y^a |\nabla U|^2 <X,e_{n+1}> \Ga, 
\notag
\end{align}
where $\nabla^2 U$ is the Hessian of $U$, and we have denoted by $e_{n+1} = (0,1)$ the unit vector of the standard basis in $\Rn_x\times\R_y$. One should keep in mind that $-e_{n+1}$ is the outer unit normal to $\p \Rnp$. Since $y^a <X,e_{n+1}> = y^{a+1}$ and $a+1>0$, and since respectively from Lemma \ref{reg1} and \eqref{sup} in Remark \ref{R:sup} we have $\nabla_x U, y^a U_y\in L^\infty(\Rnp)$, we see that the boundary integral $\int_{\Rn\times\{0\}} y^a |\nabla U|^2 <X,e_{n+1}> \Ga$ vanishes, and we obtain

\begin{align}\label{e4}
& - t\int_{\Rnp} y^{a} \left(2 <\nabla U,\nabla U_t> \Ga - |\nabla U|^2 \Delta \Ga \right)=  \frac{a}{2t} i(t) - \frac{a}{2} \int_{\Rn} Vu^2 \Ga
\\
& - 2 t \int_{\Rnp} y^a <\nabla U,\nabla U_t> \Ga - \int_{\Rnp} y^a <\nabla^2 U (\nabla U), X> \Ga,
\notag
\end{align}
where we have used \eqref{i} that gives
\[
\frac a2  \int_{\Rnp} y^{a} |\nabla U|^2 \Ga = \frac{a}{2t} i(t) -\frac{a}{2} \int_{\Rn} Vu^2 \Ga.
\] 
Using  \eqref{e4} in \eqref{e'''} we find
\begin{align}\label{e5}
& i'(t) =  \frac{a}{2t} i(t) - \int_{\Rnp} y^{a}  |\nabla U|^2 \Ga 
\\
&- 2 t \int_{\Rnp} y^a <\nabla U,\nabla U_t> \Ga - \int_{\Rnp} y^a <\nabla^2 U (\nabla U), X> \Ga
\notag\\
&  + \frac{1-a}2 \int_{\Rn\times\{0\}} Vu^2 \Ga + 2t\int_{\Rn\times\{0\}} Vuu_t \Ga + t \int_{\Rn\times\{0\}} V_t u^2 \Ga
\notag
\\
&   + \int_{\Rn\times\{0\}} V < \nabla u,x> u \Ga + \frac 12 \int_{\Rn\times\{0\}} <\nabla V,x> u^2\Ga.
\notag
\end{align}

In order to evaluate the third integral in the right-hand side of \eqref{e5} we now note the identity
 \[
 <\nabla^2 U (\nabla U),X>= <\nabla U, \nabla\left(<\nabla U,X>\right)> - |\nabla U|^2.
 \]
Using it we obtain
\begin{align}\label{abov}
& - 2 t \int_{\Rnp} y^a <\nabla U,\nabla U_t> \Ga - \int_{\Rnp} y^a <\nabla^2 U (\nabla U), X> \Ga 
\\
&= -2t \left(\int_{\Rnp}y^a < \nabla U, \nabla U_t> \Ga + y^a <\nabla^2 U(\nabla U), \frac{X}{2t}> \Ga\right)
\notag
\\
&= - 2t \int_{\Rnp}y^a < \nabla U,\nabla\left(U_t + <\nabla U, \frac{X}{2t} >\right)> \Ga  + \int_{\Rnp} y^a |\nabla U|^2 \Ga.
\notag
\end{align}
Formally integrating by parts, and using \eqref{ext} and \eqref{f}, we find
 \begin{align}\label{ab1}
& - 2t \int_{\Rnp}y^a < \nabla U,\nabla\left(U_t + <\nabla U, \frac{X}{2t} >\right)> \Ga=
\\
&=  2t \int_{\Rnp} \operatorname{div}(y^a \nabla U) \left(U_t + <\nabla U, \frac{X}{2t}>\right) \Ga
\notag
\\
& + 2t \int_{\Rnp} y^a <\nabla U, \frac{X}{2t}> \left(U_t + <\nabla U,\frac{X}{2t} >\right)\Ga
\notag
\\
& -2t \int_{\Rn \times \{0\}} \underset{y\to 0}{\lim}\left(y^a< \nabla U, -e_{n+1}>\right) \left(u_t + <\nabla u,\frac{x}{2t}>\right)\Ga.
\notag
\\
& =  2t \int_{\Rnp} 
 y^{a} \left(U_t + <\nabla U,\frac{X}{2t}>\right)^2\Ga -2t  \int_{\Rn\times\{0\}} Vu \left(u_t + <\nabla u,\frac{x}{2t}>\right)\Ga. 
 \notag
 \end{align}
 where again we have used that $\nu = -e_{n+1}$ is the outward unit normal to $\p \Rnp = \Rn\times \{0\}$. Note that in \eqref{ab1} we have also used \eqref{ext}, which gives
 \[
\underset{y\to 0}{\lim}\left( y^a< \nabla U, -e_{n+1}>\right)= - \underset{y\to 0}{\lim} y^a U_y = Vu,
\]
and the fact that  $U_t + <\nabla U, \frac{X}{2t} >$  restricted to $\Rn\times\{0\}$ equals  $u_t + <\nabla u, \frac{x}{2t}>$. These computations will be rigorously justified in Step 2 below. 
 
From \eqref{abov} and \eqref{ab1} we have 
\begin{align}\label{e6}
& - 2 t \int_{\Rnp} y^a <\nabla U,\nabla U_t> \Ga - \int_{\Rnp} y^a <\nabla^2 U (\nabla U), X> \Ga 
\\
& = 2t \int_{\Rnp} 
 y^{a} \left(U_t + <\nabla U,\frac{X}{2t}>\right)^2\Ga -2t  \int_{\Rn\times\{0\}} Vu \left(u_t + <\nabla u,\frac{x}{2t}>\right)\Ga
\notag \\
& + \int_{\Rnp} y^a |\nabla U|^2 \Ga.
 \notag
\end{align}

Substituting \eqref{e6} into \eqref{e5} we finally obtain \eqref{e7}.

\medskip

\noindent \textbf{Step 2:}  We now  show how to justify the above formal computations \eqref{intermed}-\eqref{e6} leading to \eqref{e7}. With this objective in mind, similarly to the proof of Lemma \ref{L:ibi} we perform the said computations over the interior regions $\Rn\times \{y > \ve \}$, instead of $\Rnp$, and then pass to the limit as $\ve \to 0$.
In this process, the finiteness of the integrals on $\Rn\times \{y > \ve \}$ involving $\nabla^2 U$ can be established using the Poisson representation \eqref{pos}. From the latter we have in fact the following estimate 
\[
||U_{yy}(\cdot,y,\cdot)||_\infty \leq \frac{C}{y^{2}},
\]
which implies that $U_{yy}$ is bounded in $\Rn\times \{y > \ve \}$. 
Similarly, we have  for $i=1, ...n$,
\[
||(U_{i})_y(\cdot,y,\cdot)||_\infty \leq \frac{C}{y}.
\]
Finally, from \eqref{pos} and from our qualitative assumption \eqref{qasump}, we obtain that $U_{ij} \in L^{\infty}$ for $i, j=1,...,n$. Proceeding from \eqref{intermed} to \eqref{e6}, when passing to the limit as $\ve\to 0^+$ we need to worry about two terms.
The former is the following  boundary integral in \eqref{intermed}, 
\[
-\frac 1 2 \int_{\Rn\times \{y =\ve \}} \ve^{1+a} |\nabla U|^2 \Ga,
\]
which in view of \eqref{sup} goes to zero as $\ve\to 0^+$. In \eqref{e6} an analogous integration by parts on the region $\Rn\times \{y > \ve \}$ will produce the boundary integral  
 \[
\int_{\Rn\times \{y =\ve \}} y^a U_y\left(U_t+ <\nabla U,X>\right)\Ga.
 \] 
Using the H\"older continuity of $y^{a} U_y$ and the estimate \eqref{sup}, we see that this integral converges to  $\int_{\Rn\times \{y =0\}} Vu \left(u_t + <\nabla u,x>\right)\Ga$ as $\ve \to 0^+$. After passing to the limit as $\ve\to 0^+$ we end up with \eqref{e7}, which is now rigorously justified under the hypothesis \eqref{qasump}.

\medskip

\noindent \textbf{Step 3:} In this last part of the proof we remove the hypothesis \eqref{qasump} and show how to establish \eqref{e7} 
under the sole assumption that $u\in  \operatorname{Dom}(H^{s})$ be a solution to \eqref{e00} with $V$ satisfying \eqref{vasump}. In order to do so, we  first note that  from Lemma \ref{regU} we have that $y^aU_y$ is H\"older continuous up to $y=0$.  Moreover, from the hypoellipticity of the extension operator in \eqref{ext}, we know that $U$ is smooth for $\{y>0\}$. 
Having said this, given  $\ve >0$ we let $\tau_{\ve}$ be a $C^\infty_0$ function of the $X=(x,y)$ variable such that $\tau_\ve \equiv 1$ in $\B_{1/\ve}$, and $\tau_\ve \equiv 0$ outside $\B_{2/\ve}$. We then  define
\[
i_{\ve} (t)= -t \int_{\Rn\times \{y > \ve \}} y^a |\nabla U|^2 \Ga \tau_{\ve} + t \int_{\Rn \times \{0\}} Vu^2 \Ga,
\] 
and note that  
\[
|i_{\ve} (t) - i(t)| \leq |t| \int_{\Rn\times \{0<y < \ve \}} y^a |\nabla U|^2 \Ga + |t| \int_{\Rnp \cap \{|X| >1/\ve\}} y^a |\nabla U|^2 \Ga.
\]
Now for a given $\delta >0$ and $t \in [-1,-\delta]$, from \eqref{sup} and Lemma \ref{reg1} we have 
\[
|t| \int_{\Rn\times \{0<y < \ve \}} y^a |\nabla U|^2 \Ga \leq  C |t| \int_{\Rn\times \{0<y < \ve \}}  ( y^a + y^{-a}) \Ga, 
\]
and
\[
 |t| \int_{\Rnp \cap \{|X| >1/\ve\}} y^a |\nabla U|^2 \Ga \leq C |t| \int_{\Rnp \cap \{|X| > 1/\ve\}} (y^a+ y^{-a}) \Ga.
 \]
The change  of variable
\[
X' = \frac{X}{\sqrt{|t|}}
\] 
gives for $t \in [-1,-\delta]$, 
\begin{align}\label{Arg1}
&|t| \int_{\Rn\times \{0<y < \ve \}} (y^a + y^{-a}) \Ga  \leq  C \left(|t|^{1+ \frac a2} \int_0^{\frac{\ve^2}{4 \delta}} \tau^{-\frac{1-a}{2}} +|t|^{1- \frac a2} \int_0^{\frac{\ve^2}{4 \delta}} \tau^{-\frac{1+a}{2}} \right) \longrightarrow 0,
\end{align}
uniformly as $\ve \to 0^+$.
Similarly, we see that for $t \in [-1,-\delta]$
\begin{align}
& |t| \int_{\Rnp \cap \{|X| > 1/\ve\}} (y^a+ y^{-a}) \Ga \longrightarrow 0,
\end{align}
uniformly as $\ve \to 0^+$. It follows that $i_{\ve} \to i$, uniformly in $[-1,-\delta]$. At this point we perform on $i_\ve(t)$ computations similar to those that in Step $1$ have led to \eqref{e7}, and conclude that

\begin{align}\label{trunc}
i_{\ve}'(t) & = \frac{a}{2t} i_{\ve}(t) +  2t \int_{\Rn\times \{y > \ve \}}y^{a} \left(U_t + <\nabla U, \frac{X}{2t}>\right)^2  \Ga \tau_{\ve}
\\
&  +\frac{1-a }{2} \int_{\Rn \times \{ 0\}} Vu^2 G + t \int_{ \Rn \times \{0\}} V_t u^2 \Ga + \frac 12  \int_{\Rn \times \{0\}} <\nabla V,x> u^2 \Ga
 \notag
 \\
 & + \int_{\Rn \times \{0\}} V u <\nabla u,x>  \Ga  +2t \int_{\Rn\times \{y = \ve \}} y^a U_y\left(U_t+  <\nabla U, \frac{X}{2t}>\right)  \Ga  \tau_{\ve}
   \notag
 \\
&- \frac 12 \int_{\Rn\times \{y > \ve \}}y^a |\nabla U|^2 < \nabla  \tau_\ve,X> \Ga - \frac{1}{2} \int_{\Rn\times \{y = \ve \}} y^{a+1} |\nabla U|^2  \Ga \tau_{\ve} 
 \notag
 \\
 & +    2t \int_{\Rn\times \{y > \ve \}} y^a \left(U_t + <\nabla U, \frac{X}{2t}>\right) < \nabla \tau_\ve, \nabla U> \Ga
 \notag\\
 & + 2t\int_{\Rn \times \{0\}} V u u_t \Ga. 
\notag
 \end{align}
In deriving \eqref{trunc} we have crucially used the fact that, in view of the hypoellipticity of the extension operator in \eqref{ext}, the functions $\nabla U_t, \nabla^2 U$ are bounded in the region $\B_{2/\ve}^{+} \cap \left(\Rn\times \{y > \ve \}\right)$. As a consequence, the counterparts of some of the intermediate calculations in \eqref{intermed}-\eqref{e7} above are justified. From \eqref{est50} above and Lemma \ref{reg1}, similarly to \eqref{Arg1} we obtain as $\ve \to 0$ 
 \begin{align*}
 & 2t \int_{\Rn\times \{y > \ve \}}y^{a} \left(U_t + <\nabla U, \frac{X}{2t}>\right)^2  \Ga \tau_{\ve}\  \longrightarrow\  2t \int_{\Rnp}y^{a} \left(U_t + <\nabla U,  \frac{X}{2t}>\right)^2 \Ga,
 \end{align*}  
uniformly in $ t \in [-1,-\delta]$. Similarly, the term 
   \[
   -\frac{1}{2} \int_{\Rn\times \{y = \ve \}} y^{a+1} |\nabla U|^2 \Ga \tau_{\ve} 
   \]
    goes to zero uniformly in $\ve$ for $t \in [-1, -\delta]$. Now  from the assumptions on $V, u$ and Lemma \ref{reg2} we infer the existence of $\alpha', C >0$ such that for $\Rn\times \{y = \ve \}$ the following holds 
 \[
 |-y^a U_y - Vu| \leq C\ve^{\alpha'},\ \ \ |U_i- u_i| <C\ve^{\alpha'},\ \ \  |U_t - u_t| \leq C\ve^{\alpha'},\ i=1,...n.
\]
These estimates, coupled with \eqref{sup}, Lemma \ref{reg1} and the change of variable $X' = \frac{X}{\sqrt{|t|}}$, imply the following uniform convergence as $\ve \to 0$, for $t\in [-1, -\delta]$, 
\[
2t \int_{\Rn\times \{y = \ve \}} y^a U_y\left(U_t+  <\nabla U, \frac{X}{2t}>\right)  \Ga \tau_{\ve}\  \longrightarrow\  -2t \int_{\Rn \times \{0\}} Vu (u_t + <\nabla u,x/2t>)\Ga. 
\]
Similarly, since $|\nabla \tau_{\ve}| \leq C \ve$ for some universal $C$, and $\nabla \tau_{\ve}$ is supported in $\frac{1}{\ve} \leq |X| \leq \frac{2}{\ve}$, we have 
\[
|<\nabla  \tau_{\ve},X>| \le C.
\]
This implies that the integrals
\[
- t \int_{\Rn\times \{y > \ve \}} y^a |\nabla U|^2 < \nabla \tau_\ve,\frac{X}{2t}> \Ga,
\]
and
\begin{equation}\label{trunc1}
2t \int_{\Rn\times \{y > \ve \}} y^a \left(U_t+  <\nabla U, \frac{X}{2t}>\right) <\nabla \tau_\ve,\nabla U> \Ga
\end{equation}
converge to $0$ uniformly  in $t\in [-1, -\delta]$ as $\ve \to 0$. Finally, given the uniform convergence of $i_{\ve}, i_{\ve}'$ in $[-1,-\delta]$ and the arbitrariness of $\delta$, we conclude that under  the more general assumptions on $u, V$ as in Theorem \ref{main},  $i'(t)$ is given by the right-hand side in \eqref{e7}. 

\end{proof}

\begin{lemma}\label{L:trace}
There exists $0<t_0<1$, depending only on $n, s$ and $K$ in \eqref{vasump}, such that for $-t_0 \leq  t < 0$ one has
\begin{equation}\label{e9'}
\int_{\Rn \times \{0\}} u^2 \Ga = \int_{\Rn} u(x,t)^2 \Ga(x,0,t) dx \leq C |t|^{s-1} \left(i(t) + h(t)\right).
\end{equation}
When $\frac{1}{2}\le s<1$ one also has 
\begin{equation}\label{e91}
\left |\int_{\Rn \times \{0\}} <\nabla V(x, t), x> u(x, t)^2 \Ga(x, 0, t) \right| \leq C |t|^{s-1} \left(i(t) + h(t) \right).
\end{equation}
\end{lemma}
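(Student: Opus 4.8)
Throughout, the plan is to reduce both inequalities to the \emph{Lebesgue} trace inequality of Lemma \ref{tr} by testing it against suitable Gaussian--modulated functions, and then to close the resulting estimates by weighted integration by parts based on the extension equation \eqref{ext} and on the identities \eqref{f}, \eqref{g} for $\Ga$. The free parameter $\mu$ in Lemma \ref{tr} will be chosen of size $|t|^{-1/2}$, which is precisely what converts the scale--invariant trace inequality into the power $|t|^{s-1}$ appearing in \eqref{e9'}. All the integrations by parts over $\Rnp$ will be justified, exactly as in Lemmas \ref{L:ibi} and \ref{L:fvei}, by first integrating over $\{y>\ve\}$ and letting $\ve\to 0^+$, using the pointwise bound \eqref{sup}, the $C^1_{(x,t)}$ estimate of Lemma \ref{reg1}, the boundedness of $u$ from Corollary \ref{bdd} and the H\"older continuity of $y^aU_y$ from Lemma \ref{regU}.

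\emph{Proof of \eqref{e9'}.} Set $g=U\,\Ga^{1/2}$. By \eqref{f} one has $\nabla g=\Ga^{1/2}\big(\nabla U+\tfrac{X}{4t}U\big)$, so that $\int_{\Rnp}y^ag^2=h(t)$, the trace of $g$ on $\{y=0\}$ equals $u\,\Ga^{1/2}$, and $\int_{\Rnp}y^a|\nabla g|^2\le 2\int_{\Rnp}y^a|\nabla U|^2\Ga+\tfrac12\int_{\Rnp}y^a\frac{|X|^2}{4t^2}U^2\Ga$. Since the a priori bounds above place $g$ in the weighted space $W^{1,2}(\Rnp,y^adX)$, Lemma \ref{tr} applies to $g$ by density, and with $\mu=|t|^{-1/2}$ it gives
\[
\int_{\Rn\times\{0\}}u^2\Ga\ \le\ C|t|^{s-1}h(t)+C|t|^{s}\Big(\int_{\Rnp}y^a|\nabla U|^2\Ga+\int_{\Rnp}y^a\frac{|X|^2}{4t^2}U^2\Ga\Big).
\]
For the second moment I would use that $\int_{\Rnp}\operatorname{div}_X\big(y^aU^2\nabla\Ga\big)=0$ --- the boundary integral at $\{y=0\}$ vanishing since $a+1>0$ and $U$ is bounded --- which, after expanding the divergence via \eqref{f}, \eqref{g} and applying Young's inequality, yields $\int_{\Rnp}y^a\frac{|X|^2}{4t^2}U^2\Ga\le\frac{a+n+1}{|t|}h(t)+4\int_{\Rnp}y^a|\nabla U|^2\Ga$. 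Finally, \eqref{i} gives $\int_{\Rnp}y^a|\nabla U|^2\Ga=\tfrac{i(t)}{|t|}+\int_{\Rn\times\{0\}}Vu^2\Ga$ and $\big|\int_{\Rn\times\{0\}}Vu^2\Ga\big|\le K\int_{\Rn\times\{0\}}u^2\Ga$; choosing $t_0=t_0(n,s,K)$ small enough the last term is absorbed into the left--hand side, and \eqref{e9'} follows.

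\emph{Proof of \eqref{e91}.} When $0<s<\tfrac12$, assumption \eqref{vasump} bounds $|{<\nabla V,x>}|$ pointwise by $K$, so \eqref{e91} is immediate from \eqref{e9'}. For $\tfrac12\le s<1$ only $|{<\nabla V,x>}|\le K|x|$ is available, and the plan is to use $K|x|u^2\le\tfrac{K}{2\sqrt{|t|}}|x|^2u^2+\tfrac{K\sqrt{|t|}}{2}u^2$ --- the last term handled by \eqref{e9'} --- so as to reduce \eqref{e91} to the second--moment boundary bound $\int_{\Rn\times\{0\}}|x|^2u^2\Ga\le C|t|^{s-1/2}\big(i(t)+h(t)\big)$. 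This in turn I would obtain by applying Lemma \ref{tr} to each $x_jU\Ga^{1/2}$, $j=1,\dots,n$, and summing (again with $\mu=|t|^{-1/2}$); the weighted interior integrals that then appear, namely $\int y^a|x|^2U^2\Ga$, $\int y^a|x|^2|\nabla U|^2\Ga$ and $\int y^a\frac{|x|^2|X|^2}{4t^2}U^2\Ga$, are controlled by the second--moment analogues of the identities used above --- in particular $\int_{\Rnp}\operatorname{div}_X(y^a|X|^2U^2\nabla\Ga)=0$ together with the weighted Caccioppoli estimate obtained by testing the equation in \eqref{ext} against $U|X|^2\Ga$ --- which feed back only into the already--estimated quantities, the quantity $\int_{\Rn\times\{0\}}|x|^2u^2\Ga$ itself (absorbed by taking $t_0$ smaller), and the first moment of \eqref{e9'}.

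The point I expect to be delicate is the bookkeeping in these weighted integrations by parts. Because $h(t)+i(t)$ may be arbitrarily small (for instance if $u(\cdot,t)$ is small while $\|u\|_{L^\infty(\Rnn)}$ is not), every term that is produced must be controlled by a \emph{fixed} multiple of $h(t)$ or $i(t)$, or else absorbed into the left--hand side; a residue merely bounded by a dimensional constant times $\|u\|_{L^\infty}$ or $\|\nabla_x u\|_{L^\infty}$ would be fatal, even though such a term carries a positive power of $|t|$. Arranging the test functions and the identities so that all such residues cancel --- in particular dealing with the time--derivative contributions $\int y^aUU_t|X|^2\Ga$ that surface in the weighted Caccioppoli step, presumably by working with the scaling combination $U_t+{<\nabla U,\tfrac{X}{2t}>}$ rather than $U_t$ alone --- together with the careful tracking of the $\{y=\ve\}$ boundary terms as $\ve\to0^+$, is the real work; the fact that \eqref{e91} is stated only for $s\ge\tfrac12$ is exactly because for $s<\tfrac12$ the hypothesis \eqref{vasump} bypasses this entire argument.
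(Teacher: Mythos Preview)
Your argument for \eqref{e9'} is essentially the paper's: apply Lemma \ref{tr} to $U\Ga^{1/2}$ with $\mu=|t|^{-1/2}$, control the second moment $\int y^aU^2\frac{|X|^2}{t^2}\Ga$ by the integration-by-parts identity you describe (this is exactly the paper's \eqref{iner5}), and absorb the $V$-term. That part is fine.

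The gap is in your plan for \eqref{e91}. Applying the flat trace inequality to $x_jU\Ga^{1/2}$ with $\mu=|t|^{-1/2}$ does produce, after summing in $j$, the interior term $|t|^{s}\int_{\Rnp}y^a|x|^2|\nabla U|^2\Ga$, and this is the term that does \emph{not} close. Your proposed ``weighted Caccioppoli'' step---testing the equation against $U|X|^2\Ga$---yields
\[
\int_{\Rnp}y^a|X|^2|\nabla U|^2\Ga \;=\; -\frac{1}{2t}\int_{\Rnp}y^a\,U\,(ZU)\,|X|^2\,\Ga \;-\;2\int_{\Rnp}y^a U\langle\nabla U,X\rangle\Ga\;-\;\int_{\Rn\times\{0\}}Vu^2|x|^2\Ga,
\]
and the first term on the right is not controlled by $h(t)+i(t)$: any application of Cauchy--Schwarz or Young either reproduces $\int y^a|X|^2|\nabla U|^2\Ga$ or generates $\int y^a(ZU)^2\Ga$ (equivalently $i'(t)$, see \eqref{e7}), which is one derivative too many. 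Grouping $U_t$ with $\langle\nabla U,\frac{X}{2t}\rangle$ into $ZU/(2t)$, as you suggest, is precisely what produces this term; there is no cancellation to exploit. The moments spiral upward and the scheme does not terminate.

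The paper avoids this by a genuinely different device: it passes to spherical coordinates, applies the \emph{surface} trace inequality of Lemma \ref{tr5} on each sphere $\{|X|=r\}$ with the $r$-dependent parameter $\tau=|t|^{-1/2}r^{(2-a)/(2s)}$, and then uses the numerical fact (valid exactly when $s\ge\tfrac12$) that $n+(2-a)\tfrac{1-s}{s}\le n+a+2$. The point of the $r$-dependent $\tau$ is that, after integrating in $r$, the gradient contribution collapses to $|t|^s\int_{\Rnp}y^a|\nabla(U\Ga^{1/2})|^2$ with \emph{no} extra $|X|^2$ weight---so one lands back on the quantities already handled by \eqref{iner5}, and the argument closes. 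Your fixed-$\mu$ application of the flat trace cannot reproduce this balancing, which is why the restriction $s\ge\tfrac12$ (needed for the exponent inequality above, not for any Caccioppoli bookkeeping) is the real content of \eqref{e91}.
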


\begin{proof}
We first establish \eqref{e9'}. At each time level $t$ we apply the trace inequality in Lemma \ref{tr} to the function $f= U \Ga^{1/2}$ with $\mu= |t|^{-1/2}$. Of course, such $f$ is not compactly supported. Nevertheless, the application of the trace inequality can be justified by an approximation argument using cut-off functions which we omit. Since by \eqref{f} one has
\[
\nabla(U\Ga^{1/2}) = \Ga^{1/2} \nabla U + \Ga^{1/2} U \frac{X}{4t},
\]
we obtain
\begin{align}\label{ei}  
\int_{\Rn \times \{0\}} u^2 \Ga & \leq C \bigg(|t|^{s-1} \int_{\Rnp} y^a U^2 \Ga + |t|^{s} \int_{\Rnp}  y^{a} U^2 \frac{|X|^2}{16t^2} \Ga
\\
& + |t|^{s} \int_{\Rnp} y^a  |\nabla U|^2 \Ga\bigg).
\notag
\end{align}

We now claim that
\begin{equation}\label{iner5}
\int_{\Rnp} y^a U^2 \frac{|X|^2} {16t^2 } \Ga \leq C \left(\frac{1}{|t|} \int_{\Rnp} y^a U^2 \Ga + \int_{\Rnp} y^a |\nabla U|^2 \Ga\right).
\end{equation}
To prove \eqref{iner5} we first observe that
 \begin{equation*}\label{id}
\int_{\Rnp} y^a U^2 \frac{|X|^2} {16 t^2 } \Ga = \frac{1}{8 t}  \int_{\Rnp} y^a U^2  < X,  \nabla \Ga>.
\end{equation*}
Integrating by parts we find
\begin{align}\label{iner}
& \frac{1}{8 t}  \int_{\Rnp} y^a U^2  <X,\nabla \Ga>   = - \frac{1}{8t} \int_{\Rnp} \left(y^ a (\operatorname{div} X) U^2 \Ga   + ay^a U^2 \Ga  + 2U <\nabla U, X> \Ga\right)
\\
\notag
& = -\frac{n+1 + a}{8t} \int_{\Rnp} y^a U^2 \Ga  - \frac{1}{8t} \int_{\Rnp} y^a 2 U < \nabla U, X> \Ga.
\notag
\end{align}
We note that \eqref{iner} can be justified by first integrating by parts on the region $\Rn \times \{y> \ve\}$, and then passing to the limit as $\ve \to 0$. In this process, one obtains the following boundary integral 
\[
-\frac{1}{8t} \int_{\Rn \times \{y=\ve\}} y^{a+1} U^2 \Ga,
\]
which is easily seen to tend to $0$ as $\ve \to 0$. Next, we apply the numerical inequality 
$ab \leq \delta a^2 + \frac{1}{\delta} b^2$,
with  $a=  2 |\nabla U|$, $b= |U| |X|$ and $\delta= 4|t|$, obtaining 
\begin{align}\label{iner1}
& \left| \frac{1}{8t} \int_{\Rnp} y^a 2 U < \nabla U, X> \Ga\right| \leq  \int_{\Rnp} y^a U^2\frac{|X|^2}{32t^2}  \Ga +  C \int_{\Rnp} y^a |\nabla U|^2 \Ga.
\end{align}
Combining \eqref{iner} with \eqref{iner1} we easily conclude that \eqref{iner5} holds. Using now the inequality \eqref{iner5} in \eqref{ei}, we conclude  for some $C>0$, depending on $n, s$ and $K$ in \eqref{vasump},  
\begin{align}\label{ei1}
&\int_{\Rn \times \{0\}} u^2 \Ga \leq C \left(|t|^{s-1} \int_{\Rnp} y^a U^2 \Ga + |t|^{s} \int_{\Rnp} y^a  |\nabla U|^2 \Ga\right).
\end{align}

Recalling \eqref{h1}, \eqref{i}, 
we finally have from \eqref{ei1}
\begin{align}\label{eii}
& \int_{\Rn \times \{0\}} u^2 \Ga \leq C |t|^{s-1} \left( i(t) + h(t) + |t| \int_{\Rn \times \{0\}} V u^2 \Ga\right)
\\
& \le C |t|^{s-1} \left( i(t) + h(t) + K |t| \int_{\Rn \times \{0\}} u^2 \Ga\right),
\notag
\end{align}
where in the last inequality we have used \eqref{vasump}. We now choose $t_0>0$ such that
\[
C K t_0^{s} \leq 1/2.
\]
For $-t_0\le t < 0$ we thus obtain the desired conclusion \eqref{e9'} from \eqref{eii}. 

We next establish \eqref{e91} for $\frac{1}{2}\le s < 1$. We first split  the integral in the left-hand side of \eqref{e91} as follows:
 \begin{align*}
&  \int_{\Rn} <\nabla V(x, t), x> u(x, t)^2 \Ga(x, 0, t) =  \int_{\Rn \cap \{|x|  \leq 1\}} <\nabla V(x, t), x> u(x, t)^2 \Ga(x, 0, t) 
\\
& +  \int_{\Rn \cap \{|x| >1\}}  <\nabla V(x, t), x> u(x,t)^2 \Ga(x,0,t). 
\notag
\end{align*} 
Since $|\nabla_x V| \leq K$  by \eqref{vasump}, for $|x|<1$ obtain $\left|<\nabla_x V, x>\right| \leq K$.
It follows  that the first integral in the right-hand side of the latter inequality can be bounded from above  by $K \int_{\Rn} u(x,t)^2 \Ga(x,0,t)$, which in turn can be  estimated by \eqref{e9'}. Consequently, we have 
\begin{equation*}
 \int_{\Rn \cap \{|x|  \leq 1\}} <\nabla_{x} V(x,t),x> u(x,t)^2 \Ga(x,0,t) \leq C|t|^{s-1} \left( i(t) + h(t) \right).
 \end{equation*}
To complete the proof of the lemma we are thus left with estimating the second integral in the right-hand side. Because of \eqref{vasump} again, we have
\begin{align*}
&\int_{(\Rn\times \{0\}) \cap \{|x| >1\}}  <\nabla V, x> u^2 \Ga \leq K \int_{\Rn \cap \{|x| >1\}}  |x| u(x,t)^2 \Ga(x,0,t)
\\
&\ \ \ \ \ \ \ \ \text{(passing to spherical coordinates, and letting $f(X)= U(X,t) \Ga^{1/2}(X,t)$)}
\notag\\
& = K \int_{1}^{\infty} r^{n} \int_{\mathbb{S}^{n-1}} u(r\omega',t)^2 \Ga(r\omega',0,t) d\omega'  dr = K \int_{1}^{\infty} \int_{\mathbb{S}^{n-1}} r^n f(r\omega', 0)^2 d\omega' dr.
\notag
\end{align*}
For any fixed $r>1$, we next apply the trace inequality in Lemma \ref{tr5} to $ g(\omega)= f(r\omega)$ with $\tau= |t|^{-\frac{1}{2}} r^{\frac{2-a}{2s}}$. Keeping in mind that $a = 1 - 2s$, we  find
\begin{align*}
&  \int_{\mathbb{S}^{n-1}} r^n f(r\omega',0)^2 d\omega'  \leq C \bigg(|t|^{s-1} r^{n+(2-a)\frac{1-s}{s}}\int_{\mathbb{S}^{n}_{+}} \omega_{n+1}^{a} f(r\omega)^2  d\omega
  \\
  & + |t|^s r^{n+a-2} \int_{\mathbb{S}^{n}_{+}} \omega_{n+1}^a |\nabla_{\mathbb{S}^{n}} f(r\omega)|^2 d\omega\bigg).
\notag
\end{align*}
Keeping in mind that $|\nabla f|^2 = f_r^2 + \frac{1}{r^2} |\nabla_{\mathbb S^{n}} f|^2$, which gives in particular $\frac{1}{r^2} |\nabla_{\mathbb S^{n}} f|^2\le |\nabla f|^2$,
we see that the latter inequality trivially implies
\begin{align*}
&  \int_{\mathbb{S}^{n-1}} r^n f(r\omega',0)^2 d\omega'  \leq C \bigg(|t|^{s-1} r^{n+(2-a)\frac{1-s}{s}}\int_{\mathbb{S}^{n}_{+}} \omega_{n+1}^{a} f(r\omega)^2  d\omega
 + |t|^s r^{n+a} \int_{\mathbb{S}^{n}_{+}} \omega_{n+1}^a |\nabla f(r\omega)|^2 d\omega\bigg).
\end{align*}
We now make the simple, yet crucial observation that $1>s \geq \frac{1}{2}$ implies
\[
 n + (2-a)\frac{1-s}{s} = n+a + \frac{1}{s} \le n + a + 2.
\]
Since $r>1$, this gives $r^{n + (2-a)\frac{1-s}{s}} \leq r^{n+a + 2}$. We thus find from the above inequality
\begin{align*}
&  \int_1^\infty \int_{\mathbb{S}^{n-1}} r^n f(r\omega',0)^2 d\omega' dr \leq C \bigg(|t|^{s-1} \int_1^\infty r^{n+a + 2} \int_{\mathbb{S}^{n}_{+}} \omega_{n+1}^{a} f(r\omega)^2  d\omega dr
\\
& + |t|^s \int_1^\infty r^{n+a} \int_{\mathbb{S}^{n}_{+}} \omega_{n+1}^a |\nabla f(r\omega)|^2 d\omega dr\bigg).
\end{align*}
Returning to Euclidean coordinates, and keeping in mind that $X = r \omega = (r\omega',r\omega_{n+1})$, we finally have recalling the definition of $f$
\begin{align}\label{ei1000}  
\int_{(\Rn \times \{0\}) \cap \{|x|>1\}} <\nabla V,x>  u^2 \Ga & \leq C \bigg( |t|^{s} \int_{\Rnp}  y^{a} U^2 \frac{|X|^2}{|t|^2} \Ga
+ |t|^{s} \int_{\Rnp} y^a  |\nabla U|^2 \Ga\bigg).
\end{align}
Using \eqref{iner5} in \eqref{ei1000}, and recalling \eqref{h1}, \eqref{i}, we conclude 
\begin{equation*}
\int_{\Rn \times \{0\} \cap \{|x|>1 \}} <\nabla V, x>  u^2 \Ga \leq C |t|^{s-1} \left(i(t) + h(t) + |t| \int_{\Rn \times \{0\}} V u^2 \Ga\right).
\end{equation*}
In view of \eqref{vasump} and \eqref{e9'}, the last term in the right-hand side of the latter ienquality is estimated as follows
\begin{equation*}
|t| \int_{\Rn \times \{0\}} V u^2 \Ga \leq K |t| \int_{\Rn \times \{0\} }Vu^2 \Ga \leq C |t|^{s} (h(t) + i(t)). 
\end{equation*}
To finish, with $C>0$ as in the right-hand side of the latter inequality, we choose $t_0$ such that
\[
C |t_0|^s \leq 1.
\]
Then, for $-t_0<t<0$ we reach the desired conclusion 
\[
\int_{(\Rn \times \{0\}) \cap \{|x|>1 \}} <\nabla V, x>  u^2 \Ga \leq C|t|^{s-1} \left(i(t) + h(t) \right).
\]

\end{proof}

\begin{rmrk}\label{R:pos}
For later use we note explicitly that Lemma \ref{L:trace} implies in particular that $i(t) + h(t) \ge 0$ for $-t_0\le t\le 0$. Integrating such inequality we obtain $I(r) + H(r) \ge 0$ for $0\le r \le r_0 = \sqrt{t_0}$. This guarantees, in turn, that $N(r) + 1 \ge 0$, and therefore that the frequency is bounded from below for $0\le r \le r_0$. In Remark \ref{R:imp} below we will show that, as a consequence of Theorem \ref{mon1}, the frequency is also bounded from above. 
\end{rmrk}

\begin{rmrk}\label{R:trace}
If $h(\overline t) = 0$ for some $-t_0\le \overline t < 0$, then we must have $u(x,\overline t) = 0$ for every $x\in \Rn$. In fact, from  $h(\overline t) = 0$ and \eqref{h1} we infer $U(X,\overline t) \equiv 0$ for $X\in \Rnp$. It then follows from the continuity of $U$ upto $\{y=0\}$  that $u(x, \overline t) =0$ for every $x \in \Rn$. 
\end{rmrk}

Using Lemmas \ref{L:fvei} and \ref{L:trace} we can now establish the following fundamental result.

\begin{lemma}\label{L:fve}
There exists $t_0 = t_0(n,s,K)>0$, where $K$ is as in \eqref{vasump}, such that for $r \leq r_0= \sqrt{t_0}$ and $Z$ as in \eqref{z1}, one has   
\begin{align}\label{e8}
& I'(r) \geq  \frac{a}{r} I(r) + \frac{1}{r^3} \int_{\Sa_{r}^{+}} y^{a} (ZU)^2 \Ga dX dt - C r^{-a} I(r)  - C r^{-a} H(r).
\end{align}
\end{lemma}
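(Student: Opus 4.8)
The plan is to differentiate the identity $I(r)=\int_{-1}^0 i(r^2t)\,dt$ from \eqref{HI} and to substitute into it the first variation formula \eqref{e7} of Lemma \ref{L:fvei}. First I would establish that
\begin{equation*}
I'(r) = 2r\int_{-1}^0 t\, i'(r^2 t)\, dt, \qquad 0<r\le 1.
\end{equation*}
This differentiation under the integral sign is justified exactly as in the proof of Lemma \ref{L:fvh}: one works with the truncated quantities $I_\ve(r)=\int_{-1}^{-\ve} i(r^2 t)\,dt$, for which differentiating under the integral is legitimate because $i'(r^2 t)$ is then evaluated only at times $r^2 t\le -\ve r^2$ bounded away from $0$, where all the integrands occurring in \eqref{e7} are uniformly controlled by \eqref{sup}, Lemma \ref{reg1}, Lemma \ref{regU} and the estimates on $\nabla U_t$ used in the proof of Lemma \ref{L:fvei}; one then checks, using those same estimates together with Lemma \ref{L:trace}, that $I_\ve\to I$ and $I_\ve'\to 2r\int_{-1}^0 t\, i'(r^2 t)\,dt$ uniformly on every $[\delta,1]$, and lets $\delta\to 0$.

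Next, I would substitute \eqref{e7} evaluated at the time level $\sigma=r^2 t$. The algebraic point is the identity \eqref{z11}, which gives $U_t+<\nabla U,\frac{X}{2\sigma}>=\frac{ZU}{2\sigma}$, hence $2\sigma\big(U_t+<\nabla U,\frac{X}{2\sigma}>\big)^2=\frac{(ZU)^2}{2\sigma}$. Writing $R(t)$ for the sum of the last three terms in \eqref{e7} (those involving $V$, $V_t$ and $<\nabla V,x>$), we obtain
\begin{equation*}
I'(r) = 2r\int_{-1}^0 t\left[\frac{a}{2r^2 t}\,i(r^2 t) + \frac{1}{2r^2 t}\int_{\Rnp} y^a (ZU)^2\, \Ga + R(r^2 t)\right] dt,
\end{equation*}
where in the middle term $ZU$ and $\Ga$ are evaluated at $(X,r^2 t)$. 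The first bracketed term integrates to $\frac{a}{r}\int_{-1}^0 i(r^2 t)\,dt=\frac{a}{r}I(r)$; the second integrates to $\frac1r\int_{-1}^0\int_{\Rnp} y^a (ZU(X,r^2 t))^2\Ga(X,r^2 t)\,dX\,dt$, which after the substitution $\sigma=r^2 t$ equals precisely $\frac{1}{r^3}\int_{\Sa_{r}^{+}} y^a (ZU)^2\Ga\, dX\, dt$ (recall $\Sa_{r}^{+}=\Rnp\times(-r^2,0)$). Therefore
\begin{equation*}
I'(r) = \frac{a}{r}I(r) + \frac{1}{r^3}\int_{\Sa_{r}^{+}} y^a (ZU)^2\Ga\, dX\, dt + 2r\int_{-1}^0 t\, R(r^2 t)\,dt.
\end{equation*}

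It then remains to bound the last term. By \eqref{vasump} we have $|V|,|V_t|\le K$, and for the gradient term we use that $\big|\int_{\Rn\times\{0\}}<\nabla V,x> u^2\Ga\big|\le C|t|^{s-1}(i(t)+h(t))$: this is the second inequality of Lemma \ref{L:trace} when $\frac12\le s<1$, and follows from the bound $\|<\nabla_x V,x>\|_{L^\infty}\le K$ in \eqref{vasump} together with \eqref{e9'} when $0<s<\frac12$. Combining these with \eqref{e9'} we get $|R(t)|\le C|t|^{s-1}(i(t)+h(t))$ for $-t_0\le t<0$, with $t_0=t_0(n,s,K)$ as in Lemma \ref{L:trace}. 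Hence, using $i+h\ge 0$ on $(-t_0,0)$ (Remark \ref{R:pos}) and $|t|^s\le 1$ on $(-1,0)$,
\begin{align*}
\left|2r\int_{-1}^0 t\, R(r^2 t)\,dt\right| &\le 2r\int_{-1}^0 |t|\,C|r^2 t|^{s-1}\big(i(r^2 t)+h(r^2 t)\big)\,dt \\
&= 2Cr^{2s-1}\int_{-1}^0 |t|^s\big(i(r^2 t)+h(r^2 t)\big)\,dt \le 2Cr^{2s-1}\big(I(r)+H(r)\big).
\end{align*}
Since $2s-1=-a$, this is $\le Cr^{-a}(I(r)+H(r))$, so $2r\int_{-1}^0 t\, R(r^2 t)\,dt\ge -Cr^{-a}I(r)-Cr^{-a}H(r)$ for $r\le r_0=\sqrt{t_0}$, and combined with the displayed identity this yields \eqref{e8}. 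The main obstacle here is bookkeeping rather than genuinely new analysis — the hard content (the variation formula \eqref{e7} and the trace estimates) is already contained in Lemmas \ref{L:fvei} and \ref{L:trace} — but some care is needed to invoke the correct trace inequality according to whether $s\ge\frac12$ or $s<\frac12$, and to justify the differentiation under the integral sign via the truncation argument rather than directly.
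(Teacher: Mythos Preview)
Your proposal is correct and follows essentially the same route as the paper: both justify $I'(r)=2r\int_{-1}^0 t\,i'(r^2 t)\,dt$ via the truncated quantities $I_\ve$, substitute the first variation formula \eqref{e7}, handle the $<\nabla V,x>$ boundary term by splitting into the cases $0<s<\tfrac12$ (where $\|<\nabla_x V,x>\|_{L^\infty}\le K$ from \eqref{vasump} and \eqref{e9'} suffice) and $\tfrac12\le s<1$ (where \eqref{e91} is needed), and then use $|t|^s\le 1$ together with $i+h\ge 0$ to pass to $I(r)+H(r)$. The only cosmetic difference is that you write the identity for $I'(r)$ plus a remainder and bound the remainder in absolute value, whereas the paper first derives the pointwise upper bound \eqref{e890} for $i'(t)$ and then integrates against $t<0$; the content is the same.
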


\begin{proof}

We start from the expression $I(r) = \int_{-1}^{0} i(r^2 t) dt$ in \eqref{HI}. Our first objective is proving the following identity 
\begin{equation}\label{dh5}
I'(r)=  2r \int_{-1}^0 t i'(r^2 t) dt.
\end{equation}
To this end we note that, similarly to what was done in the computation of $H'$ in Lemma \ref{L:fvh}, the formal differentiation in \eqref{dh5} can be justified in the following way from the expression of $i'$ in \eqref{e7}. For a given $\ve>0$, we let
\[
I_{\ve}(r)= \frac{1}{r^2} \int_{-r^2}^{-\ve r^2} i(t) dt.
\]
Now fix $\delta>0$ arbitrarily. We claim that as $\ve \to 0^+$:
\begin{itemize}
\item[(i)] $\underset{r\in [\delta,1]}{\sup} \left|I_\ve(r) - I(r)\right| \longrightarrow 0$;
\item[(ii)] $\underset{r\in [\delta,1]}{\sup} \left|I_{\ve}'(r) - 2r \int_{-1}^{0}  t i'(r^2 t)dt\right| \longrightarrow 0$.
\end{itemize}
Taking the claim for granted, from it we infer that \eqref{dh5} does hold for $r \in [\delta,1]$. The arbitrariness of $\delta$ implies that \eqref{dh5} holds for $ r \in (0, 1]$. We are thus left with proving the claim.

With this objective in mind, a change of variable gives
\[
I_{\ve}(r)= \int_{-1}^{-\ve} i(r^2 t) dt.
\]
Similarly to the computation of $H_{\ve}'$, for a given $\delta>0$ and for $r \in [\delta, 1]$, using the expression of $i'$ in \eqref{e7}, Lemma \ref{reg1}, \eqref{sup}, we can differentiate under the integral sign and deduce that
\[
I_{\ve}'(r)=  2r \int_{-1}^{-\ve} t i'(r^2t)dt.
\]
Now, from \eqref{sup} and the estimate in Lemma \ref{reg1}, we have
\[
|\nabla_{x} U| \leq C,\ \ \  |U_y| \leq \frac{C}{y^a}.
\]
Using such bounds, the boundedness of $u$, \eqref{vasump} and the change of variable $X' = \frac{X}{\sqrt{|t|}}$, we obtain as $\ve \to 0$ 
\begin{align*}
&|I_{\ve}(r) -I(r)| = \int_{-\ve}^{0} i(r^2 t) dt = \int_{-\ve}^{0} \int_{\Rnp} -r^2 t y^a |\nabla U(X, r^2t)|^2 \Ga (X, r^2t)  
\\
& + \int_{-\ve}^{0} \int_{\Rn} -r^2 t Vu(x, r^2 t) \Ga(x, 0, r^2t)
\notag
\\
& \leq C \int_{-\ve}^{0} \left[(r^2|t|)^{1+a/2}  +  (r^2|t|)^{1-a/2} + (r^2|t|)^{1/2}\right] dt\ \longrightarrow\ 0,
\end{align*}
uniformly for $r \in [\delta,1]$. This establishes (i).

Similarly, using the expression of $i'$ in \eqref{e7},  \eqref{sup}, Lemma \ref{reg1}, \eqref{est50} and the change  of variable $X' = \frac{X}{\sqrt{|t|}}$, we find as $\ve \to 0$
\[
|I_{\ve}' (r) - 2r \int_{-1}^{0} t i'(r^2t)dt| \leq \frac{C}{r}  \int_{-\ve}^{0} \left[(r^2|t|)^{1+a/2}  + (r^2|t|)^{1-a/2}  + (r^2|t|)^{1/2}  + (r^2|t|)^{a/2}\right]\ \longrightarrow\ 0,
\]
uniformly  for $r \in [\delta,1]$. This proves (ii), and therefore the claim. We have thus established \eqref{dh5}.

With \eqref{dh5} in hand we note that, in order to establish \eqref{e8}, we will need to obtain an estimate from above for $i'(t)$ in \eqref{e7}. We consider two cases:
\begin{itemize}
\item[(i)] When $0<s< \frac{1}{2}$ we know from \eqref{vasump} that $V, V_t, <\nabla V,x> \in L^\infty$. Therefore, for $-1<t<0$ we have from \eqref{e7} 
\begin{align}\label{e89}
& i'(t) \leq \frac{a}{2t} i(t) + 2t \int_{\Rnp} 
 y^{a} \left(U_t + < \nabla U,\frac{X}{2t}>\right)^2\Ga  + \tilde{K} \int_{\Rn\times\{0\}} u^2 \Ga,
\end{align}
where $\tilde{K}$ depends on the constant $K$ in \eqref{vasump}, as well as on $n$ and $s$.  
\item[(ii)] If instead $\frac{1}{2}\le s<1$, then \eqref{e7} and \eqref{vasump} imply
\begin{align}\label{e892}
& i'(t) \leq \frac{a}{2t} i(t) + 2t \int_{\Rnp} 
 y^{a} \left(U_t + < \nabla U,\frac{X}{2t}>\right)^2\Ga  + \tilde{K} \int_{\Rn\times\{0\}} u^2 \Ga
 \\
 &+\frac{1}{2} \left |\int_{\Rn \times \{0\}} <\nabla V,x> u^2 \Ga \right|
 \notag
 \end{align}
 \end{itemize}
Finally, we show that \eqref{e89} in case (i), and \eqref{e892} in case (ii) imply the sought for conclusion \eqref{e8}. To see this, in case (i) we use \eqref{e9'} in Lemma \ref{L:trace} to estimate the boundary integral $\int_{\Rn\times\{0\}} u^2 \Ga$ in \eqref{e89}, obtaining for $-t_0\le t < 0$    
\begin{align}\label{e890}
& i'(t) \leq \frac{a}{2t} i(t) + C |t|^{s-1} i(t)  + 2t \int_{\Rnp} 
 y^{a} \left(U_t + < \nabla U,\frac{X}{2t}>\right)^2\Ga  + C |t|^{s-1} h(t).
\end{align}
In case (ii) instead, we use \eqref{e9'}, \eqref{e91} from Lemma \ref{L:trace} in \eqref{e892}. Again, we conclude that \eqref{e890} holds. 
 The inequality \eqref{e8} now follows in a standard way from \eqref{e890} if we use \eqref{dh5}, and keep in mind that $a = 1 - 2s$ and that $\int_{-1}^0 |t|^s h(r^2t) dt \le \int_{-1}^0 h(r^2 t) dt = H(r)$. 

\end{proof}
  
With  Lemma \ref{L:fve} in hand,  we can  now establish our main  result of this section, Theorem \ref{mon1}. 

\begin{proof}[Proof of Theorem \ref{mon1}]
From the definition \eqref{N100} of $N(r)$ we obtain for $H(r) \neq 0$
\begin{equation*}
N'(r) = \frac{I'(r)}{ H(r)}- \frac{H'(r)}{H(r)} N(r).
\end{equation*}
Using \eqref{mon10} in Lemma \ref{L:fvh}, \eqref{e8} in Lemma \ref{L:fve} and \eqref{e91}, we obtain for some universal $C = C(n,s,K)>0$, and for $0<r\le r_0$
\begin{align*}
N'(r) & \geq  \frac{1}{r} \frac{\int_{\Sa_{r}^{+}} y^a (ZU)^2 \Ga}{\int_{\Sa_{r}^{+}} y^a U^2 \Ga}  - \frac{4}{r} N(r)^2 - Cr^{-a} N(r) - Cr^{-a}.
\end{align*}
By \eqref{e91} in Lemma \ref{L:I} we have
\[
\frac{4}{r} N(r)^2 = \frac{4}{r} \frac{I(r)^2}{H(r)^2} = \frac{1}{r} \frac{\left(\int_{\Sa_{r}^{+}} y^a U ZU \Ga\right)^2}{\left(\int_{\Sa_{r}^{+}} y^a U^2 \Ga\right)^2}.
\]
Substituting this identity in the previous inequality, we find
\begin{align*}
N'(r) & \geq  \frac{1}{r} \frac{\int_{\Sa_{r}^{+}} y^a (ZU)^2 \Ga}{\int_{\Sa_{r}^{+}} y^a U^2 \Ga}  - \frac{1}{r} \frac{\left(\int_{\Sa_{r}^{+}} y^a U ZU \Ga\right)^2}{\left(\int_{\Sa_{r}^{+}} y^a U^2 \Ga\right)^2}- Cr^{-a} N(r) - Cr^{-a}.
\end{align*}
An application of Cauchy-Schwarz inequality gives for $0<r < r_0$
\[
\frac{\int_{\Sa_{r}^{+}} y^a (ZU)^2 \Ga}{\int_{\Sa_{r}^{+}} y^a U^2 \Ga}  - \frac{\left(\int_{\Sa_{r}^{+}} y^a U ZU \Ga\right)^2}{\left(\int_{\Sa_{r}^{+}} y^a U^2 \Ga\right)^2} \ge 0.
\]
We thus finally obtain
\begin{align*}\label{e13}
& N'(r)  \geq  - Cr^{-a} - Cr^{-a} N(r).
\end{align*}
If we set $\psi(r) = \int_0^r t^{-a} dt$, the latter inequality can be written
\[
N'(r)  \geq  - C \psi'(r) - C \psi'(r) N(r) ,\ \ \ \ \ \  0<r<r_0.
\]
This implies for $0<r<r_0$
\begin{align*}
\frac{d}{dr} e^{C\psi(r)}\left(N(r) + C \psi(r)\right) & = e^{C\psi(r)}\left(N'(r) + C \psi'(r) + C \psi'(r) N(r) + C^2 \psi(r) \psi'(r)\right)
\\
& \ge C^2 e^{C\psi(r)} \psi(r) \psi'(r) \ge 0,
\end{align*}
from which the desired conclusion readily follows. 

To prove the second part of the theorem, assume now that $V\equiv 0$. In such case, from \eqref{I100} we have
\begin{equation}\label{II}
I(r)  =  \frac{1}{r^2} \int_{\Sa_{r}^{+}} |t|  y^a |\nabla U|^2 \Ga = \frac{1}{2r^2} \int_{\Sa_{r}^{+}} y^a U ZU \Ga,
\end{equation}
where in the second equality we have used \eqref{e91} in Lemma \ref{L:I}.
Furthermore, \eqref{e8} in Lemma \ref{L:fvh} gives
\[ 
I'(r) \geq  \frac{a}{r} I(r) + \frac{1}{r^3} \int_{\Sa_{r}^{+}} y^{a} (ZU)^2 \Ga.
\]
We conclude that
\begin{align}\label{monomono}
N'(r) & \geq  \frac{1}{r} \frac{\int_{\Sa_{r}^{+}} y^a (ZU)^2 \Ga}{\int_{\Sa_{r}^{+}} y^a U^2 \Ga}  - \frac{1}{r} \frac{\left(\int_{\Sa_{r}^{+}} y^a U ZU \Ga\right)^2}{\left(\int_{\Sa_{r}^{+}} y^a U^2 \Ga\right)^2} \ge 0,
\end{align}
by Cauchy-Schwarz inequality. This proves the monotonicity of $r\to N(r)$. Suppose now that $U$ be homogeneous of degree $2\kappa$ in $\Sa_{R}^{+}$ with respect to the parabolic dilations \eqref{pardil}. Then, \eqref{euler} gives $ZU = 2\kappa U$. We thus find from \eqref{II}
\[
I(r)  = \frac{2\kappa}{2r^2} \int_{\Sa_{r}^{+}} y^a U^2 \Ga = \kappa H(r).
\]
This gives for $0<r<R$
\[
N(r) = \frac{I(r)}{H(r)} \equiv \kappa.
\]
Vice-versa, if this equation holds for $0<r<R$, then $N' \equiv 0$ in $(0,R)$. Combining this with \eqref{monomono}, we obtain
\begin{align*}
0 \equiv N'(r) & \geq  \frac{1}{r} \frac{\int_{\Sa_{r}^{+}} y^a (ZU)^2 \Ga}{\int_{\Sa_{r}^{+}} y^a U^2 \Ga}  - \frac{1}{r} \frac{\left(\int_{\Sa_{r}^{+}} y^a U ZU \Ga\right)^2}{\left(\int_{\Sa_{r}^{+}} y^a U^2 \Ga\right)^2} \ge 0.
\end{align*}
This means that there must be equality in the Cauchy-Schwarz's inequality, and therefore for every $r\in (0,R)$ there exists $\alpha(r)$ such that $ZU = \alpha(r) U$ in $\Sa_{r}^{+}$. Using this information in \eqref{II}, we find
\[
I(r) =  \frac{\alpha(r)}{2r^2} \int_{\Sa_{r}^{+}} y^a U^2 \Ga = \frac{\alpha(r)}{2} H(r),
\]
or, equivalently, $N(r) = \frac{\alpha(r)}{2}$. This implies $\alpha(r) \equiv  2\kappa$ in $(0,R)$, and thus $ZU = 2\kappa U$ in $\Sa_{R}^{+}$.

\end{proof}

\begin{rmrk}\label{R:imp}
It is important to observe that Theorem \ref{mon1} implies, in particular, the boundedness from above of the frequency $N(r)$ for $r\leq r_0$. This follows from the fact that, since $|a|<1$, we have $\psi(r) = \int_{0}^{r} t^{-a} < \infty$ for every $r>0$, and moreover
\[
N(r) \le e^{C\psi(r)} \left(N(r) + C \psi(r)\right) \le e^{C\psi(r_0)} \left(N(r_0) + C \psi(r_0)\right),
\]
by the monotonicity of $r\to e^{C\psi(r)} \left(N(r) + C \psi(r)\right)$. On the other hand, we have observed in Remark \ref{R:pos} that $N(r)$ is bounded from below on $(0,r_0)$. Thus, $N\in L^\infty(0,r_0)$.
\end{rmrk}

Keeping in mind \eqref{H} and \eqref{I100}, which give
\[
\int_{\Sa_{r}^{+}} y^{a} U^2 \Ga dX dt = r^2 H(r),
\]
\[
\int_{\Sa_{r}^{+}} y^a |t|  |\nabla U|^2 \Ga dX dt = r^2 I(r) + \int_{S_r} |t| V u^2 \Ga(x,0,t) dx dt,
\]
we next introduce the quantity 
\begin{equation}\label{n1}
N_1(r) \overset{def}{=} \frac{\int_{\Sa_r^{+}} y^a |t| |\nabla U|^2\Ga}{ \int_{\Sa_r^{+}} y^a U^2\Ga} = N(r) + \frac{\int_{S_r} |t| V u^2 \Ga(x,0,t) dx dt}{r^2 H(r)},
\end{equation}
and notice that we trivially have $N_1(r) \ge 0$.
Since \eqref{vasump} and \eqref{e9'} give
\begin{align*}
\left|\int_{S_r} |t| V u^2 \Ga(x,0,t) dx dt\right| & \le K \int_{-r^2}^0 |t| \int_{\Rn \times \{0\}} u^2 \Ga \leq C K \int_{-r^2}^0  |t|^{s} \left(i(t) + h(t)\right)
\\
& \le C K r^{2s} \int_{-r^2}^0 \left(i(t) + h(t)\right) = C K r^{2s+2} \left(I(r) + H(r)\right),
\end{align*}
where in the second to the last inequality we have used that $i(t)+h(t)\ge 0$ for $-t_0\le t\le 0$, see  \eqref{e9'} in Lemma \ref{L:trace}. We infer that
\[
\left|\frac{\int_{S_r} |t| V u^2 \Ga(x,0,t) dx dt}{r^2 H(r)}\right| \le K_1 r^{2s} \left(N(r) + 1\right),
\]
where $K_1 = K_1(n,s,K)>0$.
From this estimate and \eqref{n1} we obtain that, under the assumptions of Theorem \ref{mon1}, the following inequality holds for $0\le r \le r_0$
\begin{equation}\label{n11}
N_1(r) - K_1 r^{2s} N_1 (r) -K_1 r^{2s} \le N(r) \le N_1(r) + K_1 r^{2s} N_1 (r) + K_1 r^{2s}.
\end{equation}
From \eqref{n11} we conclude, in particular, that the boundedness of $N$ on $(0,r_0)$ (see Remark \ref{R:imp}) implies that of $N_1$. 
We also have the following consequence of Theorem \ref{mon1}. 

\begin{cor}\label{min}
Under the assumptions of Theorem \ref{mon1} the limits 
\[
\underset{r \to 0}{\lim} N(r)= \underset{r \to 0}{\lim} N_1(r) 
\]
exist finite and they coincide.
\end{cor}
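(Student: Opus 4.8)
The plan is to read off the existence of $\lim_{r\to 0}N(r)$ directly from the monotonicity statement in Theorem \ref{mon1}, and then to transfer the conclusion to $N_1$ via the two-sided comparison \eqref{n11}. First I would record the elementary facts about $\psi(r)=\int_0^r t^{-a}dt = \frac{r^{1-a}}{1-a}$: since $|a|<1$ we have $1-a>0$, so $\psi$ is continuous and nondecreasing on $[0,r_0]$ with $\psi(0)=0$, and consequently $1\le e^{C\psi(r)}\le e^{C\psi(r_0)}$ on $(0,r_0)$ together with $e^{\pm C\psi(r)}\to 1$ as $r\to 0^+$. By Theorem \ref{mon1} the function $\Phi(r):=e^{C\psi(r)}\big(N(r)+C\psi(r)\big)$ is monotone nondecreasing on $(0,r_0)$. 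By Remark \ref{R:pos} we have $N(r)\ge -1$, hence $N(r)+C\psi(r)\ge -1$, and therefore $\Phi(r)\ge -e^{C\psi(r)}\ge -e^{C\psi(r_0)}$ on $(0,r_0)$; thus $\Phi$ is a nondecreasing function which is bounded below, so $L:=\lim_{r\to 0^+}\Phi(r)=\inf_{(0,r_0)}\Phi$ exists and is finite.

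Next I would invert the defining relation, writing $N(r)=e^{-C\psi(r)}\Phi(r)-C\psi(r)$, and let $r\to 0^+$. Using $e^{-C\psi(r)}\to 1$, $\Phi(r)\to L$ and $C\psi(r)\to 0$, this gives at once that $\lim_{r\to 0^+}N(r)=L$ exists and is finite.

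Finally, to handle $N_1$ I would invoke \eqref{n11}, which yields $|N(r)-N_1(r)|\le K_1 r^{2s}\big(N_1(r)+1\big)$ for $0<r\le r_0$, together with the observation recorded just after \eqref{n11}, namely that the boundedness of $N$ on $(0,r_0)$ coming from Remark \ref{R:imp} forces $N_1$ to be bounded on $(0,r_0)$, say $|N_1(r)|\le M$. Hence $|N(r)-N_1(r)|\le K_1(M+1)r^{2s}\to 0$ as $r\to 0^+$, since $s>0$. Combined with the convergence of $N(r)$ just established, this shows $\lim_{r\to 0^+}N_1(r)=\lim_{r\to 0^+}N(r)=L$, completing the proof.

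I do not expect a genuine obstacle here: all the weight is carried by Theorem \ref{mon1}, the lower bound in Remark \ref{R:pos}, the upper bound in Remark \ref{R:imp}, and the estimate \eqref{n11}. The only point requiring a modicum of care is that a monotone function a priori possesses only a one-sided limit, which could in principle be $-\infty$; this is precisely why one must verify that $\Phi$ is bounded below, and that verification is where Remark \ref{R:pos} is used.
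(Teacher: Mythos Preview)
Your proof is correct and follows essentially the same route as the paper: use the monotonicity of $\overline N(r)=e^{C\psi(r)}(N(r)+C\psi(r))$ from Theorem \ref{mon1} together with a bound to get a finite limit, invert to recover $\lim_{r\to 0}N(r)$, and then pass to $N_1$ via \eqref{n11} and the boundedness of $N_1$. If anything, you are slightly more careful than the paper in pinpointing that it is the \emph{lower} bound $N(r)\ge -1$ from Remark \ref{R:pos} (rather than the upper bound of Remark \ref{R:imp}) that is needed to ensure the nondecreasing function $\Phi$ has a finite limit at $0^+$.
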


\begin{proof}
Consider the function in \eqref{mon}
\[
\overline{N}(r) \overset{def}{=} e^{C\psi(r)} \left(N(r)  +  C \psi(r)\right),
\]
where $\psi(r) = \int_{0}^r t^{-a} dt$.
By Theorem \ref{mon} and Remark \ref{R:imp} we know that there exists finite 
\[
\overline N(0+) = \underset{r \to 0}{\lim} \overline N(r).
\]
Since 
\[
N(r)= e^{-C \psi(r)} \overline{N}(r) - C \psi(r),
\]
we infer that also 
\[
N(0+) = \underset{r \to 0}{\lim} N(r)
\]
exists finite. By the boundedness of $N_1$ on $(0,r_0)$ and \eqref{n11} we conclude that also 
\[
N_1(0+) = \underset{r \to 0}{\lim} N_1(r)
\]
exists finite and furthermore $N_1(0+)  = N(0+)$.

\end{proof}

The following result about the growth of the function $H(r)$ 
 will be important in the sequel.  

\begin{cor}[Non-degeneracy]\label{growth}
Under the assumptions in Theorem \ref{mon1} one has for every $0<r<r_0$ 
\begin{equation}\label{iner10}
H(r) \geq  H(r_0) \left(\frac{r}{r_0}\right)^{4||\overline N||_\infty+a}
\end{equation}
where we have let 
\[
||\overline N||_\infty = ||\overline N||_{L^\infty(0,r_0)}. 
\]
\end{cor}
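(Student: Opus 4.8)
The plan is to integrate the logarithmic derivative of the height function $H$, combining the first variation formula of Lemma \ref{L:fvh} with the upper bound on the frequency supplied by Theorem \ref{mon1}. We work throughout on the interval $(0,r_0)$, on which $H(r)>0$ by the standing hypothesis of Theorem \ref{mon1} (it is a nonnegative quantity assumed not to vanish), so that $\log H$ is well defined and differentiable there and $N(r)$ is defined.

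First I would rewrite \eqref{mon10} in Lemma \ref{L:fvh} as
\[
\frac{H'(r)}{H(r)} = \frac{4 I(r) + a\,H(r)}{r\,H(r)} = \frac{4 N(r) + a}{r},
\]
using the definition \eqref{N100} of $N$. Next I would derive the pointwise bound $N(r)\le \|\overline N\|_\infty$ for $0<r<r_0$. Indeed, writing $\overline N(r) = e^{C\psi(r)}\big(N(r)+C\psi(r)\big)$ as in \eqref{mon}, with $\psi(r)=\int_0^r t^{-a}\,dt\ge 0$, we have $N(r) = e^{-C\psi(r)}\overline N(r) - C\psi(r)$, and since $0<e^{-C\psi(r)}\le 1$ and $C\psi(r)\ge 0$,
\[
N(r) \le e^{-C\psi(r)}\,\overline N(r) \le \max\{\overline N(r),0\} \le \|\overline N\|_{L^\infty(0,r_0)} = \|\overline N\|_\infty .
\]
(When $V\equiv 0$ one has $C=0$, $\overline N=N$, and this is simply $N(r)\le \|N\|_\infty$.) Consequently
\[
\frac{d}{dr}\log H(r) = \frac{4N(r)+a}{r} \le \frac{4\|\overline N\|_\infty + a}{r}, \qquad 0<r<r_0.
\]

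Finally, since $N\in L^\infty(0,r_0)$ by Remark \ref{R:imp}, the function $r\mapsto H'(r)/H(r)$ is bounded, hence integrable, on every compact subinterval $[r,r_0]\subset(0,r_0]$, so integrating the last inequality from $r$ to $r_0$ gives
\[
\log\frac{H(r_0)}{H(r)} \le \big(4\|\overline N\|_\infty + a\big)\log\frac{r_0}{r}.
\]
As $r<r_0$ implies $\log(r_0/r)>0$, exponentiating yields $H(r_0)/H(r)\le (r_0/r)^{4\|\overline N\|_\infty+a}$, which is exactly \eqref{iner10} after multiplying through by $H(r)>0$ and $(r/r_0)^{4\|\overline N\|_\infty+a}$. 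There is no genuine obstacle here: the computation is a Gronwall-type integration, and the only point requiring a moment's care is the uniform frequency bound $N(r)\le\|\overline N\|_\infty$, which follows directly from the monotonicity of $r\mapsto\overline N(r)$ proved in Theorem \ref{mon1} together with the finiteness of $\overline N$ recorded in Remark \ref{R:imp}.
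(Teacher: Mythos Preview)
Your proof is correct and follows essentially the same route as the paper: rewrite \eqref{mon10} as $(\log H)'(r)=(4N(r)+a)/r$, bound $N(r)\le \|\overline N\|_\infty$ (the paper simply cites Remark \ref{R:imp} for this, while you spell out the one-line derivation from $N(r)=e^{-C\psi(r)}\overline N(r)-C\psi(r)$), integrate from $r$ to $r_0$, and exponentiate.
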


\begin{proof}
From \eqref{mon10} in Lemma \ref{L:fvh} we have for $0<\sigma<r_0$
\[
\frac{d}{d\sigma} \log H(\sigma) = \frac{4}{\sigma} N(\sigma) + \frac{a}{\sigma}.
\]
Integrating this identity on the interval $(r,r_0)$, we find
\[
\log\left(\frac{H(r_0)}{H(r)} \left(\frac{r}{r_0}\right)^a\right) = 4 \int_r^{r_0} \frac{N(\sigma)}{\sigma} d\sigma \le 4 ||\overline N||_\infty \log \frac{r_0}{r},
\] 
where in the last inequality we have used Remark \ref{R:imp}. Exponentiating we obtain the desired conclusion.
 
\end{proof}

\begin{cor}\label{int}
If with $r_0$ as in Theorem \ref{mon1} we have $H(r_0) \neq 0$, then we must have $H(r) \neq 0$ for all $0<r <r_0$.  
\end{cor}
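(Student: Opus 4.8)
The plan is to argue by contradiction, combining the continuity of $r\mapsto H(r)$ with the monotonicity machinery of Theorem \ref{mon1}, the latter run on a subinterval of $(0,r_0)$ on which $H$ is known not to vanish. First I would record two structural facts about the set $A=\{r\in(0,r_0]\mid H(r)=0\}$. By Remark \ref{R:nondeg}, $H(r)=0$ forces $U\equiv 0$ in $\Sa_r^{+}$; since $\Sa_{r'}^{+}\subset \Sa_r^{+}$ for $r'<r$, it follows that $A$ is downward closed, i.e.\ $r\in A$ and $0<r'<r$ imply $r'\in A$. Moreover $r\mapsto H(r)$ is continuous (indeed differentiable, by Lemma \ref{L:fvh}), so $A$ is relatively closed in $(0,r_0]$. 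Consequently, if $A\neq\emptyset$, then $A=(0,\rho]$ with $\rho=\sup A$, and the hypothesis $H(r_0)\neq 0$ (hence $H(r_0)>0$, as $H\ge 0$) gives $\rho<r_0$. In particular $H(r)>0$ on $(\rho,r_0]$, so $N(r)=I(r)/H(r)$ is well defined there.

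The heart of the argument is to run the argument in the proof of Theorem \ref{mon1} on the open interval $(\rho,r_0)$. All the ingredients are pointwise in $r$ (or in $t$): Lemma \ref{L:fvh} holds for every $r>0$, Lemmas \ref{L:fve} and \ref{L:trace} hold for $r\le r_0$, and the Cauchy--Schwarz step uses only that $H(r)\neq 0$ at the $r$ under consideration. Hence the differential inequality $N'(r)\ge -Cr^{-a}\big(1+N(r)\big)$ is valid on $(\rho,r_0)$, and therefore $r\mapsto e^{C\psi(r)}\big(N(r)+C\psi(r)\big)$, with $\psi(r)=\int_0^r t^{-a}\,dt$, is monotone increasing on $(\rho,r_0)$. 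Since $\psi$ is bounded (as $|a|<1$) and $N(r_0)$ is finite (because $H(r_0)>0$ and $i(U,t)$ is finite), this yields a uniform upper bound $N(r)\le M$ on $(\rho,r_0)$; together with $N(r)\ge -1$ from Remark \ref{R:pos}, we get $|N(r)|\le \Lambda$ on $(\rho,r_0)$ for some $\Lambda=\Lambda(n,s,K,r_0,N(r_0))$.

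Finally I would use the first variation identity $\frac{d}{dr}\log H(r)=\frac{4N(r)+a}{r}$ from Lemma \ref{L:fvh}: integrating it over $(r,r_0)$ for $r\in(\rho,r_0)$ and invoking $|4N(r)+a|\le 4\Lambda+|a|$ gives $\big|\log\!\big(H(r_0)/H(r)\big)\big|\le (4\Lambda+|a|)\log(r_0/\rho)$, that is, $H(r)\ge H(r_0)\,(r/r_0)^{4\Lambda+|a|}$ on $(\rho,r_0)$ --- the non-degeneracy estimate of Corollary \ref{growth} restricted to the interval where $H$ does not vanish. Letting $r\to\rho^{+}$ and using continuity of $H$ yields $H(\rho)\ge H(r_0)(\rho/r_0)^{4\Lambda+|a|}>0$, which contradicts $\rho\in A$, i.e.\ $H(\rho)=0$. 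Hence $A=\emptyset$, which is precisely the claim. I expect the only delicate point to be the one flagged above: one must verify that the derivation of the differential inequality in Theorem \ref{mon1} does not secretly use the global hypothesis ``$H\neq 0$ on all of $(0,r_0]$'' appearing in that theorem's statement, for otherwise the argument would be circular --- but since every step is local in $r$ this causes no difficulty.
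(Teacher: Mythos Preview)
Your proof is correct and follows essentially the same contradiction argument as the paper: define $\rho=\sup\{r\le r_0\mid H(r)=0\}$, note $H>0$ on $(\rho,r_0]$, apply the non-degeneracy bound of Corollary \ref{growth} there, and let $r\to\rho^+$ to contradict $H(\rho)=0$. You are in fact more careful than the paper on two points---you explicitly observe via Remark \ref{R:nondeg} that the zero set is downward closed (so $A=(0,\rho]$), and you verify that the monotonicity argument of Theorem \ref{mon1} and hence Corollary \ref{growth} can legitimately be run on the subinterval $(\rho,r_0]$ rather than on all of $(0,r_0]$, a point the paper leaves implicit.
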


\begin{proof}

We argue by contradiction and assume that there exist $0 <\overline r < r_0$  such that $H(\overline r) =0$. Define
\[
\rho = \sup  \{r\leq r_0\mid H(r) =0\}.
\]
Since $H(r_0) \neq 0$, we must have $0 < \rho< r_0$. Since by the hypothesis in Theorem \ref{mon1} we have $H(r)\not= 0$ for $r \in (\rho, r_0]$, applying \eqref{iner10} above we obtain 
\[
H(r) \geq  H(r_0) \left(\frac{r}{r_0}\right)^{4||\overline N||_\infty+a}
\]
for every $r \in (\rho, r_0]$. Letting $r \to  \rho^+$ this leads to a contradiction since $H(\rho)=0$. 
 
\end{proof}


\section{Blow-up analysis and the Proof of  Theorem \ref{main}}\label{S:comp}

In this section we develop a blow-up analysis for solutions of the extension problem \eqref{ext} above with the objective of proving Theorem \ref{main}. Our arguments are similar in spirit to those in \cite{FF} (strong unique continuation for $(-\Delta)^s$) and \cite{DGPT} (parabolic Signorini problem).  As mentioned in the introduction,  we will  show that  certain rescaled versions  $\{U_r\}$  of $U$  converge to a globally defined function $U_0$ as the rescaling parameter $r$ goes to $0$ and the proof of which  crucially uses  the monotonicity result Theorem \ref{mon1} (see Lemma \ref{convergence} below.) We then show that this $U_0$  is homogeneous with respect to the non-isotropic parabolic scalings (see Proposition \ref{homg1}). Finally, from the homogeneity of $U_0$  and the equation satisfied by it,  we   prove our main result Theorem \ref{main}. Roughly speaking, this blow-up analysis  allows us to derive "quantitative" information  about vanishing order in the thick space (i.e., gives us  information about the  vanishing order for $U$  at $(0, 0, 0) \in \Rnp \times \R$)  given the   knowledge  about  vanishing  order  at the boundary (i.e., once we know about  vanishing order for $u$ at $(0, 0) \in \Rn \times \R$).

Now as in Section \ref{S:dgn}, with  $u$ as in the statement of Theorem \ref{main}, we denote by $U$ the solution to the extension problem \eqref{ext} above. Without loss of generality we assume hereafter that the point $(x_0, t_0)$ in that statement be the origin $(0,0)$. 
Throughout this section the number $r_0>0$ will always indicate that specified in Theorem \ref{mon1}. 
To prove Theorem \ref{main} we need to be able to conclude that, under the hypothesis that $u$ vanishes to infinite order at $(0,0)$, we must have $u\equiv 0$ in $\Rn\times [-r_0^2,0)$. This will of course be the case if we can show that $H(U, r_0) = 0$. 

We thus suppose that 
\begin{equation}\label{assump}
H(r_0)= H(U, r_0) > 0,
\end{equation}
and our objective is to show that \eqref{assump} leads to a contradiction.
Note that \eqref{assump} combined with Corollary \ref{int} implies that $H(r) >0$ for all $0<r <r_0$.  
We recall the definition \eqref{pardil} of the parabolic dilations in $\Rnn\times \R$.
 
 \begin{dfn}\label{D:abups}
We define the \emph{Almgren rescalings} of $U$ as 
\begin{equation}\label{bl}
U_r (X, t)= \frac{r^{a/2}U(\delta_r(X,t))}{\sqrt{H(U, r)}} = \frac{r^{a/2}U(rX, r^2 t)}{\sqrt{H(U, r)}}.
\end{equation}
\end{dfn}

An elementary, yet crucial property, of the Almgren rescalings which follows from \eqref{H}, the fact that $\Ga(r^{-1}X,r^{-2}t) = r^{n+1} \Ga(X,t)$, and a change of variable is that
\begin{equation}\label{15}
H(U_r, 1)= \int_{\Sa_1^{+}}y^{a} U_r^2 \Ga dX dt = 1.
\end{equation}
From \eqref{15} and analogous considerations we see that, with $N_1$ defined as in \eqref{n1}, we have
\begin{equation}\label{16}
 N_1(U,r) = N_1(U_r,1) = \int_{\Sa_1^{+}}y^{a} |t| |\nabla U_r|^2 \Ga \le ||N_1||_\infty = ||N_1||_{L^\infty(0,r_0)}<\infty, 
\end{equation}
since, as we have noted after \eqref{n11}, $N_1(r)$ is bounded for $r\in (0,r_0)$ as a consequence of Theorem \ref{mon1}. In fact, \eqref{16} is a special case of the following more general property
\begin{equation}\label{si}
N_1(U_r,\rho) = N_1(U, r\rho),\ \ \ \ \ \ \ \ \ \ \ \ r, \rho >0.
\end{equation}
We remark that $U_r$ solves the following problem
\begin{equation}\label{ext1}
\begin{cases}
\operatorname{div}(y^a \nabla U_r)= y^a \partial_t U_r,
\\
\underset{y \to 0}{\lim} - y^a \partial_{y} U_r = r^{2s} V(rx, r^2 t) U_r(x,0,t).
\end{cases}
\end{equation}

The next result shows that, on a subsequence  $r_j \to 0$, the Almgren rescalings $U_{r_j}$ converge in some appropriate topology. Note that when in the next statement we say $\nabla U_j \to \nabla U_0$ weakly  on compact subsets of $\overline{\Rnp} \times (-1, 0)$, we mean there is weak convergence in $L^2(y^a dXdt)$ on sets of the type $\overline{\B_{A}^{+}} \times (-T, -\delta)$, for all $A>0$ and all $0< \delta <T<1$.

\begin{lemma}\label{convergence}
With $ U_r$ as in \eqref{bl}, there exists a subsequence  $r_j \to 0$, and a function  $ U_0: \Rn \times \R \to \R$, such that  $ U_{r_j}= U_j$ converges uniformly to $ U_0$  and $\nabla  U_j \to \nabla  U_0$ weakly in $L^{2}(|y|^a dX)$ on compact subsets of $\overline{\Rnp} \times (-1, 0)$. Moreover, $ U_0$ is a weak solution to 
\begin{equation}\label{hom}
\begin{cases}
\operatorname{div} (|y|^{a} \nabla  U_0)= 0
\\
\underset{y \to 0^+}{\lim} y^{a} \partial_y  U_0 =0,
\end{cases}
\end{equation}
on every compact  subset of $\overline{\Rnp} \times (-1, 0)$. 
\end{lemma}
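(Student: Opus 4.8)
The plan is to extract a convergent subsequence from the family $\{U_r\}$ using the uniform bounds established above, and then pass to the limit in the weak formulation of \eqref{ext1}. The key inputs are: the normalization \eqref{15}, which says $\int_{\Sa_1^+} y^a U_r^2\,\Ga = 1$; the energy bound \eqref{16}, which says $\int_{\Sa_1^+} y^a |t|\,|\nabla U_r|^2\,\Ga \le \|N_1\|_\infty$; and the regularity theory of Section \ref{S:dgn}, in particular the De Giorgi--Nash--Moser Theorem \ref{reg5}, applied to the rescaled functions $U_r$, which solve \eqref{ext1}. Observe that the Neumann data in \eqref{ext1} has $L^\infty$ norm bounded by $r^{2s}\|V\|_{L^\infty}$, which tends to $0$ as $r\to 0$; in particular for small $r$ this is uniformly bounded by $K$, so Theorem \ref{reg5} (and the ensuing Harnack-type estimates) apply with constants independent of $r$.

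First I would fix a parabolic box $Q = \overline{\B_A^+}\times(-T,-\delta)$ with $0<\delta<T<1$ and $A>0$. On such a box, the Gaussian weight $\Ga$ is bounded above and below by positive constants depending on $A,\delta,T$, and similarly $|t|$ is comparable to a constant. Hence the normalization and energy bounds translate into genuine (un-weighted by $\Ga$) bounds $\int_Q y^a U_r^2 \le C(Q)$ and $\int_Q y^a |\nabla U_r|^2 \le C(Q)$, uniformly in small $r$. From the weak formulation one also controls $\partial_t U_r$ in a dual norm. Now I would invoke Theorem \ref{reg5} applied to $U_r$ on a slightly larger box: this gives a uniform $\Ha^\alpha$ bound on $U_r$ over $Q$, hence equicontinuity, so by Arzel\`a--Ascoli a subsequence converges uniformly on $Q$. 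Combined with the uniform $L^2(y^a\,dXdt)$ bound on $\nabla U_r$, a further diagonal extraction over an exhausting sequence of boxes $Q_k \uparrow \overline{\Rnp}\times(-1,0)$ yields a single subsequence $U_{r_j}=U_j \to U_0$ uniformly on compact sets, with $\nabla U_j \rightharpoonup \nabla U_0$ weakly in $L^2(y^a\,dXdt)$ on compact sets. (The weak limit of $\nabla U_j$ is indeed $\nabla U_0$ since $U_j\to U_0$ in, say, $L^2_{loc}(y^a)$ and distributional derivatives pass to the limit.)

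Next I would verify that $U_0$ is a weak solution of \eqref{hom}. Take a test function $\phi \in W^{1,2}(\B_A^+\times(t_1,t_2),y^a\,dXdt)$ compactly supported in $(\B_A^+\cup B_A)\times[t_1,t_2]$ with $-1<t_1<t_2<0$, and write down the weak formulation \eqref{d8} for $U_j$, with $W = r_j^{2s}V(r_j x, r_j^2 t)$, $\psi = 0$. The volume terms $\int\!\int <\nabla U_j,\nabla\phi> y^a$ and $\int\!\int y^a \phi_t U_j$ and the time-slice terms converge by the weak/uniform convergence of $U_j$ and $\nabla U_j$. The boundary term $\int\!\int_{B_A} W U_j \phi = \int\!\int_{B_A} r_j^{2s} V(r_jx,r_j^2 t) U_j \phi$ is controlled, via the trace inequality Lemma \ref{tr} applied at each time level (exactly as in the proof of Theorem \ref{reg5}), by $r_j^{2s}\|V\|_{L^\infty}$ times a bounded quantity, hence tends to $0$. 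Passing to the limit gives precisely the weak formulation of \eqref{hom}, i.e. $\operatorname{div}(y^a\nabla U_0)=y^a\partial_t U_0$ with homogeneous Neumann condition $\lim_{y\to0^+}y^a\partial_y U_0 = 0$; since there is no right-hand side this is \eqref{hom} (note the first equation in \eqref{hom} as displayed should be read together with the parabolic evolution, i.e. the equation is $\operatorname{div}(y^a\nabla U_0)=y^a\partial_t U_0$).

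The main obstacle will be the uniformity of the regularity estimates for $U_r$ near the boundary $\{y=0\}$ as $r\to 0$, and in particular making sure the boundary integral involving the rescaled potential is handled correctly. This is where one must use that the Neumann data $r^{2s}V(rx,r^2t)$ has $L^\infty$ norm uniformly bounded for small $r$ (so Theorem \ref{reg5} applies with $r$-independent constants) and that it in fact tends to zero (so the limiting equation has no potential term). A secondary technical point is the justification of the diagonal extraction and the identification of the weak limit of the gradients, together with checking that the time-boundary terms $\int_{\B_A^+}\phi(\cdot,t_i)U_j(\cdot,t_i)y^a$ converge — this follows from the uniform convergence of $U_j$ on compact subsets of $\overline{\Rnp}\times(-1,0)$, which includes the closed time slices $t=t_i$ once we choose $t_i$ bounded away from $0$ and $-1$. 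None of these steps requires new ideas beyond what is already developed in Sections \ref{S:nltol} and \ref{S:dgn}; it is a matter of assembling the compactness argument carefully.
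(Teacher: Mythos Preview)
Your proposal is correct and follows essentially the same approach as the paper: extract uniform $V^a$-bounds on boxes $\B_A^+\times(-T,-\delta]$ from \eqref{15}, \eqref{16} (using that $\Ga$ and $|t|$ are bounded away from zero there), upgrade to uniform convergence via Theorem~\ref{reg5} and Arzel\`a--Ascoli, run a diagonal extraction, and then pass to the limit in the weak formulation \eqref{d8} with the boundary term killed by the trace inequality Lemma~\ref{tr} and the factor $r_j^{2s}\to 0$. You also correctly observe that the first line of \eqref{hom} should be read as the parabolic equation $\operatorname{div}(y^a\nabla U_0)=y^a(U_0)_t$, which is indeed what the paper's proof establishes (cf.\ the later use in \eqref{conj}).
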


\begin{proof}
We first note that for any given $A> 0$ and $0< \delta <T<1$, the function $\Ga$ is bounded from below in the sets $\B_A^{+} \times (-T,-\delta]$. As a consequence, we see from \eqref{15}, \eqref{16} that 
\begin{equation}\label{unif}
\int_{-T}^{-\delta} \int_{\B_A^{+} } y^a U_r^2 \leq C_0 (A,\delta,T),\ \ \ \ \int_{-T}^{-\delta} \int_{\B_A^{+} } y^a |\nabla U_r|^2 \leq C_0(A,\delta,T).
\end{equation}
The bounds \eqref{unif} imply the existence of $U_0 :\B_A^{+} \times (-T, -\delta] \to \R$ such that for a subsequence $r_j \to 0$, $U_{r_j} \to U_0$ and $\nabla U_{r_j} \to \nabla U_0$ weakly in $L^{2}(\B_A^{+} \times (-T,-\delta],y^a dXdt)$. Moreover, from the regularity estimates in Theorem \ref{reg5} and the Theorem of Ascoli-Arzel\`a, after possibly passing to another subsequence,  we can guarantee that $U_{r_j} \to U_0$ uniformly in $\overline{\B_{A'}^{+}} \times (-T', -\delta]$ for any $A'<A$ and $T' < T$. By a countable exhaustion of $\Sa_{1}^{+}$ with domains of the type $\B_A^{+} \times (-T,-\delta]$, with $A>0$ and $0< \delta< T < 1$, and a Cantor  diagonalization argument, we conclude that for a  subsequence $r_j \to 0$, $U_j = U_{r_j}$ converges uniformly to $U_0$, and $\nabla U_j \to \nabla U_0$ weakly on compact subsets of $\overline{ \Rnp} \times (-1,0)$.

Next, we show that $U_0$ solves \eqref{hom}. To prove this fact, given $A>0$ and $0<\delta < T<1$, we pick a test function $\phi \in W^{1,2}(\B_A^{+} \times (-T,-\delta], y^a dXdt)$ with compact support in $\B_A^{+} \times (-T,-\delta]$. Since $U_r = U_{r_j} = U_j$ solves \eqref{ext1}, for every $t_1, t_2 \in (-T, -\delta)$ we see that the following holds
\begin{align}\label{ext2}
\int_{t_1}^{t_2} \int_{\B_A^{+}} y^a <\nabla U_r, \nabla \phi> & = \int_{t_1}^{t_2} \int_{\B_A^{+}} y^a U_r \phi_t + \int_{\Rn\times \{0\}} r^{2s} V(rx, r^2t) \phi U_r  
\\
& - \int_{\B_A^{+}} y^a U_r(\cdot,t_2)  \phi(\cdot,t_2) + \int_{\B_A^{+}} y^a U_r(\cdot,t_1)\phi(\cdot,t_1).
\notag
\end{align}
By the weak convergence of $\nabla U_j \to \nabla U_0$, and the uniform convergence of $U_j \to U_0$, we infer that
\[
\begin{cases}
\int_{t_1}^{t_2} \int_{\B_A^{+}} y^a <\nabla U_j,\nabla \phi>\ \longrightarrow\ \int_{t_1}^{t_2} \int_{\B_A^{+}} y^a <\nabla U_0,\nabla \phi>,
\\
\int_{t_1}^{t_2} \int_{\B_A^{+}} y^a U_j \phi_t\ \longrightarrow\ \int_{t_1}^{t_2} \int_{\B_A^{+}}   y^a U_0 \phi_t,
\\
\int_{\B_A^{+}} y^a U_j(\cdot,t_i)\phi(\cdot,t_i)\ \longrightarrow\ \int_{\B_A^{+}} y^a U_0(\cdot,t_i)\phi(\cdot,t_i),\ \ \ \ \ \ i=1,2.
\end{cases}
\]
Moreover, since by \eqref{vasump} we have $r^{2s}|V(rx, r^2t)| \leq K r^{2s}$, using the trace  inequality in Lemma \ref{tr} with $\mu=1$, and the bounds in \eqref{unif}, we find 
\begin{equation}
\int_{B_A \times (t_1, t_2)} r^{2s} V(rx, r^2t) \phi U_r\ \longrightarrow\ 0\ \ \text{as}\ r \to 0.
\end{equation}
We conclude that $U_0 \in V^{a, A,-T,-\delta}$ for all $A, \delta>0$ and $T< 1$, and that it is a weak solution to \eqref{hom}.  

\end{proof}

\begin{dfn}\label{D:albu}
A function $U_0$ as in Lemma \ref{convergence} will be called an \emph{Almgren blowup} of $U$.
\end{dfn}

Before proceeding further, we observe that if $U_0$ is any Almgren blowup of $U$, then by taking  difference quotients in the $x_1,..., x_n, t$ directions, and by repeatedly applying Lemma \ref{reg2}, we see that $U_0$ is infinitely differentiable in $(x,t)\in \Rn\times (-1,0)$. 

A fundamental property of the Almgren blowups is that they are homogeneous with respect to the parabolic dilations, with a degree of homogeneity which is decided by the value at zero $N(U,+0)$ of the frequency of $U$. This is the content of Proposition \ref{homg1} below. Before we can prove such result, however, we need to establish the following basic convergence result in Gaussian spaces for which we closely follow some ideas in the proof of Theorem 7.3 in \cite{DGPT}. 
   
\begin{lemma}\label{F1}
For any subsequence $r_j \to 0$ as  in Lemma \ref{convergence}, we have for $U_j = U_{r_j}$ and any $R <1$, 
\begin{equation}\label{f1}
\int_{\Sa_{R}^{+}} y^{a} \left(|U_j -U_0|^2 + |t| |\nabla U_j - \nabla U_0|^2 \right) \Ga\ \longrightarrow\ 0.
\end{equation}
\end{lemma}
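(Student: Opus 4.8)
The plan is to prove convergence in the Gaussian-weighted space by exploiting the energy identity satisfied by the rescalings, together with the uniform bounds \eqref{15}, \eqref{16} and the local convergence already established in Lemma \ref{convergence}. The key observation is that the $L^2(y^a\Ga)$-norm of $\nabla U_j$ over the strip $\Sa_R^+$ is controlled by $N_1(U_j,R) = N_1(U,r_jR)$, which by Corollary \ref{min} converges to $N(U,0+)$, hence is a convergent — in particular bounded and "non-leaking" — sequence. This is what upgrades the weak local convergence of $\nabla U_j$ to strong convergence in the global Gaussian norm: there is no loss of mass at spatial infinity or as $t\to 0^-$, because the total energy is asymptotically conserved.

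First I would record the weighted energy/Rellich-type identity for weak solutions of \eqref{ext1}: multiplying the equation by $U_r\Ga$ and integrating over $\Sa_\rho^+$ (justified as in Lemmas \ref{L:ibi} and \ref{L:fvh} using the regularity of $y^aU_y$ from Lemma \ref{regU}), one gets
\[
\int_{\Sa_\rho^+} y^a|t|\,|\nabla U_r|^2\,\Ga = \rho^2\, I(U_r,\rho) + \int_{S_\rho} |t|\,r^{2s}V(rx,r^2t)\,U_r^2\,\Ga,
\]
and the second term is $O(r^{2s})$ uniformly on $(0,\rho]$ by \eqref{vasump}, \eqref{e9'} and \eqref{16}. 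Combined with \eqref{n1}, this shows $\int_{\Sa_R^+} y^a|t||\nabla U_j|^2\Ga = N_1(U,r_jR) \to N(U,0+)$. Independently, the uniform convergence of $U_j$ to $U_0$ on compact subsets of $\overline{\Rnp}\times(-1,0)$ together with the Gaussian decay of $\Ga$ and the uniform $L^2(y^a\Ga)$ bound \eqref{15} (via a tightness/dominated-convergence argument splitting the integral into $\{|X|\le A,\ -R^2\le t\le -\delta\}$ and its complement, and using that $\Ga$ is integrable against $y^a$) yields $\int_{\Sa_R^+} y^a|U_j-U_0|^2\Ga \to 0$; this also gives $\int_{\Sa_R^+} y^a|t||\nabla U_0|^2\Ga \le N(U,0+)$ by weak lower semicontinuity applied on each compact piece and monotone exhaustion.

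Next I would pass to the limit in the weak formulation \eqref{ext2} tested against $U_j\Ga$ itself — more precisely, against a cutoff of $U_0\Ga$ and then against $U_j\Ga$ — to identify $\lim_j \int_{\Sa_R^+} y^a|t||\nabla U_j|^2\Ga$ with $\int_{\Sa_R^+} y^a|t||\nabla U_0|^2\Ga$. The cleanest route: from the energy identity above applied to $U_0$ (which solves \eqref{hom}, so the boundary term vanishes) one gets $\int_{\Sa_R^+} y^a|t||\nabla U_0|^2\Ga = R^2 I(U_0,R)$; on the other hand $R^2 I(U_j,R) = R^2 I(U,r_jR)$, and using Corollary \ref{min} and the convergence of the boundary terms to zero one concludes $\int_{\Sa_R^+} y^a|t||\nabla U_j|^2\Ga \to \int_{\Sa_R^+} y^a|t||\nabla U_0|^2\Ga$. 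Together with weak convergence of $|t|^{1/2}\nabla U_j \to |t|^{1/2}\nabla U_0$ in $L^2(y^a\Ga)$ on each compact piece, convergence of the norms upgrades to strong convergence: write
\[
\int_{\Sa_R^+} y^a|t||\nabla U_j - \nabla U_0|^2\Ga = \int y^a|t||\nabla U_j|^2\Ga - 2\int y^a|t|\langle\nabla U_j,\nabla U_0\rangle\Ga + \int y^a|t||\nabla U_0|^2\Ga,
\]
where the cross term tends to $2\int y^a|t||\nabla U_0|^2\Ga$ by weak convergence against the fixed $L^2(y^a|t|\Ga)$ function $|t|^{1/2}\nabla U_0$ (again splitting into a compact part, handled by weak convergence, and a tail, handled by the uniform energy bound and the Gaussian factor). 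This gives \eqref{f1}.

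The main obstacle I anticipate is controlling the behaviour near the two "bad" parts of $\Sa_R^+$ — spatial infinity and the time slice $t=0^-$ — where the local convergence of Lemma \ref{convergence} gives nothing directly. The resolution is precisely the non-leaking of energy: because $\int_{\Sa_R^+} y^a|t||\nabla U_j|^2\Ga$ converges (not merely stays bounded), for every $\eta>0$ there is a compact $K\subset\Sa_R^+$ with $\int_{\Sa_R^+\setminus K} y^a|t||\nabla U_j|^2\Ga < \eta$ uniformly in $j$ — this is a standard consequence of convergence of the total integral plus pointwise-a.e. (or $L^1_{loc}$) control of the integrand, made rigorous by the Gaussian weight forcing tightness. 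The same tightness for $\int y^a U_j^2\Ga$ follows from \eqref{15} together with the uniform frequency bound \eqref{16} and Lemma \ref{tr} to handle the slice $t\to 0^-$, exactly as in the computation of $H_\ve(r)\to H(r)$ in Lemma \ref{L:fvh}. Once tightness is in hand, the compact-set estimates suffice and the splitting arguments above close.
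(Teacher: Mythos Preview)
Your strategy of ``weak convergence plus convergence of norms implies strong convergence'' is natural, but as written it has two genuine gaps.

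\textbf{Tightness for the $L^2$ part.} You appeal to ``the Gaussian decay of $\Ga$'' and to the argument for $H_\ve(r)\to H(r)$ in Lemma~\ref{L:fvh}. That argument, however, used the $L^\infty$ bound on $U$ coming from Corollary~\ref{bdd}; no uniform $L^\infty$ bound is available for the rescalings $U_j$ (any such bound would involve $H(U,r_j)^{-1/2}$). A uniform $L^2(y^a\Ga)$ bound alone does not give tightness. The paper handles the two bad regions by separate, nontrivial tools: for the time tail $\{-\delta^2<t<0\}$ it uses the inequality $H(r\delta)\le H(r)\,\delta^{a-1}$, which follows from $N(r)\ge -1/4$ and the first variation \eqref{mon10}, giving $\int_{\Sa_\delta^+} y^a U_r^2\,\Ga\le \delta^{a+1}$; for the spatial tail $\{|X|>A\}$ it uses Gross' log-Sobolev inequality (Lemma~7.7 of \cite{DGPT}), which converts the gradient bound \eqref{16} into smallness of $\int_{|X|>A} y^a U_r^2\,\Ga$ once $A$ is chosen so the Gaussian mass outside is $\le e^{-1/\delta}$. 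Neither of these ingredients appears in your proposal.

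\textbf{Circularity in the gradient part.} Granting the $L^2$ convergence, you have $H(U_j,R)\to H(U_0,R)$ and $N_1(U_j,R)=N_1(U,r_jR)\to\kappa$, hence $\int_{\Sa_R^+} y^a|t|\,|\nabla U_j|^2\Ga\to R^2\kappa\, H(U_0,R)$. But to conclude strong convergence via your expansion you need this to equal $\int_{\Sa_R^+} y^a|t|\,|\nabla U_0|^2\Ga = R^2 N_1(U_0,R)\,H(U_0,R)$, i.e.\ $N_1(U_0,R)=\kappa$. In the paper this identity is Proposition~\ref{homg1}, proved \emph{using} Lemma~\ref{F1} (see \eqref{conv} and \eqref{ndeg}). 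Weak lower semicontinuity gives only $N_1(U_0,R)\le\kappa$, and your ``non-leaking'' claim (that convergence of totals plus $L^1_{loc}$ control forces tightness) is false in general---consider $f_j=\mathbf 1_{[j,j+1]}$. Also, the identity you write, $I(U_j,R)=I(U,r_jR)$, is not correct; rather $I(U_j,R)=I(U,r_jR)/H(U,r_j)$.

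The paper avoids this circularity entirely: for the gradient part it works with $v=U_r-U_0$, which satisfies the extension equation with Neumann datum $r^{2s}V(rx,r^2t)U_r$, tests against $\tau_1^2 v\,\Ga$ with $\tau_1(X,t)=|t|^{1/2}\tau_0(X/\sqrt{|t|})$, and derives the Caccioppoli-type estimate
\[
\int_{\Sa_R^+} y^a\,|t|\,|\nabla(U_r-U_0)|^2\,\Ga \;\le\; C\int_{\Sa_{\rho_1}^+} y^a (U_r-U_0)^2\,\Ga \;+\; C\,r^{2s},
\]
valid for any $R<\rho_1<1$. Then the already-established $L^2$ convergence finishes the job, with no reference to $N_1(U_0,\cdot)$.
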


\begin{proof}
In what follows we write $H(r), I(r), N(r), N_1(r)$, instead of $H(U,r)$, etc. We first show that as $j\to \infty$ 
\begin{equation}\label{i'1}
\int_{\Sa_{R}^{+}} y^{a}  |U_j -U_0|^2 \Ga\  \longrightarrow\ 0.
\end{equation}
 
In order to establish \eqref{i'1} we note that since $N_1(r)\ge 0$, possibly restricting $r_0$, \eqref{n11} implies in particular that for $0< r\le r_0$  the following holds, 
\begin{equation}\label{e9''}
N(r) \geq - K_1 r^{2s}. 
\end{equation}
Restricting $r_0$ even further, in such a way that  
\[
4 K_1 r_0^{2s} < 1,
\]
we have for $0< r\le r_0$
\[
N(r) \ge - \frac{1}{4}.
\]
Since \eqref{mon10} in Lemma \ref{L:fvh} gives
\[
\frac{H'(r)}{H(r)} = \frac{4 N(r)}{r} + \frac{a}{r},
\]
we conclude for $0<r \leq r_0$ 
\[
\frac{H'(r)}{H(r)} \geq \frac{a - 1}{r}.
\]

Let $0<\delta<1$ be arbitrarily chosen. Given any $0<r\le r_0$, integrating the latter inequality from $\delta r$ to $r$ we find  
\begin{equation*}\label{56}
H(r\delta) \leq H(r) \delta^{a-1}.
\end{equation*}
Using this inequality and a simple calculation we conclude for $0<r\le r_0$  
\begin{equation}\label{cond4}
\int _{\Sa_{\delta}^{+}}  y^{a} U_r^2 \Ga = \frac{\delta^2 H(\delta r)}{H(r)} \leq \delta^{a+1}.
\end{equation}
At this point, we need the following inequalities from \cite{DGPT} which are  corollaries of L. Gross' log-Sobolev inequality (see Lemma 7.7 in \cite{DGPT}). We first  write
\begin{equation}\label{prod}
\Ga(X,t) = G(x,t) K(y,t),
\end{equation}
where $G(x,t) = (4\pi |t|)^{-n/2} e^{\frac{|x|^2}{4t}}$ and $K(y,t) = (4\pi |t|)^{-n/2} e^{\frac{y^2}{4t}}$ respectively indicate the backward heat kernels in $\Rn$ and in $\R$.  The following inequalities hold:
\begin{equation}\label{ineq}
\log(\frac{1}{\int_{|f| > 0} \Ga(\cdot,s)} ) \int_{\Rnn}  f^2 \Ga(\cdot,s) \leq 2 |s| \int_{\Rnn} |\nabla f|^2 \Ga(\cdot,s),\ \ \ \ \ f \in W^{1,2}(\Rnn, \Ga(\cdot,s)),
\end{equation}
and
\begin{equation}\label{ineq1}
\log(\frac{1}{\int_{|f| > 0} G(\cdot,s)} ) \int_{\Rn}  f^2 G(\cdot,s) \leq 2 |s| \int_{\Rn} |\nabla f|^2 G(\cdot,s),\ \ \ \ \ \  f \in W^{1,2}(\Rn,G(\cdot,s)).
\end{equation}
We now choose  $A> 2$  large  enough such that for all $-1 < t < 0$, 
\begin{equation}\label{cond}
 \int_{\R^{n+1} \setminus \B_{A/2}} \Ga (X, t) dX \leq e^{-1/\delta},\ \ \ \ \ \ \  \int_{\R^{n} \setminus B_{A/2}} G(x,t) dx \leq e^{-1/\delta}.
\end{equation}
Using \eqref{ineq} and \eqref{ineq1}, we will appropriately estimate  the integral
\begin{equation}\label{end10}
\int_{\R^{n+1}_{+} \setminus \B_A}  y^{a} U_r^2 (X, t) \Ga(X,t).
\end{equation}
We note however that, unlike the situation in Sec. 7 in \cite{DGPT}, where the log-Sobolev inequality\eqref{ineq1} is employed to obtain smallness estimates for a certain integral of the type \eqref{end10}, in the present case, in view of the fact that $a<1$, a direct application of \eqref{ineq} to the natural choice $f= y^{a/2} U_r$  is not possible because $\nabla f$ behaves badly at points where $y$ is small. Therefore, we split \eqref{end10} into two regions as follows
\begin{align}\label{split}
\int_{\R^{n+1}_{+} \setminus \B_A}  y^{a} U_r^2 (X,t) \Ga(X,t) & = \int_{(\R^{n+1}_{+} \setminus \B_A) \cap (\Rn \times \{0<y \leq A/2 \})} y^a U_r^2 (X,t) \Ga(X,t)  
\\
 & + \int_{(\R^{n+1}_{+} \setminus \B_A) \cap (\Rn \times \{y > A/2 \})} y^a U_r^2 (X,t)\Ga(X,t).
 \notag
\end{align}
Now, in the region where $|X| \geq A$ and $0<y \leq A/2$, we must have $|x| \geq A/2$. Therefore, 
\begin{align}\label{iner19}
& \int_{(\Rnn_{+} \setminus \B_A) \cap (\Rn \times \{0<y \leq A/2 \}) } y^a U_r^2(X,t) \Ga(X,t)
\\
& \leq \int_{0}^{A/2} y^a K(y,t) \left(\int_{|x| \geq A/2} U_r^2 (X, t) G(x,t) dx\right) dy.
\notag
\end{align}
We now pick cut-off functions:
\begin{itemize}
\item $\bar \eta\in C^\infty_0(\Rnn)$ such that $\bar \eta \equiv 1$ in $\B_{A/2}$, and $\bar \eta \equiv 0$ outside $\B_{A}$;
\item $\eta\in C^\infty_0(\Rn)$ such that $\eta \equiv 1$ in  $B_{A/2}$, and $\eta \equiv 0$ outside $B_A$;
\item $\eta_2 \in C^\infty(\R)$ such that $\eta_2(y) \equiv 1$ for $y \leq \frac{A}{4}$, and $\eta_2(y) \equiv 0$ for $y \geq \frac{A}{2}$.  
\end{itemize}
Applying \eqref{ineq1} to $f= U_r (1-\eta)$, and noting that by the support property of $\eta$ and the second inequality in \eqref{cond} we have $\int_{|f|>0} G \le \int_{\R^{n} \setminus B_{A/2}} G \leq e^{-1/\delta}$,
we find
\begin{align*}
\int_{|x| \geq A/2} U_r^2 (x,t) G(x,t) & \leq \int_{\Rn} U_r^2(x,t) (1 -\eta(x))^2 G(x,t) 
\\
& \leq  C  \delta |t| \int_{\Rn} (U_r^2(x,t) + |\nabla_{x} U_r(x,t)|^2) G(x,t).
\notag
\end{align*}
Using this inequality in \eqref{iner19}, and keeping \eqref{prod} in mind, we obtain 
\begin{align}\label{cond1}
& \int_{(\Rnn_{+} \setminus \B_A) \cap (\Rn \times \{y \leq A/2 \})} y^a U_r^2(X,t) \Ga(X,t) 
\\
& \leq C \delta |t| \int_{\Rnn_{+}} y^a (U_r^2(X,t) + |\nabla U_r(X,t)|^2)\Ga(X,t).
\notag
\end{align}
 
Next, we estimate the second integral in the right-hand side of \eqref{split}. From the support properties of $\bar \eta$ and $\eta_2$, we find
\begin{align}\label{iner22}
& \int_{(\Rnn_{+} \setminus \B_A)  \cap (\Rn \times \{y > A/2\})} y^{a} U_r(X,t)^2 \Ga(X,t) 
\\
& \leq \int_{\Rnn} y^a  U_r^2(X,t) (1-\bar \eta(X))^2 (1- \eta_2(y))^2 \Ga(X,t).
\notag
\end{align}
We now apply \eqref{ineq} with $f= y^{a/2} U_r (1-\bar \eta(X)) (1- \eta_2(y))$. Note that, although $U_r$ is not defined for $y < 0$, given the fact that $1-\eta_{2}(y)$ vanishes for $y \leq \frac{A}{4}$, the function $f$ is smoothly extendable to the whole of $\Rnn$. For such $f$ we have the following estimate  
 \begin{align}\label{iner23}
|\nabla f|^2 & \leq C ( y^a  |\nabla  U_r|^2 + y^a U_r^2 + y^{a-2} U_r^2 (1-\bar \eta(X))^2 (1- \eta_2(y))^2)
 \\
 & \leq C y^a (U_r^2 + |\nabla U_r|^2),
 \notag
 \end{align}
 where in the last  inequality we have used the fact that, since $y^{a-2} U_r^2 (1-\eta(X))^2 (1- \eta_2(y))^2 $  is non-zero only for $y > \frac{A}{4}$ and $A>2$, we have
 \[
 y^{a-2} U_r^2 (1-\bar \eta(X))^2 (1- \eta_2(y))^2  \leq  y^{a-2} U_r^2 \leq 4 y^a U_r^2. 
\]
Therefore, applying \eqref{ineq} to $f= y^{a/2} U_r (1-\eta(X)) (1- \eta_2(y))$ we obtain from \eqref{iner22} and \eqref{iner23}
\begin{align}\label{cond2}
& \int_{(\Rnn_{+} \setminus \B_A)  \cap (\Rn \times \{y > A/2 \})} y^{a} U_r^2(X,t) \Ga(X,t) dx
\\
& \leq  C \delta |t| \int_{\Rnn_{+}} y^a (U_r^2(X,t) + |\nabla U_r(X,t)|^2)\Ga(X,t).
\notag
\end{align}
Integrating \eqref{cond2} in $t$ on $(-R^2, 0)$ we find 
\begin{align} \label{cond3}
& \int_{(\Rnp \setminus \B_A) \times (-R^2, 0)} y^{a} U_r^2(X,t) \Ga(X,t)
\\
& \leq C \delta  \int_{\Rnp \times (-R^2, 0) } |t| y^a (U_r^2(X, t) + |\nabla U_r(X, t)|^2)\Ga(X, t) \leq C \delta, 
\notag
\end{align}
where in the last inequality we have used the uniform bounds \eqref{15} and \eqref{16}. Combining\eqref{cond3} with \eqref{cond4} we conclude for $0<r\le r_0$  
\begin{equation}\label{conv10}
\int_{[(\Rnp \setminus \B_A) \times (-R^2,0)]  \cup \Sa_{\delta}^{+}}  y^{a} U_r^2(X,t) \Ga(X,t) \leq C \delta^{a_0},
\end{equation}
where $a_0= \min\{1,1+a\}$. 
In the set $\B_A^{+} \times [-R^2, -\delta^2]$, which is the complement of $[(\Rnp \setminus \B_A) \times (-R^2, 0)]  \cup \Sa_{\delta}^{+}$, we apply Lemma \ref{convergence} which guarantees that, for a suitable sequence $r_j \searrow 0$, we have $U_{r_j} \to U_0$ uniformly in $\B_A^{+} \times [-R^2, -\delta^2]$. Therefore, from \eqref{conv10} and the arbitrariness of $\delta$, we conclude that \eqref{i'1} holds.

Now  we establish the second part of \eqref{f1}, i.e., 
\[
\int_{\Sa_{R}^{+}} y^{a} |t| |\nabla U_j - \nabla U_0|^2  \Ga\ \longrightarrow\ 0.
\]
This conclusion will be directly derived from the following estimate 
\begin{align}\label{wk101}
& \int_{\Sa_{R}^{+}} y^a |\nabla U_r - \nabla U_0|^2 |t| \Ga \leq C_2  \int_{\Sa_{\rho_1}^{+} } y^a (U_r- U_0) ^2 \Ga + C_2r^{2s},
\end{align} 
where $\rho_1<1$ is arbitrarily chosen.
It is clear that letting $r = r_j\searrow 0$ in \eqref{wk101}, and using \eqref{i'1}, we reach the desired conclusion.

To proof of \eqref{wk101} is somewhat involved and will be accomplished in several steps. First, we note that  $U_r-U_0$  satisfies the following equation in $\Sa_{1}^{+}$
\begin{equation}\label{diff}
\begin{cases}
\operatorname{div}(y^a \nabla (U_r-U_0)) = y^a (U_r - U_0)_t
\\
 -\lim_{y \to 0} y^a (U_r - U_0)_y = r^{2s} V(rx, r^2t) U_r.
\end{cases}
\end{equation}
For notational convenience we will denote  $U_r-U_0$  by $v$ in the computations from \eqref{wk} to \eqref{wk4} below. For the same reason, we will write $V_r(x,t)$ instead of $V(rx, r^2t)$. 

Let $\rho \in [R, \rho_1]$ be arbitrary, where $\rho_1<1$ and let $0 < \delta < \rho$ be a number that will eventually tend to $0$.  We now use the weak formulation \eqref{d8} in Definition \ref{d2} applied to $U_r - U_0$, with $t_1 = - \rho^2$ and $t_2 =  - \delta^2$. Since from Lemmas \ref{reg2} and \ref{reg1} we know differentiability in time of $U_r - U_0$, in the weak formulation \eqref{d8} we can put the time-derivative on $v$, and furthermore the boundary terms disappear. In \eqref{d8} we now choose a test function $\eta$ such that $\eta(\cdot, t)$ is compactly supported in the $X$ variable for every $t \in [-\rho^2,-\delta^2]$. Then, corresponding to such a $\eta$, from \eqref{diff} we have the following weak formulation for $v$ 
\begin{equation}\label{wk}
\int_{\Sa_{\rho}^{+} \setminus \Sa_{\delta}^{+}} y^{a}\left(<\nabla v, \nabla \eta > + v_t \eta\right) = \int_{S_{\rho} \setminus S_{\delta}} r^{2s}  V_r  U_r  \eta.
\end{equation}
We are going to eventually derive \eqref{wk101} from \eqref{wk} by an appropriate choice of the cutoff function $\eta$, and by several estimates. With this objective in mind, we let 
\begin{equation}\label{tau1}
\tau_1(X,t) = |t|^{1/2} \tau_0\left(\frac{X}{\sqrt{|t|}}\right),
\end{equation}
where $\tau_0\in C^\infty_0(\Rnn)$, $\tau_0^2 \leq 1$, and whose support will at the end of the process exhaust the whole $\Rnn$. Corresponding to such a $\tau_0$ we choose 
\begin{equation}\label{rt1}
\eta =  \tau_1^{2} v \Ga.
\end{equation}
By substituting this choice of $\eta$ in \eqref{wk}, we find
\begin{align*}
& \int_{\Sa_\rho^{+} \setminus \Sa_{\delta}^{+}} y^a \left(|\nabla v|^2 \tau_1^{2} \Ga+ v <\nabla v,\frac{X}{2t}>\Ga \tau_1^2 + vv_t \Ga \tau_1^2 + 2 v \tau_1 <\nabla v,\nabla \tau_1>\Ga\right)
\\
& = \int_{S_\rho \setminus S_{\delta}} r^{2s} V_r U_r v  \tau_1 ^2 \Ga.
\notag
\end{align*}
Using \eqref{z11} in the latter equation we can rewrite it as follows  
\begin{align}\label{wk100}
& \int_{\Sa_\rho^{+} \setminus \Sa_{\delta}^{+}} y^{a} |\nabla v|^2 \tau_1^2 \Ga = - \int_{\Sa_\rho^{+} \setminus \Sa_{\delta}^{+}} y^{a} \frac{1}{4t} Z(v^2) \tau_1^2\Ga 
\\
& - \int_{\Sa_\rho^{+} \setminus \Sa_{\delta}^{+}} y^{a}  2v \tau_1 <\nabla v,\nabla \tau_1>\Ga
 +  r^{2s} \int_{S_\rho \setminus S_{\delta}} V_r U_r v\tau_1^2 \Ga.
\notag
\end{align}
We intend to show that \eqref{wk100} leads to the following inequality
\begin{align}\label{wk4}
& \int_{\Sa_{R}^{+} \setminus \Sa_{\delta}^{+}} y^a |\nabla U_r - \nabla U_0|^2 \tau_1^2 \Ga \leq C_2  \int_{\Sa_{\rho_1}^{+} \setminus \Sa_{\delta}^{+}} y^a (U_r- U_0) ^2 \Ga + C_2r^{2s},
\end{align}
where $C_2 = C_2(n,s,K)>0$.
At this point, if we let the support of $\tau_0$ exhaust $\R^{n+1}$ in \eqref{wk4}, and note that from \eqref{tau1} we obtain that $\tau_1^2(X,t) \longrightarrow |t|$, applying the monotone convergence theorem in \eqref{wk4}, and then letting $\delta \to 0$, we conclude that \eqref{wk101} holds.  

In order to complete the proof of the lemma we are thus left with proving that \eqref{wk100} $\Longrightarrow$ 
\eqref{wk4}.
With this objective in mind we now estimate each of the terms in the right-hand side of \eqref{wk100}, beginning with the last integral which is the most delicate one.
With $K$ as in \eqref{vasump}, and $C_0$ as in Lemma \ref{tr}, we choose $\mu$ as follows  
\[
C_0 K \mu^{-2s} = \frac{1}{2},
\]
and apply the trace inequality in Lemma \ref{tr} at every time level $t\in (-\rho^2, \delta^2)$ to both functions $f_1=  K^{1/2} U_r \tau_1 \Ga^{1/2}$ and $f_2= K^{1/2} v \tau_1 \Ga_{1/2}$. Summing the resulting inequalities, we find 
\begin{align}\label{wk2}
& r^{2s}\int_{S_\rho \setminus S_{\delta}}V_r U_r v \tau_1^2 \Ga \leq r^{2s} K \int_{S_\rho \setminus S_{\delta}} (U_r^2 + v^2) \tau_1^2 \Ga
\\
& \leq  r^{2s}  \bigg[C \int_{\Sa_\rho^{+} \setminus \Sa_{\delta}^{+}} y^a (U_r^2 + v^2) \tau_1^2\Ga 
 +  \frac{1}{2} \bigg(\int_{\Sa_\rho^{+} \setminus \Sa_{\delta}^{+}} y^a (|\nabla v|^2 + |\nabla U_r|^2) \tau_1^2 \Ga
\notag\\
&  + \int_{\Sa_\rho^{+} \setminus \Sa_{\delta}^{+}} y^a (v^2 + U_r^2) |\nabla \tau_1|^2 G  
 + \int_{\Sa_\rho^{+} \setminus \Sa_{\delta}^{+}}y^a (v^2 + U_r^2) \tau_1^2 \frac{|X|^2}{16 t^2} \Ga\bigg)\bigg],
\notag
\end{align}
where in the first inequality in \eqref{wk2} we have used \eqref{vasump} that gives $|V_r| \leq K$. We now bound from above the integral
\[
\int_{\Sa_\rho^{+} \setminus \Sa_{\delta}^{+}} y^a (v^2 + U_r^2) \tau_1^2 \frac{|X|^2}{16 t^2}\Ga= \int_{\Sa_\rho^{+} \setminus \Sa_{\delta}^{+}} y^a v^2 \tau_1^2 \frac{|X|^2}{16 t^2}\Ga + \int_{\Sa_\rho^{+} \setminus \Sa_{\delta}^{+}} y^a U_r^2 \tau_1^2 \frac{|X|^2}{16 t^2}\Ga
\]
in \eqref{wk2} by separately applying the inequality \eqref{ne1} in the Appendix to the two integrals in the right-hand side of the latter equation. Adding the resulting inequalities we obtain
\begin{align}\label{ne10}
& \int_{\Sa_\rho^{+} \setminus \Sa_{\delta}^{+}} y^a (v^2 + U_r^2) \tau_1^2 \frac{|X|^2}{16 t^2}\Ga \leq  C \bigg(\int_{\Sa_\rho^{+} \setminus \Sa_{\delta}^{+}}y^a  (v^2 + U_r^2) \frac{\tau_1^2}{|t|}\Ga 
\\
& + \int_{\Sa_\rho^{+} \setminus \Sa_{\delta}^{+}} y^a (|\nabla v|^2 + |\nabla U_r|^2)\tau_1^2 \Ga + \int_{\Sa_\rho^{+} \setminus \Sa_{\delta}^{+}} y^a (v^2 + U_r^2) |\nabla \tau_1|^2\Ga\bigg).
\notag
\end{align}
We now substitute \eqref{ne10} into \eqref{wk2}. If we keep in mind that $\tau_1^2(X, t) \leq |t|$ and  $|\nabla \tau_1|^2 \le C$, where $C$ is a universal constant, we find
\begin{align}\label{wk22}
& r^{2s}\int_{S_\rho \setminus S_{\delta}}V_r U_r v \tau_1^2 \Ga 
\\
& \leq  r^{2s}  \bigg\{C \int_{\Sa_\rho^{+} \setminus \Sa_{\delta}^{+}} y^a |t| (v^2 + U_r^2) \Ga 
 +  \frac{1}{2} \bigg[\int_{\Sa_\rho^{+} \setminus \Sa_{\delta}^{+}} y^a (|\nabla v|^2 + |\nabla U_r|^2) \tau_1^2 \Ga
\notag\\
&  + C \int_{\Sa_\rho^{+} \setminus \Sa_{\delta}^{+}} y^a (v^2 + U_r^2) \Ga  
 + C \bigg(\int_{\Sa_\rho^{+} \setminus \Sa_{\delta}^{+}}y^a  (U_r^2 + v^2) \Ga 
\notag
\\
& + \int_{\Sa_\rho^{+} \setminus \Sa_{\delta}^{+}} y^a (|\nabla v|^2 + |\nabla U_r|^2)\tau_1^2 \Ga + C \int_{\Sa_\rho^{+} \setminus \Sa_{\delta}^{+}} y^a (v^2 + U_r^2) \Ga\bigg)\bigg]\bigg\}.
\notag
\end{align}
If we further restrict $r_0$ so that
\[
\left(1+ C\right)r_0^{2s} \le 1,
\]
then for $0<r\le r_0$ we obtain from \eqref{wk100} and \eqref{wk22}
\begin{align}\label{wk1000}
& \frac 12 \int_{\Sa_\rho^{+} \setminus \Sa_{\delta}^{+}} y^{a} |\nabla v|^2 \tau_1^2 \Ga \le - \int_{\Sa_\rho^{+} \setminus \Sa_{\delta}^{+}} y^{a} \frac{1}{4t} Z(v^2) \tau_1^2\Ga - \int_{\Sa_\rho^{+} \setminus \Sa_{\delta}^{+}} y^{a}  2v \tau_1 <\nabla v,\nabla \tau_1>\Ga
\\
& +  r^{2s}  \bigg\{C \int_{\Sa_\rho^{+} \setminus \Sa_{\delta}^{+}} y^a |t| (v^2 + U_r^2) \Ga 
 +  \frac{1}{2} \bigg[(C+1) \int_{\Sa_\rho^{+} \setminus \Sa_{\delta}^{+}} y^a |t| |\nabla U_r|^2  \Ga
\notag\\
& + C \int_{\Sa_\rho^{+} \setminus \Sa_{\delta}^{+}} y^a (v^2 + U_r^2) \Ga\bigg)\bigg]\bigg\}.
\notag
\end{align}

Next, we bound from above the first two integrals in the right-hand side of \eqref{wk1000}.
First, recalling that $Z$ is the infinitesimal generator of the parabolic dilations \eqref{pardil}, by a standard argument which can be found on p. 94 of the Appendix A in \cite{DGPT}, we find  
\begin{equation}\label{wk1}
- \int_{\Sa_\rho^{+} \setminus \Sa_{\delta}^{+}}  y^a \frac{1}{2t} Z(v^2) \tau_1^2 \Ga \le  \rho^{2+a} \int_{\R^{n+1}_{+}} y^av^2(\cdot, -\rho^2) \tau_0^2 \Ga(\cdot,-\rho^2).
\end{equation}
Secondly, applying Young's inequality to $2v \tau_1 <\nabla v, \nabla \tau_1>\Ga$, we have
\begin{align}\label{iner52}
\left|2 \int_{\Sa_\rho^{+} \setminus \Sa_{\delta}^{+}}  y^a v \tau_1 <\nabla v, \nabla \tau_1> \Ga\right| \leq \frac{1}{4} \int_{\Sa_\rho^{+} \setminus \Sa_{\delta}^{+}} y^{a} |\nabla v|^2 \tau_1^2 \Ga
+ 4 \int_{\Sa_\rho^{+} \setminus \Sa_{\delta}^{+}} y^a v^2 |\nabla \tau_1|^2 \Ga.
\end{align}
Using the estimates \eqref{wk1}, \eqref{iner52} in \eqref{wk1000}, and keeping in mind that $0<\rho<1$ and $-1<t<0$, we obtain
\begin{align}\label{wk3}
& \int_{\Sa_{\rho}^{+} \setminus \Sa_{\delta}^{+}}  y^a|\nabla v|^2 \tau_1^2 \Ga \leq C \int_{\R^{n+1}_{+}} y^av^2(\cdot, -\rho^2) \tau_0^2 \Ga(\cdot,-\rho^2) +C  \int_{\Sa_{\rho}^{+} \setminus \Sa_{\delta}^{+}} y^a v^2  \Ga
\\
&  + C r^{2s}  \left(\int_{\Sa_{\rho}^{+} \setminus \Sa_{\delta}^{+}} y^a |t| |\nabla U_r|^2  \Ga +  \int_{\Sa_{\rho}^{+} \setminus \Sa_{\delta}^{+}} y^a ( v^2 + U_r^2)\Ga\right).
\notag
\end{align}
Recalling that $v= U_r- U_0$,  using \eqref{15}, \eqref{16} and \eqref{i'1} in \eqref{wk3}, we conclude for some $C_1 = C_1(n,s,K)>0$, and $ r = r_j\searrow 0$,
\begin{equation*}
\int_{\Sa_{\rho}^{+} \setminus \Sa_{\delta}^{+}} y^a |t| |\nabla U_r|^2 \Ga + \int_{\Sa_{\rho}^{+} \setminus \Sa_{\delta}^{+}} y^a (U_r^2 + (U_r-U_0)^2)\Ga \leq C_1.
\end{equation*}
Using the latter inequality in \eqref{wk3}, and the fact that $\tau_0^2 \leq 1$, we have for some $C_2 = C_2(n,s,K)>0$
\begin{align}\label{wk10}
& \int_{\Sa_{\rho}^{+} \setminus \Sa_{\delta}^{+}}  y^a|\nabla (U_r -U_0)|^2 \tau_1^2 \Ga \leq C_2 \int_{\R^{n+1}_{+}} y^av^2(\cdot, -\rho^2) \Ga(\cdot,-\rho^2)
\\
&+C_2  \int_{\Sa_{\rho}^{+} \setminus \Sa_{\delta}^{+}} y^a (U_r-U_0)^2 \Ga + C_2 r^{2s}.
\notag
\end{align}
Integrating  \eqref{wk10} with respect to $\rho \in [R,\rho_1]$, we finally obtain \eqref{wk4}. 

\end{proof}

Before proceeding further it is important to remark that, since $U_0$ solves \eqref{hom}, then thanks to the vanishing Neumann condition $\underset{y \to 0}{\lim} y^{a} \partial_y U_0 =0$, if we even reflect $U_0$ in the variable $y$, we can conclude that $U_0$ solves the following equation in $\Sa_1-\Sa_{\delta}$, for any  $\delta > 0$,   
\begin{equation}\label{conj}
\operatorname{div}(|y|^a \nabla U_0) = |y|^a (U_0)_t. 
\end{equation}
This implies that the function $g\overset{def}{=} |y|^{a} (U_0)_y$ solves the following \emph{adjoint equation} classically for all $(X,t) \in \Sa_{1}$ such that  $y \neq 0$
\begin{equation}\label{conj1}
\operatorname{div}( |y|^{-a} \nabla g)= |y|^{-a} g_t.
\end{equation}
We now want to show that, in fact, $g$ is a weak solution to \eqref{conj1} in $\Sa_1$, i.e., also across $y = 0$. For this we observe that, because of  the weak  convergence of $U_{r_j}$ to $U_0$ in Lemma \ref{convergence}, it follows  that the estimate \eqref{unif} holds for $U_0$. Therefore, from \eqref{unif} and the arguments in the  proof of Lemma  \ref{reg2}, we conclude that  
  \begin{equation}\label{e505}
     \nabla (U_0)_i,  (U_0)_t  \in L^{2}_{loc} ( |y|^a dXdt)
  \end{equation}
    in $\Sa_1$. Then, as in the proof of Lemma \ref{reg2},  from \eqref{e505} and  the equation \eqref{conj} we can now infer that 
    \begin{equation}\label{e501}
g_y = (|y|^{a} (U_0)_y )_y \in L^{2}_{loc} (|y|^{-a} dXdt)
    \end{equation}
    in  $\Sa_1$. From \eqref{conj1}, \eqref{e505} and \eqref{e501} we conclude that $g$ is a weak solution to  \eqref{conj1}. This implies the H\"older continuity of  $y^a (U_0)_y(\cdot,t)$ on compact subsets of $\overline{\Rnp} \times (-1,0)$.

We are now in a position to prove the following fundamental property of the Almgren blowups. 

\begin{prop}[Homogeneity of the Almgren blowups]\label{homg1}
Under the assumptions of Theorem \ref{mon1}, let 
\[
\kappa = N(U,0+) = N_1(U,0+)
\]
be as in Corollary \ref{min}. If $U_0$ is any Almgren blowup of $U$, then $U_0$ is homogeneous of degree $2\kappa$. 
 \end{prop}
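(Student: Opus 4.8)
The plan is to show that the generalized frequency of $U_0$ is identically equal to $\kappa$ on the interval $(0,1)$, and then to invoke the rigidity statement contained in the second part of Theorem \ref{mon1}. To prepare for this, I would first record that, after the even reflection in $y$ discussed right before the statement, $U_0$ is a weak solution in $\Sa_1$ of $\operatorname{div}(|y|^a\nabla U_0)=|y|^a(U_0)_t$ together with the vanishing Neumann condition, i.e. it solves the extension problem \eqref{ext} with potential $V\equiv 0$. The regularity theory for Almgren blowups developed in the Appendix (Section \ref{S:app}), together with the uniform bounds \eqref{15}, \eqref{16} which pass to the limit, guarantees that $H(U_0,r)$, $I(U_0,r)$ and all the first--variation identities of Section \ref{S:mono} are meaningful and hold verbatim for $U_0$ on $(0,1)$; since $V\equiv 0$, this means in particular that $I(U_0,r)=\frac{1}{2r^2}\int_{\Sa_r^{+}}y^a U_0\,ZU_0\,\Ga$, that $N(U_0,r)=N_1(U_0,r)$, and that $r\mapsto N(U_0,r)$ is monotone non-decreasing by Theorem \ref{mon1} with the constant $C$ equal to zero.

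Next I would transfer the value $\kappa$ from $U$ to $U_0$ by means of the strong convergence in Lemma \ref{F1}. Fix $\rho\in(0,1)$. Applying \eqref{f1} with $R=\rho$, and using the uniform $L^2(y^a\Ga)$ bounds on the $U_{r_j}$ and on $U_0$, we obtain $H(U_{r_j},\rho)\to H(U_0,\rho)$ and $\int_{\Sa_\rho^{+}}y^a|t||\nabla U_{r_j}|^2\Ga\to\int_{\Sa_\rho^{+}}y^a|t||\nabla U_0|^2\Ga$. Letting $\rho\to 1$ and recalling $H(U_{r_j},1)=1$ gives $H(U_0,1)=1$, and hence, by the non-degeneracy estimate of Corollary \ref{growth} and Corollary \ref{int} applied to the clean equation satisfied by $U_0$, $H(U_0,\rho)>0$ for every $\rho\in(0,1)$, so that $N(U_0,\rho)$ is well defined. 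The displayed convergences then yield $N_1(U_{r_j},\rho)\to N_1(U_0,\rho)=N(U_0,\rho)$. On the other hand, the scale invariance \eqref{si} gives $N_1(U_{r_j},\rho)=N_1(U,r_j\rho)$, and by Corollary \ref{min} $N_1(U,r_j\rho)\to N_1(U,0+)=\kappa$ as $j\to\infty$. Combining these facts, $N(U_0,\rho)=\kappa$ for every $\rho\in(0,1)$.

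Finally, since $N(U_0,\cdot)\equiv\kappa$ on $(0,1)$ and $U_0$ solves the extension problem with $V\equiv 0$, the argument that proves the rigidity part of Theorem \ref{mon1} --- which uses only the local identities in Lemmas \ref{L:I} and \ref{L:fvh}, the energy estimate \eqref{e8} with $C=0$, and the equality case in the Cauchy--Schwarz inequality --- applies verbatim to $U_0$ and forces $ZU_0=2\kappa U_0$ in $\Sa_1^{+}$; by \eqref{euler} this is exactly the assertion that $U_0$ is parabolically homogeneous of degree $2\kappa$ with respect to the dilations \eqref{pardil}. I expect the only genuinely delicate point to be the preparatory step: one must know that $U_0$, which a priori is defined only on the strip $\Sa_1$ and not globally, enjoys enough regularity up to $\{y=0\}$ and enough decay against the Gaussian weight $\Ga$ for the differentiations under the integral sign and the integrations by parts of Section \ref{S:mono} to be legitimate; supplying this is precisely the purpose of the Appendix, and once it is in force the remainder of the proof is a routine passage to the limit combined with the monotonicity results already established.
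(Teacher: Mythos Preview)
Your overall strategy matches the paper's: pass the frequency value $\kappa$ to $U_0$ via the strong Gaussian convergence of Lemma \ref{F1} and the scale invariance \eqref{si}, obtain $N(U_0,\rho)\equiv\kappa$ on $(0,1)$, and then invoke the rigidity statement (which for $U_0$ is Theorem \ref{just4} in the Appendix). Your closing paragraph correctly identifies that the Appendix is what legitimizes applying the first--variation identities to $U_0$.

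There is, however, a genuine gap in your non--degeneracy step. You write ``Letting $\rho\to 1$ and recalling $H(U_{r_j},1)=1$ gives $H(U_0,1)=1$'', and then appeal to Corollaries \ref{growth} and \ref{int} applied to $U_0$. But Lemma \ref{F1} only gives $H(U_{r_j},\rho)\to H(U_0,\rho)$ for each \emph{fixed} $\rho<1$; nothing in it allows you to send $\rho\to 1$, and a priori $H(U_0,1)$ need not even be finite. Moreover, Corollaries \ref{growth} and \ref{int} are stated and proved for $U$ on $(0,r_0)$ under the hypotheses of Theorem \ref{mon1}; invoking them directly for $U_0$ on $(0,1)$ presupposes exactly the monotonicity machinery for $U_0$ that you are still in the process of setting up. The paper avoids both problems by a one--line computation that stays entirely on the side of $U$: from the scaling identity $H(U_r,\rho)=H(U,r\rho)/H(U,r)$ and Corollary \ref{growth} applied to $U$ one has $H(U_r,\rho)\ge\rho^\beta$ with $\beta=4\|\overline N\|_\infty+a$, and then the already--established convergence $H(U_{r_j},\rho)\to H(U_0,\rho)$ gives $H(U_0,\rho)\ge\rho^\beta>0$ for every $\rho\in(0,1)$. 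Replacing your non--degeneracy paragraph with this argument closes the gap, and the rest of your proof goes through unchanged.
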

 
\begin{proof}

We begin by noting that thanks to \eqref{f1} in Lemma \ref{F1}, for any $\rho< 1$ we have with $U_j = U_{r_j}$   
\begin{equation}\label{conv}
\begin{cases}
H(U_j,\rho)\ \longrightarrow\  H(U_0,\rho),
\\
\frac{1}{\rho^2} \int_{\Sa_{\rho}^{+}} y^a |t| |\nabla U_j|^2 \Ga\ \longrightarrow\ \frac{1}{\rho^2} \int_{\Sa_{\rho}^{+}} y^a |t| |\nabla U_0|^2 \Ga.
\end{cases}
\end{equation}
Corollary \ref{min} and \eqref{si} above give 
\begin{equation}\label{bd}
N_1(U_r, \rho) = N_1(U, r\rho)\ \longrightarrow\  N_1(U,0+) = N(U,0+) = \kappa\ \ \text{as}\ r \to 0.
\end{equation}
By arguing as for \eqref{iner10} in Corollary \ref{growth}, we obtain
\begin{equation*}
H(U, r\rho) \geq \rho^{\beta} H(U,r),
\end{equation*}
where $\beta = 4 ||\overline{N}||_{\infty} + a$.
This estimate implies 
\begin{equation}\label{res10}
H(U_r,\rho)= \frac{H(U,r\rho)}{H(U, r)} \geq \rho^{\beta}.
\end{equation}
From \eqref{conv} and \eqref{res10} we obtain
\begin{equation}\label{res1}
H(U_0,\rho) > \rho^{\beta}>0.
\end{equation}
From the definition \eqref{n1} of $N_1$, the non-degeneracy of $H(U_0, \rho)$ established in \eqref{res1} and from \eqref{conv} we infer that for any $0< \rho<1$ 
\begin{equation}\label{ndeg}
N_1(U_j,\rho)\ \longrightarrow\ N_1(U_0,\rho).
\end{equation}
From \eqref{ndeg} and \eqref{bd} we obtain that $N_1(U_0,\rho) \equiv \kappa$ for every $0<\rho<1$. Moreover since $U_0$ solves \eqref{hom}, we conclude   
\[
N(U_0,\rho) = N_1(U_0,\rho) \equiv \kappa,\ \ \ \ \ \ \ \ 0<\rho<1.  
\]
Once we know that the frequency $N(U_0,\cdot) \equiv \kappa$, we invoke Theorem \ref{just4}   in the Appendix  (which extends to $U_0$ the second half of Theorem \ref{mon1}), to conclude that $U_0$ has homogeneity $2 \kappa$ with respect to parabolic dilations \eqref{pardil}.  
 
\end{proof}

We note that the estimate \eqref{res1} has the following direct consequence.

\begin{cor}\label{des2}
The Almgren blowup $U_0$ in Lemma \ref{homg1} cannot be identically zero in $\Sa_{1}^{+}$ (the reader should keep in mind that we are assuming \eqref{assump}!).
\end{cor}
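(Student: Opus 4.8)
\textbf{Proof proposal for Corollary \ref{des2}.}

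The plan is to argue by contradiction, supposing that $U_0 \equiv 0$ in $\Sa_{1}^{+}$, and deriving a contradiction with the strict positivity estimate \eqref{res1}. If $U_0 \equiv 0$ in $\Sa_1^+$, then in particular for any $0<\rho<1$ we would have
\[
H(U_0,\rho) = \frac{1}{\rho^2}\int_{\Sa_\rho^+} y^a U_0^2 \Ga \, dX\, dt = 0.
\]
But Proposition \ref{homg1} and its proof establish (via \eqref{conv}, \eqref{res10} and \eqref{res1}) that
\[
H(U_0,\rho) \ge \rho^\beta > 0,\qquad 0<\rho<1,
\]
where $\beta = 4\|\overline N\|_\infty + a$. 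These two statements are incompatible, hence $U_0$ cannot be identically zero in $\Sa_1^+$.

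First I would recall explicitly that the whole blowup analysis in this section is carried out under the standing assumption \eqref{assump}, namely $H(U,r_0) > 0$; by Corollary \ref{int} this forces $H(U,r) > 0$ for all $0 < r \le r_0$, so that the Almgren rescalings $U_r$ in \eqref{bl} and the frequency $N(U,r)$ are well defined, and the normalization \eqref{15} holds. This is what makes the chain of inequalities leading to \eqref{res1} legitimate: the non-degeneracy estimate of Corollary \ref{growth}, applied to $U$ rather than to $U_0$ directly, yields $H(U,r\rho) \ge \rho^\beta H(U,r)$, and dividing by $H(U,r) > 0$ gives \eqref{res10}, i.e. $H(U_r,\rho) \ge \rho^\beta$ uniformly in $r$. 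Passing to the limit along the subsequence $r_j \to 0$ and using the $L^2(y^a\Ga)$ convergence $H(U_{r_j},\rho) \to H(U_0,\rho)$ from \eqref{conv} (which in turn rests on \eqref{f1} in Lemma \ref{F1}), one obtains $H(U_0,\rho) \ge \rho^\beta > 0$.

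There is essentially no obstacle here: the statement is an immediate corollary of the estimate \eqref{res1} that was already proved inside Proposition \ref{homg1}, and the only thing worth spelling out is that the hypothesis \eqref{assump} is precisely what guarantees $H(U,r) \ne 0$ along the way, so that none of the divisions are illegal. One could equivalently phrase the proof by directly quoting \eqref{res1}: since $H(U_0,\rho) \ge \rho^\beta > 0$ for every $0<\rho<1$, the function $U_0$ has a nonzero weighted $L^2$ norm over every $\Sa_\rho^+$, and therefore cannot vanish identically on $\Sa_1^+$. This completes the argument.
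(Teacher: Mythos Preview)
Your proposal is correct and follows exactly the paper's approach: the corollary is stated as a direct consequence of the estimate \eqref{res1}, and you have simply spelled out that $U_0\equiv 0$ would force $H(U_0,\rho)=0$, contradicting $H(U_0,\rho)\ge \rho^\beta>0$. Your additional remarks about why \eqref{assump} is needed to justify the chain leading to \eqref{res1} are accurate but already contained in the proof of Proposition~\ref{homg1}.
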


 Our next key result is the following.

\begin{lemma}\label{nont}
We have $U_0(\cdot,0,\cdot)\not\equiv 0$.
 \end{lemma}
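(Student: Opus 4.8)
The plan is to argue by contradiction: assume the trace $u_0 := U_0(\cdot,0,\cdot)$ vanishes identically on $\Rn\times(-1,0)$, and derive $U_0\equiv 0$ in $\Sa_1^+$, which contradicts Corollary \ref{des2}. The idea is that under this assumption $U_0$ carries \emph{both} a vanishing Dirichlet datum (the contradiction hypothesis) and a vanishing weighted Neumann datum (the second equation in \eqref{hom}) on $\{y=0\}$, i.e. zero Cauchy data for the degenerate parabolic operator along the characteristic hyperplane; such over-determined data should force triviality.

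\textbf{Main steps.} First, since $\lim_{y\to 0^+}y^a\p_y U_0=0$, the even reflection $\widetilde U_0(x,y,t):=U_0(x,|y|,t)$ is a weak solution of $\operatorname{div}(|y|^a\nabla \widetilde U_0)=|y|^a\p_t\widetilde U_0$ across $\{y=0\}$ in $\Sa_1$ — this is exactly \eqref{conj}. Next, I would observe that, \emph{because the Dirichlet datum $U_0(x,0,t)$ vanishes}, the odd reflection $\check U_0(x,y,t):=\operatorname{sgn}(y)\,U_0(x,|y|,t)$ is \emph{also} a weak solution of $\operatorname{div}(|y|^a\nabla \check U_0)=|y|^a\p_t\check U_0$ across $\{y=0\}$ in $\Sa_1$ (the continuity of $\check U_0$ across $\{y=0\}$ is granted by $u_0\equiv 0$, and the weighted conormal flux of an odd reflection is automatically continuous). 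Consequently the difference $w:=\widetilde U_0-\check U_0$ is a weak solution of $\operatorname{div}(|y|^a\nabla w)=|y|^a w_t$ in the whole strip $\Sa_1$, with $w\equiv 0$ in $\Sa_1\cap\{y>0\}$ and $w(x,y,t)=2U_0(x,|y|,t)$ in $\Sa_1\cap\{y<0\}$; by Corollary \ref{des2} this latter expression is not identically zero. Since $w$ is a weak solution \emph{across} $\{y=0\}$ and vanishes on $\{y>0\}$, continuity of $w$ and of the weighted flux $|y|^a w_y$ up to $\{y=0\}$ (H\"older continuity of $U_0$ and of $y^a(U_0)_y$, established after \eqref{conj1}) show that $w\big|_{\{y<0\}}$ solves the degenerate equation with \emph{zero Dirichlet and zero weighted Neumann data} on $\{y=0\}$. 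Analysing the indicial roots $\gamma\in\{0,\,1-a\}=\{0,\,2s\}$ of the Bessel part $\p_{yy}+\tfrac ay\p_y$ one sees that such data force $w$ to vanish to infinite order along $\{y=0\}$; since away from $\{y=0\}$ the equation is uniformly parabolic with real--analytic coefficients, $w$ is space--analytic there, and a strong unique continuation / Carleman argument for the degenerate operator (or, equivalently, reduction to a fixed time slice $t=-1$ via the homogeneity of $U_0$ from Proposition \ref{homg1}, turning the statement into elliptic Cauchy uniqueness) yields $w\equiv 0$ in $\Sa_1$. This gives $U_0\equiv 0$ in $\Sa_1^+$, the desired contradiction.

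\textbf{Main obstacle.} The delicate point is the last step: Cauchy uniqueness (across the characteristic, degenerate hyperplane $\{y=0\}$) for $\operatorname{div}(|y|^a\nabla w)=|y|^a w_t$. Within each half $\{y>0\}$, $\{y<0\}$ the equation is nondegenerate and standard propagation of zeros applies, but crossing $\{y=0\}$ requires exploiting the weak formulation together with the vanishing of both Cauchy data; I expect this is handled either by the infinite-order-vanishing + Carleman estimate route, or by freezing $t=-1$ and invoking elliptic unique continuation for the weighted Ornstein--Uhlenbeck operator solved by $U_0(\cdot,-1)$ (whose coefficients are analytic off $\{y=0\}$), the homogeneity of $U_0$ making the reduction to a single time level lossless.
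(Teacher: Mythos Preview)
Your overall strategy is correct and parallels the paper's: both argue by contradiction, both reduce to showing that the simultaneous vanishing of the Dirichlet trace and the weighted Neumann trace on $\{y=0\}$ forces $U_0\equiv 0$ in $\Sa_1^+$ (contradicting Corollary \ref{des2}), and both pass through the intermediate claim that $U_0$ vanishes to infinite order in $y$ along $\{y=0\}$. Your even/odd reflection device is a tidy and correct way to package that setup --- both reflections are indeed weak solutions across $\{y=0\}$ under the stated hypotheses --- while the paper instead quotes Steps 1--2 of Proposition 2.2 in \cite{Ru2} (repeated $y$-differentiation and a bootstrap based on the indicial structure of the Bessel operator) to reach the same infinite-order vanishing conclusion.

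The genuine divergence, and the gap in your write-up, is the passage from infinite-order vanishing along $\{y=0\}$ to $U_0\equiv 0$. You correctly flag this as the ``main obstacle'' and offer two routes --- Carleman estimates for the degenerate parabolic operator, or elliptic reduction to $t=-1$ via Proposition \ref{homg1} --- but carry out neither. The elliptic reduction is viable in principle: setting $V=U_0(\cdot,-1)$ one finds $\operatorname{div}\bigl(y^a e^{-|X|^2/4}\nabla V\bigr)+\kappa\, y^a e^{-|X|^2/4}V=0$, and local SUCP for this $A_2$-weighted operator together with the infinite-order vanishing should yield $V\equiv 0$; but this still requires adapting a Carleman or frequency argument of the type in \cite{Ru2} to absorb the Ornstein--Uhlenbeck drift, which you do not supply.

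The paper takes a third, self-contained route for this step. It fixes an interior time $t_0\in(-1,0)$ at which $U_0(\cdot,t_0)\not\equiv 0$ (if any), introduces a \emph{second} parabolic frequency $\tilde n(r)=\tilde i(r)/\tilde h(r)$ centered at $(0,t_0)$, and proves its monotonicity (Theorem \ref{just5} in the Appendix, whose justification relies on the homogeneity of $U_0$ from Proposition \ref{homg1} and the Gaussian second-derivative bounds of Lemmas \ref{F1} and \ref{firstlemma}). This yields a doubling-type lower bound $\tilde h(r)\ge \tilde h(r_0)(r/r_0)^{4\tilde n(r_0)+|a|}$. It is then confronted with the upper bound $\tilde h(r)=O(r^\ell)$ for every $\ell$, obtained by splitting the Gaussian integral into $\{|X|\le r^{1/2}\}$ and $\{|X|>r^{1/2}\}$, using the infinite-order vanishing on the near part and exponential Gaussian decay plus homogeneity on the far part. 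The resulting contradiction avoids any external Carleman input and keeps the entire argument within the frequency framework the paper has built. Remark \ref{bl1} makes explicit why some further use of the equation is unavoidable at this last step: there exist smooth parabolically homogeneous functions vanishing to infinite order along $\{y=0\}$ at every $t\in(-1,0)$ that are not identically zero.
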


\begin{proof}

We argue by contradiction, and suppose that $U_0(\cdot,0,\cdot)\equiv 0$. We now adapt to the parabolic setting the arguments in Step $1$ and $2$ in the proof of Proposition 2.2 in \cite{Ru2}. Such arguments involve making use of repeated differentiation in the $y$ variable of the equation satisfied by $U_0$ in \eqref{hom} and a bootstrap type argument. We finally reach the conclusion that $U_0$ must vanish to infinite order in the $y$ direction at $(x,0,t)$ for all $x \in \Rn$  and all $-1< t< 0$ (we note explicitly that the final part of Step 3 in \cite{Ru2} is based on a Carleman estimate, but we do not use that part!). Moreover, by adapting the arguments in \cite{Ru2} we also obtain that for any $m \in \mathbb{N}$ the function defined by
\[
g_{m}(X, t)=  \begin{cases}
y^{-m} U_0(X, t)\ \ \ \text{for $y\neq 0$},
\\
g_{m}(x, 0, t)= 0,
\end{cases}
\]
is uniformly continuous  and hence bounded on compact  subsets of $\overline{\Rnp} \times (-1, 0)$. It is easy to recognize that such boundedness of $g_m$ for any $m \in \mathbb{N}$ implies the following claim:

\medskip
  
\noindent  \textbf{Claim}: $U_0$ vanishes to infinite order  at $(0,0,t)$ for any $-1<t < 0$. 
  
\medskip

Using the claim and the equation \eqref{hom} satisfied by $U_0$, we will next show that $U_0 \equiv 0$ in $\Sa_{1}^{+}$. This will contradict Corollary \ref{des2}, thus proving the lemma. Since to show that $U_0 \equiv 0$ in $\Sa_{1}^{+}$ it suffices to establish that for any $t\in (-1,0)$ we have $U_0(\cdot,t) \equiv 0$, we turn our attention to proving this.

Again, we argue by contradiction and suppose that there exists $t_0\in (-1,0)$ such that $U_0(\cdot,t_0) \not\equiv 0$. Then, by the continuity of $U_0$ there exists $r_0>0$ such that $U_0(\cdot,t)\not\equiv 0$ for $ t_0 \geq t \geq t_0 - r_0^{2}$. Thus, by possibly restricting $r_0$, we may assume without loss of generality that  
\begin{equation*}
\min\left\{\frac{|t_0|+1}{2}, 2|t_0|\right\} > |t_0 - r_0^2|.
\end{equation*}
We observe that the reason for choosing $r_0$ in this range is that in later considerations we crucially need the ratio $\frac{t_0-r_0^2}{t_0}$ to be bounded from above by an absolute quantity. For this aspect we refer the reader to \eqref{ineq111} and \eqref{2piece} below. From the latter inequality we obtain for $0<r\le r_0$  
\begin{equation}\label{bd11}
\min\left\{\frac{|t_0|+1}{2}, 2|t_0|\right\} > |t_0 - r^2| > |t_0|.
\end{equation}

In order to reach a contradiction, we begin by observing that
 \eqref{f1} gives for all $\rho < 1$
\begin{equation*}
\int_{\Sa_\rho^{+}} y^a U_0^2 \Ga < \infty.
\end{equation*}
This implies for a.e. $t \in (-1, 0)$
\begin{equation}\label{fin}
\int_{\R_{+}^{n+1}} y^{a} U_0^2 (\cdot,t) \Ga (\cdot,t)< \infty.
\end{equation}
Since by Proposition \ref{homg1} we know that $U_0$ is homogeneous of parabolic degree $2\kappa$, by the change of variable $X = \sqrt{\frac{t_1}{t_2}} X'$, we obtain 
\begin{equation}
\int_{\R_{+}^{n+1}} y^{a} U_0^2 (\cdot,t_1) \Ga (\cdot,t_1) = \left(\frac{t_1}{t_2}\right)^{(2\kappa+ a)/2} \int_{\R_{+}^{n+1}} y^{a} U_0^2(\cdot,t_2) \Ga(\cdot,t_2).
\end{equation}
This implies that \eqref{fin} holds for every $-1<t<0$. In a similar way, since the space derivatives of $U_0$ are homogeneous of degree $2\kappa - 1$, we conclude that for every $-1<t<0$
\begin{equation}\label{fin1}
\int_{\R_{+}^{n+1}} y^a |\nabla U_0|^2(\cdot,t) \Ga(\cdot,t) < \infty.
\end{equation}

Let now $-1<t_0 < 0$ be the above number such that $U_0(\cdot,t_0) \not\equiv 0$. We first note that for $ -1 < t < t_0$ we have 
\begin{equation}\label{compare}
e^{-\frac{|X|^2}{4|t-t_0|} } \leq e^{-\frac{|X|^2}{4|t|}}.
\end{equation}
Therefore,
\begin{equation}\label{fin3}
\int_{\Rnp} y^a U_0^2 (X, t)e^{-\frac{|X|^2}{4|t-t_0|} }dX \leq  \int_{\Rnp} y^a U_0^2(X, t) e^{-\frac{|X|^2}{4|t|}}  dX< \infty,
\end{equation}
where in the last inequality we have used \eqref{fin}. Similarly from \eqref{fin1} we have,
\begin{align}\label{fin4}
& \int_{\Rnn_{+}} y^{a} |\nabla U_0(X,t)|^2  e^{\frac{-|X|^2}{4|t-t_0|}} < \infty.
\end{align}
To proceed, let $\Ga(X,t-t_0)$ be the backward heat kernel in $\Rnn \times \R$ centered at $(0,t_0)$, i.e.,
\[
\Ga(X,t-t_0) = (4\pi(t_0 -t))^{-\frac{n+1}{2}} e^{-\frac{|X|^2}{4(t_0-t)}}.
\]
When $t = t_0 - r^2$ we have
\[
\Ga(X,t-t_0)= \Ga(X,-r^2) = (4\pi r^2)^{-\frac{n+1}{2}} e^{-\frac{|X|^2}{4r^2}}.
\]

We next introduce the following quantities centered at $(0,t_0)\in \Rnn \times \R$:
\begin{equation}\label{h6}
\tilde{h}(r) = \tilde{h}(U_0,r)  = \int_{\Rnp }y^a U_0^2 (X,t_0-r^2)  \Ga(X,-r^2) dX,
\end{equation}
and
\begin{equation}\label{I6}
\tilde{i}(r) = \tilde{i}(U_0,r)  =  r^2 \int_{\Rnp} y^a |\nabla U_0|^2(X,t_0-r^2)  \Ga(X,-r^2) dX.
\end{equation}
We note that \eqref{fin3} and \eqref{fin4} ensure that $\tilde{h}(r), \tilde{i}(r)$ are finite for $0<r<r_0$. With these definitions in place we consider the following \emph{frequency} centered at $(0,t_0)$, 
\begin{equation}\label{n6}
\tilde{n}(r) = \frac{\tilde{i}(r)}{\tilde{h}(r)}.
\end{equation}

Now, in Theorem \ref{just5} in the Appendix we have established the following two properties of $U_0$:
\begin{equation}\label{ei10}
\tilde{h}'(r) = \frac{4}{r} \tilde{i}(r)  + \frac{a}{r} \tilde{h}(r),
\end{equation}
and
\begin{equation}\label{n101}
\tilde{n}'(r) \geq 0,\ \ \ \ \ \ \ 0<r< r_0.
\end{equation}

We intend to use \eqref{ei10}, \eqref{n101} to reach the final contradiction that we must have $U_0(\cdot,t_0) \equiv 0$. For this, if we integrate \eqref{ei10} from $r$ to ${r}_0$, and use the monotonicity in \eqref{n101}, we obtain in particular
\begin{equation}\label{fin5}
\tilde{h}(r) \geq  \tilde{h}({r}_0) \left(\frac{r}{{r}_0}\right)^{4 \tilde{n}({r}_0)+ |a|},\ \ \ \ \ \ \ 0<r< r_0.
\end{equation}
On the other hand, from the Claim above we know that $U_0$ vanishes to infinite order at $(0,0,t_0)$. Therefore, for any $\ell>1$ there exists  $C_\ell>0$ depending on $U_0$, $n, a, t_0$ and $\ell$, such that whenever $|X| + |t-t_0|^{1/2} \leq 4r$ we have 
\begin{equation}\label{van1}
|U_0(X, t)| \leq C_\ell\ r^{4\ell}.
\end{equation}
Now 
\begin{align}\label{sp}
\tilde{h}(r) &= \int_{\Rnp \cap \{|X| \leq r^{1/2}\}} y^a U_0^2 (X, t_0- r^2) \Ga(X,-r^2) 
\\
& + \int_{\Rnp \cap \{|X| \geq r^{1/2}\}} y^a U_0^2 (X, t_0-r^2) \Ga(X,-r^2).
\notag
\end{align}
Because of \eqref{van1}, we have 
\begin{equation}\label{1piece}
\int_{\Rnp \cap \{|X| \leq r^{1/2}\}} y^a U_0^2(X,t_0-r^2) \Ga(X,-r^2) \leq C_\ell^2  r^{4\ell+a} \leq C_\ell^2 r^{3\ell},
\end{equation}
 where in the last inequality we have used the fact that $|a| < 1$. For the second integral in \eqref{sp} we obtain with $C_n = (4\pi)^{-\frac{n+1}{2}}$
 \begin{align}\label{ineq111}
 & \int_{\Rnp \cap \{|X| \geq r^{1/2}\}}y^a U_0^2(X,t_0-r^2) \Ga(X,-r^2) \le  \frac{C_n }{r^{n+1} } \int_{|X| \geq r^{1/2}} y^a U_0^2(X,t_0-r^2) e^{-\frac{|X|^2}{4 r^2}}
\\
& = \frac{C_n}{r^{n+1} }\int_{|X| \geq r^{1/2}} y^a U_0^2(X,t_0-r^2) e^{\frac{|X|^2}{4( t_0 - r^2)}} e^{-\frac{|X|^2 t_0}{4r^2(t_0-r^2)}}
\notag\\
& \text{(by \eqref{bd11}, which gives $\frac{t_0}{t_0-r^2} >\frac 12$, and $|X| \geq r^{1/2}$ we have $e^{-\frac{|X|^2 t_0}{4r^2(t_0-r^2)}} \le e^{-\frac{1}{8r}} $)}
\notag\\
& \leq  \frac{C_n}{r^{n+1}} e^{-\frac{1}{8r}} \int_{|X| \geq r^{1/2}} y^a U_0^2(X,t_0-r^2) e^{\frac{|X|^2}{4(t_0-r^2)}} 
\notag\\
& \text{(using the change of variable $X= \sqrt{\frac{t_0-r^2}{t_0}} X'$ and the $2\kappa$-homogeneity}
\notag\\
& \text{  of $U_0$ in Proposition \ref{homg1})}
\notag
\\
& \leq  \frac{C_n}{r^{n+1}} e^{-\frac{1}{8r}} \left(\frac{t_0-r^2}{t_0}\right)^{\frac{4\kappa+ a+n+1}2} \int_{\Rnn_{+}} y^a U_0^2 (X,t_0)e^{-\frac{|X|^2}{4|t_0|}} 
\notag\\
& = \frac{1}{r^{n+1}} e^{-\frac{1}{8r}} \left(\frac{t_0-r^2}{t_0}\right)^{\frac{4\kappa+ a+n+1}2} |t_0|^{\frac{n+1}{2}} \int_{\Rnn_{+}} y^a U_0^2 (X,t_0) \Ga(X,t_0). 
\notag
\end{align}
From \eqref{fin} above (which remember, we have shown it holds for every $-1<t<0$) we know that  the number
\[
M(U_0,n,a,t_0) \overset{def}{=} |t_0|^{\frac{n+1}{2}} \int_{\Rnn_{+}} y^a U_0^2 (X,t_0) \Ga(X,t_0) < \infty.
\]
We also know by \eqref{bd11} that, in particular,
\begin{equation}\label{later}
1 < \frac{t_0-r^2}{t_0} < 2.
\end{equation}
On the other hand, given any $\ell >1$, there exists $\tilde{C}= \tilde{C}(n,\ell)>0$ such that
\[
\frac{1}{r^{n+1}} e^{-\frac{1}{8r}} \leq \tilde{C} r^{\ell}.
\]
We thus conclude from \eqref{ineq111} that
\begin{equation}\label{2piece}
\int_{\Rnp \cap \{|X| \geq r^{1/2}\}}y^a U_0^2(X,t_0-r^2) \Ga(X,-r^2) \le M(U_0,n,a,t_0)  2^{\frac{4\kappa+ a+n+1}2} \tilde{C}  r^{\ell}.
\end{equation}

Combining \eqref{sp}, \eqref{1piece} and \eqref{2piece}, we finally infer
that the Claim leads to the conclusion that for every $\ell>1$
\begin{equation*}
\tilde{h}(r) = O(r^\ell).
\end{equation*}
This clearly contradicts \eqref{fin5}, and thus the Claim cannot possibly be true and we must have $U_0(\cdot,t) \equiv 0$ for every $-1<t<0$. This completes the proof.

\end{proof}

The reader should note that, unlike what has been done in Section \ref{S:mono}, where we have introduced the averaged quantities $H(U,r), I(U,r)$ and $N(U,r)$, in the proof of Lemma \ref{nont} we do not take averages of $\tilde h(r), \tilde i(r)$, and it suffices to study the frequency $\tilde n(r)$ in \eqref{n6}. This difference is justified by the fact that presently we are only interested in the unique continuation property of $U_0$, not in any  further blow-up analysis of $U_0$ itself. Therefore, we do not need for $U_0$ uniform estimates such as \eqref{15}, \eqref{16}.

With Lemma \ref{nont}  in hand, we can now prove our main result, Theorem \ref{main}.

\begin{proof}[Proof of Theorem \ref{main}]
From Lemma \ref{nont} we know that $U_0(\cdot,0,\cdot)\not\equiv 0$. Therefore, there exist constants $L, c> 0$, and $0< \rho < \rho_1 < 1$, such that
\begin{equation*}
||U_0 ||_{L^{\infty}(B_L \times \{0\}\times [-\rho_1, -\rho]) } = c > 0.
\end{equation*}
We note in passing that from the homogeneity of $U_0$, we can take $L=1$. Since Lemma \ref{convergence} asserts that for a certain sequence $r_j\searrow$ the Almgren rescalings $U_j = U_{r_j}$ converge to $U_0$ uniformly on compact subsets of $\overline{\Rnp} \times (-1,0)$, we infer that for all sufficiently large $j\in \mathbb N$ one has
\begin{equation*}
||U_j||_{L^{\infty}(B_L \times \{0\} \times  [-\rho_1,  0])} \geq ||U_j||_{L^{\infty}(B_L \times \{0\} \times [-\rho_1,-\rho])} \geq \frac{c}{2} > 0.
\end{equation*}
From the latter inequality, the definition of $U_j$ in \eqref{bl}, and the fact that $U(x,0,t) = u(x,t)$, we obtain
\begin{equation*}
||u||_{L^{\infty}(B_{Lr_j} \times [-\rho_1 r_j^2, 0])} \geq \frac{c}{2}\ r_j^{-\frac a2}\ \sqrt{H(U,r_j)}.  
\end{equation*}
We now use \eqref{iner10} in Corollary \ref{growth} to infer from the latter inequality 
\begin{equation*}
||u||_{L^{\infty}(B_{Lr_j} \times [-\rho_1 r_j^2, 0])} \geq \frac{c}{2} \sqrt{H(U, r_0)} \left(\frac{r_{j}}{r_0}\right)^{2 ||\overline{N}||_{\infty} +\frac{a}{2}} r_j^{-\frac{a}{2}}.
\end{equation*}
This contradicts that $u$ vanishes to infinite order at $(0,0)$ according to Definition \ref{d11}. Therefore, keeping Corollary \ref{int} in mind, we reach the conclusion that $u(\cdot,t)  \equiv 0$  for all $ - r_0^2< t \leq 0$. At this point, we argue in a standard way to conclude that $u \equiv 0$ in $\Rn\times (-\infty,0]$. 

\end{proof}

We close this section with an important comment concerning the proof of Lemma \ref{nont}.

\begin{rmrk}\label{bl1}
If we  knew that $U_0$ were well defined at $t=0$ and vanished to infinite order at $(0,0,0)$, then we would  immediately reach a contradiction from the $2 \kappa$-homogeneity of $U_0$ in Proposition \ref{homg1}. This is precisely what happens in the elliptic case, see for instance \cite{FF}.
The difficulty in our proof is caused by the fact that from the hypothesis $U_0(\cdot,0,\cdot) \equiv 0$ we can only infer that $U_0$  vanishes to infinite order at points $(x,0,t)$ for all $x \in \Rn$ and $-1< t<0$. However, we cannot assert that $U_0$ vanishes to infinite order at  $(x,0,0)$. Now, given that $U_0$ vanishes to infinite order at $(x,0,t)$, we cannot use the  homogeneity of $U_0$ to conclude that $U_0 \equiv 0$ in $\Sa_{1}^{+}$.  Consider  for instance the following  function defined by
\begin{equation}
\begin{cases}
 f(x,y,t)= y e^{-\frac{ |x|^2 + |t|}{ y^2}},\ \ \ \ x\in \Rn, y \neq 0,\ -\infty< t<0,
\\
 f(x,0,t) =0,\ \ \ \ \ x\in \Rn, \ -\infty< t<0.
 \end{cases}
\end{equation}
It is clear that such $f$ is homogeneous of degree $1$ and vanishes to infinite order at points $(x,0,t)$ for all $x \in \Rn$ and all $t <0$, and yet $f\not\equiv 0$! Of course, the function $f$ does not vanish to infinite order at $(0,0,0)$. Therefore, in order to establish the Claim in the proof of Lemma \ref{nont} it is essential to further utilize the equation satisfied by $U_0$, which is what we have done. 
\end{rmrk}

\section{Appendix}\label{S:app}


In this section we complete the proofs of Lemma \ref{F1}, Proposition \ref{homg1} and Lemma \ref{nont} by establishing for $U_0$ results analogous to those that in Section \ref{S:mono} were obtained for the solution $U$ to the extension problem \ref{ext}. We emphasize that such work is not redundant since an Almgren blowup $U_0$ does not a priori possess the same regularity properties that in Section \ref{S:nltol} have been proved to hold for $U$. 
 
We begin by obtaining second derivative estimates for $U_0$ in the Gaussian space. In this, we again borrow some ideas from the case $s= 1/2$ in \cite{DGPT}. 

\begin{lemma}\label{firstlemma}
With $U_0$ as in Lemma \ref{convergence}, for $i=1,...,n$, we have for any $\rho <1$ 
\begin{equation}\label{sd}
\int_{\Sa_\rho^{+}} y^a t^2 \left((U_0)_t^2 \Ga+  |\nabla (U_0)_i|^2\right) \Ga< \infty.
\end{equation}
\end{lemma}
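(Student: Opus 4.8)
The plan is to derive the estimate \eqref{sd} by an energy (Caccioppoli-type) argument in Gaussian space applied to difference quotients of $U_0$, exploiting two structural facts established earlier: first, that $U_0$ is a weak solution of the degenerate heat equation \eqref{hom} on compact subsets of $\overline{\Rnp}\times(-1,0)$ with the vanishing Neumann condition $\lim_{y\to 0^+}y^a\partial_y U_0=0$; second, that $U_0$ is the $L^2(y^a)$-weak limit of the Almgren rescalings $U_{r_j}$, so it inherits the uniform bounds \eqref{15}, \eqref{16} via lower semicontinuity, namely $H(U_0,\rho)\le 1$ and $\frac{1}{\rho^2}\int_{\Sa_\rho^+}y^a|t||\nabla U_0|^2\Ga\le \|N_1\|_\infty$ for all $\rho<1$. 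I will closely follow the scheme used for the analogous second-derivative estimate in the case $s=1/2$ in \cite{DGPT} (their Lemma~7.4 / Section~7), adapting the Gaussian weight and the $y^a$ measure.

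First I would fix $\rho<1$ and, for $i=1,\dots,n$, consider the tangential difference quotient $w=(U_0)^h_{e_i}$ (and similarly the time difference quotient for the $(U_0)_t$ estimate), which by linearity of \eqref{hom} solves the same equation $\operatorname{div}(y^a\nabla w)=y^a w_t$ with $\lim_{y\to0^+}y^a w_y=0$ on compact subsets. The key computation is to test this equation against $\eta = \tau_1^2\, w\,\Ga$, where $\tau_1(X,t)=|t|^{1/2}\tau_0(X/\sqrt{|t|})$ is the parabolically self-similar cutoff used repeatedly in Section~\ref{S:comp} (see \eqref{tau1}), with $\tau_0\in C_0^\infty(\Rnn)$ eventually exhausting $\Rnn$. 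Using \eqref{f}, \eqref{g} and the identity \eqref{z11} for the drift term $\langle\nabla w,X/2t\rangle$, the boundary term at $y=0$ drops because of the vanishing Neumann condition, and after a Cauchy–Schwarz / Young absorption of the cross terms (exactly as in \eqref{iner52} and the surrounding estimates), one obtains a weighted energy inequality of the form
\begin{equation*}
\int_{\Sa_\rho^{+}\setminus\Sa_\delta^{+}} y^a|t|\,|\nabla w|^2\,\Ga\ \le\ C\int_{\Sa_{\rho_1}^{+}\setminus\Sa_\delta^{+}} y^a\, w^2\,\Ga + (\text{boundary at }t=-\rho_1^2),
\end{equation*}
with $C$ independent of $\delta$ and of the support of $\tau_0$. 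Letting $\tau_0$ exhaust $\Rnn$ and then $\delta\to0^+$ by monotone convergence converts this into a bound on $\int_{\Sa_\rho^+}y^a t^2|\nabla w|^2\Ga$ in terms of $\int_{\Sa_{\rho_1}^+}y^a w^2\Ga$. Now $\int_{\Sa_{\rho_1}^+}y^a w^2\Ga = \int_{\Sa_{\rho_1}^+}y^a |(U_0)^h_{e_i}|^2\Ga$ is controlled, uniformly in $h$, by $\int_{\Sa_{\rho_1'}^+}y^a|\nabla_x U_0|^2\Ga$ for a slightly larger $\rho_1'<1$ (standard difference-quotient estimate in the Gaussian-weighted space, using only the $L^2(y^a)$ gradient bound already available from \eqref{16}). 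Passing $h\to0$ gives uniform control of $\int_{\Sa_\rho^+}y^a t^2|\nabla(U_0)_i|^2\Ga$. For the $(U_0)_t^2$ term one argues identically with the time difference quotient, or else simply reads it off the equation: since $y^a(U_0)_t=\operatorname{div}(y^a\nabla U_0)$, once the second spatial derivatives $\nabla(U_0)_i$ and the conormal derivative $(y^a(U_0)_y)_y$ are in $L^2_{\mathrm{loc}}(y^a\,\text{(resp. }y^{-a}))$ — which follows from the above together with the reasoning already carried out after \eqref{conj1} (yielding \eqref{e505}, \eqref{e501}) — one gets $\int_{\Sa_\rho^+}y^a t^2 (U_0)_t^2\Ga<\infty$ as well.

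The main obstacle I anticipate is the same one flagged in the proof of Lemma~\ref{F1}: because $a<1$, the naive choice of test function degenerates near $\{y=0\}$, so one cannot simply insert $f=y^{a/2}w$ into a log-Sobolev-type inequality. The resolution, as in Lemma~\ref{F1}, is to split the region $\{y>0\}$ into $\{0<y\le A/2\}$ and $\{y>A/2\}$ and handle the tail carefully; but here, since we are proving an estimate on a fixed bounded parabolic region $\Sa_\rho^+$ with $\rho<1$ rather than needing uniform decay, the argument is lighter — the weight $\Ga$ is bounded above and below on $\B_A^+\times[-\rho^2,-\delta^2]$, so the only genuine subtlety is the interplay of the $y^a$-degeneracy with the conormal boundary operator at $y=0$, which is precisely what the vanishing Neumann condition $\lim_{y\to0^+}y^a\partial_y U_0=0$ is there to absorb. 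A secondary technical point is justifying the differentiation/difference-quotient manipulations at the level of weak solutions and the passage to the limit $\delta\to0$; these are routine given the local smoothness of $U_0$ in $\{y>0\}$ (hypoellipticity) together with the already-established $L^2_{\mathrm{loc}}(y^a)$ regularity of $\nabla(U_0)_i$ and $(U_0)_t$ in $\Sa_1$ noted after \eqref{conj1}.
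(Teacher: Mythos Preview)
Your overall scheme is the right one and matches the paper's, but there is a concrete weight mismatch that breaks the argument as written. You test with $\eta=\tau_1^2 w\Ga$, where $\tau_1^2(X,t)\to |t|$, and then claim this ``converts into a bound on $\int_{\Sa_\rho^+}y^a t^2|\nabla w|^2\Ga$'' --- it does not, it only gives $\int y^a|t||\nabla w|^2\Ga$. More importantly, the right-hand side of your Caccioppoli inequality is $\int_{\Sa_{\rho_1}^+}y^a w^2\Ga$ with \emph{no} $|t|$ factor, and you control it by $\int_{\Sa_{\rho_1'}^+}y^a|\nabla_x U_0|^2\Ga$. But the only a priori Gaussian gradient bound available (from \eqref{16}/\eqref{f1}) is $\int_{\Sa_{\rho_1'}^+}y^a|t||\nabla U_0|^2\Ga<\infty$; the quantity $\int y^a|\nabla U_0|^2\Ga$ without the $|t|$ is not known to be finite, so your chain does not close. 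The paper fixes this by using the cutoff $\tau_2(X,t)=|t|\,\tau_0(X/\sqrt{|t|})$ instead of $\tau_1$: then $\tau_2^2\to t^2$ (giving the correct left side), $|\nabla\tau_2|^2\le C|t|$, and the right side of the Caccioppoli becomes $C\int_{\Sa_{\rho_1}^+}y^a|t|\,(U_0)_i^2\Ga\le C\int y^a|t||\nabla U_0|^2\Ga<\infty$. This one extra power of $|t|$ is exactly what lets you feed the first-order estimate into the second-order one. (Incidentally, since $U_0$ is already smooth in $(x,t)$ up to $\{y=0\}$, one can test directly with $(U_0)_i\tau_2^2\Ga$ and skip difference quotients altogether, as the paper does.)

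There is a second, related gap in the $(U_0)_t$ part. You appeal to \eqref{e505}, \eqref{e501} for $(y^a(U_0)_y)_y\in L^2_{\mathrm{loc}}(y^{-a})$, but these are \emph{local} bounds and do not yield the Gaussian-weighted estimate $\int_{\Sa_\rho^+}y^{-a}t^2\big((y^a(U_0)_y)_y\big)^2\Ga<\infty$ needed to deduce \eqref{end1} from the equation. The paper obtains this by running the same $\tau_2$-Caccioppoli argument on the \emph{conjugate} equation \eqref{cj} satisfied by $v=|y|^a(U_0)_y$ (with $a$ replaced by $-a$), yielding \eqref{ne2} and hence \eqref{ne13}; only then can one read off $\int y^a t^2(U_0)_t^2\Ga<\infty$ from $\operatorname{div}(y^a\nabla U_0)=y^a(U_0)_t$ combined with \eqref{end}.
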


\begin{proof}
We first show that one has 
\begin{align}\label{end}
& \int_{\Sa_{\rho}^{+}} y^a t^2  |\nabla (U_0)_i|^2 \Ga \leq C, \ \ \ \ \ \ i=1, .... n.
\end{align}

The argument for this is similar to that used in the proof of Lemma \ref{F1}, based on differentiating the equation \eqref{hom} with respect to $x_i$, $i=1,...,n$. We let 
\begin{equation*}
\tau_2= |t| \tau_0\left(\frac{X}{\sqrt{|t|}}\right)
\end{equation*}
where, as in Lemma \ref{F1}, $\tau_0\in C^\infty_0(\Rnn)$ is a cut-off function, such that $\tau_0^2 \le 1$, whose support at the end of the process will exhaust the whole $\Rnn$. For a fixed $i= 1,...,n$, we define
\begin{equation}\label{tst}
\eta= (U_0)_i \tau_2^2 \Ga.
\end{equation}
Let now $r \in [\rho, \rho_1]$ for some $\rho_1$ such that  $\rho< \rho_1 < 1$. In the weak formulation of the equation  \eqref{hom} we choose as test function $ \phi= \eta_i$. Using the infinite differentiability of $U_0$ with respect to $x, t$, similarly to \eqref{wk} we find  
\begin{align*}
& \int_{\Sa_{r}^{+} \setminus \Sa_{\delta}^{+}}  y^a  \left(<\nabla (U_0), \nabla \eta_i> + (U_0)_t \eta_i \right)  = 0.
\end{align*}
Integrating by parts with respect to $x_i$, we have
\begin{align*}
& \int_{\Sa_{r}^{+} \setminus \Sa_{\delta}^{+}}  y^a \left(<\nabla (U_0)_i,\nabla \eta >  + \partial_t (U_0)_i \eta \right)  = 0.
\end{align*}
Using \eqref{tst} and \eqref{z11}, we obtain
\begin{align}\label{in61}
& \int_{\Sa_{r}^{+} \setminus \Sa_{\delta}^{+}} y^a \left[ |\nabla (U_0)_i|^2 \tau_2^2 \Ga + \frac{1}{4t} Z((U_0)_i^2) \tau_2^2 \Ga + 2(U_0)_i \tau_2< \nabla (U_0)_i ,\nabla \tau_2 >\Ga \right] =0.
\end{align}
Now   we estimate the second and third term in \eqref{in61} separately.  
Similarly to \eqref{wk1}, by a standard argument 
which can be found on p. 95 of the Appendix A in \cite{DGPT} we  obtain
\begin{equation}\label{wk5}
\int_{\Sa_r^{+} \setminus \Sa_{\delta}^{+}}  y^a \frac{1}{4t} Z((U_0)_i^2) \tau_2^2 \Ga \geq - \frac{r^{2+a}}{2} \int_{\R^{n+1}_{+}} y^a(U_0)_{i}^2(\cdot,-r^2) \tau_1^2 \Ga(\cdot,-r^2).
\end{equation}
where $\tau_1$ is as in \eqref{tau1}. 

Concerning the third term in \eqref{in61},  applying Young's inequality we find 
\begin{align}\label{ine62}
& \left|2 \int_{\Sa_{r}^{+} \setminus \Sa_{\delta}^{+}} y^a (U_0)_i \tau_2< \nabla (U_0)_i ,\nabla \tau_2 >\Ga\right| \leq \frac{1}{2} \int_{\Sa_{r}^{+} \setminus \Sa_{\delta}^{+}} y^a |\nabla (U_0)_i|^2 \tau_2^2 \Ga
\\
& + 2 \int_{\Sa_{r}^{+} \setminus \Sa_{\delta}^{+}}   y^a   (U_0)_i^2  |\nabla  \tau_2|^2\Ga.
\notag
\end{align}
Using \eqref{wk5}, \eqref{ine62}  in \eqref{in61}, and keeping in mind that $0<r<1$ and $a+1>1$, we obtain  
\begin{align*}
& \int_{\Sa_{r}^{+} \setminus \Sa_{\delta}^{+}} y^a |\nabla (U_0)_i|^2 \tau_2^2 \Ga \leq   \int_{\R^{n+1}_{+}} y^a(U_0)_{i}^2(\cdot,-r^2) \tau_1^2 \Ga(\cdot,-r^2) + 4 \int_{\Sa_{r}^{+} \setminus \Sa_{\delta}^{+}} y^a   (U_0)_i^2  |\nabla  \tau_2|^2\Ga.
\end{align*}
Integrating the latter inequality in $r \in [\rho, \rho_1]$, we find for some universal $C>0$
\begin{align}\label{it64}
& \int_{\Sa_{\rho}^{+} \setminus \Sa_{\delta}^{+}} y^a |\nabla (U_0)_i|^2 \tau_2^2 \Ga \leq  C \int_{\Sa_{\rho_1}^{+} \setminus \Sa_{\delta}^{+}} y^a\left((U_0)_i^2   |\nabla  \tau_2|^2 \Ga + (U_0)_i^2 \tau_1^2 \Ga\right).
\end{align}
Keeping in mind that $\tau_1^2 \leq |t|, |\nabla \tau_2|^2 \leq C|t|$, we obtain from \eqref{it64} 
\begin{align}\label{e116}
& \int_{\Sa_{\rho}^{+} \setminus \Sa_{\delta}^{+}} y^a |\nabla (U_0)_i|^2 \tau_2^2 \Ga \leq  C \int_{\Sa_{\rho_1}^{+} \setminus \Sa_{\delta}^{+}} y^a |t|  (U_0)_i^2  \Ga.  
\end{align}
We now recall that \eqref{f1} implies that, for any $\rho_1 < 1$, we have 
 \begin{equation}\label{f100}
 \int_{\Sa_{\rho_1}^{+}} y^a  |t| |\nabla U_0|^2   \Ga  < C_1,
 \end{equation}
 where $C_1$ depends on $\rho_1$. We use \eqref{f100} to bound the integral in the right-hand side of \eqref{e116}, and consequently obtain
 \begin{align*}
& \int_{\Sa_{\rho}^{+} \setminus \Sa_{\delta}^{+}} y^a |\nabla (U_0)_i|^2 \tau_2^2 \Ga \leq  C_2.
\end{align*}
Finally, if we let supp($\tau_0) \nearrow \R^{n+1}$ in the latter inequality, and noting that  $\tau_2^2(X,t) \nearrow t^2$, by the monotone convergence theorem, and subsequently letting $\delta \to 0$, we thus conclude that for $i=1,...,n$ and any $\rho<1$,  
\begin{align}\label{end}
& \int_{\Sa_{\rho}^{+}} y^a t^2  |\nabla (U_0)_i|^2 \Ga \leq C,
\end{align}
with $C = C(\rho)>0$.

In order to complete the proof of  the lemma we next show that for any $\rho<1$
\begin{equation}\label{end1}
  \int_{\Sa_{\rho}^{+}} y^a t^2  (U_0)_t^2  \Ga < \infty.
 \end{equation} 
With this objective in mind, we first establish Gaussian estimates for second derivatives of $U_0$ in the $y$ direction in order to prove that $(y^a(U_0)_y)_y$ in $L^{2}( \Sa_{\rho}^{+}, y^{-a} |t| \Ga dX dt)$ for all $\rho<1$. Subsequently, we use the equation \eqref{hom} satisfied by $U_0$ in combination with \eqref{end} to deduce \eqref{end1}. To implement these steps, we consider the conjugate  equation satisfied by $v= |y|^{a} (U_0)_y$.  More precisely, from our discussion in \eqref{conj1}-\eqref{e501}, after an even reflection of $U_0$ across $\{y=0\}$,  we know that, for any $\delta>0$, $v$ satisfies the following equation in the weak sense in $\Sa_1 \setminus \Sa_{\delta}$ 
\begin{align}\label{cj}
\operatorname{div}(|y|^{-a} \nabla v) = |y|^{-a} v_t.
\end{align}
Taking $\eta= v\tau_2^2 G$ as a test function in the weak formulation of \eqref{cj}, we can now argue as in \eqref{tst}-\eqref{e116}  above, with $a$ replaced by $-a$, and finally obtain by a limiting type argument  that  for any $\rho < 1$,  
\begin{equation}\label{ne2}
\int_{\Sa_\rho} |y|^{-a} t^2  |\nabla v|^2 \Ga \leq C \int_{\Sa_{\tilde{\rho}}} |y|^{-a} |t| v^2\Ga = C \int_{\Sa_{\tilde{\rho}}} |y|^{a} |t| (U_0)_y^2 \Ga < \infty,
\end{equation}
where $\tilde{\rho}$ is any arbitrary number such that  $\rho < \tilde{\rho}  < 1$.  Note that in the last inequality in \eqref{ne2}, we have used \eqref{f100} above. Since $v= |y|^a (U_0)_y$, we trivially have for $y>0$
\begin{equation*}
|(y^a (U_0)_y)_y| \leq |\nabla v |.  
\end{equation*}
Using this observation in the left-hand side of \eqref{ne2}, we find
\begin{equation}\label{ne13}
\int_{\Sa_{\rho}^{+}} y^{-a} |t|^2 (y^a (U_0)_y)_y^2 \Ga \leq C,
\end{equation}
for some $C$ universal depending  on $U_0$ and  $\rho$. Using the equation \eqref{hom} satisfied by $U_0$, in combination with
 \eqref{end} and \eqref{ne13}, we finally obtain the estimate \eqref{end1}. 
 
\end{proof}

We combine the conclusions of Lemmas \ref{F1} and \ref{sd} in the following estimate, which is valid for any $\rho < 1$ and $ i=1,...,n$:
\begin{equation}\label{est10}
\int_{\Sa_\rho^{+}} y^a \left(U_0^2 + |t| |\nabla U_0|^2 + t^2 |\nabla  (U_0)_i|^2 + t^2 (U_0)_t^2\right) \Ga < \infty.
\end{equation}
We also claim that the following inequality holds for $0 < \rho < 1$, 
\begin{equation}\label{est11}
\int_{\Sa_\rho^{+}} y^a  (ZU_0)^2 \Ga  < \infty.
\end{equation}
To establish \eqref{est11}, we multiply by $|t|$ both sides of \eqref{iner5}, obtaining the  following inequality for any $v \in W^{1,2}(\R^{n+1}, y^a \Ga dX)$ 
\begin{equation}\label{ne1}
\int_{\Rnp}  y^a v^2 \frac{|X|^2 }{|t|} \Ga \leq  C \int_{\Rnp}  y^a (v^2 + |t| |\nabla v|^2)\Ga
\end{equation}
Here, it is important to observe that in establishing \eqref{iner5} we have never used the equation satisfied  by $U$, thus the inequality actually holds for every $v \in W^{1,2}(\Rnp, y^a \Ga dX)$. Since we trivially have $x_i^2 \leq |X|^2$, by applying \eqref{ne1} to $v = U_0$ at every time level we obtain for $i=1,...,n$  and all $\rho<1$, 
\begin{equation}\label{ne3}
\int_{\Sa_{\rho}^{+}} y^a (U_0)_i^2 x_i^2  \Ga \leq  \int_{\Sa_{\rho}^{+}} y^a (U_0)_i^2 |X|^2 \Ga \leq C \int_{\Sa_{\rho}^{+}} y^a  (|t| (U_0)_i^2 + |t|^2 |\nabla  (U_0)_i|^2)\Ga < \infty,
\end{equation}
where in the last  inequality we used \eqref{end}. If we now apply \eqref{ne1} with $v= y^a (U_0)_y$, but with $a$ replaced  by $-a$, we obtain for $\rho<\rho_1<1$, 
\begin{align}\label{ne4}
& \int_{\Sa_\rho^{+}}  y^{a} (U_0)_y^2 y^2 \Ga = \int_{\Sa_\rho^{+}} y^{-a} v^2 y^2 \Ga \leq  \int_{\Sa_{\rho}^{+}} y^a (U_0)_y^2 |X|^2 \Ga
\\
&  \leq C \int_{\Sa_{\rho_1}^{+}} y^{-a} ( |t| v^2 + |t|^2 |\nabla v|^2 )\Ga < \infty,
\notag
\end{align}
where in the last inequality in \eqref{ne4} above,  we used \eqref{ne2}. We note that the use of the inequality \eqref{ne1} in \eqref{ne3} and \eqref{ne4} can be justified by an approximation  argument using cut-offs. The Cauchy-Schwarz inequality and the definition of $Z$ give 
\[
(ZU_0)^2 \leq C\left ( \Sigma_{i=1}^{n} x_i^2 (U_0)_i^2  + y^2 (U_0)_y^2 + t^2 (U_0)_t^2\right)
\]
From this observation, \eqref{est10}, \eqref{ne3} and \eqref{ne4} it is easy to conclude that \eqref{est11} holds.

We are now ready to establish the counterpart of Theorem \ref{mon1} for any Almgren blowup $U_0$. We mention explicitly that the next result rigorously completes the proof of Proposition \ref{homg1}.

\begin{thrm}\label{just4}
With $U_0$ as in Lemma \ref{convergence}, the following formula hold for any $0< r<1$:
\begin{equation}\label{h10}
H'(U_0, r)= \frac{4}{r} I(U_0, r) + \frac{a}{r} H(U_0, r),
\end{equation}
\begin{equation}\label{i10}
I'(U_0, r)=  \frac{a}{r} I(U_0, r) + \frac{1}{r^3} \int_{\Sa_r^{+}} y^{a} (ZU_0)^2 \Ga,
\end{equation}
and
\begin{align}\label{U_0}
& rN'(U_0, r)=   \frac{\int_{\Sa_r^{+}} y^a (ZU_0)^2 \Ga}{ \int_{\Sa_{r}^{+}} y^a U_0^2 \Ga}
 -  \frac{ (\int_{\Sa_r^{+}} y^a U_0 ZU_0 \Ga)^2}{ (\int_{\Sa_r^{+}} y^a U_0^2 \Ga)^2}.
\end{align}
In particular, by an application of Cauchy-Schwarz inequality we conclude from \eqref{U_0} that $N'(U_0,r) \ge 0$, and therefore $r \to N(U_0, r)$
is non-decreasing in $(0,1)$. 
\end{thrm}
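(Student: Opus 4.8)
The plan is to mirror, for the Almgren blowup $U_0$, the chain of identities established in Section \ref{S:mono} for the extension solution $U$, with the crucial simplification that the potential $V$ is absent because $U_0$ solves the homogeneous problem \eqref{hom}. The essential new difficulty is justification: unlike $U$, the function $U_0$ is not known a priori to enjoy the pointwise bounds \eqref{sup} and the $C^1_{(x,t)}$ estimates of Lemma \ref{reg1}, so every differentiation-under-the-integral-sign step and every integration by parts must be rerun using only the Gaussian-space estimates \eqref{est10} and \eqref{est11} assembled just before the statement (together with the smoothness of $U_0$ for $y>0$, its $x,t$-smoothness from repeated application of Lemma \ref{reg2}, and the Hölder continuity up to $\{y=0\}$ of $y^a(U_0)_y$ with the vanishing Neumann condition).

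First I would prove \eqref{h10}. Set $h(U_0,t) = \int_{\Rnp} y^a U_0^2 \Ga$ and differentiate in $t$; the estimates \eqref{est10} supply an $L^1_{loc}$-in-$X$ dominant uniform in $t$ on compact $t$-intervals away from $0$, so one gets $h'(t) = \int_{\Rnp} y^a(2U_0 (U_0)_t \Ga + U_0^2\Ga_t)$. Using \eqref{g} to replace $\Ga_t$ by $-\Delta\Ga$, integrating by parts over $\Rn\times\{y>\ve\}$ and sending $\ve\to0$ (the boundary term $\int_{\Rn\times\{y=\ve\}} \frac{\ve^{1+a}}{2t} U_0^2\Ga$ vanishes since $a>-1$ and $U_0$ is bounded on that slice by continuity), and invoking the vanishing Neumann condition $\lim_{y\to0}y^a(U_0)_y=0$ to kill the boundary integral at $y=0$, one obtains $h'(t) = \frac{2}{t}i(U_0,t) + \frac{a}{2t}h(U_0,t)$ exactly as in \eqref{h''}, where $i(U_0,t) = -t\int_{\Rnp} y^a|\nabla U_0|^2\Ga$ (no boundary term now). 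Then $H(U_0,r) = \int_{-1}^0 h(U_0,r^2 t)dt$, $I(U_0,r)=\int_{-1}^0 i(U_0,r^2t)dt$, and differentiating under the integral sign — justified by the truncation-in-$\ve$ argument of Lemma \ref{L:fvh}, now using \eqref{est10} in place of \eqref{sup} and Lemma \ref{reg1} — yields \eqref{h10}.

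Next I would prove \eqref{i10}. Differentiating $i(U_0,t)= -t\int_{\Rnp}y^a|\nabla U_0|^2\Ga$ and running the computation of Lemma \ref{L:fvei} with $V\equiv0$: the boundary integrals involving $V u^2$, $V_t u^2$, $\langle\nabla V,x\rangle u^2$, and $Vu(u_t+\langle\nabla u,\cdot\rangle)$ all disappear, and the lone remaining boundary term from the integration by parts, $\int_{\Rn\times\{y=\ve\}} y^a(U_0)_y(U_0)_t\Ga$ etc., tends to zero using $\lim_{y\to0}y^a(U_0)_y=0$ and the Hölder continuity of $y^a(U_0)_y$. The interior manipulations — using the identity $\langle\nabla^2U_0(\nabla U_0),X\rangle = \langle\nabla U_0,\nabla(\langle\nabla U_0,X\rangle)\rangle - |\nabla U_0|^2$, integrating by parts with \eqref{hom}, and the elementary relation $\langle\nabla U_0,\nabla(U_0)_t\rangle$ combining into a perfect square — require finiteness of the Hessian integrals, which is precisely what \eqref{est10} and \eqref{est11} guarantee over $\Rn\times\{y>\ve\}$; the $\ve\to0$ passage uses \eqref{ne3}, \eqref{ne4} and monotone/dominated convergence. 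The outcome is $i'(U_0,t) = \frac{a}{2t}i(U_0,t) + 2t\int_{\Rnp}y^a(U_0)_t + \langle\nabla U_0,\frac{X}{2t}\rangle)^2\Ga$. Identifying $2t(U_0)_t+\langle\nabla U_0,X\rangle = ZU_0$ via \eqref{z1} and \eqref{z11}, one gets $i'(U_0,t) = \frac{a}{2t}i(U_0,t) + \frac{1}{2t}\int_{\Rnp}y^a(ZU_0)^2\Ga$; passing to $I(U_0,r)$ by the same differentiation-under-the-integral argument as above gives \eqref{i10}.

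Finally, \eqref{U_0} follows by the quotient rule: $N'(U_0,r) = \frac{I'(U_0,r)}{H(U_0,r)} - \frac{H'(U_0,r)}{H(U_0,r)}N(U_0,r)$, into which I substitute \eqref{h10} and \eqref{i10}. The $\frac{a}{r}$ terms cancel, $\frac{4}{r}N(U_0,r)^2$ is rewritten using the analogue of Lemma \ref{L:I} — namely $2I(U_0,r) = \frac{1}{r^2}\int_{\Sa_r^+}y^a U_0 ZU_0\Ga$, which in the $V\equiv0$ case is immediate from $i(U_0,t)=\frac12\int_{\Rnp}y^a U_0 ZU_0\Ga$ (this is \eqref{i1} with $V\equiv0$, re-justified by the same interior-region argument) — and one arrives at \eqref{U_0}. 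The Cauchy-Schwarz inequality applied to the pair $(ZU_0)\sqrt{y^a\Ga}$ and $U_0\sqrt{y^a\Ga}$ on $\Sa_r^+$ then shows the right side of \eqref{U_0} is nonnegative, whence $r\mapsto N(U_0,r)$ is non-decreasing on $(0,1)$. I expect the main obstacle to be the rigorous $\ve\to0$ justification of the integrations by parts in the proof of \eqref{i10} — in particular controlling the second-derivative terms and the boundary slice $\{y=\ve\}$ using only the Gaussian estimates \eqref{est10}–\eqref{est11} rather than the uniform bounds available for $U$; everything else is a transcription of Section \ref{S:mono} with the $V$-terms deleted.
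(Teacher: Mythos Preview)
Your overall strategy --- rerun the Section \ref{S:mono} identities for $U_0$ with $V\equiv 0$, using the Gaussian estimates \eqref{est10}--\eqref{est11} in place of the pointwise bounds \eqref{sup} and Lemma \ref{reg1} --- is exactly right, and the final algebraic assembly into \eqref{U_0} via the quotient rule and the alternate expression $2I(U_0,r)=\frac{1}{r^2}\int_{\Sa_r^+}y^a U_0\,ZU_0\,\Ga$ is correct. However, there is a genuine justification gap: you truncate in $y$ (the $\{y>\ve\}$ slices) and in $t$ (the $H_\ve$-type argument of Lemma \ref{L:fvh}), but you never truncate in $|X|$. The estimates \eqref{est10}, \eqref{est11} are \emph{integrated} bounds over $\Sa_\rho^+$; they do not directly furnish pointwise-in-$t$ dominants for differentiating $h(U_0,t)$, nor do they by themselves justify the integrations by parts over the unbounded region $\Rn\times\{y>\ve\}$. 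The boundary contributions at $|X|\to\infty$ need to be controlled, and for $U_0$ you have no analogue of the pointwise growth bounds that made those contributions harmless for $U$ in Section \ref{S:mono} (recall that even there, Step~3 of Lemma \ref{L:fvei} required a spatial cutoff $\tau_\ve$).

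The paper's route is to introduce a spatial cutoff $\tau_N\in C_0^\infty(\Rnn)$ with $\tau_N\equiv 1$ on $\B_N$, and to work with the doubly truncated quantities $H_N^\delta(r)$, $I_N^\delta(r)$ built from $h_N(t)=\int y^a U_0^2\,\tau_N\,\Ga$ and $i_N(t)=-t\int y^a|\nabla U_0|^2\,\tau_N\,\Ga$. With the cutoff in place, local regularity of $U_0$ (smoothness for $y>0$, $C^\infty$ in $(x,t)$ up to $\{y=0\}$, H\"older continuity of $y^a(U_0)_y$) makes every differentiation and integration by parts rigorous on a compact set; the price is extra error terms involving $\nabla\tau_N$, supported in $N\le|X|\le 2N$ with $|\langle\nabla\tau_N,X\rangle|\le C$. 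One then shows these errors vanish uniformly on compact $r$-intervals as $N\to\infty$, and it is precisely here --- and essentially only here --- that the Gaussian $L^2$ bounds \eqref{est10}, \eqref{est11} are spent. After $N\to\infty$ one lets the time truncation $\delta\to 0$. This two-parameter limiting procedure is the real technical content of the Appendix proof; once you insert it, your argument becomes the paper's.
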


\begin{proof}
With \eqref{est10} and \eqref{est11} in hands, the arguments are similar to those in Section \ref{S:mono}. Let $\tau_{N}\in C^\infty_0(\Rnn)$ be such that $\tau_{N} \equiv 1$ for $|X| \leq N$, and is identically zero outside $\B_{2N}$. For a given $\delta > 0$ we let 
\begin{equation}
H_{N}^{\delta}(r) = H_{N}^{\delta} (U_0,r) \overset{def}{=} \frac{1}{r^2} \int_{-r^2}^{-\delta r^2}  \int_{\Rnn_{+}} y^a U_0^2 \tau_N\Ga.
\end{equation}
By a change of variable we have
\begin{equation}\label{ft1}
H_{N}^{\delta}(r)= \int_{-1}^{-\delta} h_N (r^2 t) dt,
\end{equation}
where
\begin{equation}\label{der1}
h_N(t) = h_{N}(U_0,t) \overset{def}{=} \int_{\Rnn_{+}} y^a \tau_N(\cdot,t) U_0^2(\cdot,t) \Ga(\cdot,t)dX.
\end{equation}
Since $\tau_N$ is compactly supported and $\delta>0$, using the regularity properties of $U_0$ and Lebesgue dominated convergence, the formal computations for $h_{N}'$ can be justified similarly to those for $h' (U,\cdot)$ in Section \ref{S:mono}. More precisely, we know that $U_0$ is infinitely differentiable in the variables $(x,t)$ up to $\{y=0\}$, and also that given any multi-index $\alpha$ of length $n+1$, $y^a (D_{x, t}^{\alpha}U_0)_y$ is H\"older continuous on compact subsets of $\overline{\Rnp} \times (-1,0)$. As previously mentioned after Definition \ref{D:albu} above, such regularity properties of $U_0$ follow from Theorem \ref{reg5} and Lemma \ref{reg2} by taking repeated difference quotients of $U_0$ in the  $x, t$ variable as in \eqref{rep}. Consequently, with
\begin{equation*}
i_N(t) = i_N(U_0, t) \overset{ded}{=} - t \int_{\Rnn_{+}} y^a \tau_N(\cdot,t)  |\nabla U_0(\cdot,t)|^2  \Ga(\cdot,t) dX,
\end{equation*}
we have
\begin{align}\label{der2}
h_N'(t) & = \frac{2}{t} i_N(t) + \frac{a}{2t} h_N(t) + \int_{\Rnn_{+}} y^a U_0^2 < \nabla \tau_N, \frac{X}{2t} > \Ga
\\
& -2 \int_{\Rnn_{+}} y^a  U_0 < \nabla U_0, \nabla \tau_N> \Ga.
\notag
\end{align}
Corresponding to $i_N$ above, we  now let
 \begin{equation*}
I_{N} ^{\delta}(r) = I_{N} ^{\delta}(U_0,r) \overset{def}{=} \frac{1}{r^2} \int_{-r^2}^{-\delta r^2}  i_N(t) dt.
 \end{equation*}
 Again by a change of variable, we have
 \begin{equation}\label{ft2}
 I_{N} ^{\delta}(r) =  \int_{-1}^{-\delta }  i_N(r^2 t) dt.
 \end{equation}
At this point differentiating under the integral sign $H_N^{\delta}$ in \eqref{ft1} can be justified  from the expression of $h_N'$ in \eqref{der2}, by using the local regularity estimates for $U_0$  and dominated convergence theorem. We thus deduce from \eqref{der2} that 
 \begin{align}\label{exp1}
& \frac{dH_{N}^{\delta}}{dr}(r)= \frac{4}{r} I_{N} ^{\delta}(r) + \frac{a}{r} H_{N}^{\delta}(r) + \frac{1}{r^3}  \int_{\Sa_{r}^{+} \setminus \Sa_{\delta r}^{+}} y^a  U_0^2 < \nabla \tau_N, X> \Ga 
\\
&-\frac{4}{r^3} \int_{\Sa_{r}^{+} \setminus \Sa_{\delta r}^{+}} y^ a t  U_0 < \nabla U_0, \nabla \tau_N> \Ga.
\notag
  \end{align}
  
Now, because of \eqref{est10}, \eqref{est11}, given $\ve, \rho$ such that   $0< \ve < \rho <1$, we note that for $r \in [\ve,\rho]$ the following terms in \eqref{exp1} 
 \[
 \frac{1}{r^3}  \int_{\Sa_{r}^{+} \setminus \Sa_{\delta r}^{+}} y^a  U_0^2 < \nabla \tau_N, X> \Ga,\ \ \ \ \text{and}\ \ \ \  
  \frac{4}{r^3} \int_{\Sa_{r}^{+} \setminus \Sa_{\delta r}^{+}} y^ a t  U_0 < \nabla U_0, \nabla \tau_N> \Ga,
  \]
can be made uniformly small in $N$, as $N \to \infty$. Here, we also use that
$\left|<\nabla \tau^N, X>\right|$
is bounded from above by a universal constant. This follows from the fact that  
$|\nabla \tau_N| \leq \frac{C}{N}$
 for some universal $C>0$, and that $\nabla \tau_N$ is supported in $N \leq |X| \leq 2N$.
 Moreover, again because of \eqref{est10} and \eqref{est11}, we can assert that the following term in \eqref{exp1}
 \[
 \frac{4}{r} I_{N} ^{\delta}(r) + \frac{a}{r} H_{N}^{\delta}(r) 
 \]
 converges to $\frac{4}{r} I^{\delta} + \frac{a}{r} H^{\delta}$ uniformly as $N \to \infty$, 
 where
 \[
 H^{\delta}(r)= \frac{1}{r^2} \int_{-r^2}^{-\delta r^2}  \int_{\Rnn_{+}} y^a U_0^2 \Ga dX dt\ \ \ \ \text{and}\ \ \ \
 I ^{\delta}(r) = \frac{1}{r^2} \int_{-r^2}^{-\delta r^2}  |t| \int_{\Rnp} y^a |\nabla U_0|^2 \Ga dX dt. 
 \] 
We thus see that the sequences $\{H_{N}^{\delta}(U_0,r)\}$ and $ \{\frac{dH_{N}^{\delta}(U_0, r)}{dr}\}$ are  uniformly  convergent  in $[\ve,\rho]$ as $N \to \infty$. Therefore, letting $N \to \infty$ we finally obtain
 \[
 \frac{dH^{\delta}(U_0,r)}{dr} = \frac{4}{r} I ^{\delta}(U_0,r) + \frac{a}{r} H^{\delta}(U_0,r).
 \]
Let now $\delta_k\searrow 0$. Because of \eqref{est10} and \eqref{est11} again, the functions  $\{H^{\delta_k}(U_0,r)\}$ and $ \{\frac{dH^{\delta_k}(U_0,r)}{dr}\}$ are uniformly convergent in $[\ve, \rho]$  as $k \to \infty$. Letting $k \to \infty$ we thus conclude that for all $r \in [\ve, \rho]$
 \begin{equation*}
H'(U_0,r) = \frac{4}{r} I(U_0,r) + \frac{a} {r} H(U_0,r).
\end{equation*}
The arbitrariness of $\ve, \rho$ implies that the latter equality does in fact hold for all $0<r<1$. This establishes \eqref{h10}.

We next turn to the proof of \eqref{i10}. Our first step is establishing the counterpart for $U_0$ of Lemma \ref{L:I}, see \eqref{equiv2} below. We first note that an integration by parts based on the regularity properties of $U_0$, and similar to the one used in deriving \eqref{der2} above, gives 
 \begin{align}\label{equiv1}
I_{N}^{\delta}(r) & = \frac{1}{2r^2} \int_{\Sa_{r}^{+} \setminus \Sa_{\delta r}^{+}} y^a U_0 (ZU_0) \tau_N \Ga + \frac{1}{r^2} \int_{\Sa_{r}^{+} \setminus \Sa_{\delta r}^{+}} y^a  t U_0< \nabla U_0, \nabla \tau_N>\Ga.
\end{align}
Using \eqref{est10}, \eqref{est11}, one recognizes that as $N\to \infty$
 \begin{equation}\label{l11}
 \frac{1}{r^2} \int_{\Sa_{r}^{+} \setminus \Sa_{\delta r}^{+}} y^a  t U_0< \nabla U_0, \tau_N>\Ga\ \longrightarrow\ 0.
 \end{equation}
Moreover, again by \eqref{est11}, by first letting $N \to \infty$ and then $\delta \to 0$, one finds
 \begin{equation}\label{l13}
 \frac{1}{2r^2} \int_{\Sa_{r}^{+} \setminus \Sa_{\delta r}^{+}} y^a  U_0 (ZU_0) \tau_N \Ga\ \longrightarrow\   \frac{1}{2r^2} \int_{\Sa_{r}^{+}}  y^aU_0 (ZU_0) \Ga.
\end{equation}
From \eqref{equiv1}, \eqref{l11} and \eqref{l13} we conclude that the following alternate  expression for $I(U_0,r)$ holds
\begin{equation}\label{equiv2}
I(U_0,r)= \frac{1}{2r^2} \int_{\Sa_{r}^{+}} y^a U_0 (ZU_0)  \Ga.
\end{equation}

We next turn to the computation of $i_N'(r)$ since such quantity is needed in the proof of \eqref{i10}. We claim that
\begin{align}\label{iprimoN}
i_N'(t) &  = \frac{a}{2t} i_N(t) + \frac{1}{2t} \int_{\Rnn_{+}} y^a  (ZU_0)^2 \tau_N \Ga
\\
& +  \int_{\Rnn_{+}} y^a Z U_0 < \nabla \tau_N,\nabla U_0> \Ga - t \int_{\Rnn_{+}}y^a |\nabla U_0|^2 < \nabla  \tau_N, \frac{X}{2t}> \Ga. 
\notag
 \end{align}
The proof of \eqref{iprimoN} is based on arguments similar to those used in the computation of $i_{\ve} '(U,r)$  in \eqref{trunc}, by first considering integrals over the region $\Rn \times \{y > \ve \}$ as in Step 2 in the proof of Lemma \ref{L:fvei}. By the regularity estimates for $U_0$ and arguing as in Step 2, after letting $\ve \to 0$ one obtains \eqref{iprimoN}. From such formula one proceeds to differentiate under the integral  sign in \eqref{ft2} by using local regularity estimates for $U_0$ and dominated convergence theorem, finally arriving to the following expression
 \begin{align}\label{ne5}
\frac{d I_{N}^{\delta}}{dr}(r) & = \frac{a}{r} I_{N}^{\delta}(r) + \frac{1}{r^3} \int_{\Sa_r^{+} - \Sa_{\delta r}^{+}}  y^a (ZU_0)^2 \tau_{N}\Ga
 \\
 & + \frac{2}{r^3} \int_{\Sa_r^{+} - \Sa_{\delta r}^{+}} y^a t ZU_0 < \nabla \tau_N, \nabla U_0> \Ga - \frac{1}{r^3} \int _{\Sa_r^{+} - \Sa_{\delta r}^{+}}y^a  t |\nabla U_0|^2 < \nabla  \tau_N, X> \Ga.
 \notag
 \end{align}
Using again \eqref{est10}, \eqref{est11}, by a limiting type argument similar to that for $H'(U_0,r)$ above, first letting $N \to \infty$ and then $\delta \to 0$, we  conclude that for $0<r<1$
 \begin{equation}
 I'(U_0, r) = \frac{a}{r} I(U_0, r) + \frac{1}{r^3} \int_{\Sa_r^{+}} y^a (ZU_0)^2 \Ga, 
 \end{equation}
which establishes \eqref{i10}.

Finally, \eqref{U_0} follows from \eqref{h10}, \eqref{i10} and the alternate expression of $I(U_0, r)$ in \eqref{equiv2} above. 
  
 \end{proof}

We close this Appendix by justifying formula \eqref{ei10} and the monotonicity \eqref{n101} of the frequency $\tilde{n}(r)$ defined in \eqref{n6}. This is needed to complete the proof of Lemma \ref{nont} above.
The reader should keep in mind that, since we have already proved Theorem \ref{just4},  Proposition \ref{homg1} is now available to us. Therefore, in the proof of the next Theorem \ref{just5} we can use that $U_0$ is parabolically homogeneous of degree $2\kappa$.

\begin{thrm}\label{just5}
 Let $\tilde{n}(r)$  be  as in \eqref{n6},  and let $r_0>0$ be such that for all $0 < r \leq r_0$ the condition \eqref{bd11} hold. Then, for all $r \in (0,r_0]$ the following is true: 
  \begin{equation}\label{h100}
\tilde{h}'(r) = \frac{4}{r} \tilde{i}(r)  + \frac{a}{r} \tilde{h}(r),
\end{equation}
\begin{equation}\label{i100}
\tilde{i}'(r) = \frac{a}{r} \tilde{i}(r)
 + \frac{1}{r} \int_{\Rnp} y^a (Z_{t_0} U_0(X, t_0-r^2))^2 \Ga(X,-r^2),
\end{equation}
where 
\begin{equation}\label{Z_0}
Z_{t_{0}} U_0(X, t_0-r^2) =\ < \nabla U_0(X, t_0-r^2) , X>  - 2r^2  \partial_t U_0 (X, t_0-r^2).
\end{equation}
Finally, for $0<r<r_0$ we have 
\begin{equation}\label{n100}
\tilde{n}'(r) \geq 0.
\end{equation} 
\end{thrm}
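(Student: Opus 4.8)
The plan is to establish \eqref{h100}, \eqref{i100} and \eqref{n100} as the exact analogues, centered at $(0,t_0)$ rather than at the origin, of Lemma \ref{L:fvh}, Lemma \ref{L:fve} (in the case $V\equiv 0$) and the monotonicity argument in the proof of Theorem \ref{mon1}. The crucial structural difference is that $\tilde h, \tilde i$ are \emph{not} time-averaged: they are the bare quantities $h(U_0,t)$ and $|t|\,i(U_0,t)$ evaluated along the single time slice $t=t_0-r^2$, with the backward heat kernel $\Ga(X,-r^2)$ recentered at $(0,t_0)$. Because $U_0$ solves the homogeneous equation \eqref{hom} with vanishing Neumann data, all boundary contributions at $y=0$ drop out, so the computations are in fact cleaner than those of Section \ref{S:mono}; the role of the regularity hypotheses \eqref{vasump}, Lemma \ref{reg1} and Lemma \ref{regU} is played here by \eqref{est10} and \eqref{est11}, together with the fact, recorded after Definition \ref{D:albu} and in the discussion following \eqref{conj1}, that $U_0$ is smooth in $(x,t)$ up to $\{y=0\}$ and that $y^a(U_0)_y$ is H\"older continuous up to the boundary.

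First I would prove \eqref{h100}. Writing $\tilde h(r)=h(U_0,t_0-r^2)$ with respect to the recentered kernel $\Ga(\cdot,\cdot-t_0)$, I differentiate under the integral sign — justified by \eqref{est10} exactly as in Lemma \ref{L:fvh}, where one truncates in $y$ via $\Rn\times\{y>\ve\}$ and in $X$ via the cutoffs $\tau_N$ to control the boundary term $-\tfrac1{8t}\int y^{a+1}U_0^2\Ga$ which vanishes since $a+1>0$. Using $\nabla\Ga(X,t-t_0)=\tfrac{X}{2(t-t_0)}\Ga$ and $\Delta\Ga+\Ga_t=0$ in the recentered variables, an integration by parts gives the identity $\tfrac{d}{dr}\tilde h(r)=\tfrac{4}{r}\tilde i(r)+\tfrac{a}{r}\tilde h(r)$; the $\tilde i$ term arises because $\int y^a U_0\langle\nabla U_0,X\rangle\Ga$ equals $-r^2\int y^a|\nabla U_0|^2\Ga$ after using the equation $\operatorname{div}(y^a\nabla U_0)=y^a(U_0)_t$ and discarding the (vanishing) boundary integral. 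Next I would prove \eqref{i100} by the same scheme applied to $\tilde i(r)=r^2\int y^a|\nabla U_0|^2(X,t_0-r^2)\Ga(X,-r^2)$: differentiating, integrating by parts twice (once moving $\Delta\Ga$ onto $|\nabla U_0|^2$, once using the Hessian identity $\langle\nabla^2 U_0(\nabla U_0),X\rangle=\langle\nabla U_0,\nabla\langle\nabla U_0,X\rangle\rangle-|\nabla U_0|^2$), and using that $U_0$ solves the homogeneous equation with zero Neumann data — so that the terms involving $V$, $V_t$, $\langle\nabla V,x\rangle$ in Lemma \ref{L:fvei} are simply absent — collapses everything to the single clean term $\tfrac1r\int y^a(Z_{t_0}U_0)^2\Ga$, where $Z_{t_0}U_0=\langle\nabla U_0,X\rangle-2r^2(U_0)_t$ is the generator of parabolic dilations about $(0,t_0)$ evaluated at $t=t_0-r^2$. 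The validity of the intermediate integrations by parts is exactly what \eqref{est10}–\eqref{est11} are designed to supply, via the truncation-and-limit argument of Lemma \ref{L:fvei} Step 3 and Theorem \ref{just4}.

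Granting \eqref{h100} and \eqref{i100}, the monotonicity \eqref{n100} follows by the standard logarithmic-derivative computation. One has $\tilde n=\tilde i/\tilde h$, hence
\[
\tilde n'(r)=\frac{\tilde i'(r)}{\tilde h(r)}-\frac{\tilde i(r)\,\tilde h'(r)}{\tilde h(r)^2}
=\frac1r\cdot\frac{\int y^a(Z_{t_0}U_0)^2\Ga}{\int y^a U_0^2\Ga}-\frac{4}{r}\cdot\frac{\tilde i(r)^2}{\tilde h(r)^2},
\]
after substituting both formulas and cancelling the $\tfrac{a}{r}\tilde i/\tilde h$ terms. Now one needs the ``first variation identity'' $\tilde i(r)=\tfrac12\int y^a U_0\,Z_{t_0}U_0\,\Ga(X,-r^2)$, which is the recentered analogue of \eqref{equiv2} / Lemma \ref{L:I}; it comes from integrating $|\nabla U_0|^2$ by parts against $\Ga$ using the equation and the vanishing Neumann data, and again is legitimate by \eqref{est10}. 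With it, $4\tilde i(r)^2/\tilde h(r)^2=\big(\int y^a U_0 Z_{t_0}U_0\,\Ga\big)^2/\big(\int y^a U_0^2\,\Ga\big)^2$, and Cauchy–Schwarz applied to the pair $(U_0, Z_{t_0}U_0)$ in $L^2(y^a\Ga\,dX)$ gives $\tilde n'(r)\ge0$. The role of the hypothesis \eqref{bd11} on $r_0$ is purely to guarantee, as in the displayed estimates \eqref{fin3}–\eqref{fin4} preceding the statement, that $\tilde h(r)$ and $\tilde i(r)$ are finite for $0<r\le r_0$ (so that $\tilde n(r)$ is well-defined) — it plays no role in the differential identities themselves.

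The main obstacle I anticipate is not any single computation but the bookkeeping of the two nested limiting procedures — the interior approximation $\Rn\times\{y>\ve\}\to\Rnp$ and the spatial truncation $\tau_N\to 1$ — needed to justify every integration by parts and every differentiation under the integral sign directly from \eqref{est10} and \eqref{est11}, since unlike in Section \ref{S:mono} we do not have the pointwise bounds of Lemma \ref{reg1} or \eqref{sup} at our disposal and must instead lean entirely on these Gaussian $L^2$ estimates. Concretely, one must check that the boundary terms produced by $\nabla\tau_N$ (the integrals $\int y^a U_0^2\langle\nabla\tau_N,X\rangle\Ga$, $\int y^a t\,U_0\langle\nabla U_0,\nabla\tau_N\rangle\Ga$, and their $\nabla U_0$-weighted counterparts appearing in \eqref{exp1} and \eqref{ne5}) tend to zero as $N\to\infty$, which follows because $|\langle\nabla\tau_N,X\rangle|$ is bounded uniformly and $\nabla\tau_N$ is supported where $N\le|X|\le 2N$, combined with the finiteness of the tail integrals of the quantities in \eqref{est10}–\eqref{est11}; and that the boundary terms at $y=0$ vanish identically thanks to $\lim_{y\to0}y^a(U_0)_y=0$ and the H\"older continuity of $y^a(U_0)_y$ up to $\{y=0\}$. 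Since all of these steps are faithful transcriptions of arguments already carried out in the proofs of Lemma \ref{L:fvh}, Lemma \ref{L:fvei}, and Theorem \ref{just4}, I would present them compactly, referring back to those proofs for the repetitive details rather than reproducing them.
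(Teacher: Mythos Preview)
Your overall architecture is correct and matches the paper: truncate via $\tau_N$, compute $\tilde h_N'$ and $\tilde i_N'$ exactly as in Section~\ref{S:mono} with $V\equiv 0$, establish the alternate expression $\tilde i(r)=\tfrac12\int y^a U_0\,Z_{t_0}U_0\,\Ga(X,-r^2)$, pass to the limit, and conclude by Cauchy--Schwarz. The boundary terms at $y=0$ indeed drop out by the vanishing Neumann condition and the H\"older continuity of $y^a(U_0)_y$.

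There is, however, a genuine gap in your justification of the limits $N\to\infty$. You write that the $\nabla\tau_N$ error terms vanish ``combined with the finiteness of the tail integrals of the quantities in \eqref{est10}--\eqref{est11}'', and you say the argument is a ``faithful transcription'' of Theorem~\ref{just4}. But \eqref{est10}--\eqref{est11} are estimates on $\int_{\Sa_\rho^+}(\cdots)\Ga(X,t)\,dX\,dt$, i.e.\ \emph{time-integrated} quantities with the kernel centered at the origin. That sufficed in Theorem~\ref{just4} because $H(U_0,r)$ and $I(U_0,r)$ are themselves time-averaged. Here $\tilde h(r)$ and $\tilde i(r)$ live on a \emph{single} time slice $t=t_0-r^2$ with the recentered kernel $\Ga(X,-r^2)$, so to show that $\tilde h_N'\to\tilde h'$ and $\tilde i_N'\to\tilde i'$ \emph{uniformly on $[\delta,r_0]$} (which is what you need to pass derivatives through the limit) you must control tails of the form $\int_{\{|X|>N\}}y^a U_0^2(X,t_0-r^2)\Ga(X,-r^2)\,dX$ uniformly in $r$. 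The time-integrated bounds \eqref{est10}--\eqref{est11} do not deliver this.

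The missing ingredient is the parabolic $2\kappa$-homogeneity of $U_0$ from Proposition~\ref{homg1}, which the paper explicitly flags just before the statement of the theorem. With it, one first upgrades \eqref{est10} to per-slice finiteness (this is how \eqref{fin}, \eqref{fin1} are obtained for \emph{every} $t\in(-1,0)$), and then, via the change of variable $X=X'\sqrt{(t_0-r^2)/t_0}$ and the homogeneity of $U_0,\nabla U_0,(U_0)_t$, reduces each tail integral at time $t_0-r^2$ to a single tail integral at the fixed time $t_0$, independent of $r$; see \eqref{fr10}--\eqref{fr13} in the paper's proof. This is also where \eqref{bd11} earns its keep beyond mere finiteness: it gives $1<(t_0-r^2)/t_0<2$, so the Jacobian factor $((t_0-r^2)/t_0)^{(4\kappa+a)/2}$ stays bounded and the image region $\{|X'|>N/\sqrt2\}$ is genuinely a shrinking tail. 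Your claim that \eqref{bd11} ``plays no role in the differential identities themselves'' is therefore not quite right --- it plays no role in the identities, but it is essential in the uniform limiting argument that validates them.

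Once you insert the homogeneity-based uniform tail control in place of the appeal to \eqref{est10}--\eqref{est11}, the rest of your proof goes through as written.
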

 
\begin{proof}
 
 With $\tau_N$ as in the proof of Theorem \ref{just4}, we let
 \begin{equation}
 \tilde{h}_N (r) = \int_{\Rnp}  y^a U_0^2(X, t_0-r^2)  \tau_N(X) \Ga(X, -r^2)  dX
 \end{equation}
 and
 \begin{equation}
 \tilde{i}_{N} (r) = r^2 \int_{\Rnp} y^a |\nabla U_0|^2 (X, t_0-r^2)\tau_N(X) \Ga(X, -r^2) dX.
 \end{equation}
 Then, by computations similar to those in Section \ref{S:mono}, that are justified using the local regularity properties of $U_0$ and Lebesgue dominated convergence, we find 
 \begin{align}\label{frt13}
\tilde{h}'_{N}(r) & = \frac{4}{r} \tilde{i}_N(r) + \frac{a}{r} \tilde{h}_N +\frac{1}{r} \int_{\Rnp}  y^a  U_0^2(X, t_0-r^2) < \nabla  \tau_N, X > \Ga(X, t_0-r^2)
\\
&+ 4r\int_{\Rnp} y^ a  U_0 (X, t_0-r^2) < \nabla U_0(X, t_0-r^2), \nabla \tau_N> \Ga(X, -r^2).
\notag
 \end{align}
For any $\delta>0$ such that $\delta < r_0$, we let $r \in  [\delta, r_0]$. The first integral in the right-hand side of \eqref{frt13} can be estimated as follows. Note that \eqref{compare} gives for $r \in [\delta, r_0]$,
\[
\Ga (X, -r^2) \leq \frac{|t|^{(n+1)/2}}{\delta^{n+1}} \Ga(X, t_0-r^2) \leq \frac{1}{\delta^{n+1}} \Ga(X, t_0-r^2),
\]
where in the last inequality we have used $|t| <1$. Since $r \geq \delta$, the above inequality gives
\begin{align}\label{fr10}
&  \frac{1}{r} \int_{\Rnp} y^a  U_0^2 (X, t_0-r^2)< \nabla  \tau_N, X > \Ga(X, -r^2) 
\\
& \leq \frac{C_n}{ \delta^{n+2}} \int_{\Rnp \cap \{|X|> N\}}  y^a U_0^2(X, t_0-r^2)  \Ga(X, t_0-r^2).
\notag
\end{align}
In obtaining \eqref{fr10} we have also used that $|<\nabla \tau_N, X>|$ is supported in $\{|X| > N\}$ and is bounded from above by a universal $C_n>0$. The change of variable 
 \begin{equation}\label{change}
 X= X' \sqrt{\frac{t_0-r^2}{t_0}}
 \end{equation}
 and the $2\kappa$-homogeneity of $U_0$ allow to infer that the right-hand side of \eqref{fr10} be estimated as follows  
  \begin{align}\label{fr11}
& \frac{C_n}{ \delta^{n+2}} \int_{\Rnp \cap \{|X|> N\}}  y^a U_0^2(X, t_0-r^2)  \Ga(X, t_0-r^2)
\\
& \leq   \frac{C_n}{ \delta^{n+2}} \left(\frac{t_0-r^2}{t_0}\right)^{(4\kappa+a)/2} \int_{\Rnp \cap \{|X'|> \frac{N}{\sqrt{2}}\} }y^a U_0^2(X', t_0) \Ga(X',t_0).
\notag
 \end{align}
  Note that in \eqref{fr11} we have also used the fact that, thanks to \eqref{later} above, $|X| > N$ implies
$|X'| > \frac{N}{\sqrt{2}}$.
Using again \eqref{later}, we infer from \eqref{fr10}, \eqref{fr11}  
  \begin{align}\label{fr13}
 &  \frac{1}{r} \int_{\Rnp} y^a  U_0^2 (X, t_0-r^2)< \nabla  \tau_N, X > \Ga(X, -r^2) dX
  \\
  & \leq  \frac{C_n}{ \delta^{n+2}} 2^{(4\kappa+a)/2} \int_{\Rnp \cap \{|X'|> \frac{N}{\sqrt{2}}\} }y^a U_0^2(X', t_0) \Ga(X', t_0) dX'.
  \notag
  \end{align}
Because \eqref{fin} holds for $t=t_0$, we see that by choosing $N$ large enough the integral in the right hand side of \eqref{fr13} can be made arbitrarily small independently of $r \in [\delta, r_0]$. We can consequently assert from \eqref{fr13} that, by choosing $N$ large enough, the  term
 \[
 \frac{1}{r} \int_{\Rnp} y^a  U_0^2 (X, t_0-r^2)< \nabla  \tau_N, X > \Ga(X, -r^2) dX
 \]
in the right-hand side of \eqref{frt13}  can be made arbitrarily small, uniformly in $r  \in [\delta, r_0]$. 
  
Arguing similarly to \eqref{fr10}-\eqref{fr13}, using the homogeneity  of $U_0$ and $\nabla U_0$, for large enough $N$ the second integral in the right-hand side of \eqref{frt13} 
\[
 4r\int_{\Rnp} y^ a  U_0 (X, t_0-r^2) < \nabla U_0(X, t_0-r^2), \nabla \tau_N> \Ga(X, -r^2)
  \]
can be made uniformly small  for $r \in [\delta, r_0]$.
  
The arguments \eqref{fr10}-\eqref{fr13} also show that  
 \begin{equation}\label{fr14}
\underset{N\to \infty}{\lim} \underset{r \in [\delta, r_0]}{\sup} |\tilde{h}(r) - \tilde{h}_N(r)| \leq \int_{\Rnp \cap \{|X| > N\}} y^a U_0^2(X, t_0-r^2) \Ga(X,-r^2)dX\ \longrightarrow\ 0.
 \end{equation}
Therefore, letting $N \to \infty$ in \eqref{frt13} we finally obtain for all $r \geq \delta$ 
 \begin{equation*}
 \tilde{h}^{'}(r) = \frac{4}{r} \tilde{i}(r) + \frac{a}{r} \tilde{h}(r).
\end{equation*}
From the arbitrariness of  $\delta$ we have thus proved \eqref{h100}.

Before turning to the proof of \eqref{i100} we next establish the following alternate expression for $\tilde{i}(r)$:
\begin{equation}\label{equiv5}
\tilde{i} (r) = \frac{1}{2} \int_{\Rnp} y^a (Z_{t_0} U_0(X, t_0-r^2)) U_0(X, t_0-r^2)   \Ga(x,-r^2), 
\end{equation}  
where $Z_{t_0} U_0(X, t_0-r^2)$ is given by \eqref{Z_0} above. To prove \eqref{equiv5} we  integrate by parts using the equation satisfied by $U_0$, as in the derivation of Lemma \ref{L:ibi}. We find 
\begin{align}\label{equiv4}
\tilde{i}_{N}(r) & = \frac{1}{2} \int_{\Rnp} y^a (Z_{t_0} U_0(X, t_0-r^2)) U_0(X, t_0-r^2)  \tau_N \Ga(x, -r^2) 
\\
& - r^2 \int_{\Rnp} y^a U_0(X, t_0-r^2) < \nabla U_0(X, t_0-r^2), \nabla \tau_N>\Ga(X, -r^2).
\notag
\end{align}
Using the homogeneity of $U_0, \nabla U_0$, and arguing as in \eqref{fr10}-\eqref{fr13} above, we prove that as $N\to \infty$
\begin{equation}\label{con10}
- r^2 \int_{\Rnp} y^a U_0(X, t_0-r^2) < \nabla U_0(X, t_0-r^2), \nabla \tau_N>\Ga(X, -r^2)\ \longrightarrow\ 0.
\end{equation}
We also  note that from  the estimates \eqref{est10},  \eqref{ne3}, \eqref{ne4} and the homogeneity of $ \nabla U_0$ and $  < X, \nabla U_0> $,  the following holds for all $t \in (-1, 0)$ 
\begin{equation}\label{e110}
\int_{\Rnp} y^a  ( |\nabla U_0|^2(X, t)+ < X, \nabla U_0(X, t)>^2 )  \Ga(X,t) dX < \infty.
\end{equation}
From Lemma \ref{sd} and the homogeneity of $(U_0)_t$ we find for all $t \in (-1,0)$
\begin{equation}\label{e111}
\int_{\Rnp} y^a (U_0)_{t}^2 (X,t)\Ga(X, t) < \infty.
\end{equation}
The equations \eqref{e110}, \eqref{e111}  coupled with \eqref{compare} imply for $r \in (0, r_0]$
\begin{equation}\label{z}
\int_{\Rnp} y^a (Z_{t_0} U_0)^2(X, t_0-r^2) \Ga(X,  -r^2) dX< \infty.
\end{equation}
The finiteness of $\tilde{h}(r)$ (which follows from \eqref{fin} and \eqref{compare}), that of the integral in \eqref{z}, and the Cauchy-Schwarz inequality imply 
\[
\int_{\Rnp} y^a \left|Z_{t_0} U_0(X, t_0-r^2) \right| \left|U_0(X, t_0-r^2) \right|  \Ga(x, -r^2) < \infty.
\]
Since $\tau_N(X)  \to 1$ as $N \to \infty$ for all $X \in \Rnp$, by Lebesgue dominated convergence we obtain as $N \to \infty$
\begin{align}\label{con11}
&\frac{1}{2} \int_{\Rnp} y^a (Z_{t_0} U_0(X, t_0-r^2)) U_0(X, t_0-r^2)  \tau_N \Ga(x, -r^2)  
\\
& \longrightarrow\ \frac{1}{2} \int_{\Rnp} y^a (Z_{t_0} U_0(X, t_0-r^2)) U_0(X, t_0-r^2)   \Ga(x, -r^2). 
\notag
\end{align}
Since $\tilde{i}_{N} \to \tilde{i}$ as $N \to \infty$, from \eqref{equiv4}, \eqref{con10} and \eqref{con11} we finally conclude that \eqref{equiv5} does hold.

We finally turn to the proof of \eqref{i100} and \eqref{n100}. Similarly to the computation of $i_{\ve}'(U,t)$ in \eqref{trunc}, the computation of $\tilde{i}'_{N}$ is justified as in Step 2 in the proof of Lemma \ref{L:fvei} using the regularity properties of $U_0$, instead of those of $U$. Consequently, we obtain 
\begin{align}\label{comp10}
\tilde{i}_N'(r) & = \frac{a}{r} \tilde{i}_N(r) + \frac{1}{r}\int_{\Rnp }y^a (Z_{t_0} U_0(X, t_0-r^2)^2 \tau_N \Ga(X, -r^2) 
\\
& + r \int_{\Rnp } y^a |\nabla U_0|^2(X, t_0-r^2) <\nabla \tau_N, X> \Ga(X,-r^2)
\notag
\\
& - 2r \int_{\Rnp} y^a Z_{t_0} U_0(X, t_0-r^2) < \nabla \tau_N, \nabla U_0(X, t_0-r^2)> \Ga(X, -r^2).
\notag
\end{align}
Using \eqref{e110}, \eqref{e111} and the homogeneity of $U_0$ and of its derivatives, by an  argument similar to that for $\tilde{h}'$ in  \eqref{fr10}-\eqref{fr13}, for any $\delta > 0$ we have
\[
r \int_{\Rnp } y^a |\nabla U_0|^2 (X, t_0-r^2)<\nabla \tau_N, X> \Ga(X, -r^2)\ \longrightarrow\ 0,
\]
and 
\[
- 2r \int_{\Rnp} y^a Z_{t_0} U_0(X, t_0-r^2) < \nabla \tau_N, \nabla U_0(X, t_0-r^2)> \Ga(X, -r^2)\ \longrightarrow\ 0,
\]
uniformly in $r \in [\delta, r_0]$. Similarly, as $N\to \infty$ we obtain 
\begin{equation*}
\left|\frac{1}{r} \int_{\Rnp }y^a (Z_{t_0} U_0(X, t_0-r^2) )^2 \tau_N \Ga(X, -r^2) - \frac{1}{r} \int_{\Rnp }y^a (Z_{t_0} U_0(X, t_0-r^2) )^2 \Ga(X, -r^2)\right|\ \longrightarrow\ 0,
\end{equation*}
uniformly in $r \in [\delta, r_0]$. This proves that as as $N \to \infty$ the following convergences \[
\tilde{i}_N(r)\ \longrightarrow\ \tilde{i}(r),\ \ \ \tilde{i}_N'(r)\ \longrightarrow\ \frac{a}{r} \tilde{i}(r) + 
\frac{1}{r} \int_{\Rnp }y^a (Z_{t_0} U_0(X, t_0-r^2) )^2 \Ga(X,-r^2),
\]
are uniform in $[\delta, r_0]$. It ensues that for any $r \in [\delta, r_0]$ we have 
\[
\tilde{i}'(r) = \frac{a}{r} \tilde{i}(r) + \frac{1}{r} \int_{\Rnp }y^a (Z_{t_0} U_0(X, t_0-r^2) )^2 \Ga(X, -r^2). 
\]
From the arbitrariness of $\delta>0$ we conclude that \eqref{i100} holds for all $r \in (0,r_0]$.

Finally, from \eqref{h100}, \eqref{i100}, \eqref{equiv5} and the Cauchy-Schwarz inequality, we conclude that \eqref{n100} holds.  

\end{proof}

\end{document}